 \def\useaos{1}
\begin{document}
\begin{frontmatter}

  \title{Asymptotic Optimality in \\ Stochastic Optimization}
  \runtitle{Asymptotic Optimality in Stochastic Optimization}

  \begin{aug}
    \author{\fnms{John C.}
      \snm{Duchi}\thanksref{t2}\ead[label=e1]{jduchi@stanford.edu}} \and
    \author{\fnms{Feng}
      \snm{Ruan}\thanksref{t2}\ead[label=e2]{fengruan@stanford.edu}}

    \thankstext{t2}{JCD and FR partially supported by
      NSF Award 1553086 and the Toyota Research Institute. FR
      additionally supported by the E.K.\ Potter
      Stanford Graduate Fellowship.}

    \runauthor{Duchi and Ruan}

    \affiliation{Stanford University}

    \address{Department of Statistics\\
      Stanford University\\
      Sequoia Hall \\
      Stanford, California, 94305-4065 \\
      \printead{e1}\\
      \phantom{E-mail:\ }\printead*{e2} }
  \end{aug}

  \begin{abstract}
  We study local complexity measures for stochastic convex optimization
  problems, providing a local minimax theory analogous to that of
  H\'{a}jek and Le Cam for classical statistical problems.
  We give complementary optimality results,
  developing fully online methods
  that adaptively achieve optimal convergence guarantees. Our results provide
  function-specific lower bounds and convergence results that make precise
  a correspondence between statistical difficulty and the geometric notion
  of tilt-stability from optimization.  As
  part of this development, we show how variants of Nesterov's dual
  averaging---a stochastic gradient-based procedure---guarantee finite
  time identification of constraints in optimization problems, while
  stochastic gradient procedures fail.  Additionally, we
  highlight a gap between problems with linear and nonlinear
  constraints: standard stochastic-gradient-based procedures are
  suboptimal even for the simplest nonlinear constraints, necessitating
  the development of asymptotically optimal
  Riemannian stochastic gradient methods.
\end{abstract}

  \begin{keyword}[class=MSC]
    \kwd{62F10, 62F12, 68Q25}
  \end{keyword}

  \begin{keyword}
    \kwd{Local asymptotic minimax theory, convex analysis, stochastic gradients,
      manifold identification}
  \end{keyword}

\end{frontmatter}
\else  

\documentclass[11pt]{article}
\usepackage{macros}
\usepackage{macros}
\usepackage{hyperref}
\usepackage[numbers]{natbib}
\usepackage{amsmath}
\usepackage{overpic}
\usepackage[top=2.54cm,left=3cm,right=3cm,bottom=2.54cm]{geometry}

\title{Asymptotic Optimality in Stochastic Optimization\thanks{
    JCD and FR partially supported by
    NSF Award 1553086 and the Toyota Research Institute. FR
    additionally supported by the E.K.\ Potter
    Stanford Graduate Fellowship.
}}
\author{John C.\ Duchi and Feng Ruan \\ Stanford University}
\date{October 2018}

\fi


\ifdefined\useaos
\else

\maketitle

\fi


\section{Introduction}

In this paper, we consider smooth stochastic convex optimization problems
of the form
\begin{equation}
  \begin{split}
    \minimize_x ~ f(x) & \defeq \E_{P}[\loss(x; \statrv)]
    = \int_{\statdomain} \loss(x; \statval) dP(\statval) \\
    \subjectto ~ x \in \xdomain & \defeq
    \{x \in \R^n : f_i(x) \le 0 ~ \mbox{for}~i = 1, \ldots, m\},
  \end{split}
  \label{eqn:problem}
\end{equation}
where each $f_i : \R^n \to \R$ is convex and smooth ($\mc{C}^2$), $\statrv
\sim P$ is a random variable, and for $\statval \in \statdomain$ the
function $\R^n \ni x \mapsto \loss(x; \statval)$ is convex and continuously
differentiable.  We study algorithms that attempt
to solve problem~\eqref{eqn:problem} using
a sample $\statrv_1, \ldots, \statrv_k \simiid P$.
In this setting, we investigate the
optimality properties of stochastic optimization procedures, providing 
both problem-specific lower bounds on the performance of any
method and giving optimal algorithms that adapt to problem structure.

Problems of the form~\eqref{eqn:problem} are of broad interest, as they
encompass a variety of problems in statistics, machine learning, and
optimization~\cite{HastieTiFr09}. Because of their wide applicability, it is
important to carefully understand the difficulty of such problems. This
includes understanding fundamental limits---how well the best algorithm
can behave on problem~\eqref{eqn:problem}---as well as
adaptivity, meaning the extent to which algorithms can adapt to the specific
problem at hand.  In this paper, we address these problems, showing
function-specific difficulty measures and developing a variant of Nesterov's
dual averaging algorithm~\cite{Nesterov09} that is (often) optimal, though
we demonstrate that alternative methods are necessary when the constraint
functions $f_i$ are nonlinear (and we provide one potential method).
Unifying our results is an understanding of the stability of solutions to
optimization problems under perturbations; we make precise connections
between Poliquin and Rockafellar's ``tilt stability''~\cite{PoliquinRo98}
and statistical and computational difficulty, giving an analogue of Fisher
information for stochastic optimization problems~\eqref{eqn:problem}.

A standard approach to providing
optimality guarantees is the minimax risk~\cite{NemirovskiYu83,
  VanDerVaart98, AgarwalBaRaWa12}. Here, one defines a class
$\mc{F}$ of functions of interest (such as Lipschitz convex functions) and
measures algorithmic performance by the worst-case behavior over this
function class. Minimax risk is an imprecise hammer: a function $f$
may belong to a number of classes of functions, and the risk may
differ substantially between these classes. The approach is
also often too conservative: if $f$ is decreasing
quickly near the boundary of $\xdomain$, it should be ``easier'' to
solve problem~\eqref{eqn:problem}. H\'{a}jek and Le Cam's local minimax
theory~\cite{VanDerVaart98, VanDerVaartWe96, LeCamYa00} addresses these
issues in classical statistical problems, giving \emph{problem-specific}
notions of difficulty and making rigorous the centrality of the Fisher
information.  In this paper,
we build on these results to answer the following: how hard is it to solve the
particular problem~\eqref{eqn:problem}?

The idea in this line of work (see also \citet{ZhuChDuLa16})
is to define a
shrinking neighborhood of problems, investigating
worst-case complexity in this neighborhood.  For stochastic optimization
problems~\eqref{eqn:problem}, the objective $(x, \statval) \mapsto \loss(x;
\statval)$ is generally known, while the probability distribution $P$ is
not; with that in mind, we study neighborhoods $\mc{P}_k(P)$
whose
elements are tilted variants $\wt{P}$ of the measure $P$ satisfying
\mbox{$d\wt{P}(s) \in [1 \pm c k^{-\half}] dP(s)$}, so that $\mc{P}_k(P)$
shrinks to $P$ as $k \to \infty$. Letting $\wt{x}$ denote the minimizer of
the objective~\eqref{eqn:problem} when $\wt{P}$ replaces $P$ and $L : \R^n
\to \R$ be a loss, we consider local minimax
complexity measures of the form
\begin{equation}
  \label{eqn:local-complexity}
  \inf_{\what{x}_k}
  \sup_{\wt{P} \in \mc{P}_k} \E_{\wt{P}}\left[ L( \what{x}_k(\statrv_1,
    \ldots, \statrv_k) - \wt{x}) \right],
\end{equation}
where the expectation is taken over $\statrv_i \simiid \wt{P}$.  To describe
our lower bound, we leverage the \emph{tilt-stability} of an optimization
problem~\cite{PoliquinRo98}, which describes the changes in solutions to
problem~\eqref{eqn:problem} when the tilt $f_v(x) \defeq f(x) - v^T x$
replaces $f(x)$. Letting $x_v$ denote the minimizer of $f_v(x)$ over
$\xdomain$, let us assume the objective~\eqref{eqn:problem} is smoothly tilt
stable, so $x_v = x\opt + D v + o(\norm{v})$ for some matrix $D$; we show
(Proposition~\ref{proposition:perturbation}) the precise dependence of $D$
on the problem~\eqref{eqn:problem} via the objective $\loss$, distribution
$P$, and constraints $\xdomain$.  Our first main result
(Theorem~\ref{theorem:local-minimax-lower}) provides a lower bound on local
complexity measures of the form~\eqref{eqn:local-complexity}. Here the
matrix $\Gamma \defeq D \cov(\nabla \loss(x\opt; \statrv)) D$ is analogous
to the classical inverse Fisher information~\cite{VanDerVaart98}, and
Theorem~\ref{theorem:local-minimax-lower} shows that $\E[L(Z_k)], Z_k \sim
\normal(0, k^{-1} \Gamma)$ is asymptotically a lower bound for the local
complexity~\eqref{eqn:local-complexity}.

The next question we address is whether our problem-dependent lower
bounds are accurate: are there procedures that achieve these
guarantees, and can we adapt to specific problem geometry?  The classical
sample average approximation (or empirical risk minimization)
approach~\cite{ShapiroDeRu09}, which sets $\what{x}_k = \argmin_{x \in
  \xdomain} \{\frac{1}{k} \sum_{i = 1}^k \loss(x; \statrv_i)\}$, is one
approach. As we discuss in the sequel, it is optimal and adaptive.
Given the scale of
many modern problems, however, it is important to develop
computationally efficient online procedures.
%
%
To that end, our second contribution
(Sections~\ref{sec:dual-averaging-convergence} and
\ref{sec:dual-averaging-normality}) is the development of
stochastic-gradient-based procedures that are (asymptotically) optimal,
achieving the infimum in the local
complexity~\eqref{eqn:local-complexity} for smooth enough functions
$\loss$.

We develop a variant of Nesterov's dual
averaging~\cite{Nesterov09}; we iterate
\begin{equation}
  \label{eqn:variant-of-dual-averaging}
  x_{k+1} \defeq \argmin_{x\in \xdomain} 
  \left\{\bigg(\sum_{i=1}^k \stepsize_i \nabla \loss(x_i; \statrv_i)\bigg)^T x +
    \half \ltwo{x-x_0}^2\right\},
\end{equation} 
where $\stepsize_i$ denotes a stepsize sequence.  In the case that $\xdomain
= \R^n$, this method reduces to the stochastic gradient method,
and \citet{PolyakJu92} show that the averages $\wb{x}_k = \frac{1}{k}
\sum_{i = 1}^k x_i$ are asymptotically normal with
the optimal covariance we derive. In contrast, we show that
(i) the
iteration~\eqref{eqn:variant-of-dual-averaging} converges a.s.\ and
identifies the active constraints in problem~\eqref{eqn:problem} in finite
time, and (ii) as long as the constraints $f_i$ are linear, dual averaging
is optimal and adaptive
(Theorems~\ref{theorem:as-convergence}--\ref{theorem:asymptotic-normality}).
Stochastic projected gradient descent methods \emph{do not} enjoy these
guarantees.  An intriguing gap arises when the constraints are nonlinear:
our proposed algorithm and classical dual
averaging~\cite{Nesterov09} cannot be optimal with nonlinear
constraints, even for $\xdomain = \{x \in \R^n : \ltwo{x}^2 \le 1\}$.  To
address this, we develop an asymptotically optimal manifold-based
online algorithm (Theorem~\ref{theorem:asymptotic-normality-rsgd}), showing
that closing this gap is possible but nontrivial.

The unifying aspect of both threads---algorithms and lower
bounds---throughout this work is the geometry of the
problem~\eqref{eqn:problem}. Letting $x\opt$ denote the minimizer of the
problem (our coming assumptions make this unique), we give a perturbation
analysis~\cite{BonnansSh98} of parameterized versions of
problem~\eqref{eqn:problem} that shows how the active constraints $\{i :
f_i(x\opt) = 0\}$ affect solutions to the perturbed
problems~\eqref{eqn:local-complexity}. A similar perturbation analysis is
also central to our results on optimal constraint identification and the
asymptotic covariance structure of the iterates of dual
averaging~\eqref{eqn:variant-of-dual-averaging}, providing a unifying
geometric theme to our results and allowing us to provide computational and
optimization-based analogues of the Fisher information.

\subsection{Related Work}
That problem geometry strongly influences optimization algorithms is
well-known. In statistics, geometric conditions involving the
continuity of the estimand with respect to the underlying probability
measure are central to minimax
analyses~\cite{Birge83,DonohoLi87,DonohoLi91a}, and the Fisher information
characterizes classical asymptotics~\cite{LeCamYa00, VanDerVaart98,
  VanDerVaartWe96}.  Our approach to local asymptotic minimax lower bounds
builds out of the literature on semi- and non-parametric
efficiency~\cite{Stein56a, IbragimovHa81, BickelKlRiWe98, VanDerVaart98},
where one wishes to estimate a finite-dimensional parameter of an
infinite-dimensional nuisance, thus studying hardest finite-dimensional
subproblems; we connect these hardest subproblems to stability in
optimization.  In deterministic optimization, work by \citet{BurkeMo94} and
\citet{Wright93} shows how projected gradient and Newton methods identify
active constraints and converge quickly once
identified, and such identification underlies active set
methods~\cite{NocedalWr06}.

On the algorithmic side, there is a substantial literature on stochastic
approximation and optimization procedures, with growing recent importance
for large-sample problems~\cite{RobbinsMo51, Kushner74, PolyakJu92,
  Zinkevich03, BottouBo07, NemirovskiJuLaSh09, Xiao10, DuchiHaSi11}. Early
works, beginning with \citet{RobbinsMo51} and continuing through work by
(among others) \citet{Ermoliev69, Ermoliev83}, \citet{Venter67},
\citet{Fabian73}, \citet{Kushner74}, and \citet{Walk83}, develop probability
one convergence with and without constraints, as well as asymptotic
normality results in restricted situations~\cite{Venter67,
  Fabian73}. \citet{PolyakJu92} show the importance of averaging
stochastic gradient methods with ``long stepping,'' establishing a generic
asymptotic normality result. Our results are a
natural descendant of this work, but they require new development, and given
the subtleties that nonlinear constraints introduce for asymptotics, we
require extensions to and connections with Riemannian
methods~\cite{AbsilMaSe09, Boumal14, TripuraneniStJiReJo17}.  Recent
progress on incremental gradient methods---which approximate the population
expectation~\eqref{eqn:problem} by an empirical average---develops efficient
estimators using limited computation~\cite{LeRouxScBa12, JohnsonZh13,
  DefazioBaLa14, LinMaHa15}, though the methods do not apply in fully online
stochastic scenarios.

\subsection{Notation and basic definitions}
We let $\R_+ = \{x \in \R : x \ge 0\}$ and $\R_{++} = \{x \in \R : x > 0\}$.
For any $m \in \N$, we use $[m]$ to denote the set of integers $\{1, 2,
\ldots, m\}$.  For a set $C$, we use $\relint(C)$ to denote its relative
interior \cite[Section 6]{Rockafellar70} and $\setindic{x}{C}$ to
denote the extended real valued function
\begin{equation*}
\setindic{x}{C} = 
\begin{cases}
	0 & x\in C \\
	+\infty & x \not\in C
\end{cases}
\end{equation*}
For a vector $v$, $\norm{v}$ denotes its Euclidean norm. 
For a matrix $A$, $A^{\dag}$ is its Moore-Penrose inverse, and
$\matrixnorm{A} = \sup_{\norm{v} = 1} \norm{Av}$ is its $l_2$ operator norm.


\section{Background and assumptions}
\label{sec:assumptions}

Before moving to our main results, we collect important assumptions,
definitions, and recapitulate a few results on stochastic optimization.  As
we view our results through the lens of stability and perturbation, we also
present a perturbation result on tilt-stability of optimization problems
that underpins our development.

\subsection{Main assumptions}

We begin by formalizing the problems we consider. 
This involves specifying smoothness and identifiability 
properties on $f$ and $x\opt$, the unique minimizer 
of problem~\eqref{eqn:problem} (our assumptions 
ensure uniqueness).
\begin{assumption}
  \label{assumption:smoothness-of-objective}
  There exists
  $L < \infty$ such that
  \begin{equation*}
    \norm{\nabla f(x) - \nabla f(x\opt)} \le L \norm{x - x\opt}
    ~~ \mbox{for~all}~ x \in \mc{X}.
  \end{equation*}
  There exist
  $C, \epsilon \in (0, \infty)$ such that for
  $x \in \xdomain \cap \{x: \norm{x - x\opt} \leq \epsilon\}$,
  \begin{equation*}
    \norm{\nabla f(x) - \nabla f(x\opt)
    - \nabla^2 f(x\opt)(x - x\opt)} \le C \norm{x - x\opt}^{2}.
  \end{equation*}
\end{assumption}

\noindent
Because we study perturbation of solutions and rates of convergence,
we require constraint qualifications to make precise
guarantees. The normal cone
to the set $\mc{X}$ at the point $x$ is
\begin{equation*}
  \normalcone_{\mc{X}}(x) \defeq
  \left\{v \in \R^n : \<v, y - x\> \le 0 ~ \mbox{for~all~} y \in \mc{X} \right\}.
\end{equation*}
The optimality conditions for convex
programming~\cite{BoydVa04,HiriartUrrutyLe93ab} for
problem~\eqref{eqn:problem} are that $x\opt$ minimizes $f$ over $\mc{X}$ if
and only if $-\nabla f(x\opt) \in \normalcone_{\mc{X}}(x\opt)$.  The
condition that $-\nabla f(x\opt) \in \normalcone_{\xdomain}(x\opt)$ is
insufficient for our identification and perturbation results, so we make a
standard constraint
qualification~\cite[Def.~2.4]{Wright93,BurkeMo94,HareLe04}.  Throughout, we
let $\numactive$ be the number of \emph{active} constraints in
problem~\eqref{eqn:problem}, that is, the number of all indices $i$ such that
$f_i(x\opt) = 0$. Without loss of generality, we assume $f_1, \ldots,
f_{\numactive}$ are the only active constraints.
\begin{assumption}
  \label{assumption:linear-independence-cq}
  The vector $\nabla f(x\opt)$ satisfies
  \begin{equation*}
    -\nabla f(x\opt) \in \relint \normalcone_{\mc{X}}(x\opt).
  \end{equation*}
  The constraint functions $\{f_1, \ldots, f_m\}$ are $\mc{C}^2$ 
  near $x\opt$. Additionally, the active constraints
  $\{f_1, \ldots, f_\numactive\}$ satisfy either
  \begin{enumerate}[i.]
  \item \label{item:linear-independence-cq}
    The set $\{\nabla f_i(x\opt)\}_{i = 1}^\numactive$ is linearly independent
  \item \label{item:affine-cq} The functions $f_i$ are affine.
  \end{enumerate}
\end{assumption}
\noindent
Assumption~\ref{assumption:linear-independence-cq} implies
there exists a strictly positive
$\lambda\opt \in \R_{++}^\numactive$ such that
\begin{equation}
  \label{eqn:strict-complementary}
  \nabla f(x\opt) + \sum_{i=1}^\numactive \lambda_i\opt \nabla f_i(x\opt) = 0,
\end{equation}
and $\lambda\opt$ is unique under
Assumption~\ref{assumption:linear-independence-cq}.i.  This follows by
standard constraint qualifications~\cite[Chapter VII.2]{HiriartUrrutyLe93ab}
for linear or independent constraints, which implies that
$\normalcone_{\mc{X}}(x\opt) = \{\sum_{i = 1}^\numactive \lambda_i \nabla
f_i(x\opt), \lambda \in \R^{\numactive}_+\}$, whose relative interior is the
set with $\lambda$ strictly positive. The set of $\lambda \in
\R^\numactive_+$ satisfying the KKT condition $\nabla f(x\opt) + \sum_i
\lambda_i \nabla f_i(x\opt) = 0$ is a compact convex polyhedron.

We require two additional assumptions on the structure of the function
$f$. We define the critical tangent set to $\mc{X}$ at $x$ by
\begin{equation}
  \label{eqn:critical-tangent}
  \tangentset_{\mc{X}}(x)
  \defeq \left\{w \in \R^n :
  \nabla f_i(x)^T w = 0,
  ~~ \mbox{for~} i \in [m] ~\mbox{s.t.}~ f_i(x) = 0 \right\}.
\end{equation}
With this definition, we make the following standard second-order sufficiency,
or restricted strong convexity,
assumption~\cite{Shapiro89,Wright93,DontchevRo14}.
\begin{assumption}
  \label{assumption:restricted-strong-convexity}
  There exists $\mu > 0$ such that for any
  $w \in \tangentset_{\mc{X}}(x\opt)$,
  \begin{equation*}
    w^T \left[\nabla^2 f(x\opt)+ \sum_{i=1}^\numactive \lambda_i\opt \nabla^2 f_i(x\opt)\right]
    w \geq \mu \norm{w}^2. 
  \end{equation*}
\end{assumption}
\noindent
Assumption~\ref{assumption:restricted-strong-convexity} guarantees the
uniqueness of minimizers of the function $f$ over
$\mc{X}$; more, it implies
$f$ has the following growth properties.
\begin{lemma}[Wright~\cite{Wright93}, Theorem 3.2(i)]
  \label{lemma:restricted-strong-convexity}
  Under Assumption~\ref{assumption:restricted-strong-convexity},
  there exists $\epsilon > 0$ such that
  \begin{equation*}
    \<\nabla f(x), x - x\opt\>
    \ge f(x) - f(x\opt) \geq \epsilon
    \min\left\{\norm{x - x\opt}^2, \norm{x - x\opt} \right\}
    ~~ \mbox{for~} x \in \xdomain.
  \end{equation*}
\end{lemma}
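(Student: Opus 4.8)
The plan is to prove the two inequalities separately. The left inequality $\<\nabla f(x), x - x\opt\> \ge f(x) - f(x\opt)$ is immediate from convexity of $f$: the subgradient inequality at $x$ reads $f(x\opt) \ge f(x) + \<\nabla f(x), x\opt - x\>$. For the right inequality write $g(x) \defeq f(x) - f(x\opt) \ge 0$; it suffices to produce $\delta \in (0,1]$ and $\epsilon_0 > 0$ with $g(x) \ge \epsilon_0 \norm{x - x\opt}^2$ for all $x \in \xdomain$ with $\norm{x - x\opt} \le \delta$. Indeed, given such a \emph{local} quadratic growth bound, for $x \in \xdomain$ with $\norm{x - x\opt} > \delta$ set $\lambda = \delta / \norm{x - x\opt} \in (0,1)$ and $x' = (1 - \lambda) x\opt + \lambda x \in \xdomain$, so $\norm{x' - x\opt} = \delta$ and convexity gives $g(x') \le \lambda g(x)$, whence $g(x) \ge \lambda^{-1} \epsilon_0 \delta^2 = \epsilon_0 \delta \norm{x - x\opt}$; taking $\epsilon = \epsilon_0 \delta$ and separating the cases $\norm{x - x\opt} \le 1$ and $> 1$ then yields $g(x) \ge \epsilon \min\{\norm{x - x\opt}^2, \norm{x - x\opt}\}$ on all of $\xdomain$.

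To prove the local bound, write $d = x - x\opt$ and introduce the Lagrangian $\mc{L}(x) \defeq f(x) + \sum_{i=1}^\numactive \lambda_i\opt f_i(x)$, so that $\nabla \mc{L}(x\opt) = 0$ by~\eqref{eqn:strict-complementary} and $H \defeq \nabla^2 f(x\opt) + \sum_{i=1}^\numactive \lambda_i\opt \nabla^2 f_i(x\opt) \succeq 0$, each summand being positive semidefinite since $f$ and the $f_i$ are convex and $\lambda_i\opt > 0$. Combining the second-order expansion of $f$ from Assumption~\ref{assumption:smoothness-of-objective} with the $\mc{C}^2$ Taylor expansions of the active $f_i$ near $x\opt$, and using $\nabla\mc{L}(x\opt)=0$, gives $\mc{L}(x) - \mc{L}(x\opt) = \half d^T H d + o(\norm{d}^2)$ as $\norm{d}\to 0$. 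Since $x \in \xdomain$ has $f_i(x) \le 0 = f_i(x\opt)$ for $i \le \numactive$ and $\lambda_i\opt > 0$, we have $g(x) = \mc{L}(x) - \mc{L}(x\opt) - \sum_{i=1}^\numactive \lambda_i\opt f_i(x) \ge \mc{L}(x) - \mc{L}(x\opt)$, hence
\begin{equation}
  \label{eqn:local-lower-lagrangian}
  g(x) \ge \half d^T H d + o(\norm{d}^2).
\end{equation}

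The remaining work — and the main obstacle — is to convert Assumption~\ref{assumption:restricted-strong-convexity}, which only controls $d^T H d$ on $\tangentset_{\mc{X}}(x\opt)$, into a bound for general feasible $d$. Let $d_N$ be the projection of $d$ onto $\mathrm{span}\{\nabla f_i(x\opt)\}_{i \le \numactive} = \tangentset_{\mc{X}}(x\opt)^\perp$ and $d_T = d - d_N$. Feasibility of $x$ with the Taylor expansions of the active $f_i$ gives $\nabla f_i(x\opt)^T d \le C \norm{d}^2$ for $i \le \numactive$ (with the sharper $\nabla f_i(x\opt)^T d = f_i(x) \le 0$ in the affine case); the optimality condition $-\nabla f(x\opt) \in \normalcone_{\mc{X}}(x\opt)$ gives $\nabla f(x\opt)^T d \ge 0$, so $\sum_i \lambda_i\opt \nabla f_i(x\opt)^T d \le 0$ by~\eqref{eqn:strict-complementary}; and convexity gives $\nabla f(x\opt)^T d \le g(x)$. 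Combining these three facts with $\lambda_i\opt > 0$ bounds $\sum_{i \le \numactive} |\nabla f_i(x\opt)^T d|$ by a constant multiple of $g(x) + \norm{d}^2$; since the coefficients expressing $d_N$ along the $\nabla f_i(x\opt)$ are controlled by $\norm{d_N}$ — using linear independence of $\{\nabla f_i(x\opt)\}_{i\le\numactive}$ in the first case, and minimal-norm coefficients (together with $g(x) = O(\norm{d}\,)$) in the affine case — this yields $\norm{d_N}^2 \le C' \norm{d}\,(g(x) + \norm{d}^2)$ for some constant $C'$.

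We finish by contradiction. Suppose $g(x) < \epsilon_0 \norm{d}^2$ for some $x \in \xdomain$ with $0 < \norm{d} \le \delta$. The last bound gives $\norm{d_N}^2 \le C'(1 + \epsilon_0)\, \delta\, \norm{d}^2$, so for $\delta$ small $\norm{d_T}^2 = \norm{d}^2 - \norm{d_N}^2 \ge \tfrac34 \norm{d}^2$ and $\norm{d}\norm{d_N} \le \sqrt{C'(1+\epsilon_0)\delta}\,\norm{d}^2$. Using $H \succeq 0$ (so $d^T H d \ge d_T^T H d_T + 2 d_T^T H d_N$) and Assumption~\ref{assumption:restricted-strong-convexity} ($d_T^T H d_T \ge \mu \norm{d_T}^2$) in~\eqref{eqn:local-lower-lagrangian},
\begin{equation*}
  g(x) \ge \half\big(d_T^T H d_T + 2 d_T^T H d_N\big) + o(\norm{d}^2)
  \ge \tfrac{3\mu}{8}\norm{d}^2 - \matrixnorm{H}\sqrt{C'(1+\epsilon_0)\delta}\,\norm{d}^2 - o(\norm{d}^2).
\end{equation*}
Choosing $\delta$ small enough that the last two terms are each at most $\tfrac{\mu}{16}\norm{d}^2$ gives $g(x) \ge \tfrac{\mu}{4}\norm{d}^2$, and taking $\epsilon_0 \le \mu/4$ contradicts $g(x) < \epsilon_0 \norm{d}^2$. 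Hence $g(x) \ge \epsilon_0 \norm{d}^2$ for all feasible $x$ near $x\opt$, which combined with the first paragraph completes the proof. The delicate point throughout is the normal-component estimate $\norm{d_N}^2 \lesssim \norm{d}\,(g(x) + \norm{d}^2)$: this is precisely where the constraint qualification enters, and the affine and linearly-independent cases require slightly different bookkeeping for the gradient coefficients.
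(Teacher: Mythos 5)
Your proof is correct. Note that the paper does not actually prove this lemma---it is quoted from Wright's Theorem 3.2(i)---so there is no internal argument to compare against; your write-up is a sound self-contained reconstruction along the standard lines: (a) a local quadratic growth bound from second-order sufficiency, (b) globalization by convexity via the scaling $x' = (1-\lambda)x\opt + \lambda x$, which converts quadratic growth on a $\delta$-ball into linear growth outside it. The genuinely nontrivial step, as you correctly identify, is the normal-component estimate $\norm{d_N}^2 \lesssim \norm{d}\,(g(x) + \norm{d}^2)$, which is where feasibility, the KKT identity~\eqref{eqn:strict-complementary}, and strict positivity of $\lambda\opt$ all enter; your bookkeeping there (bounding $\sum_i |\nabla f_i(x\opt)^T d|$ via the one-sided bounds $\nabla f_i(x\opt)^T d \le C\norm{d}^2$ together with $\sum_i \lambda_i\opt \nabla f_i(x\opt)^T d \ge -g(x)$, then writing $\norm{d_N}^2 = \sum_j c_j\, \nabla f_j(x\opt)^T d$ with coefficients controlled by $\norm{d_N}$) checks out in both the linearly independent and affine cases. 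One presentational remark: the lemma as stated invokes only Assumption~\ref{assumption:restricted-strong-convexity}, but any proof---yours included---necessarily also uses Assumptions~\ref{assumption:smoothness-of-objective} and~\ref{assumption:linear-independence-cq} (the latter to obtain $\lambda\opt > 0$ and to control the coefficients of $d_N$); these are standing hypotheses in the paper, so this is a looseness of the statement rather than a gap in your argument.
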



Finally, we make the standard assumption~\cite{RobbinsSi71, PolyakJu92,
  NemirovskiJuLaSh09} that the noise in the functions
$\loss$ is not too substantial.
\begin{assumption}
  \label{assumption:noise-level}
  There exists $C < \infty$ such for all $x \in \mc{X}$,
  \begin{equation*}
    \E[\norm{\nabla \loss(x; \statrv) - \nabla \loss(x\opt; \statrv)}^2]
    \le C \norm{x - x\opt}^2.
  \end{equation*}
  The gradients $\nabla \loss(x\opt; \statrv)$ have finite
  covariance $\covmat \defeq \cov(\nabla \loss(x\opt; \statrv))$.
\end{assumption}
\noindent
We provide two remarks on Assumption~\ref{assumption:noise-level}.  First,
Assumptions~\ref{assumption:smoothness-of-objective}
and~\ref{assumption:noise-level}, coupled with Jensen's inequality, imply
that for any $x \in \xdomain$ we have
\begin{align}
  \E[\norm{\nabla \loss(x; \statrv) - \nabla f(x)}^2]
  \le \E[\norm{\nabla \loss(x; \statrv)}^2]
  & \le C \left(1 + \norm{x - x\opt}^2\right),
    \label{eqn:noise-level-gradient-norms}
\end{align}
where $C < \infty$ is some constant.
Second, many statistical applications
and stochastic programming problems, including
linear and logistic regression, satisfy
Assumption~\ref{assumption:noise-level}. Verifying the assumptions
for these is routine~\cite{PolyakJu92}.


\subsection{Perturbation of optimal solutions and classical asymptotics}

The unifying thread throughout this work is the importance of perturbation
results for optimal solutions of optimization problems, which form the
building blocks of classical asymptotic results for
problem~\eqref{eqn:problem} (cf.~\citet{Shapiro89}), for the local minimax
lower bounds we develop, and for the identification and optimality results
we provide for stochastic gradient-based algorithms.

With this in mind, we consider tilt-stability properties of
solutions to problem~\eqref{eqn:problem}. Tilt stability is the Lipschitz
continuity of minimizers of \emph{tilted} versions of an objective $f$,
\emph{viz.} minimizers of $f_v(x) \defeq f(x) - \<v, x\>$ for $v$ near $0$;
the notion has been influential in variational analysis and the development
of optimization algorithms for some time~\cite{PoliquinRo98, DontchevRo14,
  DrusvyatskiyLe13}.  In our case, we can provide an implicit function
theorem for the KKT system associated with the optimality conditions for
problem~\eqref{eqn:problem} under tilt-like perturbations of the objective.
To make this concrete, let $v \in \R^n$ be a perturbation vector, and
assuming that $f_v$ is still convex, we consider approximate tilts of
$f$ satisfying
\begin{equation}
\label{eqn:taylor-f-v}
  f_v(x) = f(x) - v^T x + c_v + o(\norm{v}^2 + \norm{x - x_0}^2)
\end{equation}
for $v$ near 0 and $x$ near $x_0$, where $x_0$ minimizes
$f_0(x)$ over $\mc{X}$ (i.e.\ $x_0 = x\opt$) and $c_v$ depends only on $v$.
We then consider the tilted problem
\begin{equation}
  \label{eqn:tilted-problem}
  \minimize ~ f_v(x) ~~ \subjectto f_i(x) \le 0, i = 1, \ldots, m,
\end{equation}
whose minimizer we denote by $x_v$.
By assumption, the
problem~\eqref{eqn:tilted-problem}
is convex,
so we equivalently assume that
$\nabla_x f_v(x) = \nabla f_0(x) - v + o(\norm{v} + \norm{x - x_0})$.
Let $\mc{L}(x, \lambda) = f(x) + \sum_{i = 1}^m \lambda_i f_i(x)$ denote
the Lagrangian for problem~\eqref{eqn:problem}, and define the Hessian of
the problem at optimality by
\begin{equation*}
  H\opt \defeq \nabla^2_x \mc{L}(x\opt, \lambda\opt)
  = \nabla^2 f(x\opt) + \sum_{i = 1}^\numactive \lambda_i\opt
  \nabla^2 f_i(x\opt).
\end{equation*}
Let $\projmat[\tangentset]$ denote the orthogonal projection onto the
tangent set~\eqref{eqn:critical-tangent} at $x\opt$, which we recall is
$\tangentset_{\mc{X}}(x\opt) = \cap_{i = 1}^\numactive\{w : w^T \nabla
f_i(x\opt) = 0\}$.  That is, if $A \in \R^{\numactive \times n}$ denotes the
matrix with rows $\nabla f_i(x\opt)^T$, then $\projmat[\tangentset] = I -
A^T(AA^T)^\dag A$. We then have the following perturbation result, an
implicit function theorem for the KKT system generated by
problem~\eqref{eqn:tilted-problem}.
\begin{proposition}
  \label{proposition:perturbation}
  Let Assumptions~\ref{assumption:smoothness-of-objective}, 
  \ref{assumption:linear-independence-cq}
  and~\ref{assumption:restricted-strong-convexity} hold. Assume 
  that for any $v\in \R^n$, the function $f_v(x)$ is convex, and 
  satisfies the Taylor expansion at Eq~\eqref{eqn:taylor-f-v}. Then 
  the minimizer $x_v$ of 
  Eq~\eqref{eqn:tilted-problem} satisfies
  \begin{equation*}
    x_v = x_0 + \projmat[\tangentset] {H\opt}^\dag \projmat[\tangentset]
    v + o(\norm{v}).
  \end{equation*}
\end{proposition}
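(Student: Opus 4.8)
\emph{Proof plan.}
The idea is to run an implicit-function-theorem argument on the KKT system of the tilted problem~\eqref{eqn:tilted-problem}. The wrinkle is that $f_v$ is prescribed only up to the remainder $o(\norm{v}^2 + \norm{x-x_0}^2)$ (equivalently $\nabla f_v(x) = \nabla f_0(x) - v + o(\norm{v} + \norm{x-x_0})$), so the usual parametric sensitivity theorems do not apply off the shelf. Instead I will first establish a priori, using convexity and Assumptions~\ref{assumption:linear-independence-cq}--\ref{assumption:restricted-strong-convexity}, (a) the Lipschitz bound $\norm{x_v - x\opt} = O(\norm{v})$ and (b) persistence of the active set at $x_v$ for all small $v$; these are exactly what turns the $o(\cdot)$ remainders into $o(\norm{v})$ and tells us which constraints to linearize.

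\emph{Stability and the active set.} Since $x\opt$ and $x_v$ minimize $f_0$ and $f_v$ over $\mc{X}$, the variational inequalities $\<\nabla f_v(x_v), x\opt - x_v\> \ge 0$ and $\<\nabla f_0(x\opt), x_v - x\opt\> \ge 0$ hold. Substituting the Taylor expansion for $\nabla f_v$ into the first and the growth lower bound of Lemma~\ref{lemma:restricted-strong-convexity} (at $x = x_v \in \mc{X}$) into the second yields
\begin{equation*}
  \epsilon \min\{\norm{x_v - x\opt}^2, \norm{x_v - x\opt}\}
  \le \<\nabla f_0(x_v), x_v - x\opt\>
  \le \big(\norm{v} + o(\norm{v} + \norm{x_v - x\opt})\big)\norm{x_v - x\opt},
\end{equation*}
which for $v$ small first forces $x_v \to x\opt$ and then, dividing through and bootstrapping, $\norm{x_v - x\opt} = O(\norm{v})$. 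Next, since $x_v \to x\opt$ we get $f_j(x_v) < 0$ for the inactive constraints $j > \numactive$, while the strict complementarity~\eqref{eqn:strict-complementary} (so $\lambda\opt \in \R_{++}^\numactive$) together with continuity of the KKT multipliers (under Assumption~\ref{assumption:linear-independence-cq}.i, where the active gradients are independent) or the standard stability of the optimal face under small tilts (under Assumption~\ref{assumption:linear-independence-cq}.ii, polyhedral $\mc{X}$) shows the multipliers on $f_1, \ldots, f_\numactive$ stay strictly positive; by complementary slackness $f_i(x_v) = 0$ for $i \in [\numactive]$. Hence for all small $v$ the active set at $x_v$ is exactly $\{1, \ldots, \numactive\}$.

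\emph{Linearizing the KKT system.} Write $u \defeq x_v - x\opt = O(\norm{v})$, $w \defeq \lambda_v - \lambda\opt$, and let $A \in \R^{\numactive\times n}$ have rows $\nabla f_i(x\opt)^T$, so $\projmat[\tangentset] = I - A^T(AA^T)^\dag A$. Taylor-expanding the stationarity condition $\nabla f_v(x_v) + \sum_{i=1}^\numactive (\lambda_v)_i \nabla f_i(x_v) = 0$ (using Assumption~\ref{assumption:smoothness-of-objective} for $\nabla f$, $\mc{C}^2$-smoothness for the $f_i$, and $\norm{u} = O(\norm{v})$) and cancelling $\nabla f(x\opt) + \sum_i \lambda_i\opt \nabla f_i(x\opt) = 0$ gives
\begin{equation*}
  H\opt u + A^T w = v + o(\norm{v}) + O(\norm{v}\,\norm{w}),
\end{equation*}
while linearizing $f_i(x_v) = 0 = f_i(x\opt)$ for $i \in [\numactive]$ gives $A u = o(\norm{v})$, hence $(I - \projmat[\tangentset]) u = o(\norm{v})$ and $u = \projmat[\tangentset] u + o(\norm{v})$. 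Applying $\projmat[\tangentset]$ to the stationarity identity annihilates $A^T w$ (whose column space is $\tangentset_{\mc{X}}(x\opt)^\perp$) and leaves $\projmat[\tangentset] H\opt \projmat[\tangentset] u = \projmat[\tangentset] v + o(\norm{v}) + O(\norm{v}\,\norm{w})$. In the polyhedral case the last term vanishes since $\nabla^2 f_i \equiv 0$; under Assumption~\ref{assumption:linear-independence-cq}.i, $A$ has full row rank and $A^T w = (I - \projmat[\tangentset])(v - H\opt u) + o(\norm{v})$ forces $\norm{w} = O(\norm{v})$, so the term is $o(\norm{v})$ in all cases. Since $\projmat[\tangentset] H\opt \projmat[\tangentset]$ is positive definite on $\tangentset_{\mc{X}}(x\opt)$ by Assumption~\ref{assumption:restricted-strong-convexity}, inverting it there gives $u = \projmat[\tangentset] {H\opt}^\dag \projmat[\tangentset] v + o(\norm{v})$, which is the claim.

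\emph{Main obstacle.} The substantive part is the a priori stage: because the tilt is only Taylor-approximate, neither the $O(\norm{v})$ rate nor the persistence of the active set comes from a parametric implicit function theorem, so both must be argued directly from convexity together with the constraint qualification and second-order conditions (with the affine, possibly multiplier-degenerate, case handled separately via polyhedral face stability). Granting these, the linearization and its solution are routine finite-dimensional linear algebra.
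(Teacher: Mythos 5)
Your argument is correct in outline and reaches the right conclusion, but it takes a genuinely different route from the paper's. The paper delegates the entire sensitivity analysis to a cited result (Dontchev--Rockafellar, Theorem 2G.8): the semiderivative of $v \mapsto x_v$ is the minimizer of the quadratic model $\half w^T H\opt w - v^T w$ over the critical cone $\overline{\tangentset}$, so the paper's own work consists of identifying that cone (in particular showing $\overline{\tangentset} = \{w : Aw = 0\}$ in the affine, multiplier-degenerate case) and solving the resulting quadratic program. You instead build everything by hand: an a priori bound $\norm{x_v - x\opt} = O(\norm{v})$ from the two variational inequalities plus the quadratic growth of Lemma~\ref{lemma:restricted-strong-convexity}, active-set persistence from strict complementarity, and a direct linearization of the KKT system. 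What your route buys is self-containedness and an explicit accounting of why the merely approximate tilt ($o(\norm{v}^2 + \norm{x - x_0}^2)$ remainder) is harmless; what the paper's route buys is that it never needs to know which constraints are active at $x_v$.

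Two steps in your plan need more care than you give them. First, the expansion~\eqref{eqn:taylor-f-v} is local in $x$, so you cannot evaluate its remainder at $x_v$ before knowing $x_v$ is near $x_0$; as written the first display is mildly circular. The fix is a short localization step: if $\norm{x_v - x_0} > \delta$, convexity of $f_v$ gives a point of $[x_0, x_v]$ at distance $\delta$ with $f_v \le f_v(x_0)$, and comparing via~\eqref{eqn:taylor-f-v} against the growth condition yields a contradiction for small $v$; only then can you bootstrap to $O(\norm{v})$. Second, active-set persistence at $x_v$ in the affine case with non-unique multipliers is a genuine lemma, not a one-line appeal to ``polyhedral face stability''---it is essentially the Carath\'eodory-type argument of Lemma~\ref{lemma:perturbation-result-linear}, which the paper needs elsewhere but deliberately avoids in this proof by computing $\overline{\tangentset}$ directly. (Finally, your inversion step writes the solution of $\projmat[\tangentset] H\opt \projmat[\tangentset] u = \projmat[\tangentset] v + o(\norm{v})$ with $u \in \tangentset$ as $\projmat[\tangentset] {H\opt}^\dag \projmat[\tangentset] v$ rather than $(\projmat[\tangentset] H\opt \projmat[\tangentset])^\dag \projmat[\tangentset] v$; this matches the paper's own convention, so it is not a discrepancy between you and the paper, but the identification of the two expressions is itself something to justify.)
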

\noindent
Though Proposition~\ref{proposition:perturbation} is essentially known,
because of its centrality in our development, we provide a proof
based on~\cite[Theorem
  2G.8]{DontchevRo14} in Section~\ref{sec:proof-perturbation}.

\subsection{The classical M-estimator}

Proposition~\ref{proposition:perturbation} underlies both achievability
results for stochastic convex optimization~\cite{Shapiro89} and, as we show
in the sequel, local asymptotic
minimax results. To illustrate, we give a heuristic
sketch to show how Proposition~\ref{proposition:perturbation} yields
asymptotic normality of standard M-estimators for
problem~\eqref{eqn:problem}.  Given a sample $\statrv_1, \ldots, \statrv_k$,
define
\begin{equation}
  \what{x}_k \in \argmin_{x \in \mc{X}}
  \bigg\{\what{f}_k(x) \defeq \frac{1}{k} \sum_{i = 1}^k \loss(x; \statrv_i)
  \bigg\}.
  \label{eqn:m-estimator}
\end{equation}
Taylor's theorem implies there are matrices $\what{E}_k(x)$ and
$E(x)$, both $o(1)$ as $x \to x\opt$ (we assume
heuristically this is uniform in $k$), such that
\begin{align*}
  \nabla \what{f}_k(x) & = \nabla \what{f}_k(x\opt)
  + (\nabla^2 \what{f}_k(x\opt) + \what{E}_k(x))(x - x\opt) ~~ \mbox{and} ~~
  \\ 
  \nabla f(x) & = \nabla f(x\opt) + (\nabla^2 f(x\opt) + E(x))(x - x\opt).
\end{align*}
Then, defining $\what{v}_k = \nabla f(x\opt) - \nabla \what{f}_k(x\opt)$,
we have that
\begin{align*}
  \nabla \what{f}_k(x)
  & = \nabla f(x) - \what{v}_k
  + \left(\nabla^2 \what{f}_k(x\opt) - \nabla^2 f(x\opt)
  + \what{E}_k(x) - E(x)\right)(x - x\opt) \\
  & = \nabla f(x) - \what{v}_k + \left(o_p(1) + o(1)\right) \cdot (x - x\opt),
\end{align*}
where $o(1) \to 0$ as $x \to x\opt$,
and the expansion~\eqref{eqn:taylor-f-v} holds.
Applying
Proposition~\ref{proposition:perturbation}
yields that $\what{x}_k$ satisfies
$\what{x}_k - x\opt = \projmat[\tangentset]{H\opt}^\dag \projmat[\tangentset]
\what{v}_k + o_p(\norms{\what{v}_k})$, and
finally noting that $\sqrt{k} \cdot \what{v}_k
\cd \normal(0, \Sigma)$ gives the following corollary.
\begin{corollary}[Shapiro~\cite{Shapiro89}, Theorem 3.3]
  \label{corollary:shapiro-normality}
  Let
  Assumptions~\ref{assumption:smoothness-of-objective}--\ref{assumption:noise-level}
  hold and $\wt{x}_k \in \mc{X}$ satisfy
  $\what{f}_k(\wt{x}_k) - \inf_{x \in \mc{X}} \what{f}_k(x) =
  o_P(1/k)$. Then
  \begin{equation}
    \label{eqn:shapiro-normality}
    \sqrt{k}(\wt{x}_k - x\opt)
    \cd \normal\left(0, \projmat[\tangentset] {H\opt}^\dag
      \projmat[\tangentset] \Sigma \projmat[\tangentset]
      {H\opt}^\dag \projmat[\tangentset]\right)~\text{as $k \to \infty$}.
  \end{equation}
\end{corollary}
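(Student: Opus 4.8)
This is in essence Shapiro's theorem~\cite{Shapiro89}; I would turn the heuristic preceding the statement into a proof in four stages: consistency of $\wt{x}_k$, a $1/\sqrt{k}$ rate of convergence, a stochastic linearization of $\nabla\what{f}_k$, and an appeal to Proposition~\ref{proposition:perturbation}. For consistency, convexity of $x\mapsto\loss(x;\statval)$ makes each $\what{f}_k$ convex, and the law of large numbers gives $\what{f}_k(x)\to f(x)$ in probability for every $x$, hence uniformly on compact neighborhoods of $x\opt$ by convexity. Since $x\opt$ is the unique minimizer of $f$ over $\xdomain$ and $\wt{x}_k$ is an $o_P(1/k)$-approximate minimizer of $\what{f}_k$ over $\xdomain$, a standard argmin-continuity argument built on the growth estimate of Lemma~\ref{lemma:restricted-strong-convexity} shows $\wt{x}_k\to x\opt$ in probability; in particular $\norm{\wt{x}_k-x\opt}\le\epsilon$ with probability tending to one, so $f(\wt{x}_k)-f(x\opt)\ge\epsilon\norm{\wt{x}_k-x\opt}^2$ on that event.

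For the rate, I would control the centered empirical process $x\mapsto(\what{f}_k-f)(x)-(\what{f}_k-f)(x\opt)$ near $x\opt$. Writing $\loss(x;\statval)-\loss(x\opt;\statval)=\int_0^1\nabla\loss(x\opt+t(x-x\opt);\statval)^T(x-x\opt)\,dt$ and using Assumption~\ref{assumption:noise-level} bounds the variance of the increments of this process by $C\norm{x-x\opt}^2/k$ on a neighborhood of $x\opt$, so a maximal inequality yields a modulus of continuity of order $\norm{x-x\opt}/\sqrt{k}$. Since $x\opt\in\xdomain$ and $\wt{x}_k$ is $o_P(1/k)$-optimal for $\what{f}_k$, we have $\what{f}_k(\wt{x}_k)-\what{f}_k(x\opt)\le o_P(1/k)$; combining this with the growth bound and the modulus estimate via a peeling (rate-of-convergence) argument gives $\norm{\wt{x}_k-x\opt}=O_P(1/\sqrt{k})$.

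Next set $\what{v}_k:=\nabla f(x\opt)-\nabla\what{f}_k(x\opt)=-\tfrac1k\sum_{i=1}^k(\nabla\loss(x\opt;\statrv_i)-\nabla f(x\opt))$. By the multivariate central limit theorem and finiteness of $\Sigma=\cov(\nabla\loss(x\opt;\statrv))$ (Assumption~\ref{assumption:noise-level}), $\sqrt{k}\,\what{v}_k\cd\normal(0,\Sigma)$, so $\what{v}_k=O_P(1/\sqrt{k})$. Applying the same maximal-inequality bound to the gradient increments $\nabla\what{f}_k(x)-\nabla\what{f}_k(x\opt)=\tfrac1k\sum_i(\nabla\loss(x;\statrv_i)-\nabla\loss(x\opt;\statrv_i))$, which have mean $\nabla f(x)-\nabla f(x\opt)$ and summands of variance at most $C\norm{x-x\opt}^2$, gives for every fixed $C'$
\[
  \sup_{\norm{x-x\opt}\le C'/\sqrt{k}}\norm{\nabla\what{f}_k(x)-\nabla f(x)+\what{v}_k}=o_P(1/\sqrt{k}).
\]
Together with the second-order expansion of $\nabla f$ at $x\opt$ from Assumption~\ref{assumption:smoothness-of-objective}, this shows that on the event $\{\norm{\wt{x}_k-x\opt}\le C'/\sqrt{k}\}$ (of probability tending to one, by the rate bound) the random objective $\what{f}_k$ is a tilt of $f$ by $\what{v}_k$ satisfying the approximate-Taylor condition~\eqref{eqn:taylor-f-v} with an $o_P(1/\sqrt{k})$ remainder.

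Finally I would feed $v=\what{v}_k$ into Proposition~\ref{proposition:perturbation}. The subtlety is that the Proposition is an asymptotic statement for exact tilts, while $\what{f}_k$ is a random, $k$-dependent, only approximate tilt; what is needed is that the proof of the Proposition---via strong metric regularity of the KKT map, as in~\cite[Theorem~2G.8]{DontchevRo14}---yields a locally Lipschitz, hence quantitatively stable, dependence of the minimizer on the gradient data, so that the $o_P(1/\sqrt{k})=o_P(\norm{\what{v}_k})$ slack is absorbed into the error term. This produces $\wt{x}_k-x\opt=\projmat[\tangentset]{H\opt}^\dag\projmat[\tangentset]\what{v}_k+o_P(1/\sqrt{k})$; multiplying by $\sqrt{k}$ and applying Slutsky's theorem together with the continuous linear map $w\mapsto\projmat[\tangentset]{H\opt}^\dag\projmat[\tangentset]w$ to $\sqrt{k}\,\what{v}_k\cd\normal(0,\Sigma)$ yields~\eqref{eqn:shapiro-normality}. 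I expect the main obstacle to be precisely this coupling of the uniform (stochastic equicontinuity) control of the gradient process on a $1/\sqrt{k}$-ball around $x\opt$ with a \emph{quantitative} version of the deterministic perturbation result---either by extracting explicit Lipschitz constants from the metric-regularity/implicit-function argument, or by re-deriving the linear expansion directly from restricted strong convexity on the critical tangent cone $\tangentset_{\mc{X}}(x\opt)$.
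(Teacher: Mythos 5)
Your proposal is correct and follows essentially the same route as the paper, which only gives a heuristic sketch of this corollary (Taylor-expand $\nabla\what{f}_k$, set $\what{v}_k=\nabla f(x\opt)-\nabla\what{f}_k(x\opt)$, verify the tilt expansion~\eqref{eqn:taylor-f-v}, apply Proposition~\ref{proposition:perturbation}, and finish with the CLT and Slutsky) before delegating the rigorous argument to Shapiro's Theorem~3.3. Your four-stage plan is a faithful, more careful elaboration of that sketch, and you correctly identify the genuine technical point (uniform stochastic control of the gradient process coupled with a quantitative version of the deterministic perturbation result) that the paper leaves to the citation.
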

\noindent
This result shows the M-estimator $\what{x}_k$ is asymptotically normal with
the active constraints restricting (and improving) the covariance.

Corollary~\ref{corollary:shapiro-normality} leads to two questions. First,
is the result improvable?  In
Section~\ref{sec:optimality}, we show that in a local minimax sense, the
result is indeed optimal, so that it is essentially unimprovable.  Second,
the M-estimator~\eqref{eqn:m-estimator} is not really a procedure, as it may
require non-trivial computation.  Because
Corollary~\ref{corollary:shapiro-normality} allows estimators that are
$o(1/k)$ accurate, recent efficient methods for minimization of
finite sums using careful variance reduction and sampling
techniques~\cite{LeRouxScBa12, JohnsonZh13, DefazioBaLa14, LinMaHa15}
achieve the asymptotic normality~\eqref{eqn:shapiro-normality},
given a sample of size $k$, while computing $O(k \log k)$
gradients $\nabla \loss(x; \statrv)$ in total (the methods require storing
the entire dataset $\statrv_1, \ldots, \statrv_k$ and iterating through it
multiple times). It is, however, not immediate that the
rates~\eqref{eqn:shapiro-normality} are achievable using online or purely
stochastic gradient methods that compute a
\emph{single} stochastic gradient for each observation $\statrv_i$.
In
Section~\ref{sec:dual-averaging-normality}, we show this
is possible, developing asymptotically optimal online procedures.



\section{Optimality guarantees}
\label{sec:optimality}

With the asymptotic normality guarantee of
Corollary~\ref{corollary:shapiro-normality}, it is of interest to understand
the best possible (statistical) behavior for optimization procedures.
As we discuss in the introduction, standard
minimax complexity guarantees~\cite{NemirovskiYu83, AgarwalBaRaWa12}
are too imprecise: they fail to provide guidance on
specific to the problem at hand.
With this in mind, we consider a local
asymptotic minimax variant of problem~\eqref{eqn:problem}.  It
is natural to assume that the loss $\loss(x; \statval)$ is specified---we have a
way to measure performance of the decision vector $x$---but the distribution
$P$ may be unknown or is a nuisance parameter (we simply wish to find the
minimizing $x$).

We thus consider the
difficulty of solving problem~\eqref{eqn:problem} over small
neighborhoods of $P$.  To define these neighborhoods, for $d \in \N$, we
parameterize $P$ via a vector $u \in \R^d$ (where the
original problem corresponds to $u = 0$ and $P_0$), denoting the objective
of problem~\eqref{eqn:problem} by $f_0(x) = \E_{P_0}[\loss(x; \statrv)]$ and
its (unique) optimum by $x_0$. The perturbed
distributions $P_u$
dovetail with our results on stability of minimizers under tilt-perturbation
(Proposition~\ref{proposition:perturbation}): in appropriate cases, we show
that $f_u(x) = \E_{P_u}[\loss(x; \statrv)] \approx f_0(x) - u^T \Sigma (x -
x_0)$, where $\Sigma = \cov(\nabla \loss(x_0; \statrv))$.
Our results elucidate the precise correspondence
between tilt-stability and difficulty of stochastic optimization.

\subsection{Tilted distributions}
To define the perturbed problems, let $\nearlinear : \R \to [-1, 1]$ be any
three-times continuously differentiable function, where
\begin{equation}
  \label{eqn:near-linear}
  \nearlinear(t) = t ~~ \mbox{for~} t \in \left[-1/2, 1/2\right],
\end{equation}
the derivative $\nearlinear' \ge 0$ is nonnegative, and the first three
derivatives of $\nearlinear$ are bounded.  (The choice $[-1/2, 1/2]$ is
immaterial; any interval containing 0 on which $\nearlinear(t) = t$
suffices.) Now, let
\begin{equation*}
  \pertfuncset \defeq
  \left\{\pertfunc : \statdomain \to \R^d \mid
  \E_{P_0}[\pertfunc(\statrv)] = 0,
  \E_{P_0}[\norm{\pertfunc(\statrv)}^2] < \infty \right\}
\end{equation*}
(the maximal tangent set to the set of
distributions on $\statdomain$ at $P_0$,
cf.~\cite[Ch.~25]{VanDerVaart98}).
Then for $\pertfunc \in \pertfuncset$ and $u \in
\R^d$ we consider the tilted distribution
\begin{equation}
  \label{eqn:tilterriffic}
  dP_u(\statval)
  = \frac{1 + \nearlinear(u^T g(\statval))}{C_u}
  dP_0(\statval)
  ~~ \mbox{where} ~~
  C_u = 1 + \int \nearlinear(u^T g(\statval)) dP_0(\statval).
\end{equation}
This distribution approximates $dP_u(\statval) \propto e^{u^T
  \pertfunc(\statval)} dP_0(\statval)$ as $u \to 0$, providing a slight
reweighting in directions $\pertfunc$ specifies.  Such tilted constructions
are central to proving lower bounds for semi-parametric inference problems
(e.g.~\cite[Example 25.16]{Stein56a, IbragimovHa81, VanDerVaart98}) where
the goal is to infer a finite-dimensional parameter of a distribution
$P_0$. The lower bound and essential geometric difficulty arise by embedding
hardest one-dimensional sub-problems into the broader problem. In this
context, we identify the correct score (or influence)
function~\cite{IbragimovHa81, VanDerVaart98} for constrained stochastic
optimization.

Thus, for
$u \in \R^d$, we consider convex programs $\program_u$ defined by
\begin{equation}
  \begin{split}
    \minimize_x &~~ f_u(x) \defeq \E_{P_u}[\loss(x; \statrv)]
    = \int \loss(x; \statval) dP_u(\statval) \\
    \subjectto &~~ f_i(x) \leq 0 ~~\mbox{for~} i = 1, \ldots, m,
  \end{split}
  \label{eqn:stochastic-u-opt-problem}
\end{equation}
letting $x_u$ denote the minimizer of the tilted convex
program~\eqref{eqn:stochastic-u-opt-problem}.  We develop a local asymptotic 
minimax theory as $u$ varies in neighborhoods of 
zero of radius $\propto 1 / \sqrt{k}$, where $k$ denotes the sample size.


\newcommand{\rem}{{\rm Rem}}

We require one additional assumption to show our lower bounds.
\begin{assumption}
  \label{assumption:regularity-gradients}
  For $P_0$-almost all $\statval$, the function $\loss(\cdot; \statval)$ is
  $\mc{C}^2$ in a neighborhood of $x_0$. There are a remainder $\rem :
  \xdomain \times \statdomain \to \R^{n \times n}$ and $M :
  \statdomain \to \R_+$ satisfying
  \begin{equation*}
    \nabla^2 \loss(x;\statval) = \nabla^2 \loss(x_0; \statval) + 
    \rem(x; \statval)
  \end{equation*}
  where for some $\delta > 0$,
  \begin{equation*}
    \sup_{\norm{x - x_0} \le \delta}
    \matrixnorm{\rem(x; \statval)} \le M(\statval)
    ~~~\mbox{and} ~~~
    \E_{P_0}[M(\statrv)] < \infty.
  \end{equation*}
  Additionally, we have the following integrability conditions:
  \begin{equation*}
    \E_{P_0}\left[M(\statrv) \norm{\nabla \loss(x_0; \statrv)}\right] < \infty,
    ~~~
    \E_{P_0} \left[\norm{\nabla \loss(x_0; \statrv)}
      \matrixnorm{\nabla^2 \loss(x_0; \statrv)}\right] < \infty,
  \end{equation*}
  and for some $\delta > 0$
  \begin{equation*}
    \sup_{\norm{x - x_0} \le \delta}
    \E_{P_0} [|\loss(x; \statrv)| \norm{\nabla \loss(x_0; \statrv)}^2] < \infty.
  \end{equation*}
\end{assumption}
\noindent
Note that $\rem(x; \statval) \to 0$ as $x \to x_0$ by assumption that
$\loss(\cdot; \statval)$ is $\mc{C}^2$.

\subsection{A local asymptotic minimax theorem}
With this assumption, we have the following theorem, which provides a local
asymptotic minimax lower bound on optimization.  In the
theorem, we use the notation of
Proposition~\ref{proposition:perturbation}, where $\projmat[\tangentset] \in
\R^{n \times n}$ denotes the orthogonal projection onto the tangent
space $\tangentset$ and $H\opt = \nabla^2 f(x\opt) + \sum_{i =
  1}^\numactive \lambda_i\opt \nabla^2 f_i(x\opt)$.  We also recall that $L :
\R^n \to \R$ is quasiconvex if for all $\alpha \in \R$ the sub-level sets
$\{x \in \R^n : L(x) \le \alpha\}$ are convex, and let $\E_{P_u^k}$ denote
expectation under $k$ i.i.d.\ observations $\statrv_i \sim P_u$.
\begin{theorem}
  \label{theorem:local-minimax-lower}
  Let
  Assumptions~\ref{assumption:smoothness-of-objective}--\ref{assumption:regularity-gradients}
  hold and let $L : \R^n \to \R$ be a symmetric quasi-convex loss.  For any
  sequence of estimators $\what{x}_k : \statdomain^k
  \to \R^n$,
  \begin{equation}
    \label{eqn:local-minimax-lower}
    \sup_{d \in \N, g \in \pertfuncset}
    \lim_{c \to \infty}
    \liminf_{k \to \infty}
    \sup_{\ltwo{u} \leq c/\sqrt{k}}
    \E_{P_u^k}\left[L(\sqrt{k}(\what{x}_k - x_{u}))\right]
    \ge \E[L(Z)],
  \end{equation}
  where
  \begin{equation*}
    Z \sim \normal\left(0, \projmat[\tangentset]
      {H\opt}^\dag \projmat[\tangentset] \cov(\nabla \loss(x_0; \statrv))
      \projmat[\tangentset] {H\opt}^\dag \projmat[\tangentset]
    \right).
  \end{equation*}
  Moreover, $g(\statval) = \nabla \loss(x_0; \statval) - \E_{P_0}[\nabla
    \loss(x_0; \statrv)]$ achieves the
  supremum~\eqref{eqn:local-minimax-lower}.
\end{theorem}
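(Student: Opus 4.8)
The plan is to reduce the lower bound to a classical local asymptotic minimax (Hájek–Le Cam) statement for a smooth finite-dimensional parametric submodel, then identify the resulting quantities with $\Gamma = D\Sigma D$ via Proposition~\ref{proposition:perturbation}. First I would fix $d$ and a score direction $g \in \pertfuncset$, and study the one-dimensional path $u \mapsto P_u$ of tilted distributions~\eqref{eqn:tilterriffic}. Since $\nearlinear(t) = t$ near zero and $\nearlinear$ is smooth with bounded derivatives, a direct computation (expanding $C_u$ and $\nearlinear(u^T g(\statval))$ to second order, using $\E_{P_0}[g]=0$ and $\E_{P_0}[\|g\|^2]<\infty$) shows that this path is differentiable in quadratic mean at $u=0$ with score $g(\statval)$, so the model $\{P_u^k\}$ is locally asymptotically normal with Fisher information $\cov_{P_0}(g) =: I_g$ at $u=0$. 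This is the standard DQM computation; the boundedness of $\nearlinear$ is exactly what controls the remainder uniformly.

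Second, I would show the map $u \mapsto x_u$ is differentiable at $u=0$ with a derivative expressible through $\projmat[\tangentset]{H\opt}^\dag\projmat[\tangentset]$. The key is that $f_u(x) = \E_{P_u}[\loss(x;\statrv)]$ is, to first order in $u$, a tilt of $f_0$: writing $dP_u = (1 + u^T g + o(\|u\|))dP_0$ and integrating against $\loss(x;\cdot)$ and its gradient, Assumption~\ref{assumption:regularity-gradients} (the integrability conditions on $M$ and $\nabla^2\loss(x_0;\statrv)$, and the local domination of $\loss(x;\statrv)\|\nabla\loss(x_0;\statrv)\|^2$) licenses differentiation under the integral and yields
\[
  \nabla_x f_u(x) = \nabla f_0(x) - \big(\E_{P_0}[\nabla\loss(x_0;\statrv)g(\statrv)^T]\big) u + o(\|u\| + \|x - x_0\|).
\]
Thus $f_u$ satisfies the hypotheses of Proposition~\ref{proposition:perturbation} with perturbation vector $v = B_g u$, where $B_g := \E_{P_0}[\nabla\loss(x_0;\statrv)g(\statrv)^T]$, so
\[
  x_u = x_0 + \projmat[\tangentset]{H\opt}^\dag\projmat[\tangentset] B_g\, u + o(\|u\|).
\]
Hence $\sqrt{k}(\widehat{x}_k - x_u) = \sqrt{k}(\widehat{x}_k - x_0) - \projmat[\tangentset]{H\opt}^\dag\projmat[\tangentset] B_g (\sqrt{k}\,u) + o(1)$ uniformly over $\|u\| \le c/\sqrt{k}$, which casts the quantity inside the supremum as estimation of the linear functional $u \mapsto \projmat[\tangentset]{H\opt}^\dag\projmat[\tangentset] B_g u$ of the LAN parameter $u$.

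Third, I would invoke the local asymptotic minimax theorem for differentiable functionals in LAN families (e.g.\ the Hájek–Le Cam bound, as in van der Vaart~\cite{VanDerVaart98}, Ch.~25, suitably adapted to a symmetric quasiconvex loss $L$ via Anderson's lemma on the limiting Gaussian experiment): for a bounded-below symmetric quasiconvex $L$, the local asymptotic minimax risk for estimating $\Psi u := \projmat[\tangentset]{H\opt}^\dag\projmat[\tangentset] B_g u$ from the shift experiment with information $I_g$ is at least $\E[L(Z_g)]$ with $Z_g \sim \normal(0, \Psi I_g^\dag \Psi^T)$, where the optimal influence function is the efficient one and $\Psi I_g^\dag \Psi^T$ is minimized over all valid scores. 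Taking the supremum over $d$ and $g \in \pertfuncset$ corresponds to enlarging the tangent set, and the sup is attained by the canonical choice $g(\statval) = \nabla\loss(x_0;\statval) - \E_{P_0}[\nabla\loss(x_0;\statrv)]$: with this $g$ we have $B_g = \Sigma = I_g = \covmat$, so $\Psi I_g^\dag \Psi^T = \projmat[\tangentset]{H\opt}^\dag\projmat[\tangentset]\,\Sigma\, \projmat[\tangentset]{H\opt}^\dag\projmat[\tangentset]$ (using $\Sigma\Sigma^\dag\Sigma = \Sigma$ and that $\projmat[\tangentset]{H\opt}^\dag\projmat[\tangentset]$ already acts within the relevant subspace), matching the stated covariance of $Z$. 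Optimality of this $g$ is the statement that no score direction can do better, which follows because for general $g$, $B_g = \E[\nabla\loss(x_0;\statrv)g(\statrv)^T]$ factors through $\Sigma$ in the sense that $\Psi I_g^\dag \Psi^T \preceq \Psi_{\mathrm{can}} \Sigma^\dag \Psi_{\mathrm{can}}^T$ by a Cauchy–Schwarz/projection argument in $L^2(P_0)$.

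The main obstacle I anticipate is the third step's measure-theoretic bookkeeping: turning the finite-dimensional LAN lower bound into the stated $\liminf_k \sup_{\|u\|\le c/\sqrt k}$ form with $c\to\infty$ afterward, while handling that $\widehat{x}_k$ is an arbitrary (possibly non-regular) sequence. The standard device is to first prove the bound for the limiting Gaussian shift experiment (where Anderson's lemma gives the $\E[L(Z)]$ floor for symmetric quasiconvex $L$), then transfer it via Le Cam's asymptotic representation theorem / the convolution-type reduction, using the LAN structure and contiguity of $\{P_u^k\}$ to $\{P_0^k\}$ along $\|u\| \le c/\sqrt k$; the $c \to \infty$ limit removes the dependence on the ball radius. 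A secondary technical point is verifying the $o(\|u\|)$ expansion of $x_u$ is uniform enough to survive the $\sqrt k$ rescaling on the shrinking ball, but this is controlled by Lemma~\ref{lemma:restricted-strong-convexity} together with the remainder bounds in Assumption~\ref{assumption:regularity-gradients}.
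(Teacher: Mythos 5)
Your proposal is correct and follows essentially the same route as the paper: establish LAN of the tilted family with precision $\cov_{P_0}(g)$, use the perturbation result (Proposition~\ref{proposition:perturbation} via the expansion of $f_u$) to show $u \mapsto x_u$ is a regular estimand with derivative $\projmat[\tangentset]{H\opt}^\dag\projmat[\tangentset]\covmat_{\pertfunc,\loss}^T$, invoke the H\'ajek--Le Cam local minimax theorem together with Anderson's lemma, and close with the Cauchy--Schwarz/semidefinite-ordering argument showing the canonical score $g = \nabla\loss(x_0;\cdot) - \E_{P_0}[\nabla\loss(x_0;\cdot)]$ attains the supremum. The only cosmetic differences are that the paper handles a possibly singular precision by regularizing with $(\covmat_\pertfunc + \lambda I)^{-1}$ and letting $\lambda \downarrow 0$, and states the minimax bound over finite subsets $U_0$ before passing to the shrinking balls.
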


\paragraph{Remarks}
We provide the proof of Theorem~\ref{theorem:local-minimax-lower} in
Section~\ref{sec:proof-local-minimax-lower}, discussing it
here.  It is important that the
limit in $k$ is taken before that in $c$, as this
provides the local nature of the result: the neighborhoods of problems, as
given by the tilted distributions $P_u$ in
Eq.~\eqref{eqn:tilterriffic}, have size decreasing as
$O(1 / \sqrt{k})$. The rescaling of the estimator error $\what{x}_k - x_u$ by
$\sqrt{k}$ reflects our expectation that
$\sqrt{k}(\what{x}_k - x_u)$ is $O(1)$ for good
estimators $\what{x}_k$ by Corollary~\ref{corollary:shapiro-normality}.

We may consider alternative choices of the
neighborhood of $P_0$. One is to use
$\phi$-divergences~\cite{AliSi66,Csiszar67}, where for $\phi$ convex with
$\phi(1) = 0$, one defines
\begin{equation*}
  \dphis{P}{Q} \defeq \int \phi\left(\frac{dP}{dQ}\right) dQ \ge
  0.
\end{equation*}
For example, KL-divergence has $\phi(t) = t \log t - t + 1$, the
$\chi^2$-divergence uses $\phi(t) = \half (t - 1)^2$, and the squared
Hellinger distance corresponds to $\phi(t) = \half (\sqrt{t} - 1)^2$. It is
no loss of generality to assume that $\phi'(1) = 0$ in the definition of
$D_\phi$, as $\phi_*(t) = \phi(t) - t \phi'(1) + \phi'(1)$ satisfies $D_\phi
= D_{\phi_*}$.  Consider now any $\phi$-divergence with $\phi$ a $\mc{C}^2$
function in a neighborhood of $1$ and $\phi''(1) > 0$.
Then Lebesgue's dominated convergence theorem implies (see
Section~\ref{sec:proof-local-minimax-lower}) that the normalization $C_u = 1
+ o(\norm{u}^2)$, and so
\begin{align*}
  \dphi{P_u}{P_0}
  & = \int \phi\left(\frac{1 + \nearlinear(u^T \pertfunc(\statval))}{C_u}\right)
    dP_0(\statval) \\
  & = \half \phi''(1) u^T \cov(\pertfunc(\statrv)) u
  + o(\norm{u}^2),
\end{align*}
where we use that $\nearlinear(t) = t$ for $t$ near $0$.
Replacing the supremum in the
local minimax lower bound~\eqref{eqn:local-minimax-lower} by any
$\phi$-divergence ball, where we let $x_P$ denote the minimizer
of problem~\eqref{eqn:problem} with distribution $P$ on the data $\statrv$,
yields
\begin{corollary}
  Let the conditions of Theorem~\ref{theorem:local-minimax-lower} hold and
  $\phi : \R \to \R \cup \{+\infty\}$ be convex. Assume that $\phi$ is
  $\mc{C}^2$ in a neighborhood of $1$. Then
  \begin{equation*}
    \liminf_{c \to \infty}
    \liminf_{k \to \infty}
    \sup_{P : \dphis{P}{P_0} \le c / k}
    \E_{P^k}\left[L(\sqrt{k}(\what{x}_k - x_P))\right]
    \ge \E[L(Z)]
  \end{equation*}
  where $Z \sim \normal(0, \projmat[\tangentset]
  {H\opt}^\dag \projmat[\tangentset] \covmat \projmat[\tangentset]
  {H\opt}^\dag \projmat[\tangentset])$.
\end{corollary}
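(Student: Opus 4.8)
The plan is to reduce the corollary to Theorem~\ref{theorem:local-minimax-lower} by showing that small Euclidean balls of tilted distributions $P_u$ lie inside the $\phi$-divergence balls $\{P:\dphis{P}{P_0}\le c/k\}$. I would take $d = n$ and the perturbation direction $g \equiv g^\star$ with $g^\star(\statval) = \nabla\loss(x_0;\statval) - \E_{P_0}[\nabla\loss(x_0;\statrv)]$, the choice that Theorem~\ref{theorem:local-minimax-lower} singles out as attaining the supremum over $(d,g)$ and for which $\cov(g^\star) = \covmat$. The key input is the expansion derived just above the corollary (valid whenever $\phi$ is $\mc{C}^2$ near $1$, with $\phi''(1)\ge 0$ by convexity): $C_u = 1 + o(\ltwo{u}^2)$ and
\[
  \dphis{P_u}{P_0} \;=\; \tfrac12\,\phi''(1)\,u^\top\covmat\,u + o(\ltwo{u}^2)
  \;\le\; \big(\kappa + o(1)\big)\ltwo{u}^2 \quad(u\to 0), \qquad
  \kappa \defeq \tfrac12\,\phi''(1)\,\matrixnorm{\covmat}\ge 0 .
\]

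Given this, for any fixed $c' > 0$ the expansion above supplies $\delta > 0$ with $\dphis{P_u}{P_0}\le(\kappa+1)\ltwo{u}^2$ for $\ltwo{u}\le\delta$, so once $k$ is large enough that $c'/\sqrt{k}\le\delta$, every $P_u$ with $\ltwo{u}\le c'/\sqrt{k}$ satisfies $\dphis{P_u}{P_0}\le(\kappa+1)(c')^2/k$; hence for $c \ge (\kappa+1)(c')^2$ and all large $k$, and using $x_P = x_u$ when $P = P_u$,
\[
  \sup_{P:\dphis{P}{P_0}\le c/k}\E_{P^k}\big[L(\sqrt{k}(\what{x}_k - x_P))\big]
  \;\ge\;
  \sup_{\ltwo{u}\le c'/\sqrt{k}}\E_{P_u^k}\big[L(\sqrt{k}(\what{x}_k - x_u))\big].
\]
Taking $\liminf_{k\to\infty}$, and noting that $c\mapsto\liminf_{k\to\infty}\sup_{P:\dphis{P}{P_0}\le c/k}\E_{P^k}[\,\cdot\,]$ is nondecreasing (so its $\liminf_{c\to\infty}$ equals its supremum over $c$), this yields, for every $c'>0$,
\[
  \liminf_{c\to\infty}\liminf_{k\to\infty}\sup_{P:\dphis{P}{P_0}\le c/k}\E_{P^k}\big[L(\sqrt{k}(\what{x}_k - x_P))\big]
  \;\ge\;
  \liminf_{k\to\infty}\sup_{\ltwo{u}\le c'/\sqrt{k}}\E_{P_u^k}\big[L(\sqrt{k}(\what{x}_k - x_u))\big],
\]
and the right side is nondecreasing in $c'$, so letting $c'\to\infty$ and invoking Theorem~\ref{theorem:local-minimax-lower} (for $g = g^\star$) bounds it below by $\E[L(Z)]$, $Z\sim\normal(0,\projmat[\tangentset]{H\opt}^\dag\projmat[\tangentset]\covmat\projmat[\tangentset]{H\opt}^\dag\projmat[\tangentset])$, which is the claim.

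The part I expect to require the most care is the degenerate case $\covmat = 0$, where $g^\star\equiv 0$ and the tilted family collapses. There I would instead run the argument with some $g\in\pertfuncset$ that is not $P_0$-a.s.\ zero, getting the lower bound $\E[L(Z_g)]$ with $Z_g\sim\normal(0,\projmat[\tangentset]{H\opt}^\dag\projmat[\tangentset]\cov(g)\projmat[\tangentset]{H\opt}^\dag\projmat[\tangentset])$; since a symmetric quasi-convex $L$ has symmetric convex sublevel sets, these contain the origin, so $L$ is minimized at $0$ and $\E[L(Z_g)]\ge L(0) = \E[L(Z)]$. If no nonzero $g$ exists, $P_0$ is a point mass, the divergence ball is $\{P_0\}$, and the bound is the trivial $L(\sqrt{k}(\what{x}_k - x_0))\ge L(0)$. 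The only other subtlety is bookkeeping: the limit order ($k\to\infty$ before $c\to\infty$) must be respected, which is why the chaining above goes through suprema over $c$ and $c'$ via monotonicity rather than any interchange of limits; the subcase $\phi''(1)=0$ merely shrinks $\kappa$ and makes the embedding easier.
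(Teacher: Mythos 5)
Your proposal is correct and follows essentially the same route as the paper: the paper's argument is precisely the remark preceding the corollary, which uses the expansion $\dphis{P_u}{P_0} = \half\phi''(1)\,u^T\cov(\pertfunc)\,u + o(\ltwo{u}^2)$ to embed the Euclidean ball of tilts $\{P_u : \ltwo{u}\le c'/\sqrt{k}\}$ (with $\pertfunc = g^\star$) inside the divergence ball of radius $c/k$, and then invokes Theorem~\ref{theorem:local-minimax-lower}. Your additional bookkeeping on the order of limits via monotonicity in $c$ and $c'$, and your treatment of the degenerate case $\covmat = 0$, are correct elaborations of details the paper leaves implicit.
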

\noindent
That is, our lower bounds imply lower bounds for natural nonparametric
choices of the neighborhood of $P_0$.

It is possible to prove a somewhat stronger result than
Theorem~\ref{theorem:local-minimax-lower}, which we do not do for
simplicity, where instead of the inner supremum over all vectors $u$ such
that $\ltwo{u} \le c / \sqrt{k}$, we take an integral against the uniform
measure $\pi$ supported on the ball $\{u : \ltwo{u} \le c / \sqrt{k}\}$ (see
the constructions in \citet[Chs.\ 6--7]{LeCamYa00}).
We then have a super-efficiency result~\cite{VanDerVaart97}: if
$\what{x}_k$ denotes an estimator based on the sample $\statrv_1,
\ldots, \statrv_k$, the set of $u \in \R^d$ for
problems~\eqref{eqn:stochastic-u-opt-problem} for which $\what{x}_k$ achieves
$\limsup_k \E_{P_u^k}[L(\sqrt{k} (\what{x}_k - x_u))] <
\E[L(Z)]$, for $Z$ as in the theorem, has Lebesgue measure zero.

\section{Convergence and manifold
  identification for dual averaging}
\label{sec:dual-averaging-convergence}

As we discuss following Corollary~\ref{corollary:shapiro-normality}, the
$\what{x}_k = \argmin_{x \in
  \xdomain} \frac{1}{k} \sum_{i = 1}^k \loss(x; \statrv_i)$ achieves optimal
asymptotic convergence. In this and the next section, we investigate the
possibilities of efficient purely online stochastic gradient-based
estimators. These have advantages---small storage space requirements, and
they take a single pass through the data---that make them especially
suitable for modern large-scale regimes~\cite{Zinkevich03, BottouBo07,
  ShalevSr08, NemirovskiJuLaSh09}. We study three aspects of these methods:
identification of the active constraints (those $i$ such that $f_i(x\opt) =
0$), almost sure convergence, and optimal asymptotic behavior.  While
stochastic gradient descent methods fail to even identify the active
constraints, we develop a variant of Nesterov's dual
averaging~\cite{Nesterov09} that identifies active constraints in finite
time and (as we show in the next section) is asymptotically optimal when the
set $\xdomain$ is a polytope; when the constraints are nonlinear,
significant difficulties arise, which we also discuss.

We first consider the stochastic gradient method~\cite{RobbinsMo51,
  PolyakJu92, NemirovskiJuLaSh09} for problem~\eqref{eqn:problem}, to
minimize $f(x)$ subject to $x \in \xdomain$.  This procedure requires a
stochastic gradient oracle, which at each iteration provides a random vector
$g_k$ satisfying $\E[g_k \mid x_k] = \nabla f(x_k)$.  In
problem~\eqref{eqn:problem}, drawing $\statrv_k \sim P$ and computing $g_k =
\nabla \loss(x_k; \statrv_k)$ evidently satisfies this condition.  Given
stochastic gradients $g_k$, the stochastic gradient method iteratively
updates
\begin{equation}
  \label{eqn:standard-sgd}
  x_{k + 1} = \argmin_{x \in \xdomain} \left\{\<g_k, x - x_k\>
    + \frac{1}{2 \stepsize_k} \ltwo{x - x_k}^2 \right\},
\end{equation}
where $\stepsize_k \propto k^{-\steppow}$ for some $\steppow \in [\half, 1]$ is
a stepsize. While the iterates~\eqref{eqn:standard-sgd} converge to the
global optimum $x\opt$, they fail to identify
optimal constraints~\cite{LeeWr12}. As a simple example, we may consider a
problem with $f(x) = x$ and $\xdomain = [-1, 1] = \{x \mid
x^2 - 1 \le 0\}$, which satisfies the assumptions of
Theorem~\ref{theorem:local-minimax-lower} and has $x\opt = -1$.  Consider
stochastic gradients $g_k = 1 + \noise_k$ for $\noise_k \simiid \normal(0,
1)$; the iteration~\eqref{eqn:standard-sgd}
satisfies $\P(x_k \ge -1 + \stepsize_k) \ge 1 - \Phi(1)$, where
$\Phi$ is the standard normal CDF. That is, $x_k \ge
-1 + \stepsize_k$ with constant probability at each iteration---it jumps off
of the constraint infinitely often.

This instability is one of the motivations for Nesterov's dual averaging
algorithm~\cite{Nesterov09}, which iterates
\begin{equation}
  \label{eqn:standard-dual-averaging}
  z_k = \sum_{i = 1}^k g_i,
  ~~ x_{k + 1} = \argmin_{x \in \xdomain}
  \left\{\<z_k, x\> + \frac{1}{2 \stepsize_k} \ltwo{x}^2 \right\}.
\end{equation}
Practically, this
procedure has much better constraint identification properties~\cite{Xiao10,
  LeeWr12} because of the averaging effects in the
definition of $z_k$. \citet{Xiao10} notes its strong performance in
application to $\ell_1$-regularized problems, while \citet{LeeWr12} give
arguments showing that dual averaging spends most of its time on
the ``optimal manifold'' for a variant of problem~\eqref{eqn:problem}, which
essentially corresponds to the set of zeros of the active constraints $\{x :
f_i(x) = 0, i \in [\numactive]\}$.
The work~\cite{LeeWr12} motivates this section, and we are able
to show finite identification of the optimal constraints for a variant of
the dual averaging method and its probability 1 convergence.

\subsection{Almost sure convergence}

We study a variant of dual averaging, which we view as a lazy-projected
gradient algorithm, as it interpolates the stochastic
gradient method and dual averaging. Given a sequence of positive stepsizes 
$\{\stepsize_k\}_{k\in \N}$, initializing $z_0 = 0$,
at each iteration $k$, we update
\begin{equation}
  \begin{split}
    & \mbox{Update} ~
    x_{k} = \argmin_{x \in \xdomain}
    \Big\{\<z_{k-1}, x\> + \half \ltwo{x}^2 \Big\} \\
    & \mbox{Draw}~ \statrv_k \simiid P, ~~
    \mbox{compute}~  g_{k}  = \nabla \loss(x_{k}; \statrv_{k}), ~~
    \mbox{set}~ z_{k} = z_{k-1} + \stepsize_{k} g_{k} \\
  \end{split}
  \label{eqn:variant-dual-averaging}
\end{equation}
In contrast to the standard dual averaging
update~\eqref{eqn:standard-dual-averaging},
procedure~\eqref{eqn:variant-dual-averaging} constructs $z_k$ as a weighted
average and regularizes with $\half \ltwo{x}^2$. This has two consequences:
first, in the unconstrained case, we recover the stochastic
gradient method, which \citet{PolyakJu92} show (when combined with
averaging) is asymptotically normal with optimal
covariance.  The form~\eqref{eqn:variant-dual-averaging} also allows us to
prove the convergence $x_k \cas x\opt$
and finite time identification results. 
Without further comment, we assume the stepsizes $\stepsize_k$ satisfy
\begin{equation}
  \label{eqn:stepsizes}
  \stepsize_k = \stepsize_0 k^{-\steppow}
  ~~ \mbox{where} ~~
  \stepsize_0 > 0 ~~ \mbox{and} ~~
  \half < \steppow < 1.
\end{equation}

\newcounter{saveassumption}
\setcounter{saveassumption}{\value{assumption}-1}
\renewcommand{\theassumption}{\Alph{saveassumption}'}

We may prove our results under slightly weaker conditions than the i.i.d.\
sampling assumed in the update~\eqref{eqn:variant-dual-averaging}, which we
specify now for completeness. In particular, we assume that at each iteration
$k$ we observe a noisy gradient $g_k = \nabla f(x_k) + \noise_k(x_k)$, where
$\noise_k : \xdomain \to \R^n$ is a random function with the property that
$\E[\noise_k(x)] = 0$ for all $x \in \xdomain$. We make the following assumption.
\begin{assumption}
  \label{assumption:noise-vectors}
  Define the filtration
  $\mc{F}_k \defeq \sigma(\noise_1, \ldots, \noise_k)$. The noise
  $\noise_k$ has the decomposable structure
  $\noise_k(x) = \noisezero_k + \noisier_k(x)$, where
  $\noisezero_k$ and $\noisier_k(x)$ are both martingale difference
  sequences adapted to the filtration $\mc{F}_k$.
  There exists a constant $C < \infty$ such that
  \begin{equation*}
    \E[\norms{\noisezero_k}^2 \mid \mc{F}_{k-1}] \le C
    ~~~ \mbox{and} ~~~
    \E[\norms{\noisier_k(x)}^2 \mid \mc{F}_{k-1}]
    \le C \norm{x - x\opt}^2.
  \end{equation*}
  Additionally, 
  $\frac{1}{\sqrt{k}} \sum_{i = 1}^k \noisezero_i \cd \normal(0, \covmat)$
  for some $\covmat \succeq 0$.
\end{assumption}
\noindent
Assumptions~\ref{assumption:smoothness-of-objective}
(smoothness of $f$) and~\ref{assumption:noise-level} (variance bounds on
$\nabla \loss(x; \statrv)$) imply \ref{assumption:noise-vectors}
when $g_k = \nabla \loss(x_k; \statrv_k) = \nabla \loss(x_k; \statrv_k)
- \nabla \loss(x\opt; \statrv_k) + \nabla \loss(x\opt; \statrv_k)$ as in the
update~\eqref{eqn:variant-dual-averaging}. The additional generality causes
no special difficulty in the proofs, so for the remainder of this paper we
let Assumption~\ref{assumption:noise-vectors} hold.

\setcounter{assumption}{\value{saveassumption}+1}
\renewcommand{\theassumption}{\Alph{assumption}}

We begin with the almost sure convergence of $x_k$. This
a.s.\ convergence requires no constraint
qualifications, just that there exists $\epsilon > 0$, 
such that $f(x) - f(x\opt) \ge \epsilon \norm{x - x\opt}^2$ 
for $x \in \xdomain$ near $x\opt$.
\begin{theorem}
  \label{theorem:as-convergence}
  Let $x_k$ be generated by the dual averaging
  iterates~\eqref{eqn:variant-dual-averaging} with
  stepsizes~\eqref{eqn:stepsizes}, let
  Assumptions~\ref{assumption:smoothness-of-objective} and
  \ref{assumption:noise-vectors} (or \ref{assumption:noise-level}) hold, and
  let the growth condition on $f$ in the conclusion of
  Lemma~\ref{lemma:restricted-strong-convexity} hold.  Then
  \begin{equation*}
    x_k \cas x\opt.
  \end{equation*}
\end{theorem}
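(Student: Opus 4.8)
The plan is to establish almost sure convergence of the dual averaging iterates $x_k$ via a supermartingale / ODE-method argument, controlling the iterates through the scaled dual variable $\bar z_k \defeq z_k / (\sum_{i=1}^k \stepsize_i)$. First I would record the key structural fact about the update: since $x_k = \argmin_{x \in \xdomain}\{\<z_{k-1},x\> + \half\ltwo{x}^2\}$, we have $x_k = \Pi_{\xdomain}(-z_{k-1})$, the Euclidean projection of $-z_{k-1}$ onto $\xdomain$, and the first-order optimality condition gives $-z_{k-1} - x_k \in \normalcone_{\xdomain}(x_k)$, equivalently $\<z_{k-1} + x_k, y - x_k\> \ge 0$ for all $y \in \xdomain$. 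Plugging $y = x\opt$ yields the basic inequality $\<z_{k-1}, x_k - x\opt\> \le -\<x_k, x_k - x\opt\> = -\<x_k - x\opt, x_k - x\opt\> - \<x\opt, x_k - x\opt\>$, i.e. $\<z_{k-1}, x_k - x\opt\> + \ltwo{x_k - x\opt}^2 \le \<x\opt, x\opt - x_k\>$, which is bounded. Writing $S_k = \sum_{i=1}^k \stepsize_i$ (so $S_k \to \infty$ and $S_k \asymp k^{1-\steppow}$ under~\eqref{eqn:stepsizes}), this translates into a bound on $\<\bar z_{k-1}, x_k - x\opt\>$ that decays like $1/S_{k-1}$.

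Next I would decompose $z_k = \sum_{i=1}^k \stepsize_i g_i = \sum_{i=1}^k \stepsize_i \nabla f(x_i) + \sum_{i=1}^k \stepsize_i \noisezero_i + \sum_{i=1}^k \stepsize_i \noisier_i(x_i)$. Using Assumption~\ref{assumption:noise-vectors} and the martingale-difference structure, the noise terms $M_k \defeq \sum_{i=1}^k \stepsize_i \noisezero_i$ and $R_k \defeq \sum_{i=1}^k \stepsize_i \noisier_i(x_i)$ are martingales; since $\sum_i \stepsize_i^2 < \infty$ (as $\steppow > \half$), $M_k$ converges a.s.\ by the martingale convergence theorem, hence $M_k / S_k \to 0$ a.s.\ For $R_k$ one needs $\sum_i \stepsize_i^2 \E[\norms{\noisier_i(x_i)}^2 \mid \mc{F}_{i-1}] \le C\sum_i \stepsize_i^2 \E[\ltwo{x_i - x\opt}^2]$, which requires an a priori bound on $\E[\ltwo{x_i - x\opt}^2]$; I would get this from the boundedness inequality derived above (since $x_i = \Pi_{\xdomain}(-z_{i-1})$ and one can show $\ltwo{x_i - x\opt}^2$ is controlled in expectation by a Robbins–Siegmund-type recursion using the growth condition $f(x) - f(x\opt) \ge \epsilon\ltwo{x - x\opt}^2$), after which the same argument gives $R_k/S_k \to 0$ a.s. Combining, $\bar z_k = \frac{1}{S_k}\sum_{i=1}^k \stepsize_i \nabla f(x_i) + o(1)$ a.s.

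The core of the argument is then a Lyapunov step: using convexity of $f$, $\<\nabla f(x_i), x_i - x\opt\> \ge f(x_i) - f(x\opt) \ge \epsilon\min\{\ltwo{x_i - x\opt}^2, \ltwo{x_i - x\opt}\}$ by the conclusion of Lemma~\ref{lemma:restricted-strong-convexity}. I would combine the basic inequality $\<\bar z_{k-1}, x_k - x\opt\> + \frac{1}{S_{k-1}}\ltwo{x_k - x\opt}^2 \le \frac{1}{S_{k-1}}\<x\opt, x\opt - x_k\>$ with the decomposition of $\bar z_{k-1}$ to obtain, after rearranging, a bound of the form $\frac{1}{S_{k-1}}\sum_{i=1}^{k-1}\stepsize_i[f(x_i) - f(x\opt)] \le \frac{C}{S_{k-1}} + o(1)$, i.e. the weighted average of suboptimality gaps tends to $0$. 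A Cesàro / summability argument (together with the fact that $f(x_i) - f(x\opt) \ge 0$ and the growth lower bound) then forces $\liminf_i \ltwo{x_i - x\opt} = 0$; to upgrade this to full convergence one shows that $x_i$ cannot oscillate, e.g. by noting $x_{k+1} - x_k = \Pi_{\xdomain}(-z_k) - \Pi_{\xdomain}(-z_{k-1})$ is $1$-Lipschitz in $z_k - z_{k-1} = \stepsize_k g_k$, whose norm is summable-square and $\stepsize_k\norms{g_k} \to 0$ a.s., so consecutive iterates get close; combined with a stability-of-projection argument near $x\opt$ (using the growth condition so that $x\opt$ is the unique limit point in $\xdomain$ of sequences with $f(x_i) \to f(x\opt)$) this yields $x_k \cas x\opt$.

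The main obstacle I anticipate is the interplay between obtaining the a priori second-moment bound $\sup_k \E[\ltwo{x_k - x\opt}^2] < \infty$ (needed to control the state-dependent noise $R_k$) and the fact that this bound itself seems to require controlling the noise — this is the usual chicken-and-egg difficulty in stochastic approximation, resolved by setting up a single Robbins–Siegmund supermartingale inequality in which the noise increments appear with summable conditional second moments so that both the boundedness and the convergence fall out simultaneously. A secondary technical point is that $\bar z_k$, rather than $z_k$ or $x_k$, is the natural quantity tracked by the ODE $\dot z = \nabla f(\Pi_{\xdomain}(-z))$, and care is needed because the map $z \mapsto \nabla f(\Pi_{\xdomain}(-z))$ is only continuous (not a contraction), so one genuinely needs the Lyapunov/suboptimality-gap route above rather than a direct contraction estimate.
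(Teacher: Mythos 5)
Your overall architecture (projection optimality conditions, martingale control of the noise, the growth condition, a Robbins--Siegmund step) matches the paper's, but two steps in the middle and at the end do not go through as written.

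First, the claimed bound $\frac{1}{S_{k-1}}\sum_{i=1}^{k-1}\stepsize_i[f(x_i)-f(x\opt)] \le C/S_{k-1} + o(1)$ does not follow from the basic inequality ``after rearranging.'' The optimality condition pairs the \emph{averaged} dual vector $z_{k-1}=\sum_i \stepsize_i \nabla f(x_i)+\mathrm{noise}$ with the \emph{current} error $x_k - x\opt$, so you get terms $\langle \nabla f(x_i), x_k - x\opt\rangle$; convexity only controls $\langle \nabla f(x_i), x_i - x\opt\rangle$, and the cross terms $\langle \nabla f(x_i), x_k - x_i\rangle$ are not sign-controlled. Converting the one-shot optimality condition into a per-iterate statement requires the telescoping (regret) argument: the paper does this by introducing the conjugate $\prox^*(z)=\sup_{x\in\xdomain}\{\langle z,x\rangle - \tfrac12\ltwo{x}^2\}$, whose gradient is $1$-Lipschitz and equals $x_{k+1}$ at $-z_k$, and feeding the resulting one-step descent inequality into Robbins--Siegmund (Lemma~\ref{lemma:pair-convergence}). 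That argument yields the strictly stronger conclusion $\sum_i \stepsize_i[f(x_i)-f(x\opt)]<\infty$ a.s.\ (not just a vanishing weighted average), and it simultaneously delivers the a.s.\ boundedness of $\norm{x_k-x\opt}$ that you need for the state-dependent noise -- resolving your chicken-and-egg concern pathwise rather than in expectation.

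Second, the endgame is a genuine gap. From $\liminf_k \norm{x_k-x\opt}=0$ together with $\norm{x_{k+1}-x_k}\le \stepsize_k\norm{g_k}\to 0$ one cannot conclude $x_k\cas x\opt$: since $\sum_k\stepsize_k=\infty$, the iterates have enough step budget to make infinitely many excursions of fixed width away from $x\opt$ while still returning, and a vanishing weighted \emph{average} of suboptimality gaps does not exclude this (each excursion contributes $\Omega(1)$ to $\sum_i\stepsize_i[f(x_i)-f(x\opt)]$, which is compatible with the average tending to zero because $S_k\to\infty$). The paper closes the loop differently: the nonnegative quantity $R_k = \langle z_k + x_{k+1}, x\opt - x_{k+1}\rangle + \tfrac12\ltwo{x_{k+1}-x\opt}^2$ is shown to converge a.s.\ to some $R_\infty$, it dominates $\tfrac12\ltwo{x_{k+1}-x\opt}^2$, and one exhibits a subsequence along which $R_{k_i}\to 0$, forcing $R_\infty=0$. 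Even that last step is delicate: because the martingale part of $z_k$ is only $O(\sqrt{A_k})$, one needs a subsequence with $A_{k_i}\norm{x_{k_i}-x\opt}^2\to 0$ (obtained from $\sum_i\stepsize_i\norm{x_i-x\opt}^2<\infty$ and $\sum_i\stepsize_i/A_i=\infty$), not merely $\norm{x_{k_i}-x\opt}\to 0$. You need either this converging-Lyapunov mechanism or a quantitative excursion argument; the sketch as written supplies neither.
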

\noindent
See Section~\ref{sec:proof-as-convergence} for a proof of the theorem.

\subsection{Constraint identification}

\newcommand{\kident}{k_{\rm ident}}

To segue into our results on identification of the optimal
surface of the constraint set $\xdomain$, note that
Theorem~\ref{theorem:as-convergence} implies inactive constraints are
inactive at some finite time: for some (random) $k <
\infty$ we have $\sup_{l \ge k} f_i(x_l) < 0$ for $i >
\numactive$.
Conversely, Theorem~\ref{theorem:as-convergence} says little about whether
$x_k$ identifies the constraints active at $x\opt$.

In brief, under the constraint qualifications of
Assumption~\ref{assumption:linear-independence-cq}, for the modified dual
averaging iteration~\eqref{eqn:variant-dual-averaging}, there is a (random)
iterate $\kident$ such that for $k \ge \kident$, we have $f_i(x_k) = 0$ for
$i \in [\numactive]$.  To provide this guarantee, we give our second set of
results on perturbation of optimal solutions to convex programs, showing
that solutions to linearized versions of problem~\eqref{eqn:problem}
belong to $\{x : f_i(x) = 0, i \le \numactive\}$. The linear approximation
(as opposed to the quadratic approximations in
Proposition~\ref{proposition:perturbation}) is a less immediate application
of the results on parametrized optimization~\cite{BonnansSh98, Shapiro89,
  Wright93}, but (nearly) linear minimization problems dovetail with the
updates~\eqref{eqn:variant-dual-averaging}.

We give a few heuristics. Consider the problem
\begin{equation}
  \label{eqn:optimal-linear-opt}
  \minimize_x  ~ \<\nabla f(x\opt), x\> ~~~
  \subjectto  ~ f_i(x) \le 0, ~~ i = 1, \ldots, m,
\end{equation}
which has a linear objective. By
Assumption~\ref{assumption:linear-independence-cq}, the point $x\opt$
satisfies the KKT conditions for this problem and is optimal, but it may not
be unique.  The dual averaging iteration~\eqref{eqn:variant-dual-averaging}
eventually approximates a slightly perturbed version of the linear
objective~\eqref{eqn:optimal-linear-opt} because $x_k \cas x\opt$ and we
expect $\sum_{i = 1}^k \stepsize_i g_i = \sum_{i = 1}^k \stepsize_i \nabla
f(x_i) + o(\sum_{i = 1}^k \stepsize_i)$. This motivates the next two
perturbation results, which we graphically describe in
Figure~\ref{fig:identification}. The intuition for each is that $-z_k$ is
in $\normalcone_\xdomain(x)$ for some $x$ near enough $x\opt$, in which
case the constraint qualifications
(Assumption~\ref{assumption:linear-independence-cq}) imply that the
projected point must lie on the set described by the active constraints
at $x\opt$.

\paragraph{Nonlinear constraints}

We begin with a perturbation result for the case in which the constraints
are nonlinear, as the linear independence constraint qualification
(Assumption~\ref{assumption:linear-independence-cq}.\ref{item:linear-independence-cq})
makes the argument easier in this case.
Let $x\opt$ be a point such that $f_i(x\opt) = 0$ for
$1 \leq i \leq \numactive$ and $f_i(x\opt) < 0$ for
$\numactive+1\leq i\leq m$. Let $\lambda\opt \in \R^\numactive$ with
$\lambda\opt > 0$ be otherwise arbitrary, and define
$g = -\sum_{i=1}^\numactive \lambda_i\opt \nabla f_i(x\opt)$. Let
$x_0 \in \R^n$, and $v \in \R^n$ and $\delta > 0$, and consider
the tilted and quadratically perturbed version of
problem~\eqref{eqn:optimal-linear-opt}
\begin{equation}
  \label{eqn:nonlinear-constraints-prob}
  \begin{split}
    \minimize_x ~~ & \<g, x\> + \<v, x\> + \frac{\delta}{2} \norm{x - x_0}^2 \\
    \subjectto ~~ & f_i(x) \le 0, ~ i = 1, \ldots, m.
  \end{split}
\end{equation}
The problem~\eqref{eqn:nonlinear-constraints-prob} has a unique minimizer that
we denote $x_{v,\delta}\opt$.  Then we have the following lemma, whose proof we
provide in Supplement, Sec.~\ref{sec:proof-perturbation-nonlinear}.
\begin{lemma}
  \label{lemma:perturbation-result-nonlinear}
  Let the sequence $(v_k, \delta_k) \in \R^n \times \R_{++}$ satisfy $v_k
  \to 0$, $\delta_k \to 0$, and that $x_k \defeq x\opt_{v_k, \delta_k} \to
  x\opt$ as $k \to \infty$.  Then there exists $K < \infty$ such that
  $f_i(x_k)=0$ for $i \in [\numactive]$ and $k \ge K$.
\end{lemma}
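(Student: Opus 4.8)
The plan is to show that $-z_k \defeq g + v_k$ lies in the relative interior of the normal cone $\normalcone_{\xdomain}$ at $x_k$ for all large $k$, and then invoke the linear independence constraint qualification (Assumption~\ref{assumption:linear-independence-cq}.\ref{item:linear-independence-cq}) to conclude that the only constraints that can be active at $x_k$ are indices in $[\numactive]$ --- but since $x_k \to x\opt$ and each $f_i(x\opt) = 0$ for $i \in [\numactive]$, a complementary argument will force \emph{all} of these active. First I would write the KKT conditions for the perturbed problem~\eqref{eqn:nonlinear-constraints-prob}: there exist multipliers $\mu^{(k)} \in \R^m_+$ with $g + v_k + \delta_k(x_k - x_0) + \sum_{i=1}^m \mu_i^{(k)} \nabla f_i(x_k) = 0$ and $\mu_i^{(k)} f_i(x_k) = 0$. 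Since $f_i(x\opt) < 0$ for $i > \numactive$ and $x_k \to x\opt$, continuity of the $f_i$ gives $f_i(x_k) < 0$ for large $k$, hence $\mu_i^{(k)} = 0$ for $i > \numactive$; so the relevant sum runs only over $i \in [\numactive]$.

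The heart of the argument is to show $\mu_i^{(k)} > 0$ for every $i \in [\numactive]$ once $k$ is large, which combined with complementary slackness $\mu_i^{(k)} f_i(x_k) = 0$ forces $f_i(x_k) = 0$. Plugging $v_k \to 0$ and $\delta_k(x_k - x_0) \to 0$ into the stationarity equation, we get $\sum_{i=1}^\numactive \mu_i^{(k)} \nabla f_i(x_k) = -g - v_k - \delta_k(x_k - x_0) \to -g = \sum_{i=1}^\numactive \lambda_i\opt \nabla f_i(x\opt)$. Now I would use that the vectors $\{\nabla f_i(x\opt)\}_{i=1}^\numactive$ are linearly independent and the $f_i$ are $\mc{C}^2$, so $\{\nabla f_i(x_k)\}_{i=1}^\numactive$ remains linearly independent for large $k$ and the Gram matrix $A_k A_k^T$ (rows $\nabla f_i(x_k)^T$) is invertible with inverse converging to that of $A_\opt A_\opt^T$. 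Thus $\mu^{(k)} = (A_k A_k^T)^{-1} A_k(-g - v_k - \delta_k(x_k-x_0))$ is a continuous function of the data and converges to $\lambda\opt$; since $\lambda\opt > 0$ componentwise and the positive orthant's interior is open, $\mu_i^{(k)} > 0$ for all $i \in [\numactive]$ for $k \ge K$ for some finite (random, since $v_k,\delta_k,x_k$ are random) $K$. Complementary slackness then yields $f_i(x_k) = 0$ for $i \in [\numactive]$ and $k \ge K$, which is the claim.

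I expect the main obstacle --- really the only subtlety --- to be the bookkeeping around the nonuniqueness of the perturbed solution and its multipliers: the problem~\eqref{eqn:nonlinear-constraints-prob} is strongly convex in $x$ (the $\frac{\delta_k}{2}\norm{x-x_0}^2$ term with $\delta_k > 0$ guarantees a \emph{unique} minimizer $x_k$), but one must still argue the multiplier vector $\mu^{(k)}$ is well-defined and unique, which is exactly what linear independence of $\{\nabla f_i(x_k)\}_{i\in[\numactive]}$ provides, together with the fact (from the limiting stationarity relation and $-g\in\relint\normalcone$) that the inactive constraints $i>\numactive$ carry zero multiplier for large $k$. A secondary point is justifying that $x_k$ being near $x\opt$ puts us in the region where each $f_i$ is $\mc{C}^2$ with gradients close to $\nabla f_i(x\opt)$; this is immediate from Assumption~\ref{assumption:linear-independence-cq}. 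Everything else is continuity of the map from $(v_k,\delta_k,x_k)$ to $\mu^{(k)}$ and openness of $\R^\numactive_{++}$.
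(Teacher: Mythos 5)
Your proposal is correct and follows the same skeleton as the paper's proof: write the KKT conditions for the perturbed problem~\eqref{eqn:nonlinear-constraints-prob}, observe that continuity and $x_k \to x\opt$ kill the multipliers on the strictly inactive constraints $i > \numactive$, show the remaining multipliers are eventually strictly positive, and conclude by complementary slackness. Where you diverge is in the key positivity step. The paper argues by contradiction: it supposes some $\lambda_{k,1} = 0$ infinitely often, normalizes the multiplier vector onto the unit sphere (prepending a component $\lambda_{k,0}=1$ to guard against unboundedness), extracts an accumulation point, and derives a contradiction with either the uniqueness of $\lambda\opt$ or the linear independence of $\{\nabla f_i(x\opt)\}_{i=1}^{\numactive}$. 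You instead solve the stationarity equation explicitly, $\mu^{(k)} = (A_k A_k^T)^{-1} A_k\bigl(-g - v_k - \delta_k(x_k - x_0)\bigr)$, and use continuity of the Gram inverse to get $\mu^{(k)} \to \lambda\opt > 0$ directly. Your route is more direct and yields slightly more (convergence of the multipliers, not just eventual positivity), at the mild cost of having to justify that $A_k$ has full row rank for large $k$ and that LICQ at $x_k$ makes the KKT multipliers exist and be unique --- both of which you correctly flag and which follow from Assumption~\ref{assumption:linear-independence-cq}.\ref{item:linear-independence-cq} and $\mc{C}^2$ continuity of the constraints. The paper's normalization trick is the standard device for when one cannot a priori rule out unbounded multiplier sequences; your explicit formula makes boundedness automatic. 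Both are valid; yours is arguably cleaner here.
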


\paragraph{Linear constraints}

Considering linear constraints allows weaker assumptions than the case in
which the constraints $f_i$ are nonlinear. Assume that the matrix $A \in
\R^{\numactive \times n}$ and vector $b \in \R^\numactive$ represent the
active constraints, while $C \in \R^{(m - \numactive) \times n}$ and $d \in
\R^{m - \numactive}$ coincide with the inactive constraints, so that $A
x\opt = b$ and $C x\opt < d$.  Specializing the
problem~\eqref{eqn:optimal-linear-opt} and the tilted
problem~\eqref{eqn:nonlinear-constraints-prob} to this setting, for $(v,
\delta) \in \R^n \times \R_+$ we consider
\begin{equation}
  \label{eqn:perturbed-linear-optimization}
  \begin{split}
    \minimize_{x} &~~  \<g, x\> + \<v, x\> + \frac{\delta}{2}
    \norm{x - x_0}^2 \\
    \subjectto &~~ Ax \leq b,
    ~~ Cx \leq d.
  \end{split}
\end{equation}
As before, we assume that for some $\lambda\opt \in \R_{++}^{m_0}$ we have
$g = A^T \lambda\opt$ so that $x\opt$ is a minimizer of
problem~\eqref{eqn:perturbed-linear-optimization} at $v = 0, \delta = 0$.
The next lemma is the analogue of
Lemma~\ref{lemma:perturbation-result-nonlinear} for the linear case. As in
Lemma~\ref{lemma:perturbation-result-nonlinear}, $x\opt_{v,\delta}$ denotes
the unique optimum for the perturbed
problem~\eqref{eqn:perturbed-linear-optimization} with $\delta > 0$. We
provide a proof of the lemma in
Supplement Sec.~\ref{sec:proof-perturbation-linear}.
\begin{lemma}
  \label{lemma:perturbation-result-linear}
  Let the sequence $(v_k, \delta_k) \in \R^n \times \R_{++}$ satisfy $v_k
  \rightarrow 0$, $\delta_k \rightarrow 0$, and that $x_k \defeq x_{v_k,
    \delta_k}\opt \rightarrow x\opt$ as $k\rightarrow \infty$. Then there
  exists $K < \infty$ such that $A x_k = b$ for $k \geq K$.
\end{lemma}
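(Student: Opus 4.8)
The plan is to sidestep any direct manipulation of Lagrange multipliers for problem~\eqref{eqn:perturbed-linear-optimization}---which is awkward because the linear setting deliberately drops the independence qualification on the rows of $A$, so KKT multipliers need not be unique---and instead compare the perturbed objective at $x_k$ with its value at the Euclidean projection of $x_k$ onto the affine set $\{x : Ax = b\}$. Write $\Phi_k(x) \defeq \<g + v_k, x\> + \frac{\delta_k}{2}\ltwo{x - x_0}^2$ for the objective in~\eqref{eqn:perturbed-linear-optimization}; since $\delta_k > 0$ it is strongly convex, so $x_k$ is its unique minimizer over the feasible polyhedron. Because $x_k \to x\opt$ and the inactive constraints are strict at $x\opt$ ($Cx\opt < d$), for all large $k$ we have $Cx_k < d$; letting $\pi_k$ denote the projection of $x_k$ onto $\{x : Ax = b\}$ (nonempty, as it contains $x\opt$) we also get $\pi_k \to x\opt$ and so $C\pi_k < d$ for large $k$. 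Hence $\pi_k$ is feasible for~\eqref{eqn:perturbed-linear-optimization}, and optimality of $x_k$ gives $\Phi_k(x_k) \le \Phi_k(\pi_k)$, i.e.
\[
  0 \ \ge\ \Phi_k(x_k) - \Phi_k(\pi_k)
  \ =\ \<g + v_k,\, x_k - \pi_k\>
  + \frac{\delta_k}{2}\left(\ltwo{x_k - x_0}^2 - \ltwo{\pi_k - x_0}^2\right).
\]

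The heart of the argument is a sharpness estimate for the linear term $\<g, x_k - \pi_k\>$. By the relative-interior condition (Assumption~\ref{assumption:linear-independence-cq}, i.e.~\eqref{eqn:strict-complementary} specialized to linear constraints) there is a strictly positive $\lambda\opt \in \R_{++}^{\numactive}$ with $g + A^T \lambda\opt = 0$. Since $\pi_k$ is the projection of $x_k$ onto $\{Ax = b\}$, the residual $x_k - \pi_k$ is orthogonal to $\ker A$, so $\ltwo{x_k - \pi_k} \le \matrixnorm{A^\dag}\,\ltwo{A(x_k - \pi_k)}$; and $A(x_k - \pi_k) = Ax_k - b \le 0$ componentwise by feasibility of $x_k$. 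Therefore
\[
  \<g,\, x_k - \pi_k\> = -\<A^T \lambda\opt,\, x_k - \pi_k\> = \<\lambda\opt,\, b - Ax_k\>
  \ \ge\ (\min_i \lambda\opt_i)\, \ltwo{b - Ax_k}
  \ \ge\ \frac{\min_i \lambda\opt_i}{\matrixnorm{A^\dag}}\, \ltwo{x_k - \pi_k} \ =:\ c_0\, \ltwo{x_k - \pi_k},
\]
where $c_0 > 0$, the first inequality uses $b - Ax_k \ge 0$ together with $\sum_i |t_i| \ge \ltwo{t}$, and the last uses the bound above with $A(x_k-\pi_k) = b - Ax_k$ up to sign. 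The perturbation terms are lower order: $|\<v_k, x_k - \pi_k\>| \le \ltwo{v_k}\,\ltwo{x_k - \pi_k}$, and $\bigl|\ltwo{x_k - x_0}^2 - \ltwo{\pi_k - x_0}^2\bigr| = |\<x_k - \pi_k,\, x_k + \pi_k - 2x_0\>| \le B\,\ltwo{x_k - \pi_k}$, where $B < \infty$ bounds $\ltwo{x_k + \pi_k - 2x_0}$ (finite since $x_k, \pi_k \to x\opt$). Substituting into the displayed inequality yields
\[
  0 \ \ge\ \Bigl(c_0 - \ltwo{v_k} - \tfrac{\delta_k B}{2}\Bigr)\, \ltwo{x_k - \pi_k}.
\]

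Since $v_k \to 0$ and $\delta_k \to 0$, the coefficient exceeds $c_0/2 > 0$ for all $k$ past some $K$, forcing $\ltwo{x_k - \pi_k} = 0$, i.e.\ $x_k = \pi_k$ and hence $Ax_k = b$, for $k \ge K$. I expect the only real subtlety to be isolating the sharpness constant $c_0$: it is strictly positive precisely because $\lambda\opt$ may be taken strictly positive, so the constraint qualification is used here and nowhere else, and this is why the geometric comparison argument is cleaner than a perturbation-of-KKT route as in Proposition~\ref{proposition:perturbation} (whose multiplier bookkeeping is delicate absent linear independence). A minor point to record carefully is that $\pi_k$ projects onto the full affine set $\{Ax = b\}$ rather than onto the optimal face $\{Ax = b,\, Cx \le d\}$; these agree in a neighborhood of $x\opt$ because $Cx\opt < d$, which is all the argument requires.
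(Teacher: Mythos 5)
Your proof is correct, and it takes a genuinely different route from the paper's. The paper argues row by row: it first proves a Carath\'eodory-type lemma (Lemma~\ref{lemma:special-treatment-for-linear-constraints}) showing that for each row $a_{i_0}$ of $A$ either $-g$ admits a representation with positive weight on $a_{i_0}$ over a linearly independent subfamily---in which case it passes to the reduced problem with independent rows and invokes the nonlinear-constraint result (Lemma~\ref{lemma:perturbation-result-nonlinear}, itself a compactness argument on normalized KKT multipliers)---or there is a nonnegative dependence $a_{i_0} + \sum_{i} \mu_i a_i = 0$, which forces $\<a_{i_0}, x\> = b_{i_0}$ for \emph{every} feasible point. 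Your argument instead is a single primal comparison: feasibility of the projection $\pi_k$ onto $\{Ax = b\}$ near $x\opt$, the identity $A(x_k - \pi_k) = Ax_k - b$ with $x_k - \pi_k \perp \ker A$, and strict positivity of $\lambda\opt$ combine into the sharpness bound $\<g, x_k - \pi_k\> \ge c_0 \ltwo{x_k - \pi_k}$, after which the $v_k$ and $\delta_k$ terms are absorbed. This is cleaner and more quantitative (it yields an explicit threshold on $\ltwo{v_k} + \delta_k B/2$ below which identification occurs), and it entirely avoids the multiplier bookkeeping that the degenerate (rank-deficient) case makes awkward. What it gives up is generality: the feasibility of $\pi_k$ and the orthogonality identity are special to affine constraints, so unlike the paper's multiplier-accumulation argument it does not adapt to the nonlinear case---but that case is covered by the separate Lemma~\ref{lemma:perturbation-result-nonlinear} anyway. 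One cosmetic remark: the paper's setup writes $g = A^T\lambda\opt$ where the KKT system for~\eqref{eqn:perturbed-linear-optimization} requires $g + A^T\lambda\opt = 0$; you use the latter (correct) sign convention, consistent with the nonlinear setup, and your verification that $\<\lambda\opt, b - Ax_k\> \ge 0$ confirms the orientation is right.
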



\begin{figure}
  \providecommand{\normalcone}{\mc{N}}
  \begin{center}
    \begin{overpic}[width=.6\columnwidth]{
        Julia/identifiable-set}
      \put(30, 41){$\{x : f_1(x) = 0\}$}
      \put(40, 10){$\{x : f_2(x) = 0\}$}
      \put(66, 36){$x\opt$}
      \put(46, 59){$\nabla f_1(x\opt)$}
      \put(81, 33){$\nabla f_2(x\opt)$}
      \put(30, 19){$\xdomain$}
      \put(65, 70){$\normalcone_\xdomain(x\opt)$}
      \put(83, 51){$-g$}
      \put(78, 61){$-(g + v)$}
    \end{overpic}
    \caption{\label{fig:identification} The set $\xdomain
      = \{x \in \R^2 :  x_1 \ge 0, f_1(x) \le 0, f_2(x) \le 0\}$, the
      top and bottom boundaries of $\xdomain$ corresponding to $f_1$ and
      $f_2$. The normal cone $\normalcone_\xdomain(x\opt)$ is the
      convex hull of $\nabla f_1(x\opt)$ and $\nabla f_2(x\opt)$. The
      vectors $-v$ and its perturbation
      $-(v + g)$ both belong to $\relint \normalcone_\xdomain(x\opt)$.}
    %
  \end{center}
\end{figure}


With the identification results provided by
Lemmas~\ref{lemma:perturbation-result-nonlinear}
and~\ref{lemma:perturbation-result-linear}, we can now show a result that
demonstrates that our variant~\eqref{eqn:variant-dual-averaging} of dual
averaging identifies the optimal manifold in
finite time with probability 1.
\begin{theorem}
  \label{theorem:manifold-identification}
  Let
  Assumptions~\ref{assumption:smoothness-of-objective}--\ref{assumption:noise-level}
  (or \ref{assumption:noise-vectors})
  hold. Then with probability one, there exists some (random) $K < \infty$
  such that $k \ge K$ implies
  \begin{equation*}
    f_i(x_k) = 0 ~~ \mbox{for~} i \le \numactive
    ~~~ \mbox{and}~~~
    \sup_{k \ge K} f_i(x_k) < 0 ~~\mbox{for~} i > \numactive.
  \end{equation*}
\end{theorem}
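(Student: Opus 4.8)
The plan is to recognize each dual-averaging iterate $x_k$ as the exact minimizer of a tilted, quadratically perturbed linear program of the type analyzed in Lemmas~\ref{lemma:perturbation-result-nonlinear} and~\ref{lemma:perturbation-result-linear}, with perturbations that shrink to zero almost surely, and then read off the identification conclusion from those lemmas together with the a.s.\ convergence $x_k \cas x\opt$ supplied by Theorem~\ref{theorem:as-convergence}. Concretely, writing $S_k \defeq \sum_{i=1}^k \stepsize_i$ and $z_{k-1} = \sum_{i=1}^{k-1}\stepsize_i g_i$, scaling the objective in the update~\eqref{eqn:variant-dual-averaging} by $1/S_{k-1} > 0$ (for $k \ge 2$) leaves the argmin unchanged, so
\[
  x_k = \argmin_{x \in \xdomain} \Big\{ \<\nabla f(x\opt) + v_k,\, x\> + \frac{\delta_k}{2}\norm{x - x_0}^2 \Big\}, \quad x_0 = 0,\ \delta_k = 1/S_{k-1},
\]
where $v_k \defeq S_{k-1}^{-1}\sum_{i=1}^{k-1}\stepsize_i\big(g_i - \nabla f(x\opt)\big)$ (using $S_{k-1}^{-1}\sum_{i<k}\stepsize_i = 1$). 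By the strict-complementarity relation~\eqref{eqn:strict-complementary} guaranteed by Assumption~\ref{assumption:linear-independence-cq}, $\nabla f(x\opt) = -\sum_{i=1}^{\numactive}\lambda_i\opt \nabla f_i(x\opt)$ with $\lambda\opt > 0$, so $x_k$ is exactly the unique minimizer $x\opt_{v_k,\delta_k}$ of problem~\eqref{eqn:nonlinear-constraints-prob} (or of~\eqref{eqn:perturbed-linear-optimization} when the active constraints are affine). It thus suffices to check the hypotheses of Lemma~\ref{lemma:perturbation-result-nonlinear} (resp.~\ref{lemma:perturbation-result-linear}): $\delta_k \to 0$, $v_k \to 0$, and $x_k \to x\opt$, all holding on an event of probability one.

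Two of these are immediate. Since $\steppow < 1$, $S_{k-1} = \stepsize_0 \sum_{i=1}^{k-1} i^{-\steppow} \to \infty$, hence $\delta_k = 1/S_{k-1} \to 0$. And $x_k \cas x\opt$ is precisely Theorem~\ref{theorem:as-convergence}, whose growth hypothesis holds because Assumption~\ref{assumption:restricted-strong-convexity} and Lemma~\ref{lemma:restricted-strong-convexity} give $f(x) - f(x\opt) \ge \epsilon\norm{x - x\opt}^2$ near $x\opt$.

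The real work is showing $v_k \to 0$ a.s. Using the decomposition $g_i = \nabla f(x_i) + \noisezero_i + \noisier_i(x_i)$ from Assumption~\ref{assumption:noise-vectors}, split $v_k$ into a bias term $S_{k-1}^{-1}\sum_{i<k}\stepsize_i(\nabla f(x_i) - \nabla f(x\opt))$ and two martingale terms $S_{k-1}^{-1}\sum_{i<k}\stepsize_i \noisezero_i$ and $S_{k-1}^{-1}\sum_{i<k}\stepsize_i \noisier_i(x_i)$. For the bias term, $\norm{\nabla f(x_i) - \nabla f(x\opt)} \le L\norm{x_i - x\opt} \to 0$ a.s. by Assumption~\ref{assumption:smoothness-of-objective} and Theorem~\ref{theorem:as-convergence}, so the weighted average converges to $0$ by the Toeplitz lemma (the weights $\stepsize_i/S_{k-1} \ge 0$ sum to one and $S_{k-1}\to\infty$). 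For the martingale terms, let $M_k = \sum_{i=1}^k \stepsize_i \noisezero_i$ and $N_k = \sum_{i=1}^k \stepsize_i \noisier_i(x_i)$, which are $\mc{F}_k$-martingales with conditional quadratic variations bounded by $C\sum_i \stepsize_i^2$ and $C\sum_i \stepsize_i^2\norm{x_i - x\opt}^2$ respectively, using Assumption~\ref{assumption:noise-vectors}. Because $\steppow > \half$, $\sum_i \stepsize_i^2 = \stepsize_0^2\sum_i i^{-2\steppow} < \infty$, and since $\norm{x_i - x\opt}$ is a.s.\ bounded (Theorem~\ref{theorem:as-convergence}), both sums are a.s.\ finite; hence $M_k$ and $N_k$ converge a.s.\ by the martingale convergence theorem, and dividing their prefixes by $S_{k-1}\to\infty$ kills both contributions. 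Therefore $v_k \to 0$ a.s.

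Finally, with $v_k, \delta_k \to 0$ and $x\opt_{v_k,\delta_k} = x_k \to x\opt$ on a probability-one event, Lemma~\ref{lemma:perturbation-result-nonlinear} (under Assumption~\ref{assumption:linear-independence-cq}.\ref{item:linear-independence-cq}) or Lemma~\ref{lemma:perturbation-result-linear} (under Assumption~\ref{assumption:linear-independence-cq}.\ref{item:affine-cq}) produces a random $K_1 < \infty$ with $f_i(x_k) = 0$ for all $i \le \numactive$ and $k \ge K_1$. For the inactive constraints, continuity of $f_i$ and $f_i(x\opt) < 0$ give $f_i(x_k) \to f_i(x\opt) < 0$ a.s., so there is a random $K_2$ with $\sup_{k \ge K_2} f_i(x_k) \le \half f_i(x\opt) < 0$ for each $i > \numactive$; take $K = \max\{K_1, K_2\}$. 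The main obstacle is the $v_k \to 0$ step, where the precise stepsize window $\half < \steppow < 1$ is used twice---square-summability of $\stepsize_i^2$ for the martingale pieces and $S_k \to \infty$ for both $\delta_k$ and the Toeplitz averaging---and where one must argue a.s.\ convergence of the martingale $N_k$ whose quadratic-variation bound is itself a random quantity, handled via the a.s.\ boundedness of $x_i - x\opt$.
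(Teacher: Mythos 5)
Your proposal is correct and follows essentially the same route as the paper: rewrite $x_k$ as the exact minimizer of the tilted, quadratically regularized problem with $\delta_k = 1/S_{k-1}$ and $v_k = z_{k-1}/S_{k-1} - \nabla f(x\opt)$, prove $v_k \cas 0$ and $\delta_k \to 0$, and invoke Lemmas~\ref{lemma:perturbation-result-nonlinear}--\ref{lemma:perturbation-result-linear} together with $x_k \cas x\opt$ from Theorem~\ref{theorem:as-convergence}. The only (immaterial) difference is in establishing $v_k \cas 0$: the paper reuses Jensen's inequality plus the summability $\sum_i \stepsize_i \norm{x_i - x\opt}^2 < \infty$ and the martingale bound already derived in the proof of Theorem~\ref{theorem:as-convergence}, whereas you argue it directly via the Toeplitz lemma for the bias term and a.s.\ convergence of the two square-summable martingales.
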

\noindent
We provide the proof of Theorem~\ref{theorem:manifold-identification} in
Section~\ref{sec:proof-manifold-identification}. The outline of the proof,
though, is apparent from the above lemmas and
Theorem~\ref{theorem:as-convergence}. Letting $A_k = \sum_{i = 1}^k
\stepsize_i$, the dual
averaging iterates~\eqref{eqn:variant-dual-averaging}
perform the update
\begin{equation*}
  x_{k + 1}
  = \argmin_{x \in \xdomain}
  \left\{\!\<z_k, x\> + \half \norm{x}^2\right\}
  = \argmin_{x \in \xdomain}
  \bigg\{\!
  \<\nabla f(x\opt) + v_k, x\>
  + \frac{1}{2 A_k} \norm{x}^2 \bigg\}
\end{equation*}
where $v_k = \frac{1}{A_k} (z_k - A_k \nabla f(x\opt)) = o(1)$,
 equivalent to
problems~\eqref{eqn:nonlinear-constraints-prob}
and~\eqref{eqn:perturbed-linear-optimization}.



\section{Stochastic gradient procedures: asymptotic normality}
\label{sec:dual-averaging-normality}

Now that we have established that dual averaging converges almost surely and
in finite time identifies constraints active at $x\opt$, we turn
asymptotic normality results.  We focus first on the case that the
constraints are linear, where dual averaging is locally asymptotically
minimax optimal. As we demonstrate, however, nonlinearity forces
a departure from this optimality. Consequently, in
Section~\ref{sec:riemannian-fun} we develop a joint dual-averaging and
Riemannian stochastic gradient procedure that is both online---it
sequentially computes only a single gradient $\nabla \loss(x;
\statrv_i)$ from each observation---and asymptotically optimal.

\subsection{Dual averaging: asymptotic normality}

When the problem is unconstrained with $\xdomain = \R^n$, \citet{PolyakJu92}
show that under our assumptions, the stochastic gradient method is
asymptotically normal when combined with averaging. In the notation of
Theorem~\ref{theorem:local-minimax-lower}, $\wb{x}_k = \frac{1}{k} \sum_{i =
  1}^k x_i$ satisfies $\sqrt{k} (\wb{x}_k - x\opt) \cd \normal(0, \nabla^2
f(x\opt)^{-1} \cov(\nabla \loss(x\opt; \statrv)) \nabla^2 f(x\opt)^{-1})$,
which is optimal.  In the constrained case, identical results hold if we
solve the problem over a subspace (i.e.\ $\{x : Ax = b\}$); there are no
differences from the classical case~\cite{PolyakJu92}. We thus expect our
dual averaging variant to behave as follows: eventually, we identify the
active constraints, that is, we have $A x_k = b$ and $C x_k < d$ for all
sufficiently large $k$.  Once this occurs, the iterations of the dual
averaging variant are \emph{identical} to those of the stochastic gradient
method in the subspace $\{x : Ax = b\}$. Thus, we expect asymptotic
normality, with the asymptotic covariance reflecting variability only in the
null space of $A$. While our development tracks this idea, the
``sufficiently large $k$'' for active set identification is random, and to
have $A x_k = b$ for all $k$ depends on the entire future noise sequence
$\{\noise_i\}_{i = k}^\infty$, making this intuitive argument fail.  With a
bit more delicacy, we can provide a similar argument that builds off of
Polyak and Juditsky's treatment. Now, define the orthogonal
projector onto the null space $\{w : Aw = 0\}
= \tangentset_\xdomain(x\opt)$,
\begin{equation*}
  \projmat \defeq I - A^T (AA^T)^\dag A.
\end{equation*}
We then have the following theorem.
\begin{theorem}
  \label{theorem:asymptotic-normality}
  Let
  Assumptions~\ref{assumption:smoothness-of-objective}--\ref{assumption:noise-vectors}
  hold, and assume that $\stepsize_k \propto k^{-\beta}$ for some $\beta \in
  (\half, 1)$. Let $\covmat = \cov(\grad \loss(x\opt; \statrv))$. Then
  \begin{equation*}
    \frac{1}{\sqrt{k}} \sum_{i=1}^k (x_i - x\opt) \cd 
    \normal \left(0, \projmat (\nabla^2 f(x\opt))^\dag \projmat \Sigma \projmat 
    (\nabla^2 f(x\opt))^\dag \projmat\right).
  \end{equation*}
\end{theorem}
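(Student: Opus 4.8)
\emph{Setup and reduction.} Throughout this subsection the active constraints are affine, so that $\{x : f_i(x) = 0,\ i \le \numactive\} = \{x : Ax = b\}$ with $A \in \R^{\numactive\times n}$ having rows $\nabla f_i(x\opt)^T$, $Ax\opt = b$, $\tangentset_\xdomain(x\opt) = \{w : Aw = 0\}$, and $\projmat = I - A^T(AA^T)^\dag A$. The plan is to use Theorem~\ref{theorem:manifold-identification}: with probability one there is a random $K < \infty$ such that $A x_k = b$ and $f_i(x_k) < 0$ for $i > \numactive$ whenever $k \ge K$. For such $k$ the inactive multipliers vanish, so the optimality conditions for update~\eqref{eqn:variant-dual-averaging} read $z_{k-1} + x_k + A^T\nu_k = 0$, $A x_k = b$; solving and using $A x\opt = b$ yields the explicit recursion $x_k - x\opt = -\projmat z_{k-1} - \projmat x\opt$ for $k \ge K$. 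Setting $u_k \defeq \projmat(z_k + x\opt) \in \tangentset_\xdomain(x\opt)$, so $x_{k+1} - x\opt = -u_k$ for $k \ge K$, writing $g_k = \nabla f(x_k) + \noise_k(x_k)$ and expanding $\nabla f(x_k) = \nabla f(x\opt) + \nabla^2 f(x\opt)(x_k - x\opt) + r_k$ with $\norm{r_k} \le C\norm{x_k - x\opt}^2$ (Assumption~\ref{assumption:smoothness-of-objective}), and using $\projmat \nabla f(x\opt) = -\projmat A^T \lambda\opt = 0$ (Eq.~\eqref{eqn:strict-complementary}), the identity $u_k - u_{k-1} = \stepsize_k \projmat g_k$ becomes, for $k \ge K+1$,
\begin{equation*}
  u_k = (I - \stepsize_k \wt H)\,u_{k-1} + \stepsize_k \projmat \noisezero_k
  + \stepsize_k \projmat \noisier_k(x_k) + \stepsize_k \projmat r_k,
  \qquad \wt H \defeq \projmat\, \nabla^2 f(x\opt)\, \projmat,
\end{equation*}
where by restricted strong convexity (Assumption~\ref{assumption:restricted-strong-convexity}, with $\nabla^2 f_i \equiv 0$ so $H\opt = \nabla^2 f(x\opt)$) we have $\wt H \succeq \mu I$ on $\tangentset_\xdomain(x\opt)$. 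This is exactly a strongly monotone stochastic-approximation recursion on the subspace $\tangentset_\xdomain(x\opt)$, of the type treated by Polyak and Juditsky~\cite{PolyakJu92}.

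\emph{Rate of convergence.} Theorem~\ref{theorem:as-convergence} gives $x_k \cas x\opt$, hence $\norm{u_k}\to 0$ a.s. Combining the contraction on $\tangentset_\xdomain(x\opt)$ with the bounds $\E[\norms{\noisezero_k}^2 \mid \mc F_{k-1}] \le C$, $\E[\norms{\noisier_k(x_k)}^2 \mid \mc F_{k-1}] \le C\norm{u_{k-1}}^2$ and $\norm{\projmat r_k} \le C\norm{u_{k-1}}^2$ (all valid once $\norm{u_{k-1}}$ is small, which occurs a.s.), the standard second-moment recursion for strongly monotone stochastic approximation gives the a.s.\ rate $\norm{u_k} = O(\sqrt{\stepsize_k \log k})$, and in particular $\sum_{i \le k}\norm{u_i}^2 = O(k^{1-\beta})$, which is $o(\sqrt k)$ precisely because $\beta > \tfrac12$.

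\emph{Averaging.} Rearranging the recursion, $\wt H u_{i-1} = \stepsize_i^{-1}(u_{i-1}-u_i) + \projmat\noisezero_i + \projmat\noisier_i(x_i) + \projmat r_i$; summing over $i \le k$ and Abel-summing $\sum_i \stepsize_i^{-1}(u_{i-1}-u_i)$ produces the boundary term $-\stepsize_k^{-1}u_k$ and $\sum_{i<k} u_i(\stepsize_{i+1}^{-1}-\stepsize_i^{-1})$, both $O(k^{\beta/2}\sqrt{\log k}) = o(\sqrt k)$ since $\beta < 1$. For the stochastic terms, $\tfrac1{\sqrt k}\sum_{i\le k}\projmat\noisezero_i = \projmat\bigl(\tfrac1{\sqrt k}\sum_i\noisezero_i\bigr) \cd \normal(0, \projmat\covmat\projmat)$ by Assumption~\ref{assumption:noise-vectors}; $\tfrac1{\sqrt k}\sum_i\projmat\noisier_i(x_i) \to 0$ in probability, its conditional variance being $O(\sum_i\norm{u_{i-1}}^2) = o(k)$; and $\tfrac1{\sqrt k}\sum_i\projmat r_i \to 0$ since $\sum_i\norm{r_i} = O(k^{1-\beta}) = o(\sqrt k)$. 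Hence $\wt H\cdot\tfrac1{\sqrt k}\sum_{i=1}^k u_{i-1} \cd \normal(0, \projmat\covmat\projmat)$, and since $\wt H$ is invertible on $\tangentset_\xdomain(x\opt)$ with inverse the operator $\projmat(\nabla^2 f(x\opt))^\dag\projmat$ of Proposition~\ref{proposition:perturbation} (the reduced-Hessian inverse, extended by zero off $\tangentset_\xdomain(x\opt)$), we get $\tfrac1{\sqrt k}\sum_{i=1}^k u_{i-1} \cd \normal(0, \projmat(\nabla^2 f(x\opt))^\dag\projmat\,\covmat\,\projmat(\nabla^2 f(x\opt))^\dag\projmat)$. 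Finally $\tfrac1{\sqrt k}\sum_{i=1}^k (x_i - x\opt) = -\tfrac1{\sqrt k}\sum_{i=1}^k u_{i-1} + \tfrac1{\sqrt k}\sum_{i\le K}\bigl((x_i - x\opt) + u_{i-1}\bigr)$, and the last, finite random sum is $o_p(1)$; the sign is immaterial for a centered symmetric Gaussian, which yields the claim.

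\emph{Main obstacle.} The delicate point is that $K$ is random and is not a stopping time in a convenient sense: whether $A x_k = b$ for \emph{all} large $k$ depends on the whole future noise path, so one cannot simply condition on $\{K = k_0\}$ and apply a martingale CLT to the post-$k_0$ iterates. This is handled by localization: one works on the events $\{K \le N,\ \sup_{k\ge N}\norm{x_k - x\opt}\le\epsilon\}$, whose probability tends to one as $N\to\infty$, on which the three steps above apply verbatim from time $N$ on, and one uses (from the rate) that the averaged quantity is insensitive to the first $N$ terms; letting $N\to\infty$ gives the convergence in distribution. The only other care required is pushing the $O(\norm{u_{k-1}}^2)$ perturbations $\projmat r_k$ and $\projmat\noisier_k(x_k)$ through the averaging without spoiling the $\sqrt k$ scaling — which is exactly where the restriction $\beta\in(\tfrac12,1)$ is used.
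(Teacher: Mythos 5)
Your overall architecture coincides with the paper's: finite-time identification of $\{x: Ax = b\}$, reduction to a Polyak--Juditsky recursion for $u_k = \projmat(z_k + x\opt)$ on the tangent subspace (this is, up to sign, exactly the recursion of Lemma~\ref{lemma:recursion-for-error}), a Ces\`aro rate $\frac{1}{\sqrt{k}}\sum_{i\le k}\norm{x_i - x\opt}^2 \cas 0$, and then averaging. The only stylistic difference in the last step is that you carry out the averaging by direct summation by parts, whereas the paper routes it through the generic Proposition~\ref{proposition:generalization-of-Polyak-Juditsky} with the product matrices $B_i^k$ and the approximation $\wb{B}_i^k \approx (\projmat H \projmat)^\dag$; these are two presentations of the same Polyak--Juditsky argument, and your version is fine.

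The one genuine gap is in the rate-of-convergence step, and it sits precisely at the obstacle you yourself flag. You propose to ``work on the events $\{K \le N,\ \sup_{k \ge N}\norm{x_k - x\opt} \le \epsilon\}$ \ldots on which the three steps above apply verbatim from time $N$ on.'' But $\{K \le N\}$ is not $\mc{F}_N$-measurable (it depends on the entire future noise path), so restricting to this event destroys the martingale structure you need: quantities like $\E[\norm{u_{k+1}}^2 \indic{K \le N} \mid \mc{F}_{k-1}]$ do not satisfy the one-step contraction, and the ``standard second-moment recursion'' cannot be run on that event. This is the same difficulty as conditioning on $\{K = k_0\}$, just relocated. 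The paper's resolution (Lemma~\ref{lemma:convergence-rate-of-residual}) is different in a way that matters: for each \emph{deterministic} $t$ it defines an auxiliary coupled sequence $x_k^t$ that equals $x_k$ for $k \le t$ and thereafter evolves by projected stochastic gradient on $\{Ax = b, Cx \le d\}$ using the same noise functions $\noise_k(\cdot)$; the martingale arguments and the bound $\frac{1}{\sqrt{k}}\sum_i \norm{x_i^t - x\opt}^2 \cas 0$ then go through \emph{unconditionally} for every fixed $t$ (via the $\mc{F}_{i-1}$-measurable events $\event_{i/2,i}$ and Lemma~\ref{lemma:funny-zero-sum}), and one concludes by observing that $x_k = x_k^T$ identically for the a.s.-finite identification time $T$. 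Separately, the pointwise a.s.\ rate $\norm{u_k} = O(\sqrt{\stepsize_k \log k})$ you invoke is stronger than what is needed or what the paper proves (it establishes only the Ces\`aro version), and would itself require a law-of-the-iterated-logarithm-type argument; you should weaken the claim to the Ces\`aro rate, which suffices for every place you use it.
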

\noindent
We defer the
proof of Theorem~\ref{theorem:asymptotic-normality} to
Supplement Sec.~\ref{sec:proof-asymptotic-normality}.

\subsection{Slow convergence for nonlinear constraint sets}
\label{sec:slow-convergence}

Theorems~\ref{theorem:as-convergence}
and~\ref{theorem:manifold-identification} guarantee almost sure convergence
and finite time constraint identification, but
Theorem~\ref{theorem:asymptotic-normality} provides an optimal convergence
rate only when the constraints are linear,
and this is fundamental. Indeed, we provide two results
showing the sub-optimality of dual averaging (both our variant and
Nesterov's original version~\cite{Nesterov09}) on a simple optimization
problem.

To make this failure concrete, let $e_1$ be
the first standard basis vector. Consider the problem (for $n \ge 2$) 
with $\statdomain = \R^n$, $\statrv \sim \normal(0, I)$, 
$\xdomain = \{x: \norm{x}^2-1 \le 0\}$ and 
$\loss(x; \statval) =  -(e_1 + \statval)^T x$. In this case, 
problem~\eqref{eqn:problem} becomes
\begin{equation}  
\label{eqn:simple-linear-ball-problem} 
  \minimize_{x \in \R^n} ~ -\! e_1^T x 
  ~~ \subjectto ~ \ltwo{x}^2 \le 1. 
\end{equation}
The optimum for program~\eqref{eqn:simple-linear-ball-problem} is $x\opt =
e_1$.  The Lagrangian for the problem is $\mc{L}(x, \lambda) = -e_1^T x +
\frac{\lambda}{2} (\ltwo{x}^2 - 1)$ with optimal dual multiplier $\lambda\opt =
1$, whence
Corollary~\ref{corollary:shapiro-normality} and the lower bound of
Theorem~\ref{theorem:local-minimax-lower} show that the optimal asymptotic
covariance is $I - e_1 e_1^T$.  As we show, however,
dual averaging and our variant are
suboptimal even with $g_k = e_1 + \statrv_k$ for
$\statrv_k \simiid \normal(0, I)$.

We first consider the
variant~\eqref{eqn:variant-dual-averaging} of dual averaging
with $z_k = \sum_{i = 1}^k \stepsize_i g_i$.
\begin{observation}
  \label{observation:failure-dual-averaging}
  Let the stepsizes $\stepsize_i = i^{-\steppow}$ for some
  $\steppow \in (\half, 1)$, and let the iterates $x_k$ be generated by the
  dual averaging procedure~\eqref{eqn:variant-dual-averaging}.  Then
  \begin{equation*}
    \frac{1}{k^\steppow} \sum_{i=1}^k (x_i -x\opt) \cd \normal \left(0, 
      \sigma_\steppow^2(I-e_1 e_1^T)\right), 
    ~~ \mbox{where} ~~
    \sigma_\steppow^2 \defeq \frac{(1-\steppow)^2}{\steppow^2} \sum_{i=1}^\infty \stepsize_i^2.
  \end{equation*}
\end{observation}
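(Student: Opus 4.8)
The plan is to exploit the extreme simplicity of problem~\eqref{eqn:simple-linear-ball-problem}: the stochastic gradients are $g_i = -(e_1 + \statrv_i)$ (note the loss is $\loss(x;\statval) = -(e_1+\statval)^Tx$, so $\nabla\loss = -(e_1+\statval)$), which are i.i.d.\ with mean $-e_1$, and the dual averaging update $x_k = \argmin_{\ltwo{x}^2\le1}\{\<z_{k-1},x\> + \half\ltwo{x}^2\}$ has a closed form. Indeed, writing $A_{k-1} = \sum_{i\le k-1}\stepsize_i$ and $\bar z_{k-1} = z_{k-1}/A_{k-1}$, the update is $x_k = -z_{k-1}/\max\{1,\ltwo{z_{k-1}}\}$ — the projection of $-z_{k-1}$ onto the unit ball. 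So the whole analysis reduces to understanding the random walk $z_{k-1} = -\sum_{i\le k-1}\stepsize_i(e_1+\statrv_i)$, whose mean is $-A_{k-1}e_1$ with $A_{k-1}\to\infty$ (since $\steppow<1$), and whose fluctuations are of order $(\sum_{i\le k-1}\stepsize_i^2)^{1/2} = O(1)$. First I would show that $\ltwo{z_{k-1}} \ge A_{k-1} - O_p(1) \to \infty$ a.s., so that eventually the projection is active: $x_k = -z_{k-1}/\ltwo{z_{k-1}}$ for all large $k$.

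Once the projection is active, I would Taylor-expand. Decompose $-z_{k-1} = A_{k-1}e_1 + w_{k-1}$ where $w_{k-1} = -\sum_{i\le k-1}\stepsize_i\statrv_i$, and write $\ltwo{z_{k-1}}^2 = A_{k-1}^2 + 2A_{k-1}w_{k-1}^{(1)} + \ltwo{w_{k-1}}^2$ where $w_{k-1}^{(1)} = e_1^Tw_{k-1}$. Then
\begin{equation*}
  x_k = \frac{A_{k-1}e_1 + w_{k-1}}{A_{k-1}\sqrt{1 + 2w_{k-1}^{(1)}/A_{k-1} + \ltwo{w_{k-1}}^2/A_{k-1}^2}}
  = e_1 + \frac{\projmat w_{k-1}}{A_{k-1}} + O_p(A_{k-1}^{-2}),
\end{equation*}
where $\projmat = I - e_1e_1^T$; the key cancellation is that the $e_1$-component of $w_{k-1}$ is killed at first order by the normalization (the component of $w_{k-1}$ along $e_1$ contributes $w_{k-1}^{(1)}e_1/A_{k-1} - w_{k-1}^{(1)}e_1/A_{k-1} = 0$ to first order). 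So $x_k - x\opt = A_{k-1}^{-1}\projmat w_{k-1} + r_k$ with $\sum_{i\le k}\ltwo{r_i}$ negligible at scale $k^\steppow$ (here one uses $A_{k-1}\asymp k^{1-\steppow}$, so $A_{k-1}^{-2}\asymp k^{2\steppow-2}$ and $\sum_{i\le k} i^{2\steppow-2}$ grows like $k^{2\steppow-1} = o(k^{2\steppow})$ — worth double-checking this is $o(k^\steppow)$, which holds iff $2\steppow-1\le\steppow$, i.e. $\steppow\le1$, true; and being a bit careful since $w$ itself grows slowly one should get $o_p(k^\steppow)$).

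It then remains to identify the limit of $k^{-\steppow}\sum_{i=1}^k A_{i-1}^{-1}\projmat w_{i-1}$. Here I would note $w_{i-1} = -\sum_{j\le i-1}\stepsize_j\statrv_j$ and $\projmat\statrv_j \simiid \normal(0,\projmat)$ restricted to the orthogonal complement of $e_1$, so the object is a deterministic linear functional of the i.i.d.\ Gaussians $\{\projmat\statrv_j\}$ and is therefore exactly Gaussian for each $k$ with mean zero. I would compute its covariance: writing $S_k = \sum_{i=1}^k A_{i-1}^{-1}w_{i-1}$, interchange sums to get $S_k = -\sum_{j\ge1}\stepsize_j\statrv_j\big(\sum_{i: j<i\le k}A_{i-1}^{-1}\big)$; using $A_{i-1}^{-1}\sim (1-\steppow)^{-1}i^{\steppow-1}$ and $\sum_{j<i\le k}i^{\steppow-1}\sim \steppow^{-1}k^\steppow$ for $j = o(k)$, the inner sum is $\sim \frac{1}{\steppow(1-\steppow)}k^\steppow$ uniformly over the bulk of $j$, so $k^{-\steppow}S_k \to -\frac{1}{\steppow(1-\steppow)}\sum_j\stepsize_j\statrv_j$ in an appropriate sense. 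Taking covariance, $k^{-2\steppow}\Var$ converges to $\frac{1}{\steppow^2(1-\steppow)^2}\cdot\frac{(1-\steppow)^2 ???}{}$ — here I must assemble the constant as $\sigma_\steppow^2 = \frac{(1-\steppow)^2}{\steppow^2}\sum_{i=1}^\infty\stepsize_i^2$, so the bookkeeping of constants (the $(1-\steppow)$ from $A_i\sim \stepsize_0 i^{1-\steppow}/(1-\steppow)$ and the $\steppow$ from the partial-sum asymptotics) must be done carefully to land exactly on this value. Applying a Lindeberg-type CLT / dominated-convergence argument to pass the partial-sum limit through and projecting with $\projmat = I - e_1e_1^T$ yields the claimed $\normal(0,\sigma_\steppow^2(I-e_1e_1^T))$.

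The main obstacle I anticipate is making the error analysis rigorous rather than formal: controlling $r_k$ uniformly requires a quantitative a.s.\ lower bound on $\ltwo{z_{k-1}}$ (so that we are genuinely on the sphere and the Taylor expansion of $1/\sqrt{\cdot}$ is valid), and then showing $\sum_{i\le k}\ltwo{r_i} = o_p(k^\steppow)$ despite the slowly-growing fluctuations $w$ (a second-moment or Kronecker-lemma argument should suffice, leaning on $\sum\stepsize_i^2<\infty$). The other delicate point is the exchange-of-summation asymptotics for the weights $\sum_{j<i\le k}A_{i-1}^{-1}$ uniformly in $j$ up to $j\asymp k$, which needs a dominated-convergence or Toeplitz-lemma argument to conclude; the tail $j$ near $k$ contribute negligibly. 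Everything else is a routine Gaussian CLT computation once the reduction $x_k = -z_{k-1}/\ltwo{z_{k-1}}$ is in hand.
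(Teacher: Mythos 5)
Your proposal is correct and follows essentially the same route as the paper's proof: reduce to $x_k = z_{k-1}/\norm{z_{k-1}}$ once the constraint is identified in finite time, linearize the normalization so that the radial ($e_1$) component of the noise cancels while the tangential component survives at order $A_{k-1}^{-1}$, and compute the limiting variance via the Toeplitz weights $B_{j,k} = k^{-\steppow}\sum_{i \ge j} A_i^{-1} \to (1-\steppow)/\steppow$ combined with dominated convergence over the summable sequence $\stepsize_j^2$. The one slip is your stated asymptotic $A_{i-1}^{-1} \sim (1-\steppow)^{-1} i^{\steppow - 1}$: since $A_i \sim i^{1-\steppow}/(1-\steppow)$, the correct relation is $A_i^{-1} \sim (1-\steppow)\, i^{\steppow-1}$, which is exactly what makes the constant resolve to $\sigma_\steppow^2 = \frac{(1-\steppow)^2}{\steppow^2}\sum_{i=1}^\infty \stepsize_i^2$ rather than the $\frac{1}{\steppow^2(1-\steppow)^2}$ factor you tentatively wrote down.
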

\noindent
See Supplement Sec.~\ref{sec:proof-failure-dual-averaging} for a proof.
In this case, even the \emph{rate} of convergence is lost: denoting
$\wb{x}_k = \frac{1}{k}\sum_{i=1}^k x_i$, then asymptotic normality holds
for $\wb{x}_k - x\opt$, but $\wb{x}_k - x\opt$ is order $k^{\steppow-1} \gg
k^{-\half}$,

Our second observation applies to dual averaging with $z_k = \sum_{i =
  1}^k g_i$.
\begin{observation}
  \label{observation:failure-classical-dual-averaging}
  Let the stepsize sequence $\stepsize_k \propto k^{-\steppow}$ for some
  $\steppow \in \openright{0}{1}$. Then the classical dual
  averaging~\eqref{eqn:standard-dual-averaging} iterates satisfy
  \begin{equation*}
    \frac{1}{\sqrt{k}} \sum_{i = 1}^k (x_i - x\opt) \cd
    \normal \left(0, 2 (I-e_1 e_1^T)\right).
  \end{equation*}
\end{observation}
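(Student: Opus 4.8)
The plan is to derive a closed form for the classical dual-averaging iterates in problem~\eqref{eqn:simple-linear-ball-problem} once the ball constraint becomes active, and then to carry out a delta-method expansion of the resulting normalization map $w \mapsto w/\norm{w}$. Since $\nabla\loss(x;\statval) = -(e_1 + \statval)$ does not depend on $x$, the accumulated subgradient $z_k = \sum_{i=1}^k g_i$ satisfies $-z_k = k e_1 + S_k$ with $S_k \defeq \sum_{i=1}^k \statrv_i$. The update~\eqref{eqn:standard-dual-averaging} is the Euclidean projection of $-\stepsize_k z_k$ onto the unit ball, so $x_{k+1} = -\stepsize_k z_k$ if $\stepsize_k\norm{z_k}\le 1$ and $x_{k+1} = -z_k/\norm{z_k}$ otherwise. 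Because $\norm{z_k}/k \to 1$ almost surely (SLLN) while $\stepsize_k\norm{z_k} = \Theta(k^{1-\steppow})\to\infty$, there is an a.s.\ finite random index $K_0$ such that for $k \ge K_0$ the constraint is active and $x_{k+1} = (k e_1 + S_k)/\norm{k e_1 + S_k}$. Note this iterate is \emph{independent of the stepsize}: once the constraint binds, classical dual averaging on the ball discards exactly the long-stepping/averaging that \cite{PolyakJu92} exploit, which is the source of suboptimality. Since $\norm{x_m}\le 1$ always, the at most $K_0$ initial iterates contribute $O_p(K_0/\sqrt k)=o_p(1)$ to $\frac1{\sqrt k}\sum_{i\le k}(x_i - e_1)$, so we may use the closed form throughout.

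Next I would expand the closed form. Writing $S_j = u_j e_1 + \xi_j$ with $u_j = \<S_j, e_1\> \sim\normal(0,j)$ and $\xi_j = (I - e_1 e_1^T)S_j$, and using $\norm{j e_1 + S_j} = (j+u_j)\sqrt{1 + \norm{\xi_j}^2/(j+u_j)^2}$, a Taylor expansion of $t\mapsto 1/\sqrt{1+t}$ shows the $e_1$-component of $x_{j+1}-e_1$ is $-\tfrac12\norm{\xi_j}^2/(j+u_j)^2 + (\text{lower order}) = O_p(1/j)$, while
\begin{equation*}
  (I - e_1 e_1^T)(x_{j+1}-e_1) = \frac{\xi_j}{j} + R_j,
  \qquad \text{with}\qquad R_j = -\frac{u_j\xi_j}{j^2} + (\text{lower order}).
\end{equation*}
Every discarded term is $O_p(1/j)$ or smaller; bounding second (and third) moments, and treating the exponentially unlikely event $\{u_j < -j/2\}$ crudely (where the iterate is still bounded by $1$), gives $\E\norm{R_j} = O(1/j)$ and the same for the $e_1$-component, hence $\frac1{\sqrt k}\sum_{j\le k}(\norm{R_j} + |e_1\text{-component}|) = O(\log k/\sqrt k)\to 0$ in $L^1$. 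Therefore
\begin{equation*}
  \frac{1}{\sqrt k}\sum_{m=1}^k (x_m - e_1)
  = \frac{1}{\sqrt k}\sum_{m=2}^k \frac{(I - e_1 e_1^T)\,S_{m-1}}{m-1} + o_p(1).
\end{equation*}

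Finally I would exchange the order of summation. With $\zeta_i \defeq (I - e_1 e_1^T)\statrv_i \simiid \normal(0, I - e_1 e_1^T)$ and $H_n$ the $n$th harmonic number,
\begin{equation*}
  \frac{1}{\sqrt k}\sum_{m=2}^k \frac{1}{m-1}\sum_{i=1}^{m-1}\zeta_i
  = \frac{1}{\sqrt k}\sum_{i=1}^{k-1} c_{i,k}\,\zeta_i,
  \qquad c_{i,k} \defeq \sum_{l=i}^{k-1}\frac1l = H_{k-1}-H_{i-1},
\end{equation*}
which is an \emph{exactly} Gaussian vector with covariance $\big(\tfrac1k\sum_{i=1}^{k-1}c_{i,k}^2\big)(I - e_1 e_1^T)$. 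Since $c_{i,k} = \log(k/i) + O(1)$ uniformly and the small-$i$ terms contribute only $O(\log^2 k/k)$, a Riemann-sum argument (dominated convergence handling the $t\to 0$ singularity) gives $\tfrac1k\sum_{i=1}^{k-1}c_{i,k}^2 \to \int_0^1(\log t)^2\,dt = 2$. Hence these Gaussians converge in distribution to $\normal(0, 2(I - e_1 e_1^T))$, and Slutsky's lemma with the $o_p(1)$ remainder yields the claim. (One could instead invoke a Lindeberg--Feller triangular-array CLT, which gives the same limit without using Gaussianity of $\statrv$; the argument otherwise parallels that for Observation~\ref{observation:failure-dual-averaging}.)

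The main obstacle is the second step: controlling the expansion remainders $R_j$ (and the $e_1$-direction error) well enough, \emph{uniformly over the random activation time $K_0$}, that they disappear after dividing by $\sqrt k$ — concretely, establishing $\sum_{j\le k}\E\norm{R_j} = o(\sqrt k)$ while accounting for the low-probability events on which $j+u_j$ is not comparable to $j$. The closed-form/activation step is elementary and the final weighted-sum limit is a routine computation.
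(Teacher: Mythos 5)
Your proposal is correct and follows essentially the same route as the paper's proof: reduce to the normalized iterates $(e_1+\overline{\noise}_k)/\norm{e_1+\overline{\noise}_k}$ after an almost surely finite activation time, show that the first coordinate and the normalization remainder are negligible after dividing by $\sqrt{k}$, and exchange the order of summation to identify the limiting variance of the weighted Gaussian sum as $2$. The only cosmetic differences are that you obtain $\frac{1}{k}\sum_i c_{i,k}^2 \to 2$ via a Riemann-sum approximation of $\int_0^1 (\log t)^2\,dt$ where the paper uses the exact identity $\sigma_k^2 = 2 - \frac{1}{k}\sum_{l\le k} l^{-1} - \frac{1}{k}(\sum_{l \le k} l^{-1})^2$, and that you control the remainder terms in $L^1$ rather than almost surely via the law of the iterated logarithm.
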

\noindent
See Supplement
Sec.~\ref{sec:proof-failure-classical-dual-averaging} for a proof.

We give a bit of intuition for the difficulty in
Observations~\ref{observation:failure-dual-averaging}
and~\ref{observation:failure-classical-dual-averaging}. We have that
$\sum_{i = 1}^k \stepsize_i g_i = (\sum_{i = 1}^k
\stepsize_i) \nabla f(x\opt) + \sum_{i = 1}^k \stepsize_i \noise_i$, where
$\noise_i \simiid \normal(0, I)$. But in projecting to the curved surface of
the ball $\{x : \ltwo{x} \le 1\}$, there is still sufficient noise in the
sum $\sum_{i = 1}^k \stepsize_i \noise_i$ to induce variance. In the case of
linear constraints $Ax \le b$, the vector $z_k = \sum_{i = 1}^k \stepsize_i
g_i$ eventually lies in the normal cone to the active face
$\{x : Ax = b\}$, so that projections force all iterates into the subspace
$\{x : Ax = b\}$, with no curvature for additional variance. Stochastic
gradient descent---which fails to even identify the active
constraints---similarly has sub-optimal rates for this problem.

\subsection{A Riemannian stochastic gradient procedure}
\label{sec:riemannian-fun}

The challenges we outline in Section~\ref{sec:slow-convergence} for
classical dual averaging and stochastic gradient methods necessitate
alternative algorithms for asymptotically optimal online procedures. To that
end, we develop an algorithm that alternates between dual averaging and a
stochastic gradient method on the manifold of the active constraints.  The
intuition is that we use dual averaging~\eqref{eqn:variant-dual-averaging}
to identify the optimal manifold, then use a Riemannian stochastic
gradient-like method~\cite{Bonnabel13, TripuraneniStJiReJo17} on the active
manifold. Letting $\M = \{x : f_i(x) = 0, i \in [\numactive]\}$ denote the
optimal manifold on which the solutions lie, two challenges arise in
the analysis of any such method. First, projections onto $\M$ are not
necessarily nonexpansive---a major component of most
analyses of stochastic gradient-based methods---so that showing convergence
of a pure Riemannian method is challenging.\footnote{Many papers on
  Riemannian stochastic gradient methods assume
  convergence, or that iterates remain in
  a small neighborhood of $x\opt$, as a condition; cf.\ \cite[Assumption
    2]{Bonnabel13,TripuraneniStJiReJo17}.} Even in noiseless settings,
gradient descent and
other first-order methods do not enjoy global convergence results for
minimization of convex $f : \R^n \to \R$
on Riemannian manifolds~\cite{AbsilMaSe09, AbsilBaGa07, Boumal14}.

To that end, we present Algorithm~\ref{alg:da-riemannian}, which is complex
and perhaps of more intellectual than practical interest,
but fulfills our desiderata of being (i) fully online, (ii) convergent with
probability 1, and (iii) asymptotically optimal.
To describe the algorithm and its convergence,
we require somewhat more notation.
For a closed set $\mc{M}$, let $\proj_{\mc{M}}(x)
= \argmin_{y \in \mc{M}}\{\norm{x - y}\}$ denote the Euclidean
projection of $x$ onto $\mc{M}$, with an arbitrary
rule for choosing the projecting if it is non-unique.
When the set $\mc{M} = \{x \in \R^n:
G(x) = 0\}$ for a continuously differentiable $G: \R^n \to
\R^l$, we let
$\grad G(x) = [\grad g_1(x) \cdots \grad g_l(x)] \in \R^{n \times
  l}$ and denote the tangent space to
$\mc{M}$ at $x$ by
\begin{equation*}
  \T_{\mc{M}}(x) \defeq \{v\in \R^n: \grad G(x)^T v = 0\},
\end{equation*}
and we define the orthogonal projector
\begin{equation*}
  \projt{x} = I - \grad G(x) (\grad G(x)^T \grad G(x))^{\dag}
  \grad G(x)^T
  \in \R^{n \times n}.
\end{equation*}

\begin{algorithm}[t]
  \caption{\label{alg:da-riemannian}
    Riemannian Stochastic Gradient with Dual Averaging}

  \begin{algorithmic}[1] 
    \State Initialize $k = 0$, $\rsgiter_0 \in \xdomain$, $\M_0 =
    \emptyset$. Input $q \in (0, 1)$, dual averaging
    times $\timesda$ with
    $1 \lesssim |\{i \in \timesda \mid i \le k\}| / k^\rho \lesssim 1$
    for some $\rho \in (0, 1)$, and stepsizes
    $\stepsize_k = \stepsize_0 k^{-\steppow}$ with $\steppow \in (\half, 1)$.
    Require that $q < \min\{\frac{1 - 2 \steppow}{(1 - \steppow) \rho},
    \frac{1}{2(1 - \rho) \steppow}\}$.

    \For{$k = 1, 2, \ldots$, $k \not \in \timesda$}

    
    \State
    Compute the manifold $\M_k$ that $x\supda_k$ identifies:
    \begin{align*}
      \M_k = \cap_{i \in \indset_k} \left\{x \in \R^n: f_i(x) = 0\right\}
      ~~\text{where}~~
      \indset_k = \left\{i \in [m]: f_i(x\supda_{\datime_k}) = 0\right\}.
    \end{align*}

    \State \label{state:riemann-sgd-manifold-update}
    Let $g_k = \grad f(\rsgiter_k) + \noise_k(\rsgiter_k)$
    and compute the iterate 
    \begin{equation*}
      \rsgiter^{\manifold}_{k+1} = \begin{cases}
	\proj_{\M_k}(\rsgiter_k - \stepsize_k \projmatr[\T_{\M_k}(\rsgiter_k)]
        g_k)
        & ~ \mbox{if}~ \M_k = \M_{k-1} \\
	x\supda_k & ~\text{otherwise}.
      \end{cases}
    \end{equation*}

    \State
    Let $\daaverage_k \defeq (\sum_{i \in \timesda}^{i \le k}
    \stepsize\supda_i)^{-1} \sum_{i \in \timesda}^{i \le k}
    \stepsize\supda_i x\supda_i$.

    \State \label{state:riemann-sgd-stay-X}
    Let $\ball_{k, 1} = \ball(\daaverage_k, \eps_k)$ and
    $\ball_{k, 3} = \ball(\daaverage_k, 3\eps_k)$
    for $\epsilon_k = (\sum_{i \in \timesda}^{i \le k} \stepsize\supda_i)^{-q}$.
    Compute
    \begin{equation*}
      \rsgiter_{k+1} = \begin{cases}
	\proj_{\xdomain}(\rsgiter^{\manifold}_{k+1})
        & \mbox{if}~ \proj_{\xdomain}(\rsgiter^{\manifold}_{k+1}) \in \ball_{k, 3} \\
	\argmin \left\{\norm{x} \mid x \in \M_k \cap \xdomain \cap \ball_{k, 1}
        \right\}
        &
        \mbox{if}~
        \proj_{\xdomain}(\rsgiter^{\manifold}_{k+1}) \not\in \ball_{k, 3},
	\M_k \cap \xdomain \cap \ball_{k, 1} \neq \emptyset \\
	\argmin \left\{\norm{x} \mid x \in \M_k \cap \xdomain\right\}
        & \text{otherwise}.
      \end{cases}
    \end{equation*}
    \EndFor
  \end{algorithmic}
  \label{algorithm:Rdg}
\end{algorithm}

With this notation established, we can describe
Algorithm~\ref{alg:da-riemannian}.  The algorithm alternates between
asymptotically infrequent iterates of dual averaging at iterates $k \in
\timesda$, constructing a sequence $x\supda_k$, and frequent iterates of
Riemannian stochastic gradient-like method that projects onto the active
constraints, the smooth manifold $\M_k = \{x : f_i(x) = 0 ~ \mbox{for}~ i
\in \indset_k\}$ where $\indset_k = \{i \in [m] \mid f_i(x\supda_k) = 0\}$
denotes the constraints dual averaging identifies.  The method takes a
stepsize sequence $\{\stepsize_k\}$ for the Riemannian stochastic gradient
method where $\stepsize_k = \stepsize_0 k^{-\steppow}$. For the dual
averaging iteration times $k \in \timesda$, we set the dual averaging
stepsizes via $\stepsize_k\supda = \stepsize_{t_k}$ where $t_k = |\{i \in
\timesda \mid i \le k\}|$, the same stepsize scaling as the Riemannian
method.  At each step $k \in \timesda$, the method updates the dual
averaging iterate via the update~\eqref{eqn:variant-dual-averaging} (with
$z_k = \sum_{i \in \timesda}^{i \le k} \stepsize_i\supda g_i$). Then for $k
\not\in \timesda$, the method performs a stochastic gradient step
(line~\ref{state:riemann-sgd-manifold-update}) but projects the stochastic
gradient $g_k$ onto the tangent space
of the active manifold $\M_k$. The final step of the
algorithm (line~\ref{state:riemann-sgd-stay-X}) guarantees that the iterates
$\rsgiter_k$ of the method stay near enough the dual averaging iterates,
which allows us to circumvent the difficulties of global convergence for
Riemannian methods.  As we demonstrate in the proof, this
asymptotically iterates a stochastic gradient method in the
tangent space $\T = \{v : \<\nabla f_i(x\opt), v\>
= 0, i \le \numactive\}$, and only updates
line~\ref{state:riemann-sgd-manifold-update} occur, as $\M_k = \M_{k-1}$ and
$\rsgiter_k^{\manifold} \in \xdomain$.

We prove the following theorem
in Supplement Sec.~\ref{sec:proof-asymptotic-normality-rsgd},
using the notation of Proposition~\ref{proposition:perturbation},
where $H\opt = \grad^2 f(x\opt) +
\sum_{i=1}^{\numactive}\lambda_i\opt\grad^2 f_i(x\opt)$
and $\projmatr$ is the projection onto
the tangent space $\T = \{v \in \R^n : v^T \grad f_i(x\opt) = 0,
i \in [\numactive]\}$.

\begin{theorem}
  \label{theorem:asymptotic-normality-rsgd}
  Let Assumptions~\ref{assumption:smoothness-of-objective},
  \ref{assumption:linear-independence-cq},
  \ref{assumption:restricted-strong-convexity}, and
  \ref{assumption:noise-vectors} hold.
  Then
  the iterates $\rsgiter_k$ of Algorithm~\ref{alg:da-riemannian}
  satisfy
  \begin{equation*}
    \frac{1}{\sqrt{k}}\sum_{i=1}^k (\rsgiter_i - x\opt)
    \cd
    \normal\left(0,
    \projmatr {H\opt}^{\dag} \projmatr \Sigma \projmatr {H\opt}^\dag
    \projmatr \right).
  \end{equation*}
\end{theorem}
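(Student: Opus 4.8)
The plan is to show that, almost surely, Algorithm~\ref{alg:da-riemannian} eventually behaves like a plain Riemannian stochastic gradient method on a \emph{fixed} $\mc{C}^2$ manifold, and then to run a Polyak--Juditsky averaging argument on the resulting recursion. To set this up, I would first treat the dual-averaging subsequence $(x\supda_i)_{i \in \timesda}$ as a standalone run of the variant~\eqref{eqn:variant-dual-averaging} indexed by its own clock (so its stepsizes are $\stepsize_0 j^{-\steppow}$, $\steppow \in (\half,1)$). Theorems~\ref{theorem:as-convergence} and~\ref{theorem:manifold-identification} then give $x\supda_i \cas x\opt$ and a random index past which the identified set $\indset_k$ equals the true active set $[\numactive]$; hence $\M_k \equiv \M \defeq \{x : f_i(x) = 0, i \le \numactive\}$ for all large $k$, and $\M_k = \M_{k-1}$, so the projection branch of line~\ref{state:riemann-sgd-manifold-update} is the one that executes. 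By Assumption~\ref{assumption:linear-independence-cq} (linear independence of $\{\nabla f_i(x\opt)\}_{i\le\numactive}$, or affineness), $\M$ is, near $x\opt$, a $\mc{C}^2$ submanifold of codimension $\numactive$ with tangent space $\T$, and the inactive constraints stay strictly slack on $\M$ near $x\opt$, so $\proj_\xdomain$ acts as the identity there.

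Next I would establish confinement and almost sure convergence of $\rsgiter_k$. The weighted dual-averaging average $\daaverage_k$ is a Ces\`aro-type average of $x\supda_i \cas x\opt$, hence $\daaverage_k \cas x\opt$, and the balls $\ball_{k,1}, \ball_{k,3}$ are therefore centered at points converging to $x\opt$ with a quantifiable rate. Using a local Lyapunov/supermartingale argument for the Riemannian step (Assumption~\ref{assumption:restricted-strong-convexity} makes the reduced Hessian $\projmatr H\opt \projmatr$ positive definite on $\T$), I would show the two fallback branches of line~\ref{state:riemann-sgd-stay-X} fire only finitely often: this is exactly where the schedule conditions relating $q$, $\rho$ and $\steppow$ are used, since they force the radius $\epsilon_k$ to shrink more slowly than the typical displacement $\|\rsgiter^{\manifold}_{k+1} - \daaverage_k\|$, which is $O_P(\sqrt{\stepsize_k})$. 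The confinement to a shrinking neighborhood of $x\opt$ is what lets us bypass the failure of global convergence / non-nonexpansiveness of $\proj_\M$ noted in the text, and a standard stochastic-approximation argument inside this neighborhood then yields both $\rsgiter_k \cas x\opt$ and the mean-square rate $\E[\ltwo{\rsgiter_k - x\opt}^2] = O(\stepsize_k)$.

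On the (almost sure) event that the manifold is fixed and the safeguard inactive, the update is $\rsgiter_{k+1} = \proj_\M\big(\rsgiter_k - \stepsize_k \projmatr[\T_\M(\rsgiter_k)](\nabla f(\rsgiter_k) + \noisezero_k + \noisier_k(\rsgiter_k))\big)$. Writing $\delta_k = \rsgiter_k - x\opt$ and Taylor-expanding the retraction $\proj_\M$, the projector $\projmatr[\T_\M(\cdot)]$, and the Riemannian gradient about $x\opt$, one obtains
\begin{equation*}
  \delta_{k+1} = \delta_k - \stepsize_k \projmatr H\opt \projmatr \, \delta_k - \stepsize_k \projmatr \noisezero_k + r_k,
\end{equation*}
where the crucial algebraic fact is that the Riemannian Hessian of $f|_\M$ at its critical point $x\opt$ is $\projmatr H\opt \projmatr$ on $\T$ --- the multiplier term $\sum_{i\le\numactive}\lambda_i\opt\nabla^2 f_i(x\opt)$ in $H\opt$ supplying precisely the second-fundamental-form correction, because $\nabla f(x\opt) = -\sum_{i\le\numactive}\lambda_i\opt\nabla f_i(x\opt) \perp \T$ --- and the remainder obeys $\ltwo{r_k} \lesssim \ltwo{\delta_k}^2 + \stepsize_k \ltwo{\delta_k}(1 + \ltwo{g_k}) + \stepsize_k^2 \ltwo{g_k}^2 + \stepsize_k\ltwo{\noisier_k(\rsgiter_k)}$. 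Since $\frac{1}{\sqrt k}\sum_{i\le k}\ltwo{\delta_i}^2 = O_P(k^{1/2-\steppow}) \to 0$, only the tangential recursion $\projmatr\delta_{k+1} = \projmatr\delta_k - \stepsize_k\projmatr H\opt\projmatr\delta_k - \stepsize_k\projmatr\noisezero_k + \projmatr r_k$ matters, and, after dividing by $\stepsize_k$, $\projmatr r_k$ splits into a bias part that is negligible after averaging and dividing by $\sqrt k$ (using $\E\ltwo{\delta_k}^2 = O(\stepsize_k)$ and $2\steppow > 1$) and a martingale-difference part with conditional second moment $O(\ltwo{\delta_k}^2) = O(\stepsize_k)$; all of this uses only the second-moment bounds of Assumption~\ref{assumption:noise-vectors}. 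The Polyak--Juditsky averaging theorem (in its martingale-plus-remainder form), with martingale noise $\projmatr\noisezero_k$ satisfying $\frac{1}{\sqrt k}\sum_{i\le k}\projmatr\noisezero_i \cd \normal(0, \projmatr\Sigma\projmatr)$, then gives $\frac{1}{\sqrt k}\sum_{i\le k}\projmatr\delta_i \cd \normal(0, \projmatr H\opt^\dag\projmatr\,\Sigma\,\projmatr H\opt^\dag\projmatr)$, the covariance being the inverse of the reduced Hessian on $\T$ in the same sense as in Proposition~\ref{proposition:perturbation}. Finally, the contributions to $\frac{1}{\sqrt k}\sum_{i\le k}(\rsgiter_i - x\opt)$ of the $o(k)$ dual-averaging indices $i \in \timesda$ and of the finitely many pre-stabilization iterates are asymptotically negligible under the schedule of Algorithm~\ref{alg:da-riemannian}, which finishes the proof.

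The main obstacle is the second step: simultaneously proving that the safeguard in line~\ref{state:riemann-sgd-stay-X} is eventually inactive \emph{and} obtaining the $O(\stepsize_k)$ mean-square rate for $\rsgiter_k$. This couples the sparse dual-averaging clock with the dense Riemannian clock, requires a quantitative rate for $\daaverage_k - x\opt$ rather than mere consistency, and must contend with the fact that the identification time is random and measurable with respect to the entire future noise sequence; it is precisely here that the three-way case split in line~\ref{state:riemann-sgd-stay-X} and the schedule constraints on $(q,\rho,\steppow)$ earn their keep. A secondary difficulty is keeping the retraction and projector-variation errors controlled using only the second-moment assumptions on the noise, which forces the martingale refinement of the averaging argument rather than a crude $\sum\ltwo{r_i}$ bound.
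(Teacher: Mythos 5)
Your proposal follows essentially the same route as the paper's proof: finite-time identification via Theorem~\ref{theorem:manifold-identification}, a rate for $\daaverage_k$ combined with the $(q,\rho,\steppow)$ schedule to show the safeguard branches of line~\ref{state:riemann-sgd-stay-X} fire only finitely often, a Taylor expansion of $\proj_\M$ and the projected gradient identifying $\projmatr H\opt \projmatr$ as the reduced Hessian, and a generalized Polyak--Juditsky averaging argument on the tangential recursion with the quadratic remainders controlled by $\frac{1}{\sqrt{k}}\sum_i\ltwo{\rsgiter_i - x\opt}^2 \cas 0$. The only cosmetic difference is that the paper obtains almost-sure rates ($\gamma_k^{-1}\ltwo{\Delta_k}^2 \cas 0$) via an auxiliary fixed-manifold sequence and localized Robbins--Siegmund estimates rather than your unconditional mean-square bound, but this serves the same purpose.
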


\noindent
The extended Riemannian stochastic gradient method,
coupled with identification results that dual averaging supplies,
is asymptotically optimal.


\newcommand{\xtrue}{x^{\rm true}}

\section{Numerical experiments}
\label{sec:experiment}

In this section, we perform a small simulation study to compare dual
averaging~\eqref{eqn:variant-dual-averaging} with stochastic (and Riemannian)
gradient methods on nonnegative least squares
and ridge regression.  We take our observations $(a_i, b_i) \in \R^n \times
\R$ and use the squared loss $\loss(x; (a, b)) = \half (\< a, x\> - b)^2$.
Both problems are of the form~\eqref{eqn:problem}, where for nonnegative
least squares, we use the constraints $\xdomain = \R^n_+$, and for the ridge
regression problem, we set $\xdomain = \{x \in \R^n: \norm{x}^2 \le
\lambda\}$, where $\lambda > 0$.

Now we describe our experimental setting. To allow easier visualization, we
use dimension $n = 2$ and generate $b_i = \<a_i, \xtrue\> + \noise_i$ for
$a_i \simiid \normal(0, I_2)$ and $\noise_i \simiid \normal(0, 1)$.  For the
nonnegative least squares problem, we set $\xtrue = (1, -1)$, while for the
ridge regression problem, we set $\xtrue = (1, 1)$ and $\lambda=1$, giving
solutions $x\opt = (1, 0)$ and $x\opt = (\frac{1}{\sqrt{2}},
\frac{1}{\sqrt{2}})$, respectively.  For both problems, the unique solution
$x\opt$ lies on the boundary of the feasible set $\xdomain$. To fairly
compare the performance of the algorithms, we use the same parameters,
initializing at $x = 0$ and using stepsizes $\stepsize_k = k^{-\steppow}$
for $\steppow = 3/4$.  In each experiment, we run each method for $K$
iterations, and we perform $T$ independent replications.

\subsection{Constraint Identification}

Our first set of numerical results shows that the
stochastic gradient method
fails to identify active constraints, while dual averaging identifies
them. We present the results graphically in
Figure~\ref{fig:MI}. For each of the two plots, the
horizontal axis indexes the
iteration $k$ (over $K = 100$ iterations)
and the vertical axis represents the proportion
of the $T = 1000$ tests in which the iterate $x_k$ lies on the active
constraints. Both plots show that the dual averaging iterates (the solid
red curve) identify the constraints
(with 100\% accuracy by iteration 40),
while the stochastic gradient method
(the dotted blue curve) does not.

\begin{figure}[!htb]
\minipage{0.48\textwidth}
  \includegraphics[width=\linewidth]{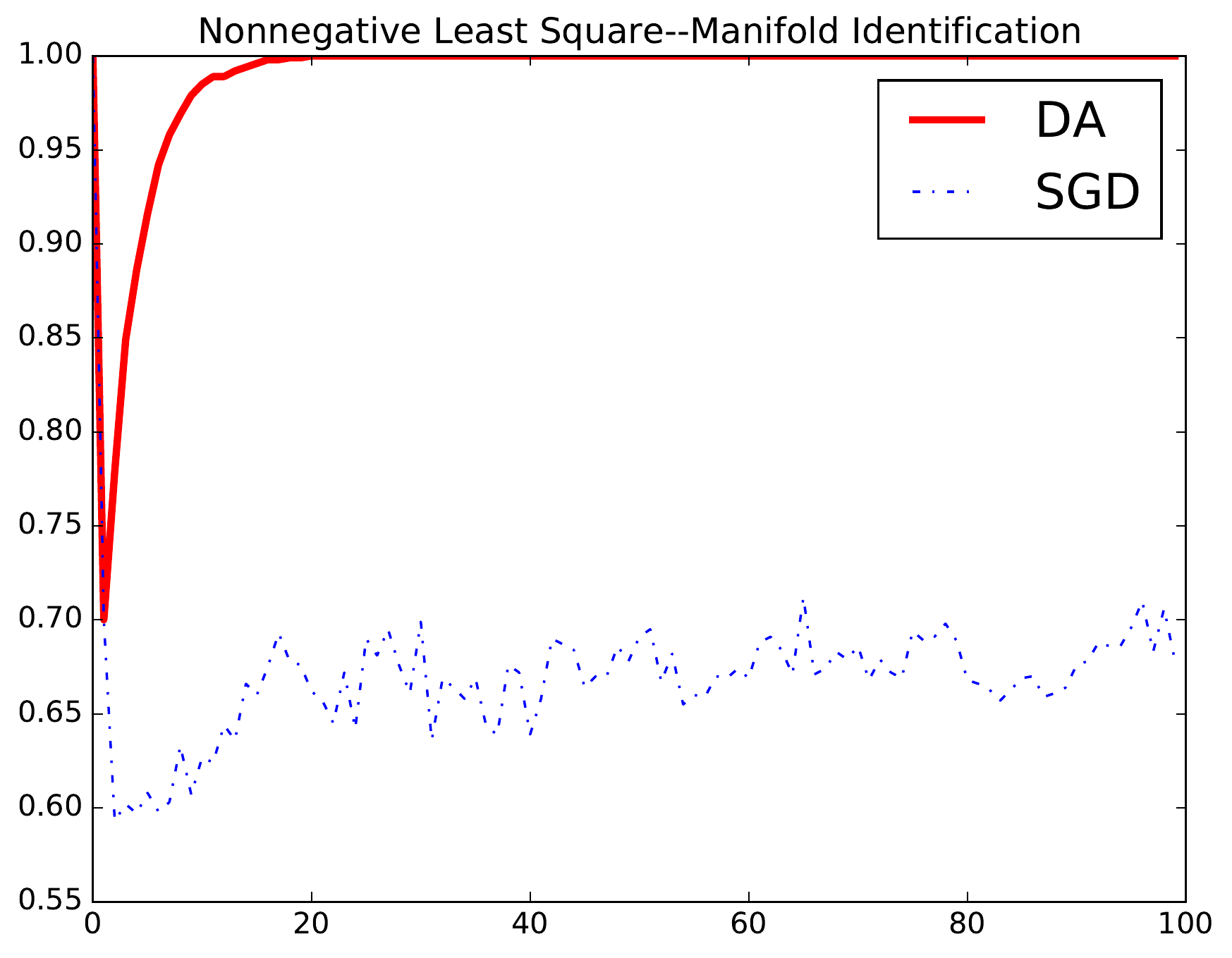}
\endminipage\hfill
\minipage{0.48\textwidth}%
  \includegraphics[width=\linewidth]{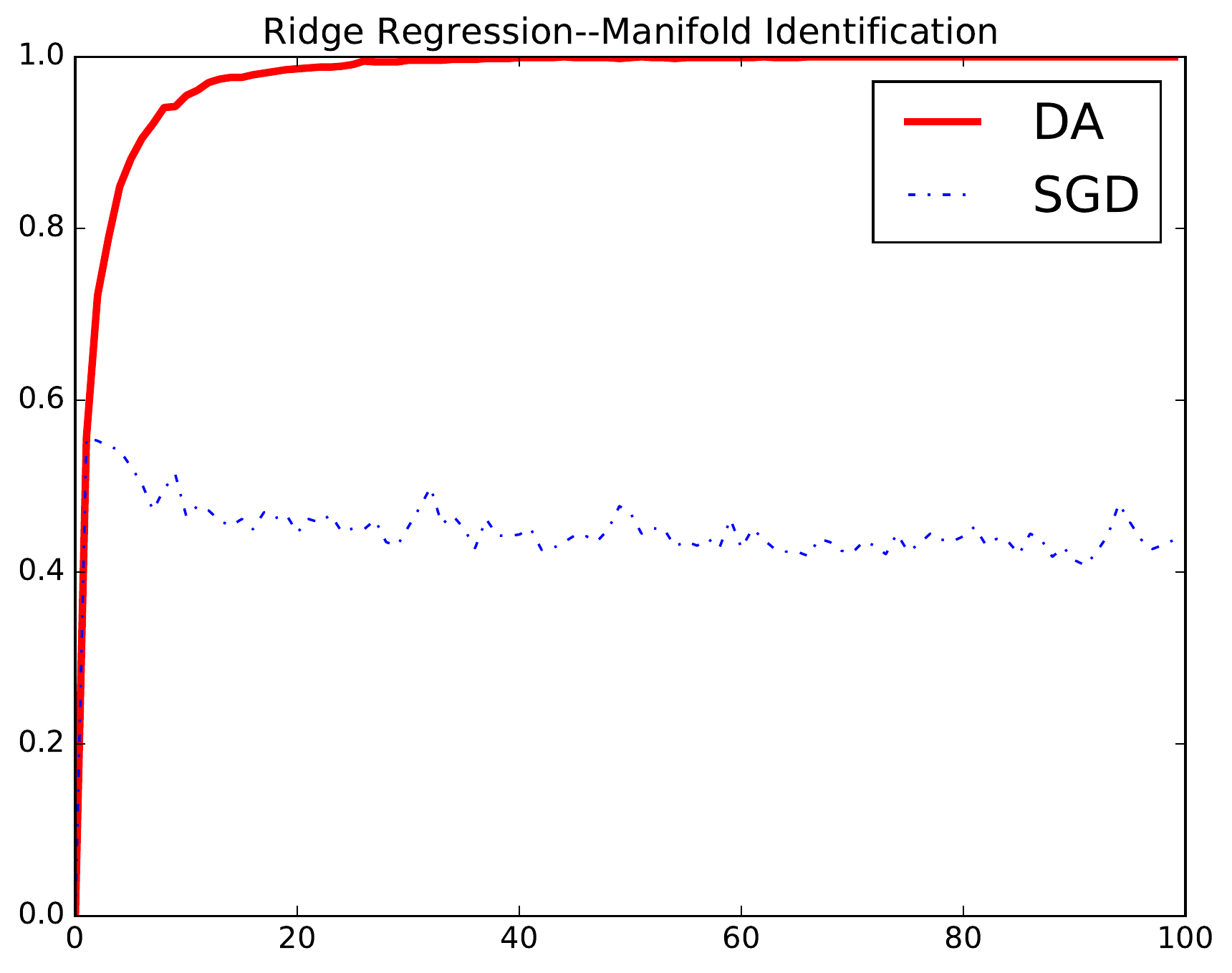}
\endminipage\hfill
\caption{\label{fig:MI}
  Success rate of manifold identification}
\end{figure}

\subsection{Accuracy}

Our second set of numerical results shows the improved performance of dual
averaging relative to projected stochastic gradient descent, and that
the manifold aware algorithm~\ref{alg:da-riemannian} exhibits better
behavior on nonlinear constraints, which we illustrate in
Figure~\ref{fig:final-iterate}.  Each of the red triangles (respectively,
blue circles or green diamonds) represents an averaged dual averaging
(resp., stochastic gradient or Riemannian method~\ref{alg:da-riemannian})
iterate $\bar{x}_K = \frac{1}{K}\sum_{i=1}^K x_i$ (we set $K = 100$) out of
$T = 20$ experiments. The dual averaging results are typically closer to
$x\opt$ (the black cross) and to the active constraints (the grey dotted
curve) than the stochastic gradient averages, while the right plot in
Fig.~\ref{fig:final-iterate} shows the improved performance of the
Riemannian method we outline in Alg.~\ref{alg:da-riemannian}.  The distance
of the dual averaging iterates to $x\opt$ is typically shorter along the
normals to the active constraints, leading to better accuracy estimating
$x\opt$.

\begin{figure}[!htb]
\minipage{0.48\textwidth}
  \includegraphics[width=\linewidth]{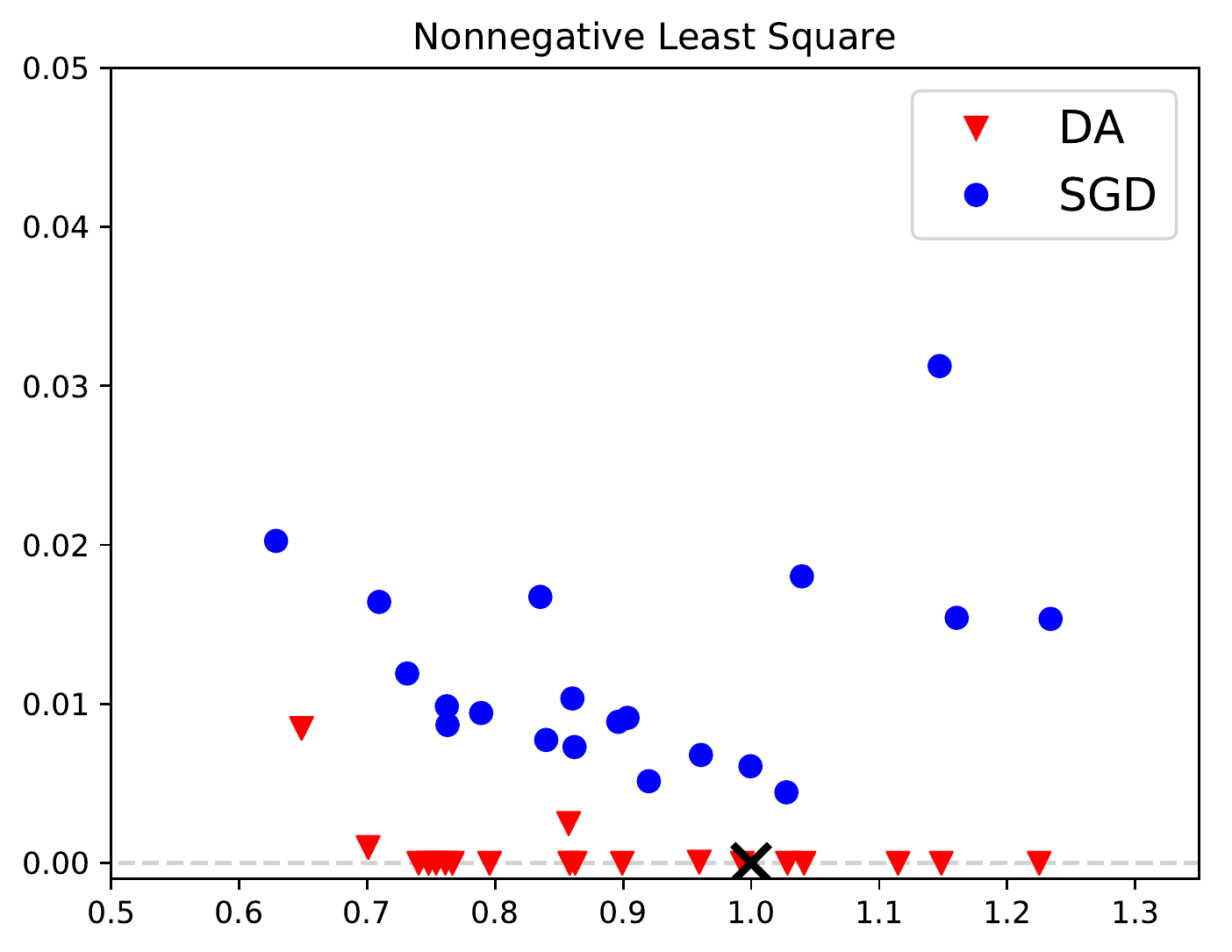}
\endminipage\hfill
\minipage{0.48\textwidth}%
  \includegraphics[width=\linewidth]{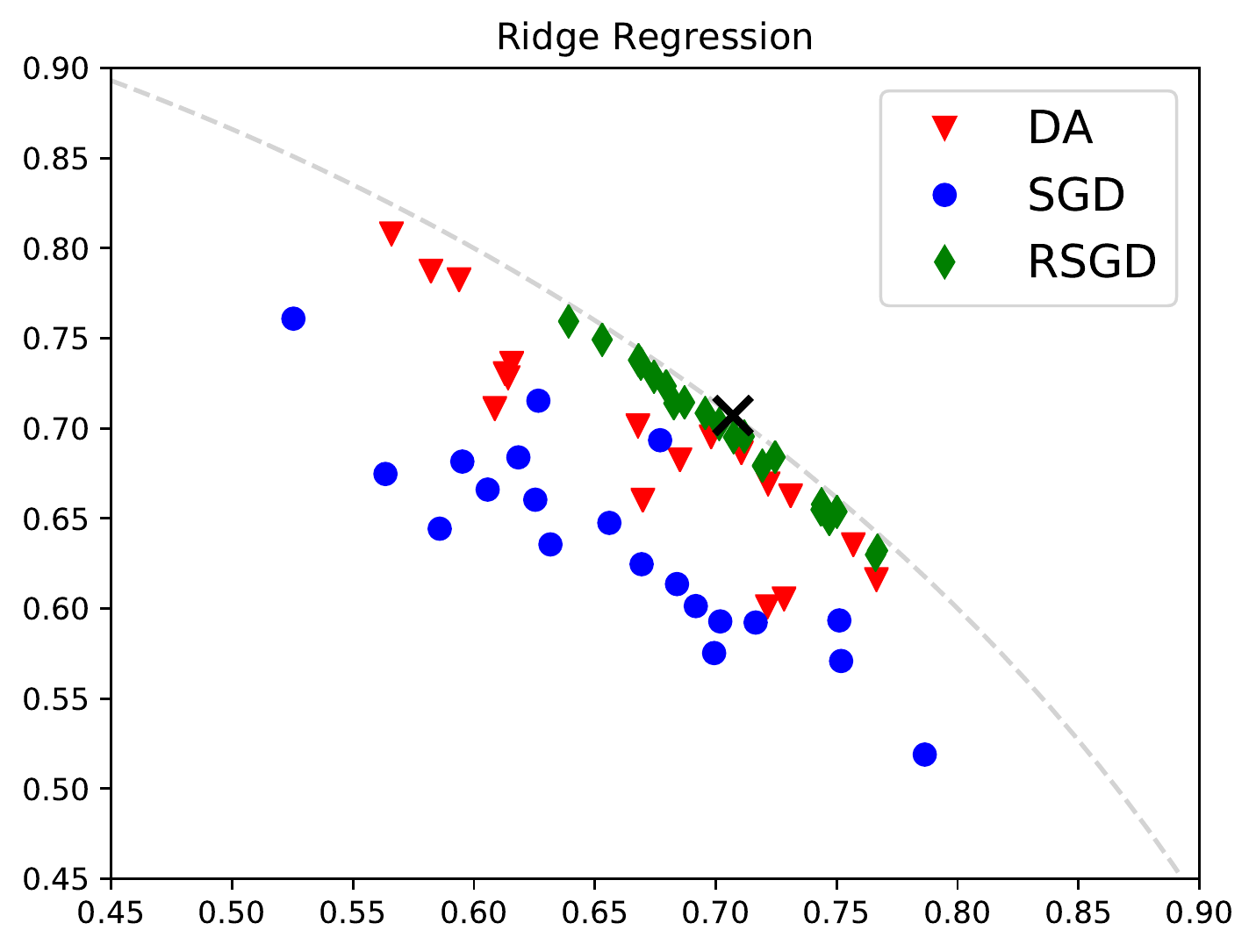}
\endminipage\hfill
\caption{The averaged iterates $\bar{x}_K = \frac{1}{K}\sum_{i=1}^K x_i$}
\label{fig:final-iterate}
\end{figure}

\subsection{Linear versus nonlinear constraints}

For our third set of numerical results, we investigate the asymptotic
variance of the variant dual averaging
method~\eqref{eqn:variant-dual-averaging} versus that of the Riemannian
method (Alg.~\ref{alg:da-riemannian}) and the optimal asymptotic variance
that Theorem~\ref{theorem:local-minimax-lower} provides.  For the
nonnegative least squares problem, the linear constraints have tangent set
$\T = \{t v\opt\}_{t \in \R}$, where $v\opt = (1, 0)$, while the ridge
problem has $\T = \{t v\opt\}_{t \in \R}$ where $v\opt = (1, -1)$. In each
case, we compute the variance of $\sqrt{k} \<v\opt, \wb{x}_k - x\opt\>$ for
$\wb{x}_k = \frac{1}{k} \sum_{i = 1}^k x_i$ for $k \le K = 10^4$ over $T =
1000$ independent trials.  We present the results in
Figure~\ref{fig:asymptotic-var}.  In each of the two plots, the red dashed
curve shows the variance the dual averaging iterates and the gray dotted
line shows the optimal asymptotic variance
(Thm.~\ref{theorem:local-minimax-lower}). In the left plot, we see that dual
averaging converges with the asymptotically optimal rate. In the right,
the Riemannian method (the solid green line) has asymptotically
optimal variance, while the dual averaging procedure has variance
between $3$ and $3.5$, which is suboptimal; these results
suggest the accuracy of our theoretical predictions.

\begin{figure}[!htb]
\minipage{0.48\textwidth}
  \includegraphics[width=\linewidth]{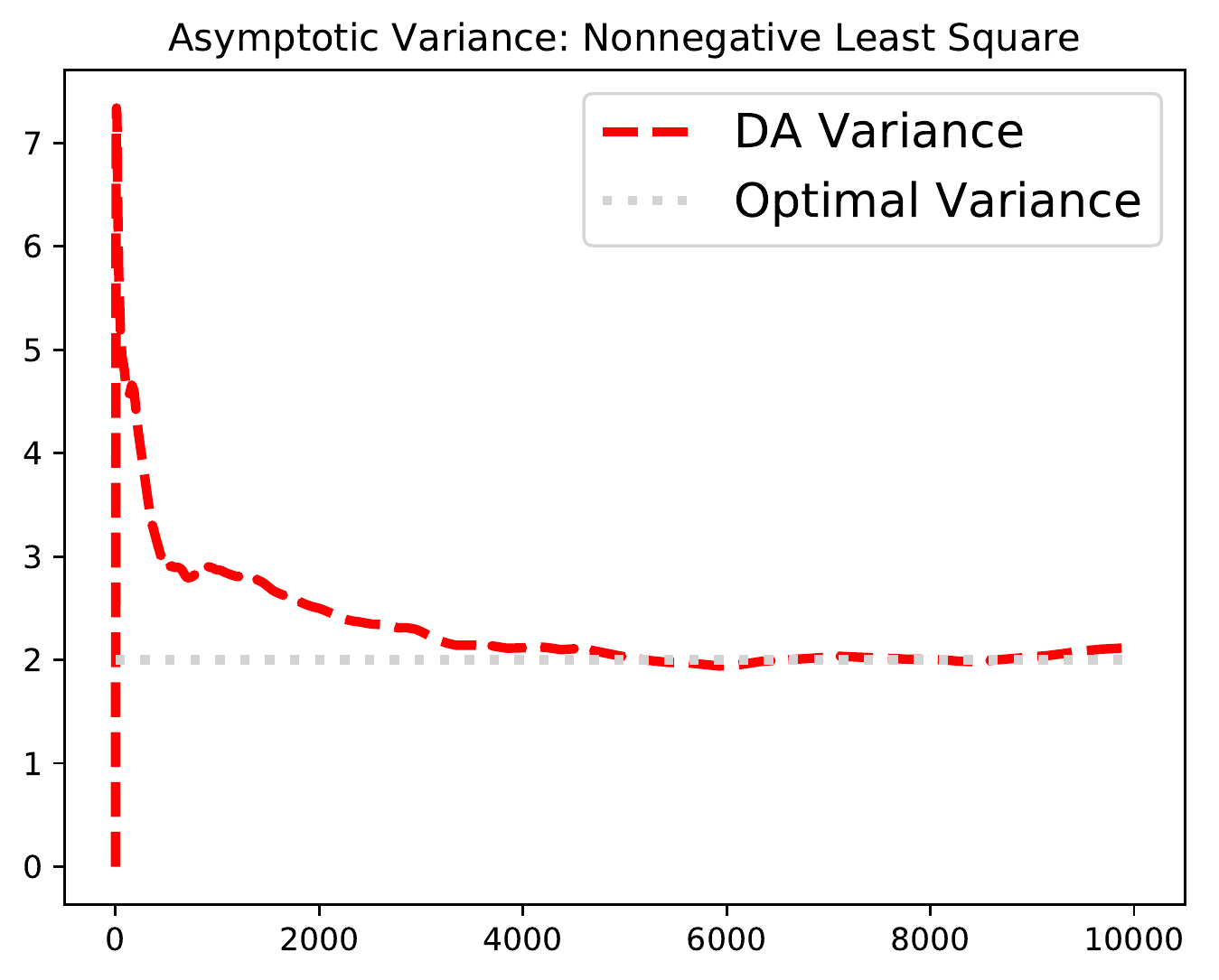}
\endminipage\hfill
\minipage{0.48\textwidth}%
  \includegraphics[width=\linewidth]{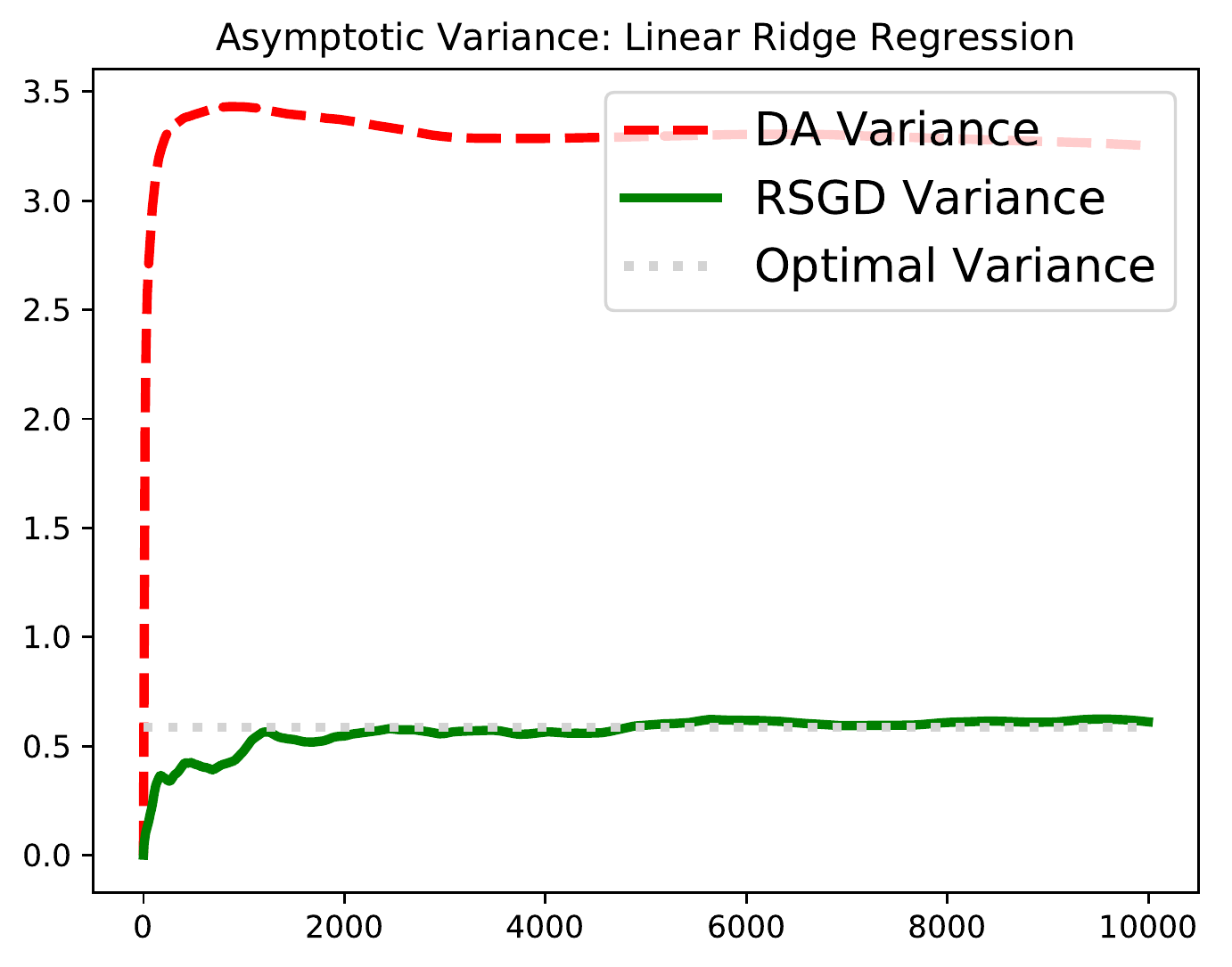}
\endminipage\hfill
\caption{Asymptotic variance under linear and nonlinear constraints}
\label{fig:asymptotic-var}
\end{figure}


\section{Proof of Proposition~\ref{proposition:perturbation}}
\label{sec:proof-perturbation}

This result is a consequence of \citet[Theorem 5.1]{Shapiro88} or
\citet[Theorem 2G.8]{DontchevRo14}. First,
consider the Lagrangian for the tilted problem~\eqref{eqn:tilted-problem},
\begin{equation*}
  \mc{L}_v(x, \lambda) = f_v(x) + \sum_{i = 1}^m \lambda_i f_i(x).
\end{equation*}
We perform a second-order Taylor approximation to
$\mc{L}_v(x, \lambda)$ around $x_0$, linearizing the active constraints
$f_i(x)$ for $i \in [\numactive]$, and minimizers of this quadratic
over linear constraints are $o(\norm{v})$-close to $x_v$.  We
make this precise.

Let $\Lambda_0 \subset \R^m_+$ denote the set of optimal Lagrange
multipliers for problem~\eqref{eqn:problem} (the tilted
problem~\eqref{eqn:tilted-problem} at $v = 0$), recalling that by
Assumption~\ref{assumption:linear-independence-cq}, this set is a compact
polyhedron (and is a singleton
under~\ref{assumption:linear-independence-cq}.i).  In either case of
Assumption~\ref{assumption:linear-independence-cq}, 
the set $\{\nabla^2_x \mc{L}_0(x_0, \lambda): \lambda \in \Lambda_0\}$ is a
singleton, so $H\opt = \nabla^2_x \mc{L}_0(x\opt, \lambda\opt) 
= \nabla^2 \mc{L}_0(x_0, \lambda)$ for any $\lambda \in \Lambda_0$. 
At $v = 0$, our assumptions imply
$\nabla^2_v\mc{L}_v(x_0, \lambda) = 0$ and 
$\nabla^2_{vx} \mc{L}_v(x_0, \lambda) = 0$. Define the quadratic
\begin{equation}
  \label{eqn:quad-approx-lagrange}
  \zeta_v(w)
  \defeq \half w^T H\opt w - v^T w
  = \half w^T \nabla^2_x \mc{L}_0(x_0, \lambda) w
  - v^T w,
\end{equation}
which approximates $\mc{L}_v(x_0 + w, \lambda) \approx f_0(x_0) +
\zeta_v(w)$ for $w, v$ small, because $\nabla_x
\mc{L}_0(x_0, \lambda) = 0$ for $\lambda \in \Lambda_0$.
For $\lambda \in \Lambda_0$, define the sets $I_0(\lambda) \defeq \{i
\in [\numactive] \mid \lambda_i = 0\}$ and $I_+(\lambda) \defeq \{i \in
[\numactive] \mid \lambda_i > 0\}$, and consider the
tangent cone
\begin{equation*}
  \overline{\tangentset}
  \defeq \bigcup_{\lambda \in \Lambda_0}
  \left\{w : w^T \nabla f_i(x_0) = 0
  ~ \mbox{for}~ i \in I_+(\lambda),
  ~ w^T \nabla f_i(x_0) \le 0
  ~ \mbox{for~} i \in I_0(\lambda) \right\}.
\end{equation*}
The minimizers of the quadratic function~\eqref{eqn:quad-approx-lagrange}
over $\overline{\tangentset}$ approximate those of the tilted
problem~\eqref{eqn:tilted-problem} as follows~\cite[Theorem
  2G.8]{DontchevRo14}: if for $v$ near 0 the function $\zeta_v(w)$ has a
unique minimizer $w_v$ over $\overline{\tangentset}$, then
\begin{equation}
  \label{eqn:gateaux-derive-tilt}
  \lim_{t \downarrow 0} \frac{x_{tv} - x_0}{t} = w_v.
\end{equation}
Moreover~\cite[Thm.~2G.8 and Def.~2.4 (semiderivative)]{DontchevRo14}, if
$w_v$ is linear in $v$, then $v \mapsto x_v$ is differentiable at $v = 0$
with $x_v = x_0 + w_v + o(\norm{v})$.  We consider the two cases of
Assumption~\ref{assumption:linear-independence-cq} to give the result.

\paragraph{Case I: Linearly independent constraints}
As noted following Assumption~\ref{assumption:linear-independence-cq},
the set $\Lambda_0 = \{\lambda\opt\}$, a singleton. Thus
$\overline{\tangentset} = \tangentset$ and following the quadratic
expansion~\eqref{eqn:quad-approx-lagrange}, we solve
\begin{equation*}
  \minimize_w ~ \half w^T H\opt w - v^T w
  ~~ \subjectto ~ w^T \nabla f_i(x_0) = 0,
  ~ i \in [\numactive].
\end{equation*}
This quadratic problem has solution $\projmat[\tangentset] {H\opt}^\dag
\projmat[\tangentset] v$, which is unique by
Assumption~\ref{assumption:restricted-strong-convexity}.
Expression~\eqref{eqn:gateaux-derive-tilt} gives the proposition in this
case.

\paragraph{Case II: Affine constraints}

In Assumption~\ref{assumption:linear-independence-cq}.ii,
the active $f_i$ are affine.  We claim that, though
$\Lambda_0$ may not be a singleton,
\begin{equation*}
  \overline{\tangentset} = \{w \mid Aw = 0\}.
\end{equation*}
To see this, let $u = -\nabla f_0(x_0)$, whence we know that $u = A^T
\lambda\opt = 0$ for some $\lambda\opt > 0$ by
Assumption~\ref{assumption:linear-independence-cq}.ii. Writing $A = [a_1 ~
  \cdots ~ a_{\numactive}]^T$, we see that for any $\lambda \in \Lambda_0$,
if we have
\begin{equation*}
  w \in \left\{w \mid a_i^T w = 0 ~ \mbox{for}~ i ~ \mbox{s.t.}~ \lambda_i > 0,
  a_i^T w \le 0 ~ \mbox{otherwise} \right\},
\end{equation*}
then $u = A^T\lambda$ and
$u^T w = \lambda^T A w = \sum_{i = 1}^{\numactive} \lambda_i a_i^T w = 0$,
because $a_i^T w = 0$ whenever $\lambda_i \neq 0$. But of course,
we know that $A^T \lambda\opt = u$, so that
\begin{equation*}
  0 = w^T u = w^T A^T \lambda\opt
  = \sum_{i = 1}^\numactive \lambda_i\opt a_i^T w
\end{equation*}
so each index $i$ satisfies $a_i^T w = 0$ as $a_i^T w \le 0$ and
$\lambda_i\opt > 0$.  The simplification $\wb{\tangentset} =
\tangentset$ as in Case I applies; the remainder of the proof
is identical.


\section{Proof of Theorem~\ref{theorem:local-minimax-lower}: 
  local minimax lower bounds}
\label{sec:proof-local-minimax-lower}

We briefly outline the approach. We divide the proof into two
parts: an analytic part studying properties of the perturbed solutions $x_u$
(Sec.~\ref{sec:perturbation-optimal-solution}), and a stochastic part
applying Le Cam's local asymptotic normality theory (Sec.~\ref{sec:LAN}). In
the first part, we investigate the perturbation properties of the solutions
$x_u$ as $u \to 0$ via the implicit function result of
Proposition~\ref{proposition:perturbation}.  We show that our
choice~\eqref{eqn:tilterriffic} of $P_u$ gives $f_u(x)
\approx f_0(x) + u^T \Sigma_{\pertfunc}(x - x_0)$ for an appropriate
$\Sigma_{\pertfunc}$, so that $x_u = x_0 + D u +
o(\norm{u})$ as $u \to 0$ for a matrix $D$ by
Proposition~\ref{proposition:perturbation}. This allows application of Le
Cam's local asymptotic normality
theory~\cite{LeCamYa00, VanDerVaartWe96, VanDerVaart98}; heuristically, we may
place a Gaussian prior on $u$ concentrated at rate $1/k$, so that
minimization in the problem~\eqref{eqn:stochastic-u-opt-problem} indexed by
$u$ is asymptotically equivalent to estimating the Gaussian shift $D u$. By
our construction of the tilting~\eqref{eqn:tilterriffic}, the vector $u$ is
asymptotically normally distributed (we make this precise in
Section~\ref{sec:LAN}), which allows us to apply standard normality
optimality guarantees. We unify our arguments in
Section~\ref{sec:finalize-local-minimax}.

\subsection{Perturbation of Optimal Solutions}
\label{sec:perturbation-optimal-solution}

We first consider optimal solutions to the problem
$\program_u$ defined in Eq.~\eqref{eqn:stochastic-u-opt-problem}.  We begin
with a lemma that describes the perturbation of $f_u$ from $f_0$.
\begin{lemma}
  \label{lemma:continuity}
  Let the conditions of Theorem~\ref{theorem:local-minimax-lower} hold.
  Then $(x, u) \mapsto f_u(x)$ is $\mc{C}^2$ near $u = 0$ and $x = x_0$, and
  \begin{equation*}
    f_u(x) = f_0(x) + u^T \covmat_{\pertfunc, \loss} (x - x_0)
    + c_u + o(\norm{x - x_0}^2 + \norm{u}^2),
  \end{equation*}
  where $\covmat_{\pertfunc, \loss} \defeq
  \E[\pertfunc(\statrv) (\nabla \loss(x_0;
    \statrv) - \grad f(x_0))^T]$ and $c_u$ depends only on $u$.
\end{lemma}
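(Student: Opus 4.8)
The plan is to first establish the joint $\mc{C}^2$ regularity of $(x,u)\mapsto f_u(x)$ near $(x_0,0)$ by differentiating under the expectation, and then simply read off the second‑order Taylor expansion, noting that the difference $\Delta(x,u)\defeq f_u(x)-f_0(x)$ vanishes identically in $x$ at $u=0$, so that the only surviving coefficient beyond the reproduction of $f_0$ is the mixed partial $\nabla^2_{xu}\Delta(x_0,0)$, which I compute directly. Throughout write $w_u(\statval)\defeq \frac{dP_u}{dP_0}(\statval)=\frac{1+\nearlinear(u^T\pertfunc(\statval))}{C_u}$, so $f_u(x)=\E_{P_0}[\loss(x;\statrv)\,w_u(\statrv)]$ and $\Delta(x,u)=\E_{P_0}[\loss(x;\statrv)(w_u(\statrv)-1)]$.

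For the regularity stage I would first treat the normalization: since $C_u=1+\E_{P_0}[\nearlinear(u^T\pertfunc(\statrv))]$ and $\nearlinear',\nearlinear''$ are bounded while $\E_{P_0}[\norm{\pertfunc(\statrv)}^2]<\infty$, dominated convergence lets one differentiate under the expectation, so $u\mapsto C_u$ is $\mc{C}^2$ (indeed $\mc{C}^3$) near $0$ with $C_0=1$; because $\nearlinear'(0)=1$ and $\E_{P_0}[\pertfunc(\statrv)]=0$ we get $\nabla_uC_u|_{u=0}=0$, and $C_0=1>0$ makes $1/C_u$ likewise $\mc{C}^2$. Then for $(x,u)\mapsto f_u(x)$ one differentiates under the expectation again; this is exactly where Assumption~\ref{assumption:regularity-gradients} enters, since it must supply integrable envelopes for the second partials. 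For instance $\nabla^2_{uu}$ brings down a factor $\nearlinear''(u^T\pertfunc)\pertfunc\pertfunc^T$, so one needs $\E_{P_0}[|\loss(x;\statrv)|\norm{\pertfunc(\statrv)}^2]$ bounded for $x$ near $x_0$ — this is the condition $\sup_{\norm{x-x_0}\le\delta}\E_{P_0}[|\loss(x;\statrv)|\norm{\nabla\loss(x_0;\statrv)}^2]<\infty$, taking $\pertfunc$ to be the canonical influence function $\nabla\loss(x_0;\cdot)-\nabla f(x_0)$, up to which the supremum in Theorem~\ref{theorem:local-minimax-lower} is attained; the mixed partial $\nabla^2_{xu}$ needs $\E_{P_0}[\norm{\nabla\loss(x;\statrv)}\norm{\pertfunc(\statrv)}]<\infty$, finite by Cauchy--Schwarz and Assumption~\ref{assumption:noise-level}; and $\nabla^2_{xx}$ needs $\E_{P_0}[\norm{\nabla^2\loss(x;\statrv)}]$ locally bounded, handled via $\nabla^2\loss(x;\statrv)=\nabla^2\loss(x_0;\statrv)+\rem(x;\statrv)$ with $\norm{\rem(x;\statrv)}\le M(\statrv)$, $\E_{P_0}[M(\statrv)]<\infty$, together with the remaining integrability bounds. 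Continuity of all partials in $(x,u)$ is one more dominated‑convergence argument, using that $\loss(\cdot;\statval)$ is $\mc{C}^2$ near $x_0$ and $\nearlinear,\nearlinear',\nearlinear''$ are continuous.

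Granting the $\mc{C}^2$ regularity, the expansion stage is short. Since $\Delta(x,0)=0$ for all $x$ near $x_0$, we have $\nabla_x\Delta(x,0)=0$ and $\nabla^2_{xx}\Delta(x,0)=0$, so the second‑order Taylor expansion of $\Delta$ at $(x_0,0)$ is
\begin{equation*}
  \Delta(x,u)=\underbrace{\nabla_u\Delta(x_0,0)^Tu+\tfrac12 u^T\nabla^2_{uu}\Delta(x_0,0)u}_{=:~c_u}
  +(x-x_0)^T\nabla^2_{xu}\Delta(x_0,0)\,u+o(\norm{x-x_0}^2+\norm{u}^2),
\end{equation*}
and $c_u$ depends on $u$ alone, as required. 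It remains to identify the mixed partial. Differentiating $w_u(\statval)$ and using $\nearlinear(0)=0$, $\nearlinear'(0)=1$, $C_0=1$, $\nabla_uC_u|_{u=0}=0$ gives $\nabla_uw_u(\statval)|_{u=0}=\pertfunc(\statval)$; interchanging $\nabla_u$ and $\E_{P_0}$ (justified by the first stage) yields $\nabla_u\Delta(x,u)|_{u=0}=\E_{P_0}[\loss(x;\statrv)\pertfunc(\statrv)]$, and differentiating in $x$ at $x_0$ gives $\nabla^2_{xu}\Delta(x_0,0)=\E_{P_0}[\nabla\loss(x_0;\statrv)\pertfunc(\statrv)^T]$. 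Transposing the scalar cross term and using $\E_{P_0}[\pertfunc(\statrv)]=0$ to subtract $\E_{P_0}[\pertfunc(\statrv)]\nabla f(x_0)^T=0$,
\begin{equation*}
  (x-x_0)^T\E_{P_0}[\nabla\loss(x_0;\statrv)\pertfunc(\statrv)^T]\,u
  =u^T\E_{P_0}\!\left[\pertfunc(\statrv)(\nabla\loss(x_0;\statrv)-\nabla f(x_0))^T\right](x-x_0)
  =u^T\covmat_{\pertfunc,\loss}(x-x_0),
\end{equation*}
which together with $f_u(x)=f_0(x)+\Delta(x,u)$ is precisely the claimed identity.

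The main obstacle is the first stage: producing integrable dominating functions for the second partials of $(x,u)\mapsto\E_{P_0}[\loss(x;\statrv)w_u(\statrv)]$, in particular the $\loss(x;\statrv)\nabla^2_uw_u(\statrv)$ term (needing local control of $\E_{P_0}[|\loss(x;\statrv)|\norm{\pertfunc(\statrv)}^2]$) and the $\nabla^2\loss(x;\statrv)w_u(\statrv)$ term. These are exactly the quantities the integrability hypotheses of Assumption~\ref{assumption:regularity-gradients} are designed to bound; once that is in place everything downstream is a routine differentiation of a composite map, driven entirely by $\nearlinear(0)=0$, $\nearlinear'(0)=1$, and $\E_{P_0}[\pertfunc(\statrv)]=0$. (If one prefers to sidestep the full $\mc{C}^2$ claim when deriving the expansion itself, one can instead write $\loss(x;\statrv)=\loss(x_0;\statrv)+\nabla\loss(x_0;\statrv)^T(x-x_0)+R(x;\statrv)$ with $0\le R(x;\statrv)\le\norm{\nabla\loss(x;\statrv)-\nabla\loss(x_0;\statrv)}\norm{x-x_0}$ by convexity, bound $\E_{P_0}[R(x;\statrv)(w_u(\statrv)-1)]$ by Cauchy--Schwarz and Assumption~\ref{assumption:noise-level}, and handle the cross term using $\nearlinear(t)=t$ for $|t|\le 1/2$ together with a dominated‑convergence tail estimate on $\{|u^T\pertfunc(\statrv)|>1/2\}$; this needs less regularity but does not by itself deliver the stated $\mc{C}^2$ conclusion.)
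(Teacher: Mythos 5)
Your proposal is correct and follows essentially the same route as the paper: establish joint $\mc{C}^2$ regularity of $(x,u)\mapsto f_u(x)$ by differentiating under the integral (with the integrability conditions of Assumption~\ref{assumption:regularity-gradients} supplying the dominating functions, and $\nearlinear(t)=t$ near $0$ plus $\E_{P_0}[\pertfunc]=0$ killing the normalization terms), then read off the second-order Taylor expansion at $(x_0,0)$ and identify the mixed partial with $\covmat_{\pertfunc,\loss}$. Your bookkeeping via $\Delta(x,u)=f_u(x)-f_0(x)$ vanishing identically at $u=0$ is a slightly cleaner way to see why only $f_0(x)$, the $u$-only terms, and the cross term survive, but it is the same computation as the paper's expansion of $\wb{G}$.
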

\noindent
The lemma consists of a number of applications of Lebesgue's dominated
convergence theorem; we defer proof to
Supplement Sec.~\ref{section:proof-of-lemma-continuity}.

Evidently, Proposition~\ref{proposition:perturbation} applies to the
minimizers $x_u$, as the problem $\program_u$ is asymptotically equivalent
to a linear tilt, exactly as in Eq.~\eqref{eqn:tilted-problem}.  Thus, it is
immediate that the minimizers $x_u$ of $f_u(x) = \int \loss(x; \statval)
dP_u(\statval)$ over $\xdomain$ satisfy
\begin{align}
  \sqrt{k} (x_{u/\sqrt{k}} - x_0)
  & \mathop{\rightarrow}_{k \uparrow \infty}
  -\projmat[\tangentset] \left(\nabla^2 f_0(x_0) + 
  \sum_{i=1}^\numactive \lambda\opt_i \nabla^2 f_i(x_0) \right)^\dag
  \projmat[\tangentset] \covmat_{\pertfunc, \loss}^T u,
  \label{eqn:asymptotic-u-solution}
\end{align}
where we recall that $\projmat[\tangentset]$ denotes projection onto the
tangent set~\eqref{eqn:critical-tangent} and $\lambda\opt$ are
optimal Lagrange multipliers for problem~\eqref{eqn:problem}.

\subsection{Local asymptotic normality}
\label{sec:LAN}

The tilts $P_u$ are a locally asymptotically
normal~\cite{LeCamYa00,VanDerVaartWe96} family of distributions indexed $u
\in \R^d$, which, when coupled with the differentiability
result~\eqref{eqn:asymptotic-u-solution}, allows us to apply the H\'ajek-Le
Cam local minimax theory.
We first recall definitions due to Le
Cam~\cite{LeCamYa00} that we use to develop our problems with
asymptotically Gaussian structure.
\begin{definition}
  \label{def:asymptotic-normality}
  Let $U \subset \R^d$ be an open set containing $0$.
  For each $k \in \N$ and $u \in U$, let $P_{k,u}$ be a probability
  measure on a measurable space $(\statdomain_k, \mc{F}_k)$,
  and let $\statrv^k$ be a sample from $P_{k,u}$.
  The sequence $\{\statdomain_k, \mc{F}_k, P_{k,u}\}_{u \in U}$ is
  \emph{locally asymptotically normal with precision $K \succeq 0$ (LAN)} if
  \begin{equation*}
    \log \frac{dP_{k,u}(\statrv^k)}{dP_{k,0}(\statrv^k)}
    = \<u, Z_k\> - \frac{1}{2} u^T K u
    + o_{P_0}(1)
  \end{equation*}
  where $Z_k \cd \normal(0, K)$ under the distribution $P_0$.
\end{definition}
\noindent
A second important definition is the regular
estimand~\cite{VanDerVaartWe96, VanDerVaart98}.
\begin{definition}
  \label{def:regular-parameter}
  Let $U \subset \R^d$ be a neighborhood of $0$ and
  $\kappa_k : U \to \R^{n}$.
  The sequence $\{\kappa_k\}_{k \in \N}$ is \emph{regular with derivative $D
    \in \R^{n \times d}$} if
  \begin{equation*}
    \sqrt{k} (\kappa_k(u) - \kappa_k(0)) \to D u
    ~~ \mbox{for~all~} u \in U.
  \end{equation*}
\end{definition}

\newcommand{\range}{\mathop{\rm range}}
\newcommand{\nullspace}{\mathop{\rm null}}

With these definitions, the following local asymptotic minimax result, a
variant of the H\'ajek-Le Cam minimax theorem, holds.
\begin{lemma}[Local minimax theorem, Theorem 3.11.5~\cite{VanDerVaartWe96}
  or Lemma 6.6.1 and Theorem 6.6.2~\cite{LeCamYa00}]
  \label{lemma:local-minimax}
  Let the sequence $\{\statdomain_k, \mc{F}_k, P_{k,u}\}_{u \in U}$ be
  locally asymptotically normal with precision $K$
  (Def.~\ref{def:asymptotic-normality}) and let $\kappa_k : U \to \R^{n'}$
  be regular with derivative $D$ (Def.~\ref{def:regular-parameter}). Let $L
  : \R^{n} \to \R_+$ be symmetric and quasi-convex. Then for any sequence
  $T_k : \statdomain_k \to \R^{n}$ of estimators,
  \begin{equation*}
    \sup_{U_0 \subset U,
      |U_0| < \infty}
    \liminf_{k \to \infty}
    \max_{u \in U_0}
    \E_{P_{u,k}}\left[L\big(\sqrt{k}(T_k(\statrv^k) - \kappa_k(u))\big)
      \right]
    \ge \E[L(Z)],
  \end{equation*}
  where $Z \sim \normal(0, D K^{-1} D^T)$ when $K \succ 0$.
  If $K$ is singular and
  $\range(D^T) \cap \nullspace(K) \neq \emptyset$, the result
  holds for
  $Z \sim \normal(0, D (K + \lambda I)^{-1} D^T)$ for any $\lambda > 0$.
\end{lemma}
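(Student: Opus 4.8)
The plan is the classical two-step route behind the H\'ajek--Le Cam theory: first replace the sequence of localized experiments by their Gaussian shift limit, then solve the \emph{exactly Gaussian} minimax problem there using Anderson's lemma. First I would make two harmless reductions. A truncation --- replace $L$ by $\min\{L, M\}$, which is again symmetric with convex sub-level sets, prove the bound for it, then let $M \uparrow \infty$ by monotone convergence --- together with approximation from below by bounded continuous symmetric quasi-convex functions, lets me assume $L$ is bounded and continuous. Second, since for any finite $U_0 \subset U$ and probability weights $(\pi_u)_{u \in U_0}$ the maximum dominates the $\pi$-average, it suffices to lower bound the Bayes risk $\sum_{u \in U_0} \pi_u \E_{P_{k,u}}[L(\sqrt{k}(T_k - \kappa_k(u)))]$; at the end I would take $U_0$ to be a fine grid inside a large ball, with $\pi$ approximating a Gaussian prior $\normal(0,\sigma^2 I)$, and let $\sigma \to \infty$.

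Next I would use the LAN structure to pass to the limit experiment. From the LAN expansion and $Z_k \cd \normal(0,K)$ under $P_{k,0}$, for any finite collection $u_1,\dots,u_m$ the vector $(Z_k, \log\tfrac{dP_{k,u_1}}{dP_{k,0}}, \dots, \log\tfrac{dP_{k,u_m}}{dP_{k,0}})$ converges in distribution under $P_{k,0}$ to $(Z, \<u_1,Z\> - \half u_1^T K u_1, \dots)$ with $Z \sim \normal(0,K)$; Le Cam's first lemma then gives mutual contiguity of $P_{k,0}$ and $P_{k,u}$, hence the likelihood ratios are uniformly integrable under $P_{k,0}$. Using regularity of $\kappa_k$ I would write $\sqrt{k}(T_k - \kappa_k(u)) = \sqrt{k}(T_k - \kappa_k(0)) - \sqrt{k}(\kappa_k(u) - \kappa_k(0))$, the second term $\to Du$. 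If $\mathrm{Law}_{P_{k,0}}(\sqrt{k}(T_k - \kappa_k(0)))$ is not tight the risk blows up and the bound is trivial; otherwise I pass to a subsequence along which $(\sqrt{k}(T_k - \kappa_k(0)), Z_k) \cd (T, Z)$ under $P_{k,0}$. A change of measure then gives, for each fixed $u$,
\begin{align*}
  \E_{P_{k,u}}\big[L(\sqrt{k}(T_k - \kappa_k(u)))\big]
  & = \E_{P_{k,0}}\Big[\tfrac{dP_{k,u}}{dP_{k,0}}\, L\big(\sqrt{k}(T_k - \kappa_k(0)) - \sqrt{k}(\kappa_k(u)-\kappa_k(0))\big)\Big] \\
  & \longrightarrow \E\big[e^{\<u,Z\> - \half u^T K u}\, L(T - Du)\big],
\end{align*}
and this limit is exactly the risk at $u$, in the Gaussian shift experiment ``observe $Z \sim \normal(Ku, K)$, estimate $Du$,'' of the (possibly randomized) estimator with conditional law $\mathrm{Law}(T \mid Z)$. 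Averaging against $\pi$ turns this into a Bayes risk, so the Bayes risk of the estimator sequence is asymptotically bounded below by the Bayes risk in the limit Gaussian experiment.

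Then I would solve the Gaussian problem. For estimating $Du$ from $Z \sim \normal(Ku,K)$ --- equivalently from $X = K^{-1}Z \sim \normal(u, K^{-1})$ when $K \succ 0$ --- the estimator $DX$ has law $\normal(Du, DK^{-1}D^T)$; since $L$ is symmetric with convex sub-level sets, Anderson's lemma shows $DX$ is minimax and that the Bayes risks against $\normal(0,\sigma^2 I)$ increase to $\E[L(Z)]$, $Z \sim \normal(0, DK^{-1}D^T)$, as $\sigma \to \infty$. Combining with the previous paragraph, and then taking $U_0$ to refine the Gaussian prior and $\sigma \to \infty$, yields the claimed bound when $K \succ 0$. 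When $K$ is singular, the limiting likelihood ratio depends on $u$ only through $Ku$ (so $Z$ concentrates on $\range(K)$); running the same Bayes argument with a Gaussian prior of \emph{fixed} precision $\lambda I$ rather than a flattening prior produces, in the limit, the bound $\E[L(Z)]$ with $Z \sim \normal(0, D(K+\lambda I)^{-1} D^T)$ for every $\lambda > 0$, matching the statement (this perturbation step is exactly the one handled in the cited references).

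The main obstacle is the passage in the second paragraph to the limit experiment for an essentially arbitrary estimator sequence $T_k$: $T_k$ need not be tight, let alone convergent, simultaneously under all $P_{k,u}$, and the change-of-measure convergence above must be controlled well enough to integrate it against $\pi$. Working with the Bayes (integrated) risk rather than the minimax risk is what makes this tractable --- it is an integral, so once tightness under $P_{k,0}$ and contiguity are in hand, uniform integrability of the likelihood ratios together with dominated convergence finish it --- and this is precisely the content of the asymptotic representation theorem underlying Theorem 3.11.5 of van der Vaart and Wellner and Theorem 6.6.2 of Le Cam and Yang. A minor point worth recording is that only symmetry and convexity of the \emph{sub-level sets} of $L$ (i.e.\ quasi-convexity), not convexity of $L$ itself, is ever used, which is exactly the hypothesis of Anderson's lemma.
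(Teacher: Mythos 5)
The paper does not prove this lemma: it is stated as a known result with citations to van der Vaart and Wellner (Theorem 3.11.5) and Le Cam and Yang (Lemma 6.6.1, Theorem 6.6.2), so there is no in-paper argument to compare against. Your sketch reproduces the standard proof from those references---reduction to bounded loss and to Bayes risk over finite priors, passage to the Gaussian shift limit experiment via LAN and contiguity, and Anderson's lemma in the limit experiment---and is correct in outline. The one step you pass over a bit quickly is the assertion that non-tightness of $\sqrt{k}(T_k - \kappa_k(0))$ under $P_{k,0}$ makes the bound trivial: for the truncated (bounded) loss that is not automatic, and the standard fix is to extract a vague (possibly sub-probability) limit and use that the nonnegative loss can be approximated from below by compactly supported continuous functions, so the limit object is a possibly deficient randomized procedure in the Gaussian experiment to which the Bayes lower bound still applies; you correctly flag this as the content of the asymptotic representation theorem. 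Your handling of the singular-$K$ case via a fixed-precision Gaussian prior, giving posterior covariance $(K+\lambda I)^{-1}$ for $Du$ and hence the bound with $Z \sim \normal(0, D(K+\lambda I)^{-1}D^T)$, matches the statement exactly.
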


\newcommand{\sumvec}{\mathsf{S}}

Eq.~\eqref{eqn:asymptotic-u-solution} shows that $\kappa_k(u)
\defeq\argmin_{x \in \mc{X}} f_{u/\sqrt{k}}(x)$ is regular
(Def.~\ref{def:regular-parameter}): recalling the definition of the Hessian
$H\opt = \nabla^2_x \mc{L}(x\opt, \lambda\opt)$ in the statement of the
theorem, the sequence is regular with derivative $\projmat[\tangentset]
{H\opt}^\dag \projmat[\tangentset] \covmat_{\pertfunc, \loss}^T$.  It remains
to establish the local asymptotic normality properties of $P_u$.
\begin{lemma}
  \label{lemma:local-asymptotic-normality}
  Let $P_u$ be as in expression~\eqref{eqn:tilterriffic}. Let $u \in
  \R^d$ and define $P_k = P^k_{u / \sqrt{k}}$, the $k$-fold product of
  $P_{u/\sqrt{k}}$. Let $\covmat_\pertfunc =
  \E_{P_0}[\pertfunc(\statrv) \pertfunc(\statrv)^T]$. Then
  \begin{equation*}
    \log \frac{dP_k(\statrv_1, \ldots, \statrv_k)}{dP_0(\statrv_1,
      \ldots, \statrv_k)}
    = -\frac{1}{\sqrt{k}} u^T \sum_{i = 1}^k \pertfunc(\statrv_i)
    - \half u^T \covmat_\pertfunc u + o_{P_0}(1).
  \end{equation*}
\end{lemma}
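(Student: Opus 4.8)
The plan is to carry out the standard local-asymptotic-normality expansion for a product of tilted measures, inserting two localization arguments forced by the facts that $\nearlinear$ coincides with the identity only on $[-\frac12,\frac12]$ and that $\pertfunc$ is only assumed square-integrable. Assume $u \neq 0$, the case $u = 0$ being trivial. Starting from the definition~\eqref{eqn:tilterriffic} of $P_u$, write
\[
  \log\frac{dP_k}{dP_0}(\statrv_1,\ldots,\statrv_k)
  = \sum_{i=1}^k \log\!\Big(1 + \nearlinear\big(k^{-1/2}u^T\pertfunc(\statrv_i)\big)\Big)
  - k\log C_{u/\sqrt{k}}.
\]

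\emph{Localization.} Since $\E_{P_0}[\norm{\pertfunc(\statrv)}^2]<\infty$, the elementary fact that $k^{-1}\max_{i\le k}\norm{\pertfunc(\statrv_i)}^2 \cas 0$ yields events $A_k = \{\max_{i\le k}\norm{\pertfunc(\statrv_i)}\le \sqrt{k}/(2\norm{u})\}$ with $P_0(A_k)\to1$, and on $A_k$ we have $|k^{-1/2}u^T\pertfunc(\statrv_i)|\le\frac12$ for \emph{every} $i\le k$, so by~\eqref{eqn:near-linear} the function $\nearlinear$ acts as the identity on all $k$ arguments. Thus on $A_k$ the first sum equals $\sum_{i\le k}\log(1+t_i)$ with $t_i\defeq k^{-1/2}u^T\pertfunc(\statrv_i)$, $|t_i|\le\frac12$. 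Because only convergence in probability is claimed, any quantity carrying the factor $\mathbf{1}\{A_k^c\}$ is automatically $o_{P_0}(1)$, so it suffices to analyze the expansion on $A_k$.

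\emph{Quadratic expansion.} On $A_k$, write $\log(1+t)=t-\frac12 t^2 + r(t)$ with $|r(t)|\le C|t|^3$ for $|t|\le\frac12$. The linear terms sum to $k^{-1/2}u^T\sum_{i\le k}\pertfunc(\statrv_i) = O_{P_0}(1)$ by the central limit theorem (recall $\E_{P_0}[\pertfunc]=0$); the quadratic terms $-\frac12\sum_{i\le k}t_i^2 = -\frac{1}{2k}\sum_{i\le k}(u^T\pertfunc(\statrv_i))^2 \cas -\frac12 u^T\covmat_\pertfunc u$ by the law of large numbers; and $\sum_{i\le k}r(t_i)\mathbf{1}\{A_k\}=o_{P_0}(1)$. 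For the last point, $\sum_i |r(t_i)|\le C k^{-3/2}\sum_i |u^T\pertfunc(\statrv_i)|^3$, and a routine truncation of $|u^T\pertfunc(\statrv_i)|$ at a fixed level $M$ handles this: the part with $|u^T\pertfunc(\statrv_i)|\le M$ is at most $M k^{-1/2}\cdot(k^{-1}\sum_i |u^T\pertfunc(\statrv_i)|^2) \to 0$, while on $A_k$ the part with $|u^T\pertfunc(\statrv_i)|> M$ is at most $\frac{1}{2k}\sum_i |u^T\pertfunc(\statrv_i)|^2\mathbf{1}\{|u^T\pertfunc(\statrv_i)|> M\} \to \frac12\E_{P_0}[|u^T\pertfunc|^2\mathbf{1}\{|u^T\pertfunc|>M\}]$, which tends to $0$ as $M\to\infty$ by dominated convergence. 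Collecting the surviving contributions gives the stated expansion (the sign of the linear term is immaterial for the sequel, as $k^{-1/2}\sum_i\pertfunc(\statrv_i)\cd\normal(0,\covmat_\pertfunc)$ is symmetric).

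\emph{Normalization and conclusion.} It remains to show $C_{u/\sqrt{k}} = 1 + o(1/k)$, so that $-k\log C_{u/\sqrt{k}}=o(1)$ and contributes nothing. Because $\nearlinear$ has bounded second derivative with $\nearlinear(0)=0$ and $\nearlinear'(0)=1$, Taylor's theorem gives $|\nearlinear(t)-t|\le C t^2$ for all $t$; since moreover $\nearlinear(t)=t$ on $[-\frac12,\frac12]$ and $\E_{P_0}[\pertfunc]=0$,
\[
  |C_{u/\sqrt{k}} - 1|
  = \Big|\int \big(\nearlinear(k^{-1/2}u^T\pertfunc) - k^{-1/2}u^T\pertfunc\big)\,dP_0\Big|
  \le \frac{C\norm{u}^2}{k}\int_{\{\norm{\pertfunc}>\sqrt{k}/(2\norm{u})\}}\norm{\pertfunc}^2\,dP_0,
\]
and the last integral tends to $0$ by dominated convergence. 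Assembling the three pieces on $A_k$ and invoking $P_0(A_k)\to1$ completes the proof. The only genuine obstacle is making the cubic remainder and the normalization $o(1)$ under just the second-moment hypothesis on $\pertfunc$; both reduce to the single observation that the set on which $\nearlinear$ differs from the identity lies inside the region where $\norm{\pertfunc(\statrv)}$ is of order $\sqrt{k}$, a set of vanishing $L^2(P_0)$-mass, after which everything is the classical LAN computation.
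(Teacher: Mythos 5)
Your proof is correct and follows essentially the same route as the paper's: the same decomposition into the normalization term $k\log C_{u/\sqrt{k}}$ plus $\sum_i \log(1+\nearlinear(\cdot))$, the same localization so that $\nearlinear$ acts as the identity on all $k$ arguments with probability tending to one, and the same second-order Taylor expansion of the logarithm with the LLN for the quadratic term. The only differences are tactical---you bound the cubic remainder by truncation at a level $M$ where the paper uses the one-line bound $k^{-3/2}\sum_i|u^T\pertfunc(\statrv_i)|^3 \le \big(k^{-1}\sum_i (u^T\pertfunc(\statrv_i))^2\big)\max_{i\le k}|u^T\pertfunc(\statrv_i)|/\sqrt{k} = o_{P_0}(1)$, and you control $C_{u/\sqrt{k}}=1+o(1/k)$ by a direct tail estimate where the paper invokes $\nabla_u C_0 = \nabla^2_u C_0 = 0$ from the proof of Lemma~\ref{lemma:continuity}---and your observation that the sign of the linear term is immaterial matches the fact that the paper's own derivation also produces the opposite sign from the one displayed in the lemma statement.
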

\noindent
See Supplement Sec.~\ref{sec:proof-LAN} for a proof.
In particular, we see that if $\mc{F}_k$ denotes the $\sigma$-algebra
on the product $\statdomain^k$, then
the sequence
\begin{equation*}
  \left\{\statdomain^k, \mc{F}_k, P_{u/\sqrt{k}}^k\right\}_{u \in \R^n}
\end{equation*}
is LAN with precision $\covmat_\pertfunc$ for
$\pertfunc$ with $\E_{P_0}[\pertfunc] = 0$ and $\E_{P_0}[\norm{\pertfunc}^2]
< \infty$.

\subsection{Finalizing the argument}
\label{sec:finalize-local-minimax}

Now that we have the regularity of the sequence $x_{u/\sqrt{k}}$ as
$k \to \infty$ (the convergence guarantee~\eqref{eqn:asymptotic-u-solution})
and the asymptotic normality of Lemma~\ref{lemma:local-asymptotic-normality},
we may apply Lemma~\ref{lemma:local-minimax}. Indeed, let
$P_{u,k} = P_{u/\sqrt{k}}^k$ be the distribution of an i.i.d.\ sample
$\statrv_i \simiid P_{u/\sqrt{k}}$ for $i = 1, \ldots, k$, and let
$\what{x}_k$ be an arbitrary estimator based on
$\statrv_{1:k}$. Lemma~\ref{lemma:local-minimax} implies
\begin{equation*}
  \sup_{U_0 \subset \R^d,
    |U_0| < \infty}
  \liminf_{k \to \infty} \max_{u \in U_0}
  \E_{P_{u/\sqrt{k}}^k}\left[
    L(\sqrt{k}(\what{x}_k - x_{u/\sqrt{k}}))\right]
  \ge \E[L(Z_\lambda)]
\end{equation*}
for any $\lambda > 0$, where
\begin{equation*}
  Z_\lambda \sim \normal\left(0, \projmat[\tangentset]
    {H\opt}^\dag \projmat[\tangentset] \covmat_{\pertfunc, \loss}^T
    (\covmat_\pertfunc + \lambda I)^{-1} \covmat_{\pertfunc, \loss}
    \projmat[\tangentset]
    {H\opt}^\dag \projmat[\tangentset] \right).
\end{equation*}
The theorem follows by taking $\lambda \downarrow 0$, noting that
for any two mean-zero random vectors $Z$ and $Y$,
we have
\begin{equation}
  \label{eqn:semidefinite-order-rvs}
  \E[YZ^T] \E[ZZ^T]^\dag \E[ZY^T]
  \preceq \E[YY^T],
\end{equation}
and that (by Anderson's lemma~\cite[Lemma 8.5]{VanDerVaart98}) if $\Sigma_1
\preceq \Sigma_2$ and $Z_i \sim \normal(0, \Sigma_i)$,
then $\E[L(Z_1)] \le \E[L(Z_2)]$.  To see
inequality~\eqref{eqn:semidefinite-order-rvs}, we may without loss of
generality assume that $\E[ZZ^T] \preceq I$, as by letting $\Sigma =
\E[ZZ^T]^\dag$, we have $\E[\Sigma^{1/2} ZZ^T \Sigma^{1/2}] = \Sigma^{1/2}
\Sigma^\dag \Sigma^{1/2} \preceq I$; to show
inequality~\eqref{eqn:semidefinite-order-rvs}, it is thus equivalent to show
that $\E[YZ^T] \E[ZY^T] \preceq I$ for all $Z$ such that $\E[ZZ^T] \preceq
I$.  To see this, let $v$ be arbitrary, and note
that by Cauchy-Schwarz we have
\begin{equation*}
  \ltwo{\E[v^T YZ]}^2
  = \sup_{\norm{u} \le 1}
  \E[v^T Y Z^T u]^2
  \le \E[(v^T Y)^2]
  \sup_{\norm{u} \le 1}
  \E[(u^T Z)^2]
  \le v^T \E[YY^T] v.
\end{equation*}


\section{Proofs of convergence for dual averaging}

Here we collect the major arguments for our proofs of the almost sure
convergence and finite time constraint identification for our
variant~\eqref{eqn:variant-dual-averaging} of dual averaging. We highlight
new results and techniques, deferring technical details.

\subsection{Proof of Theorem~\ref{theorem:as-convergence}:
  almost sure convergence}
\label{sec:proof-as-convergence}

First, we establish a few technical properties of the stepsize sequence.  We
begin with the following lemma, whose proof is immediate
when $\stepsize_k \propto k^{-\steppow}$ for $\steppow \in (\half, 1)$.
\begin{lemma}
  \label{lemma:big-diverge}
  For $\stepsize_k$ satisfying condition~\eqref{eqn:stepsizes},
  $\sum_{k=1}^\infty \frac{\stepsize_k}{\sum_{i=1}^k \stepsize_i} = \infty$.
\end{lemma}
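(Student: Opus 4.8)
The plan is to set $A_k \defeq \sum_{i=1}^k \stepsize_i$ and proceed in two short steps: first show $A_k \to \infty$, and then show that the partial sums of $\stepsize_k/A_k$ fail the Cauchy criterion.

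For the first step, since $\steppow \in (\half, 1)$ the map $t \mapsto t^{-\steppow}$ is decreasing and has divergent integral on $[1,\infty)$, so comparing the sum with $\int_1^{k+1} t^{-\steppow}\,dt$ gives
\begin{equation*}
  A_k \ge \stepsize_0 \int_1^{k+1} t^{-\steppow}\,dt
  = \frac{\stepsize_0}{1-\steppow}\left((k+1)^{1-\steppow} - 1\right) \to \infty,
\end{equation*}
and the matching upper bound $A_k \le \stepsize_0\big(1 + \int_1^{k} t^{-\steppow}\,dt\big) = O(k^{1-\steppow})$ shows $A_k \asymp k^{1-\steppow}$; in fact $A_k \sim \frac{\stepsize_0}{1-\steppow} k^{1-\steppow}$.

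For the second step, fix any $m \in \N$. Since each $\stepsize_k > 0$, we have $A_k \le A_n$ for $k \le n$, so for every $n > m$,
\begin{equation*}
  \sum_{k=m+1}^n \frac{\stepsize_k}{A_k}
  \ge \frac{1}{A_n}\sum_{k=m+1}^n \stepsize_k
  = \frac{A_n - A_m}{A_n}
  = 1 - \frac{A_m}{A_n}.
\end{equation*}
Letting $n \to \infty$ and using $A_n \to \infty$ from the first step, the right-hand side tends to $1$, so $\sum_{k>m} \stepsize_k/A_k \ge 1$ for every $m$. Hence the tails of the series do not vanish and $\sum_{k=1}^\infty \stepsize_k/A_k = \infty$. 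Alternatively, and more in keeping with the remark that the proof is ``immediate,'' one can simply observe $\stepsize_k/A_k \sim (1-\steppow)/k$ from the asymptotics of $A_k$ and invoke the limit comparison test against the harmonic series.

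There is no genuine obstacle here: the only point needing (minor) care is the integral comparison that pins down the growth rate $A_k \asymp k^{1-\steppow}$ (or even just $A_k \to \infty$, which is all the telescoping argument requires), and everything else is elementary.
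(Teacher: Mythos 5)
Your proof is correct. The paper itself gives no explicit argument for this lemma---it simply remarks that the claim is ``immediate'' for $\stepsize_k \propto k^{-\steppow}$ with $\steppow \in (\half,1)$, implicitly meaning the comparison you mention at the end: $A_k \sim \frac{\stepsize_0}{1-\steppow}k^{1-\steppow}$, hence $\stepsize_k/A_k \sim (1-\steppow)/k$, and the harmonic series diverges. Your primary argument is a genuinely different and more general route: the telescoping bound $\sum_{k=m+1}^n \stepsize_k/A_k \ge 1 - A_m/A_n$ is the classical Abel--Dini argument showing that $\sum_k a_k/A_k$ diverges for \emph{any} positive sequence with $A_k \to \infty$, so it uses nothing about the polynomial form of the stepsizes beyond divergence of $\sum_k \stepsize_k$. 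What the paper's intended comparison buys is brevity and the sharp rate $\stepsize_k/A_k \asymp 1/k$ (which is not needed here); what your argument buys is robustness---it would cover any stepsize choice satisfying $\sum_k \stepsize_k = \infty$. Both steps of your write-up (the integral comparison giving $A_k \asymp k^{1-\steppow}$ and the failure of the Cauchy criterion) are sound.
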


Now we state a classical result that is useful
for showing the almost convergence of stochastic approximation algorithms.
\begin{lemma}[Robbins and Siegmund~\cite{RobbinsSi71}]
  \label{lemma:robbins-siegmund}
  Let $V_k, A_k, B_k, C_k$ be non-negative random variables adapted to
  a filtration $\mc{F}_k$. Assume that
  \begin{equation*}
    \E[V_{k + 1} \mid \mc{F}_k] \le (1 + A_k) V_k + B_k - C_k.
  \end{equation*}
  Then on the event $\{\sum_k A_k < \infty, \sum_k B_k < \infty\}$, there is
  a random variable $V_\infty < \infty$ such that $V_k \cas V_\infty$ and
  $\sum_k C_k < \infty$ a.s.
\end{lemma}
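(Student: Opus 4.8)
The plan is to turn the recursion into a genuine supermartingale and then invoke the nonnegative supermartingale convergence theorem. First I would introduce the $\mc{F}_{k-1}$-measurable positive weights $\alpha_k \defeq \prod_{i=1}^{k-1}(1 + A_i)^{-1}$ with $\alpha_1 \defeq 1$, so that $\alpha_{k+1} = \alpha_k/(1 + A_k)$ is $\mc{F}_k$-measurable and $0 < \alpha_{k+1} \le \alpha_k \le 1$. Pulling $\alpha_{k+1}$ through the conditional expectation in the hypothesis gives
\begin{equation*}
  \E[\alpha_{k+1} V_{k+1} \mid \mc{F}_k]
  \le \alpha_{k+1}\big((1 + A_k) V_k + B_k - C_k\big)
  = \alpha_k V_k + \alpha_{k+1}(B_k - C_k).
\end{equation*}
Setting $W_k \defeq \alpha_k V_k + \sum_{i=1}^{k-1}\alpha_{i+1}(C_i - B_i)$, a one-line computation (the partial-sum term is $\mc{F}_k$-measurable) then shows $\E[W_{k+1}\mid\mc{F}_k] \le W_k$, so $(W_k)$ is an $(\mc{F}_k)$-supermartingale; one also tacitly uses the standing integrability of the $V_k$ so that these manipulations are well posed.

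The one genuine obstacle is that $W_k$ is not bounded below, because of the term $-\sum_{i<k}\alpha_{i+1}B_i$, so the convergence theorem does not apply to $W_k$ directly; I would remove this obstruction by localization. For $m\in\N$ set
\begin{equation*}
  \tau_m \defeq \inf\Big\{k : \prod_{i=1}^k(1 + A_i) \ge m
  ~~\text{or}~~ \sum_{i=1}^k B_i \ge m\Big\}.
\end{equation*}
For $k \le \tau_m$ one has $\alpha_k \ge 1/m$ and $\sum_{i=1}^{k-1}\alpha_{i+1}B_i \le \sum_{i=1}^{k-1}B_i \le m$, hence $W_{k\wedge\tau_m} \ge -m$. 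Thus $W_{k\wedge\tau_m} + m$ is a nonnegative supermartingale and converges a.s.\ to a finite limit as $k\to\infty$. Since the event $E \defeq \{\sum_k A_k < \infty,\ \sum_k B_k < \infty\}$ equals $\bigcup_m\{\tau_m = \infty\}$, it follows that $W_k$ converges a.s.\ to a finite limit $W_\infty$ on $E$.

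It remains to extract the two conclusions on $E$. Because $\sum_k B_k < \infty$ and $\alpha_{i+1}\le 1$, the series $\sum_i\alpha_{i+1}B_i$ converges; since $W_k$ converges and $\alpha_k V_k \ge 0$, the nondecreasing series $\sum_i\alpha_{i+1}C_i$ is bounded and hence converges; and on $E$ we have $\alpha_i \ge \alpha_\infty \defeq \prod_{i=1}^\infty(1 + A_i)^{-1} > 0$, so $\sum_i C_i < \infty$ a.s.\ on $E$. Finally $\alpha_k V_k = W_k - \sum_{i=1}^{k-1}\alpha_{i+1}(C_i - B_i)$ converges to a finite limit, and dividing by $\alpha_k \to \alpha_\infty \in (0,\infty)$ yields $V_k \cas V_\infty < \infty$ on $E$. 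The step I would be most careful writing up is the localization producing the deterministic lower bound $-m$ for the stopped process; the rest is bookkeeping with the weights $\alpha_k$.
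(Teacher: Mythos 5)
Your proof is correct. The paper does not actually prove this lemma---it is imported by citation from Robbins and Siegmund---so there is no in-paper argument to compare against; what you give is essentially the classical proof: reweight by $\alpha_k = \prod_{i<k}(1+A_i)^{-1}$ to absorb the multiplicative term, add the partial sums of $\alpha_{i+1}(C_i - B_i)$ to get a supermartingale $W_k$, localize with the stopping times $\tau_m$ to obtain a lower bound and conclude convergence on $\{\tau_m=\infty\}$, then unwind. The only point deserving care in a full write-up is the one you already flag: the lemma as stated assumes no integrability of the $V_k$, so to make ``$(W_k)$ is a supermartingale'' literally true one either works with generalized (nonnegative) conditional expectations throughout or adds a further localization on $\{V_k \le m\}$; with that caveat handled, the argument is complete.
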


We use Lemma~\ref{lemma:robbins-siegmund} 
to show that the quantity
\begin{equation}
  \label{eqn:funny-remainder}
  R_k \defeq \<z_k + x_{k + 1}, x\opt - x_{k + 1}\>
  + \half \ltwo{x_{k + 1} - x\opt}^2
\end{equation}
converges a.s.\ to some random variable $R_\infty < \infty$, where
$z_k \defeq \sum_{i=1}^k \stepsize_i g_i$.  We can decompose $R_k$
as the sum of two nonnegative random variables,
\begin{equation*}
  R_k = G_k + V_k,
  ~~
  G_k = \<z_k + x_{k+1}, x\opt - x_{k+1}\> \ge 0
  ~~ \mbox{and} ~~
  V_k = \half \ltwo{x_{k + 1} - x\opt}^2.
\end{equation*}
Here we have $G_k \ge 0$ because $x_{k + 1}$ minimizes
$\<z_k, x\> + \half \ltwo{x}^2$ over $x \in \xdomain$, so that
$\<z_k + x_{k + 1}, y - x_{k + 1}\> \ge 0$ for all $y \in \xdomain$ (and
$x\opt \in \xdomain$ by definition), while $V_k \ge 0$ clearly.
Recall the definition (Assumption~\ref{assumption:noise-vectors})
of the filtration
\begin{equation*}
  \mc{F}_k \defeq \sigma(\noise_1, \ldots, \noise_k)
\end{equation*}
as the $\sigma$-field generated by the noise sequence through time $k$.
Then
we have the measurability $R_k, G_k, V_k \in \mc{F}_k$
and the following convergence.

\begin{lemma}
  \label{lemma:pair-convergence}
  Let $R_k$ be as in~\eqref{eqn:funny-remainder} and assume that
  $\sum_k \stepsize_k^2 < \infty$. Then for some finite random variable 
  $R_\infty$, we have $R_k \cas R_\infty$. Moreover,
  \begin{equation*}
    \sum_{i=1}^\infty \stepsize_i \left[f(x_i) - f(x^*)\right] < \infty
    ~~~ \mbox{with~probability}~ 1.
  \end{equation*}
\end{lemma}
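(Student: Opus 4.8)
The plan is to recognize the remainder $R_k$ from~\eqref{eqn:funny-remainder} as a natural ``dual gap'' Lyapunov function for the lazy dual averaging recursion~\eqref{eqn:variant-dual-averaging}, derive a one-step almost-supermartingale inequality for it, and invoke the Robbins--Siegmund lemma (Lemma~\ref{lemma:robbins-siegmund}). Concretely, set $\psi(z) \defeq -\min_{x \in \xdomain}\{\<z, x\> + \half\ltwo{x}^2\}$. Since the update~\eqref{eqn:variant-dual-averaging} gives $x_{k+1} = \argmin_{x\in\xdomain}\{\<z_k,x\> + \half\ltwo{x}^2\} = \Pi_\xdomain(-z_k)$, a direct expansion shows
\[
  R_k = \psi(z_k) + \<z_k, x\opt\> + \half \ltwo{x\opt}^2 .
\]
The map $z \mapsto \Pi_\xdomain(-z) = -\nabla\psi(z)$ is nonexpansive, so $\psi$ is convex with $1$-Lipschitz gradient, and the descent inequality for smooth convex functions (using $z_{k+1} - z_k = \stepsize_{k+1} g_{k+1}$ and $\nabla\psi(z_k) = -x_{k+1}$) yields
\[
  R_{k+1} \;\le\; R_k + \stepsize_{k+1}\<g_{k+1}, x\opt - x_{k+1}\>
  + \half \stepsize_{k+1}^2 \ltwo{g_{k+1}}^2 .
\]

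Next I would condition on $\mc{F}_k$. Here $R_k$ and $x_{k+1}$ are $\mc{F}_k$-measurable; Assumption~\ref{assumption:noise-vectors} gives $\E[g_{k+1} \mid \mc{F}_k] = \nabla f(x_{k+1})$; convexity of $f$ gives $\<\nabla f(x_{k+1}), x\opt - x_{k+1}\> \le f(x\opt) - f(x_{k+1}) \le 0$ (as $x\opt$ minimizes $f$ over $\xdomain$ and $x_{k+1}\in\xdomain$); and Assumptions~\ref{assumption:smoothness-of-objective} and~\ref{assumption:noise-vectors}, exactly as in~\eqref{eqn:noise-level-gradient-norms}, give $\E[\ltwo{g_{k+1}}^2 \mid \mc{F}_k] \le C(1 + \ltwo{x_{k+1} - x\opt}^2)$ for a finite $C$. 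The key self-bounding step is $\ltwo{x_{k+1} - x\opt}^2 = 2 V_k \le 2 R_k$ (using $R_k = G_k + V_k$ with $G_k \ge 0$), which closes the recursion without any a priori boundedness of the iterates:
\[
  \E[R_{k+1} \mid \mc{F}_k] \;\le\; (1 + C\stepsize_{k+1}^2)\, R_k
  + \tfrac{C}{2}\stepsize_{k+1}^2 - \stepsize_{k+1}\bigl(f(x_{k+1}) - f(x\opt)\bigr).
\]

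Finally, because $\sum_k \stepsize_k^2 < \infty$, the coefficients $A_k \defeq C\stepsize_{k+1}^2$ and $B_k \defeq \tfrac{C}{2}\stepsize_{k+1}^2$ are summable and $C_k \defeq \stepsize_{k+1}(f(x_{k+1}) - f(x\opt)) \ge 0$; Lemma~\ref{lemma:robbins-siegmund} then produces a finite random variable $R_\infty$ with $R_k \cas R_\infty$ together with $\sum_k C_k < \infty$ almost surely. Adding the finite $i = 1$ term gives $\sum_{i=1}^\infty \stepsize_i(f(x_i) - f(x\opt)) < \infty$ a.s., as claimed. I expect the main obstacle to be establishing the one-step inequality cleanly---identifying $R_k$ with the smooth dual potential $\psi(z_k) + \<z_k, x\opt\> + \half\ltwo{x\opt}^2$ and recognizing the self-bounding bound $\ltwo{x_{k+1} - x\opt}^2 \le 2 R_k$, which is precisely what makes the a priori unbounded term $\ltwo{g_{k+1}}^2$ harmless in the Robbins--Siegmund recursion; the remaining steps are routine applications of convexity and the moment bounds of Assumptions~\ref{assumption:smoothness-of-objective} and~\ref{assumption:noise-vectors}.
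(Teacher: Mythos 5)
Your proof is correct and follows essentially the same route as the paper: you identify $R_k$ with the smooth dual potential ($\psi(z) = \prox^*(-z)$ in the paper's notation), apply the descent inequality for the $1$-Lipschitz-gradient conjugate, condition on $\mc{F}_k$ using the self-bounding estimate $\ltwo{x_{k+1}-x\opt}^2 \le 2R_k$, and conclude via Robbins--Siegmund. The only cosmetic difference is that you apply the convexity bound $\<\nabla f(x_{k+1}), x_{k+1}-x\opt\> \ge f(x_{k+1})-f(x\opt)$ before invoking Robbins--Siegmund rather than after, which changes nothing.
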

\begin{proof}
  Let $\prox(x) = \half \ltwo{x}^2 + \setindic{x}{\xdomain}$ and define its
  conjugate $\prox^*(z) = \sup_{x \in \xdomain} \{\<z, x\> - \half
  \ltwo{x}^2\}$.  Then $\prox^*$ has $1$-Lipschitz continuous gradient with
  $\nabla \prox^*(z) = \argmax_{x \in \xdomain} \{\<z, x\> - \half
  \ltwo{x}^2\}$~\cite[Chapter X]{HiriartUrrutyLe93ab}, and
  \begin{align*}
    R_k 
    & = \<z_k, x\opt - x_{k + 1}\> + \half \ltwo{x\opt}^2
    - \half \ltwo{x_{k+1}}^2
    = \<z_k, x\opt\> + \half \ltwo{x\opt}^2 + \prox^*(-z_k).
  \end{align*}
  Using $\nabla \prox^*(-z_{k-1}) = x_k$ and the Lipschitz
  continuity of $\nabla \prox^*$, we have
  \begin{align*}
    \prox^*(-z_k)
    & \le \prox^*(-z_{k-1}) + \<\nabla \prox^*(-z_{k-1}), z_{k - 1} - z_k\>
      + \half \ltwo{z_k - z_{k - 1}}^2 \\
    & = \prox^*(-z_{k-1}) - \stepsize_k \<g_k, x_k\>
      + \frac{\stepsize_k^2}{2} \ltwo{g_k}^2.
  \end{align*}
  That is, we have for any $k$ that
  \begin{align*}
    R_k & \le \<z_k, x\opt\> + \half \ltwo{x\opt}^2
          + \prox^*(-z_{k - 1}) - \stepsize_k \<g_k, x_k\> + \frac{\stepsize_k^2}{2}
          \ltwo{g_k}^2 \\
        & = \underbrace{\<z_{k - 1} + x_k, x\opt - x_k\>
          + \half \ltwo{x_k - x\opt}^2}_{= R_{k-1}}
          - \stepsize_k \<g_k, x_k - x\opt\>
          + \frac{\stepsize_k^2}{2} \ltwo{g_k}^2.
  \end{align*}
  Taking conditional expectations and using that $\E[g_k \mid \mc{F}_{k-1}]
  = \nabla f(x_k)$ yields
  \begin{align*}
    \E[R_k \mid \mc{F}_{k-1}]
    & \le R_{k-1} - \stepsize_k \<\nabla f(x_k), x_k - x\opt\>
      + \frac{\stepsize_k^2}{2} \E[\ltwo{g_k}^2 \mid \mc{F}_{k-1}] \\
    & \stackrel{(i)}{\le} G_{k-1} + V_{k-1} - \stepsize_k \<\nabla f(x_k), x_k - x\opt\>
      + \frac{\stepsize_k^2}{2} (C \ltwo{x_k - x\opt}^2 + C) \\
    & \stackrel{(ii)}{\le}
      (1 + C \stepsize_k^2) \left[G_{k-1} + V_{k-1}\right]
      - \stepsize_k \<\nabla f(x_k), x_k - x\opt\>
      + C \stepsize_k^2
  \end{align*}
  where inequality $(i)$ follows by Assumption~\ref{assumption:noise-vectors}
  and the discussion (Eq.~\eqref{eqn:noise-level-gradient-norms}) immediately
  following Assumption~\ref{assumption:noise-level}, and inequality $(ii)$
  because $G_{k-1} \ge 0$ and $V_{k - 1} = \half \ltwo{x_k - x\opt}^2$.  In
  particular, we have
  \begin{equation*}
    \E[R_k \mid \mc{F}_{k-1}]
    \le (1 + C \stepsize_k^2) R_{k-1} - \stepsize_k \<\nabla f(x_k),
    x_k - x\opt\> + C \stepsize_k^2.
  \end{equation*}
  Because $f(x\opt) \ge f(x_k) + \<\nabla f(x_k), x\opt - x_k\>$,
  or $\<\nabla f(x_k), x_k - x\opt\> \ge f(x_k) - f(x\opt) \ge 0$,
  Lemma~\ref{lemma:robbins-siegmund} applies. Thus we must have
  $R_k \cas R_\infty$ for some finite random variable $R_\infty$, and 
  moreover 
   \begin{equation*}
    \sum_{i=1}^\infty \stepsize_i \left[f(x_i) - f(x^*)\right] \leq 
    \sum_{i=1}^\infty \stepsize_i \left<\nabla f(x_i), x_i - x^*\right> < \infty,
  \end{equation*}
 where we have used the standard first-order convexity inequality.
\end{proof}

%

With these lemmas as background, we finally provide the proof of
Theorem~\ref{theorem:as-convergence}, by showing that with $R_k$ defined as in
expression~\eqref{eqn:funny-remainder},
\begin{equation}
  \label{eqn:everyone-as-converges}
  R_k \cas 0
  ~~\mbox{so that} ~~
  x_k \cas x\opt.
\end{equation}
We introduce a bit of notation.  Let $A_k
= \sum_{i=1}^k \stepsize_i$, and recall that $z_k = \sum_{i=1}^k \stepsize_i
g_i$. Define $\bar{z}_k = \sum_{i=1}^k \stepsize_i \nabla f(x_i)$ to
be the weighted partial sum of the (non-noisy) gradients $\nabla f(x_i)$,
and we let $z\opt_k = A_k \nabla f(x\opt)$.

We first claim that the error sequence is asymptotically
negligible:
\begin{equation}
  \label{eqn:errors-negligible}
  \frac{1}{\sqrt{A_k}}\sum_{i=1}^k \stepsize_i \noise_i  \cas 0.
\end{equation}
To see the claim~\eqref{eqn:errors-negligible},
we use the following lemma.
\begin{lemma}[Dembo~\cite{Dembo16}, Exercise 5.3.35]
  \label{lemma:dembo-l2}
  Let $Z_k \in \R^n$ be a martingale adapted to $\mc{F}_k$
  and let $b_k > 0$ be a non-random sequence increasing to
  $\infty$. If
  $\sum_{k = 1}^\infty b_k^{-2} \E[\norm{Z_k - Z_{k-1}}^2 \mid \mc{F}_{k-1}]
  < \infty$, we have $b_k^{-1} Z_k \cas 0$.
\end{lemma}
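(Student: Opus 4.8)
The plan is to reduce the statement to the almost-sure convergence of an $L^2$-martingale followed by a deterministic, pathwise application of Kronecker's lemma. First I would introduce the auxiliary process $M_k \defeq \sum_{i=1}^k b_i^{-1}(Z_i - Z_{i-1})$ (absorbing $Z_0$, since $b_k \to \infty$ forces $Z_0/b_k \to 0$). Because each $b_i > 0$ is non-random and $Z_i - Z_{i-1}$ is a martingale difference for $\mc{F}_k$, the process $M_k$ is itself a martingale adapted to $\mc{F}_k$, and its predictable quadratic variation is $\langle M\rangle_k = \sum_{i=1}^k b_i^{-2}\E[\norm{Z_i - Z_{i-1}}^2 \mid \mc{F}_{i-1}]$, which is precisely the quantity the hypothesis assumes finite.

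Second — and this is the only real work — I would show $M_k \cas M_\infty$ for a finite random vector $M_\infty$. If the conditional second moments had summable expectation this would be immediate from $\sup_k \E[\norm{M_k}^2] < \infty$ and $L^2$-bounded martingale convergence. To handle the case where the hypothesis only gives $\langle M\rangle_\infty < \infty$ almost surely, I would localize: for $N \in \N$ set $\tau_N \defeq \inf\{k : \langle M\rangle_{k+1} > N\}$, which is a stopping time since $\langle M\rangle_{k+1}$ is $\mc{F}_k$-measurable. The stopped martingale $M_{k \wedge \tau_N}$ then satisfies $\sup_k \E[\norm{M_{k\wedge\tau_N}}^2] \le N$ (after the harmless reduction to square-integrable increments via truncation), hence converges a.s.; and since $\{\langle M\rangle_\infty < \infty\} = \bigcup_N \{\tau_N = \infty\}$ has probability one, $M_k$ converges a.s.\ on a set of full measure. (Alternatively one simply cites the ready-made martingale-convergence-under-summable-conditional-variance theorem and skips the localization.)

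Third, I would invoke Kronecker's lemma coordinatewise: for a real sequence $(a_i)$ and positive nondecreasing $b_i \uparrow \infty$, convergence of $\sum_i a_i/b_i$ implies $b_n^{-1}\sum_{i=1}^n a_i \to 0$. Applying this on the event $\{M_k \to M_\infty\}$ with $a_i$ the $j$-th coordinate of $Z_i - Z_{i-1}$, for each coordinate $j$ we get $b_n^{-1}\sum_{i=1}^n (Z_i - Z_{i-1}) = b_n^{-1}(Z_n - Z_0) \to 0$, and hence $b_n^{-1} Z_n \cas 0$.

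The main obstacle is the a.s.\ convergence of $M_k$ when the bound on $\langle M\rangle_\infty$ holds only almost surely rather than in expectation; the stopping-time localization above is the clean resolution, after which the remainder is elementary real analysis applied pathwise.
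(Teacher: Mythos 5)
Your argument is correct and is exactly the standard proof of the cited result: the paper itself gives no proof of this lemma (it is quoted from Dembo's notes, Exercise 5.3.35), and the textbook route is precisely yours---form the $L^2$-martingale $M_k=\sum_{i\le k} b_i^{-1}(Z_i-Z_{i-1})$, obtain a.s.\ convergence on the event $\{\langle M\rangle_\infty<\infty\}$ by localizing with the predictable stopping times $\tau_N$, and finish with a coordinatewise application of Kronecker's lemma (valid here because the $b_k$ are assumed increasing). The localization step is handled correctly: since $\{\tau_N\ge k\}\in\mc{F}_{k-1}$, the stopped martingale has $\E[\norm{M_{k\wedge\tau_N}}^2]=\E[\langle M\rangle_{k\wedge\tau_N}]\le N$ without needing any unconditional square-integrability of the raw increments, so your parenthetical about truncation is not actually needed.
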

\noindent
Since $\{\sum_{i=1}^k \stepsize_i \noise_i\}_{k=1}^\infty$ is a martingale 
difference sequence, Lemma~\ref{lemma:dembo-l2} shows that
to obtain the claim~\eqref{eqn:errors-negligible}
it is sufficient to show that
\begin{equation*}
  \sum_{k=1}^\infty
  \frac{1}{A_k} \E\left[\norm{\stepsize_k \noise_k}^2 \mid \mc{F}_{k-1}\right] < \infty. 
\end{equation*}
By Assumption~\ref{assumption:noise-vectors}, the left side of the preceding
display has upper bound $\frac{C}{A_1} \sum_{i=1}^\infty \stepsize_i^2
(1+\norm{x_i - x\opt}^2)$, so that showing $\sum_{i=1}^\infty
\stepsize_i^2(1+ \norm{x_i - x\opt}^2) < \infty$ proves the
claim~\eqref{eqn:errors-negligible}. With that in mind, recall
Lemma~\ref{lemma:restricted-strong-convexity}, which guarantees
an $\epsilon > 0$ such that $f(x) - f(x\opt) \ge
\epsilon (\norm{x - x\opt}^2 \wedge \norm{x - x\opt})$. Using
Lemma~\ref{lemma:pair-convergence}, we know that 
$M \defeq \sup_i \norm{x_i -  x\opt} \vee 1 < \infty$. Thus we have
\begin{equation*}
  f(x_i) - f(x\opt) \ge
  \epsilon \min\left\{
  \norm{x_i - x\opt} M / M,
  \norm{x_i - x\opt}^2\right\}
  \ge c \norm{x_i - x\opt}^2
\end{equation*}
where $c > 0$ is a random positive constant that depends on the bound
$M$. Combining this result with Lemma~\ref{lemma:pair-convergence}, we have
\begin{lemma}
  \label{lemma:sum-stepsize-square-distance}
  Under the conditions of Theorem~\ref{theorem:as-convergence}, we have
  \begin{equation*}
    \sum_{i = 1}^\infty \stepsize_i \norm{x_i - x\opt}^2
    \le \frac{1}{c} \sum_{i = 1}^\infty c \stepsize_i \norm{x_i - x\opt}^2
    \le \frac{1}{c} \sum_{i = 1}^\infty \stepsize_i [f(x_i) - f(x\opt)]
    < \infty.
  \end{equation*}
\end{lemma}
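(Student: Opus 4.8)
The plan is to read the bound off directly from Lemma~\ref{lemma:pair-convergence} together with the pointwise quadratic-growth estimate derived in the paragraph preceding the statement. Lemma~\ref{lemma:pair-convergence} supplies two almost-sure facts: that $\sum_{i \ge 1} \stepsize_i [f(x_i) - f(x\opt)] < \infty$, and (through $R_k \cas R_\infty < \infty$, hence boundedness of $V_k = \half \ltwo{x_{k+1} - x\opt}^2$) that $M \defeq \sup_i \norm{x_i - x\opt} \vee 1 < \infty$. On the intersection of these two probability-one events, everything below is a pathwise statement.

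First I would justify that quadratic growth holds \emph{at every iterate}, not merely asymptotically. Each $x_i$ lies in $\xdomain$ by construction of the update~\eqref{eqn:variant-dual-averaging}, so the growth conclusion of Lemma~\ref{lemma:restricted-strong-convexity} gives $f(x_i) - f(x\opt) \ge \epsilon \min\{\norm{x_i - x\opt}^2, \norm{x_i - x\opt}\}$ for all $i$. Since $\norm{x_i - x\opt} \le M$ and $M \ge 1$, we have $\norm{x_i - x\opt} \ge \norm{x_i - x\opt}^2 / M$, so with the (random) constant $c \defeq \epsilon / M > 0$ we obtain $f(x_i) - f(x\opt) \ge c \norm{x_i - x\opt}^2$ for every $i$. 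Multiplying by $\stepsize_i > 0$ and summing yields $\sum_i \stepsize_i \cdot c \norm{x_i - x\opt}^2 \le \sum_i \stepsize_i [f(x_i) - f(x\opt)] < \infty$; dividing through by $c$ produces exactly the displayed chain of inequalities, the first of which is the trivial insertion of $c/c$, and the last of which is Lemma~\ref{lemma:pair-convergence}.

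The only delicate point---and the closest thing to an obstacle---is the bookkeeping around the random constant $c$: there is no deterministic lower bound on $c$, since it depends on the random quantity $M$, so one must fix a sample path in the good event and only then invoke $c = \epsilon/M$; consequently the conclusion is necessarily almost sure rather than in expectation. No new concentration or martingale estimates are required beyond those already established in Lemmas~\ref{lemma:restricted-strong-convexity} and~\ref{lemma:pair-convergence}.
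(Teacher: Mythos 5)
Your proposal is correct and follows essentially the same route as the paper: it combines the growth bound of Lemma~\ref{lemma:restricted-strong-convexity} with the pathwise boundedness $M = \sup_i \norm{x_i - x\opt} \vee 1 < \infty$ and the summability $\sum_i \stepsize_i [f(x_i) - f(x\opt)] < \infty$ from Lemma~\ref{lemma:pair-convergence}, yielding $f(x_i) - f(x\opt) \ge c\norm{x_i - x\opt}^2$ with the random constant $c = \epsilon/M$. Your remark that $c$ is random and the conclusion is therefore pathwise/almost sure matches the paper's own caveat.
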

\noindent
Here the final inequality follows from Lemma~\ref{lemma:pair-convergence}.  Noting
that $\sum_{i=1}^\infty \stepsize_i^2 < \infty$ by
assumption~\eqref{eqn:stepsizes}, we obtain the
claim~\eqref{eqn:errors-negligible}. Moreover, this implies that
\begin{equation}
  \label{eqn:z-and-bar-z-converge}
  \frac{z_k - \bar{z}_k}{\sqrt{A_k}}
  = \frac{1}{\sqrt{A_k}} \sum_{i = 1}^k \stepsize_i \noise_i
  \cas 0.
\end{equation}

Now that we have the convergence guarantee~\eqref{eqn:z-and-bar-z-converge},
that $R_k \cas R_\infty < \infty$ (Lemma~\ref{lemma:pair-convergence}),
and that $\sum_{i = 1}^\infty \stepsize_i \norm{x_i - x\opt}^2 <\infty$ with
probability 1, we define
\begin{equation*}
  \Omega_0 \defeq \left\{\sum_{i=1}^\infty 
    \stepsize_i \norm{x_i - x\opt}^2 < \infty,
    R_k \to R_\infty < \infty, 
    \frac{z_k - \bar{z}_k}{\sqrt{A_k}} \rightarrow 0 \right\},
    ~~~ \P(\Omega_0) = 1.
\end{equation*}
On the set $\Omega_0$, using the Lipschitz
continuity Assumption~\ref{assumption:smoothness-of-objective}, we
may define
\begin{equation*}
  \sigma_\infty^2 \defeq \sum_{i=1}^\infty \stepsize_i \norm{\nabla f(x_i) - \nabla f(x^*)}^2 \leq 
  L^2 \sum_{i=1}^\infty \stepsize_i \norm{x_i - x^*}^2  < \infty. 
\end{equation*}
Then using Jenson's inequality and recalling
the definition $z\opt_k = A_k \nabla f(x\opt)$,
\begin{align*}
  \norm{\bar{z}_k - z\opt_k}^2
  =
  \normbigg{
  \sum_{i=1}^k \stepsize_i (\nabla f(x_i) - \nabla f(x\opt))}^2
  \leq A_k \sigma_\infty^2.
\end{align*}
Hence, we have $\norm{\bar{z}_k - z\opt_k} \leq \sqrt{A_k} \sigma_\infty$. 
Now, we see that on $\Omega_0$,
\begin{equation*}
  \infty > \sum_{i = 1}^\infty \stepsize_i \norm{x_i - x\opt}^2
  = \sum_{i = 1}^\infty \frac{\stepsize_i}{A_i} A_i \norm{x_i - x\opt}^2.
\end{equation*}
Lemma~\ref{lemma:big-diverge} (that $\sum_i \frac{\stepsize_i}{A_i} = \infty$)
implies there exists a subsequence $\{k_i\}$ with
\begin{equation*}
  \lim_{i\rightarrow \infty} A_{k_i}\norm{x_{k_i} - x\opt}^2 = 0,
\end{equation*}
and moreover,
\begin{equation*}
  \norm{\bar{z}_{k_i} - z\opt_{k_i}} \norm{x_{k_i} - x\opt}
  \le \sigma_\infty \sqrt{A_{k_i}} \norm{x_{k_i} - x\opt}
  \to 0.
\end{equation*}

Keep the subsequence $\{k_i\}$ fixed, and note that on
$\Omega_0$, we have that $R_{k_i - 1} \to R_\infty$. Let us expand
the terms in the definition of $R_k$ to see that we must have $R_\infty = 0$.
Indeed, we have
\begin{align*}
  R_{k-1} 
  & = \<z_{k - 1} + x\opt, x\opt - x_k\> - \half \norm{x_k - x\opt}^2 \\
  & \le \<z_{k - 1} + x\opt, x\opt - x_k\> \\
  & = \<z_{k - 1} - z\opt_{k-1}, x\opt - x_k\>
  + \<z\opt_{k-1}, x\opt - x_k\> + \<x\opt, x\opt - x_k\>
  \\
  & \le \norm{z_{k - 1} - z\opt_{k-1}} \norm{x\opt - x_k}
  + A_k \<\nabla f(x\opt), x\opt - x_k\>
  + \norm{x\opt} \norm{x\opt - x_k}.
\end{align*}
The optimality conditions for 
$x\opt$ imply $\<\nabla f(x\opt), x\opt - x_k\> \le 0$.
On the subequence $k_i$, we have
\begin{equation*}
  \limsup_{i \to \infty}
  \norm{z_{k_i - 1} - z\opt_{k_i - 1}} \norm{x\opt - x_{k_i}}
  \le \limsup_{i \to \infty}
  \sigma_\infty \sqrt{A_{k_i-1}} \norm{x\opt - x_{k_i}} = 0
\end{equation*}
and
$\limsup_{i \to \infty} \norm{x\opt} \norm{x\opt - x_{k_i}} = 0$.
In particular, that $R_k \ge 0$ implies
\begin{align*}
  0 \le \liminf_{i \to \infty} R_{k_i - 1}
  \le \limsup_{i\rightarrow \infty} R_{k_i - 1}
  = 0.
\end{align*}
Because $R_k \to R_\infty$ on $\Omega_0$, it must thus be the case
that $R_\infty = 0$.

\subsection{Manifold identification:
  Theorem~\ref{theorem:manifold-identification}}

\label{sec:proof-manifold-identification}

Recall that $z_k = \sum_{i = 1}^k \stepsize_i g_i$ is the weighted partial sum
of the noisy gradients, and let $A_k = \sum_{i=1}^k \stepsize_i$.
The following lemma is a nearly immediate
consequence of our previous results and, given the perturbation results in
Lemmas~\ref{lemma:perturbation-result-nonlinear}
and~\ref{lemma:perturbation-result-linear}, is the key to our finite
identification result.
\begin{lemma}
  \label{lemma:average-zk-converges}
  Under the conditions of the Theorem~\ref{theorem:manifold-identification},
  $\frac{1}{A_k} z_k \cas \nabla f(x\opt)$.
\end{lemma}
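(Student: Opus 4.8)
The plan is to split $z_k$ into a ``signal'' part, a deterministic-solution part, and a noise part, reusing the almost-sure estimates already established in the proof of Theorem~\ref{theorem:as-convergence}. Write $g_i = \nabla f(x_i) + \noise_i$ and, as in that proof, set $\bar{z}_k = \sum_{i=1}^k \stepsize_i \nabla f(x_i)$ and $z\opt_k = A_k \nabla f(x\opt)$, so that
\begin{equation*}
  \frac{z_k}{A_k} - \nabla f(x\opt)
  = \frac{\bar{z}_k - z\opt_k}{A_k}
  + \frac{z_k - \bar{z}_k}{A_k}.
\end{equation*}
Since $\steppow < 1$ in~\eqref{eqn:stepsizes}, we have $A_k = \sum_{i=1}^k \stepsize_i \to \infty$, which is what makes both remainder terms vanish.

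First I would handle the noise term. Equation~\eqref{eqn:z-and-bar-z-converge} in the proof of Theorem~\ref{theorem:as-convergence} already gives $\frac{1}{\sqrt{A_k}}\sum_{i=1}^k \stepsize_i \noise_i \cas 0$; dividing again by $\sqrt{A_k} \to \infty$ yields $\frac{z_k - \bar{z}_k}{A_k} \cas 0$. For the middle term I would invoke the bound derived in the same proof: on the probability-one event $\Omega_0$, Lemma~\ref{lemma:sum-stepsize-square-distance} and Assumption~\ref{assumption:smoothness-of-objective} give $\sigma_\infty^2 \defeq \sum_{i=1}^\infty \stepsize_i \norm{\nabla f(x_i) - \nabla f(x\opt)}^2 \le L^2 \sum_{i=1}^\infty \stepsize_i \norm{x_i - x\opt}^2 < \infty$, and Jensen's inequality then gives $\norm{\bar{z}_k - z\opt_k}^2 \le A_k \sigma_\infty^2$. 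Hence $\frac{\norm{\bar{z}_k - z\opt_k}}{A_k} \le \frac{\sigma_\infty}{\sqrt{A_k}} \to 0$ almost surely. Combining the two displays proves the lemma.

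The argument is essentially a bookkeeping exercise once Theorem~\ref{theorem:as-convergence} is in hand, so there is no serious obstacle; the one point requiring a little care is to make sure the estimates are being applied on the correct full-measure event $\Omega_0$ (where $x_k \cas x\opt$, $\sum_i \stepsize_i \norm{x_i - x\opt}^2 < \infty$, and the noise-negligibility~\eqref{eqn:errors-negligible} all hold simultaneously), and to note explicitly that $A_k \to \infty$ is what converts the $\sqrt{A_k}$-scaled convergences into $A_k$-scaled ones.
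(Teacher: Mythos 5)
Your proof is correct and follows essentially the same route as the paper's: both split $z_k/A_k - \nabla f(x\opt)$ into the martingale-noise part (killed by the a.s.\ negligibility~\eqref{eqn:errors-negligible}) and the gradient-difference part (killed by Jensen's inequality together with Lemma~\ref{lemma:sum-stepsize-square-distance} and the Lipschitz assumption, yielding an $O(A_k^{-1/2})$ bound). No gaps.
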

\begin{proof}
  We first remove the randomness of $\noise_i$. By Jenson's
  inequality,
  \begin{align*}
    \norm{\frac{z_k}{A_k} - \nabla f(x\opt)}^2
    & \le 2 
      \normbigg{A_k^{-1} \sum_{i=1}^k \stepsize_i (\nabla f(x_i) - \nabla f(x\opt))}^2
      + 2 \normbigg{A_k^{-1}\sum_{i = 1}^k \stepsize_i \noise_i}^2.
  \end{align*}
  The second term converges almost surely to zero by the almost sure
  convergence~\eqref{eqn:errors-negligible} in the proof of
  Theorem~\ref{theorem:as-convergence}. We thus focus on the first term.

  By Lemma~\ref{lemma:sum-stepsize-square-distance} in the proof
  of Theorem~\ref{theorem:as-convergence}
  and the Lipschitz Assumption~\ref{assumption:smoothness-of-objective},
  we know that $\sum_{i = 1}^\infty \stepsize_i \norm{\nabla
    f(x_i) - \nabla f(x\opt)}^2
  \le C \sum_{i = 1}^\infty \stepsize_i \norm{x_i - x\opt}^2
  < \infty$ with probability 1. Thus, by Jenson's inequality,
  \begin{align*}
    \frac{1}{A_k^2} 
      \normbigg{\sum_{i=1}^k \stepsize_i (\nabla f(x_i) - \nabla f(x\opt))}^2
    &\leq \frac{1}{A_k} \sum_{i=1}^\infty \stepsize_i \norm{\nabla f(x_i) - \nabla f(x\opt)}^2.
  \end{align*}
  Taking $A_k \to \infty$ gives the result.
\end{proof}

Applying Assumption~\ref{assumption:linear-independence-cq},
there exist $\lambda_i > 0$ and $\nu_i = 0$ such that
$\nabla f(x\opt) + \sum_{i = 1}^\numactive \lambda_i \nabla f_i(x\opt)
+ \sum_{i = \numactive + 1}^m \nu_i \nabla f_i(x\opt) = 0$.
Applying the standard KKT conditions, we immediately
see that $x\opt$ is an optimum of the convex problem
\begin{align*}
  \minimize_{x}~ &~ \<\nabla f(x\opt), x\>
  ~~ \subjectto
  f_i(x)\leq 0~~ \mbox{for~} i \in [m].
\end{align*}
The dual averaging update~\eqref{eqn:variant-dual-averaging}
chooses $x_{k + 1}$ via
\begin{equation*}
  x_{k + 1} = \argmin_x \left\{ \<\nabla f(x\opt), x\>
  + \<v_k, x\>
  + \frac{1}{2 A_k} \norm{x}^2
  \mid f_i(x) \le 0, i \in [m] \right\},
\end{equation*}
where $v_k = \frac{z_k}{A_k} - \nabla f(x\opt)$.
Theorem~\ref{theorem:as-convergence} guarantees that $x_k
\to x\opt$, while Lemma~\ref{lemma:average-zk-converges} shows that $A_k^{-1}
z_k - \nabla f(x\opt) \to 0$ with probability $1$. The perturbation
results (Lemmas~\ref{lemma:perturbation-result-nonlinear} and
\ref{lemma:perturbation-result-linear}) immediately yield the theorem.


\section{Discussion}

We have developed asymptotic theory for stochastic optimization
problems, showing a local asymptotic minimax lower bound and making precise
connections between tilt stability in optimization and the (statistical)
difficulty of solving risk minimization problems.  These optimal rates of
convergence are achievable by the classical M-estimator $\what{x}_k =
\argmin_{x \in \xdomain} \frac{1}{k} \sum_{i = 1}^k F(x; \statrv_i)$
(Corollary~\ref{corollary:shapiro-normality}) and approximate versions
thereof---e.g., from modern incremental gradient
methods~\cite{LeRouxScBa12, JohnsonZh13, DefazioBaLa14, LinMaHa15}. Our dual
averaging (lazy projected gradient) and Riemannian stochastic gradient
methods are also asymptotically optimal, though subtleties arise for
nonlinear constraint sets. There are open questions about whether
simpler methods---for example, methods that do not explicitly track the
active manifold---can achieve these rates, and developing finite
sample analogues of this theory remains an open question.



\ifdefined\useaos
\else
\setlength{\bibsep}{0.3em}
\fi
  
\bibliographystyle{abbrvnat}
\bibliography{bib}

\newpage

\ifdefined\useaos
\else
\appendix
\fi


\section{Local Asymptotic Normality proofs}
\label{appendix:proofs-asymptotic-normality}

In this appendix, we collect the proofs of the additional technical results
necessary for the proof of Theorem~\ref{theorem:local-minimax-lower}.

\subsection{Proof of lemma~\ref{lemma:continuity}}
\label{section:proof-of-lemma-continuity}

Recall the definitions $\pertfunc \in \pertfuncset$ of $\pertfunc :
\statdomain \to \R^d$ with $\E_{P_0}[\pertfunc(\statrv)] = 0$ and
$\E_{P_0}[\norms{\pertfunc(\statrv)}^2] < \infty$ and the $\mc{C}^3$
function $\nearlinear : \R \to [-1, 1]$ with $\nearlinear(t) = t$ for $t \in
[-\half, \half]$. Letting $C(u) = \int (1 + \nearlinear(u^T \pertfunc))
dP_0$ be the normalization constant for $P_u$, we define
\begin{equation*}
  G(x, u) \defeq \frac{1}{C(u)} \int \loss(x;\statval)
  (1 + \nearlinear(u^T \pertfunc(\statval))) dP_0(\statval)
  ~~ \mbox{and} ~~
  \wb{G}(x, u) \defeq C(u) G(x, u)
\end{equation*}
for notational convenience.
We first show that both $C(u)$ and the un-normalized function
$\wb{G}(x, u)$ are
$\mc{C}^2$ in a neighborhood of $(x_0, 0)$.

In this case, we have $C(u) = 1 + \int \nearlinear(u^T \pertfunc(\statval))
dP_0(\statval)$, and a standard application~\cite[Thm.~16.8]{Billingsley86}
of the dominated convergence theorem, coupled with the assumption that $g$
is $\mc{C}^3$ with $\nearlinear' \ge 0$, guarantees
\begin{equation}
  \label{eqn:C-u-derivs}
  \nabla_u C(u) = \E_{P_0}[\nearlinear'(u^T \pertfunc) \pertfunc]
  ~~~ \mbox{and} ~~~
  \nabla^2_u C(u) = \E_{P_0}[
    \nearlinear''(u^T \pertfunc) \pertfunc \pertfunc^T]
\end{equation}
both of which are continuous in $u$ because $\nearlinear'''$ is bounded by
assumption, whence $C(u) = 1 + o(\norm{u}^2)$ as $\nabla_u C(0) = 0$ and
$\nabla^2_u C(0) = 0$.  Now we consider the un-normalized function
$\wb{G}$. We have
\begin{align*}
  \nabla_x \wb{G}(x, u)
  & = \int (1 + \nearlinear(u^T \pertfunc(\statval))) \nabla \loss(x; \statval)
  dP_0(\statval) ~~~ \mbox{and} \\
  \nabla^2_x \wb{G}(x, u)
  & = \int (1 + \nearlinear(u^T \pertfunc(\statval)))
  \nabla^2 \loss(x; \statval) dP_0(\statval),
\end{align*}
again by standard application of the dominated convergence
theorem~\cite[Theorem 16.8]{Billingsley86} because $g$ is bounded in $[-1, 1]$
and we have the remainder guarantee of
Assumption~\ref{assumption:regularity-gradients}.  We may calculate
derivatives with respect to $u$ similarly, obtaining
\begin{align*}
  \nabla_u \wb{G}(x, u)
  & = \int \loss(x; \statval) \nearlinear'(u^T \pertfunc(\statval)) \pertfunc(\statval)
  dP_0(\statval)
  ~~ \mbox{and} \\
  \nabla^2_u \wb{G}(x, u)
  & = \int \loss(x; \statval) \nearlinear''(u^T \pertfunc(\statval)) \pertfunc(\statval) \pertfunc(\statval)^T
  dP_0(\statval)
\end{align*}
in a neighborhood of $(x_0, 0)$, because
$\int |\loss(x; \statval)| \norm{\pertfunc(\statval)}^2 dP_0(\statval) < \infty$ for $x$
near $x_0$ by Assumption~\ref{assumption:regularity-gradients}.
Lastly, a completely similar calculation yields
\begin{equation*}
  \nabla^2_{x,u} \wb{G}(x, u)
  = \int \nearlinear'(u^T \pertfunc(\statval))
  \nabla \loss(x; \statval) \pertfunc(\statval)^T dP_0(\statval).
\end{equation*}

With these equalities in place, we may now compute derivatives. We have that
$\nearlinear''(0) = 0$ and $\nearlinear'''(0) = 0$, because $\nearlinear(t)
= t$ for $-\half \le t \le \half$, so $\nabla C(0) = 0$ and $\nabla^2 C(0) =
0$. Moreover, again using $\nearlinear'(0) = 1$ we have $\nabla^2_{x,u}
\wb{G}(x_0, 0) = \cov_{P_0}(\grad \loss(x_0; \statrv), \pertfunc(\statrv)) =
\covmat_{\pertfunc, \loss}^T$.
We then have
\begin{align*}
  \lefteqn{f_u(x)
    = G(x, u)} \\
  & ~~ = G(x, 0)
  + \nabla_u \wb{G}(x_0, 0)^T u
  + u^T \nabla^2_{u,x} \wb{G}(x_0, 0) (x - x_0)
  + o(\norm{u}^2 + \norm{x - x_0}^2) \\
  & ~~ = f_0(x) + u^T \covmat_{\pertfunc,\loss}(x - x_0) + c_u
  + o(\norm{u}^2 + \norm{x - x_0}^2),
\end{align*}
where we have used that $G(x, 0) = f_0(x)$ and $\nabla^2_u \wb{G}(x_0, 0) = 0$
and that $\norm{u} \norm{v} \le \half \norm{u}^2 + \half \norm{v}^2$
for all $u, v$ and $c_u$ depends only on $u$.

\subsection{Proof of Lemma~\ref{lemma:local-asymptotic-normality}}
\label{sec:proof-LAN}

Let $C_u = 1 + \int \nearlinear(u^T
\pertfunc(\statval)) dP_0(\statval)$.  We begin by expanding the log
likelihood ratio, which gives immediately that
\begin{align*}
  \log \frac{dP_k(\statrv_1, \ldots, \statrv_n)}{dP_0(\statrv_1,
    \ldots, \statrv_n)}
  & = k \log C_{u / \sqrt{k}}
  + \sum_{i = 1}^k \log (1 + \nearlinear( u^T \pertfunc(\statrv_i) / \sqrt{k})).
\end{align*}
By expression~\eqref{eqn:C-u-derivs} for $C_u$, we have $\nabla_u C_0 = 0$
and $\nabla_u^2 C_0 = 0$, so that $C_u = 1 + o(\norm{u}^2)$. Thus
\begin{equation*}
  k \log C_{u/\sqrt{k}}
  = k \log \left(1 + o(\norm{u}^2 / k)\right)
  = k \cdot o(\norm{u}^2 / k) \to 0
\end{equation*}
as $k \to \infty$, so for the remainder of the
proof, we ignore the term $k \log C_{u/\sqrt{k}}$.

Noting that $\E_{P_0}[|u^T \pertfunc(\statrv_i)|^2] \le \ltwo{u}^2 \E_{P_0}[\ltwo{\pertfunc(\statrv_i)}^2]
< \infty$,
it is a standard result~\cite[e.g.][Lemma 3]{Owen90} that
$\max_{1 \le i \le k} u^T \pertfunc(\statrv_i) / \sqrt{k} \cas 0$ under $P_0$.
Thus, we may without loss of generality assume that
$\nearlinear(u^T \pertfunc(\statrv_i) / \sqrt{k}) = u^T \pertfunc(\statrv_i) / \sqrt{k}$, and performing
a Taylor expansion of the logarithm, we obtain
\begin{equation*}
  \log(1 + u^T \pertfunc(\statrv_i) / \sqrt{k})
  = \frac{u^T \pertfunc(\statrv_i)}{\sqrt{k}}
  - \frac{u^T \pertfunc(\statrv_i) \pertfunc(\statrv_i)^T u}{2 k}
  + C_i \frac{|u^T \pertfunc(\statrv_i)|^3}{k^{3/2}}
\end{equation*}
for a some $C_i \in [-1, 1]$, because we may assume that
$|u^T \pertfunc(\statrv_i) / \sqrt{k}| \le \epsilon$ for any $\epsilon > 0$. In particular,
we find that for large enough $k$, we have
\begin{align*}
  \lefteqn{\sum_{i = 1}^k \log\left(1 + \nearlinear\left(\frac{u^T \pertfunc(\statrv_i)}{\sqrt{k}}
    \right)\right)} \\
  & = \frac{1}{\sqrt{k}} \sum_{i = 1}^k u^T \pertfunc(\statrv_i)
  - \frac{1}{2} u^T \bigg(\frac{1}{k} \sum_{i = 1}^k \pertfunc(\statrv_i) \pertfunc(\statrv_i)^T \bigg)
  u
  + \frac{1}{k^{3/2}} \sum_{i = 1}^k C_i |u^T \pertfunc(\statrv_i)|^3.
\end{align*}
By H\"older's inequality, the final term satisfies
\begin{align*}
  \frac{1}{k^{3/2}} \sum_{i = 1}^k C_i |u^T \pertfunc(\statrv_i)|^3
  & \le \frac{1}{k} \sum_{i = 1}^k u^T  \pertfunc(\statrv_i) \pertfunc(\statrv_i)^T u
  \cdot \max_{1 \le i \le k} \frac{|\pertfunc(\statrv_i)^T u|}{\sqrt{k}}
  = o_{P_0}(1).
\end{align*}
Thus, we have
\begin{equation*}
  \log \frac{dP_k(\statrv_1, \ldots, \statrv_k)}{dP_0(\statrv_1,
    \ldots, \statrv_k)}
  = \frac{1}{\sqrt{k}} \sum_{i = 1}^k u^T \pertfunc(\statrv_i)
  - \half u^T \covmat_k u
  + o_{P_0}(1),
\end{equation*}
where $\Sigma_k = \frac{1}{k} \sum_{i = 1}^k \pertfunc(\statrv_i) \pertfunc(\statrv_i)^T \cas \covmat_\pertfunc$, as
desired.


\section{Proofs of linear perturbation results}

In this section, we collect the proofs of our technical results on
perturbation of optimization problems.

\subsection{Proof of Lemma~\ref{lemma:perturbation-result-nonlinear}}
\label{sec:proof-perturbation-nonlinear}

For shorthand, let $x_k = x\opt_{v_k, \delta_k}$, where we have by
assumption that $x_k \to x\opt$.  The continuity of the functions $f_i$ then
implies that for $i \in \{\numactive + 1, \ldots, m\}$, there exists some
$\epsilon > 0$ such that $f_i(x_k) \le -\epsilon$ for all sufficiently large
$k$. Following the definition of $x_k$ as the unique minimizer of the convex
problem~\eqref{eqn:nonlinear-constraints-prob}, the KKT conditions for the
problem guarantee the existence of $\lambda_k \ge 0$ such that
\begin{align*}
  & g + v_k + \delta_k(x_k - x_0) + \sum_{i=1}^\numactive \lambda_{k, i}
  \nabla f_i(x_k) = 0,
  ~~~ \lambda_{k, i} f_i(x_k) = 0 ~\text{for} ~ 1\leq i\leq m, 
\end{align*}
where $\lambda_{k, i}$ denotes the $i$th coordinate of $\lambda_k$.  We
now argue that for large $k$, we have $\lambda_{k,i} > 0$ for
$i \in [\numactive]$, which by complementary slackness~\cite[Chapter
5]{BoydVa04} implies that $f_i(x_k) = 0$.

For the sake of contradiction, suppose that we do not eventually have
$\lambda_{k, i} > 0$ for all $i$, and without loss of generality, assume that
$\lambda_{k, 1} = 0$ infinitely often; we assume (again w.l.o.g.) that
$\lambda_{k, 1} = 0$ for all $k$. For $i = 0, \ldots, \numactive$, define
the modified Lagrange multipliers $\wt{\lambda}_{k,i} = \lambda_{k, i} /
\sqrt{\sum_{i = 0}^\numactive \lambda_{k,i}^2}$, where $\lambda_{k,0} = 1$.
As $f_i(x_k\opt) < 0$ eventually for $i > \numactive$, we see that
(for large $k$) we have $\lambda_{k, i} = 0$ for $i > \numactive$, so
the KKT gradient condition becomes
\begin{equation*}
  \wt{\lambda}_{k, 0} g + \wt{\lambda}_{k, 0}(v_k + \delta_k(x_k-x_0))
  + \sum_{i=1}^\numactive \wt{\lambda}_{k, i} \nabla f_i (x_k) = 0.
\end{equation*}
The subsequence $\{\wt{\lambda}_k\}$ has an accumulation point
$\wt{\lambda}$, because it lies on the surface of the sphere. Using
our assumption that $x_k \to x\opt$ and that the $\nabla f_i$ are
continuous, we may pass to a limit in the preceding display to obtain
\begin{equation*}
  \wt{\lambda}_0 g + \sum_{i=1}^\numactive \wt{\lambda}_i \nabla f_i(x\opt) = 0.
\end{equation*}
By assumption, we have $\wt{\lambda}_1 = 0$ because $\lambda_{k,1} = 0$
along the subsequence. By the constraint
qualification~\ref{assumption:linear-independence-cq}.\ref{item:linear-independence-cq},
the vectors $\{\nabla f_i(x\opt)\}_{i = 1}^\numactive$ are linearly
independent, so that the weights $\lambda\opt > 0$ satisfying $g = -\sum_{i =
  1}^\numactive \lambda\opt_i \nabla f_i(x\opt)$ are unique.
If $\wt{\lambda}_0 > 0$, we divide by it to obtain
$g = -\sum_{i = 2}^\numactive \wt{\lambda}_i / \wt{\lambda}_0
\nabla f_i(x\opt)$, a contradiction. On the other hand,
if $\wt{\lambda}_0 = 0$, then using $\wt{\lambda}_1 = 0$ we have
$\sum_{i = 2}^\numactive \wt{\lambda}_i \nabla
f_i(x\opt) = 0$, where $\wt{\lambda} \neq 0$, which contradicts
the linear independence of the gradients $\nabla f_i(x\opt)$.

\subsection{Proof of Lemma~\ref{lemma:perturbation-result-linear}}
\label{sec:proof-perturbation-linear}


We begin our development by stating the following lemma, which helps reduce
our discussion to the case where $A$ has independent rows.  The result is
essentially a variant of Carath\'eodory's theorem, so we defer the proof to
Section~\ref{sec:proof-special-treatment-linear}.

\begin{lemma}
  \label{lemma:special-treatment-for-linear-constraints}
  Let $A \in \R^{m\times n}$, and define
  the set
  \begin{equation*}
    S^0 \defeq
    \left\{x\in \R^n\mid x=A^T \lambda: \lambda > 0, \lambda\in \R^m\right\}. 
  \end{equation*}
  Denote the vectors $\{a_i\}_{i \in [n]}$ to be the columns of $A$. 
  Then for any $x\in S^0$
  and any index $i_0 \in [m]$, one of the following two cases occurs:
  \begin{enumerate}[(i)]
  \item There exists a set $T \subset [m]$ and $\lambda_i \ge 0$ such
    that the collection of vectors
    $a_{i_0} \cup \{a_i\}_{i\in T}$ is linearly independent, and
    \begin{equation*}
      x = \lambda_0 a_{i_0} + \sum_{i\in T} \lambda_i a_i ,~\lambda_{0} > 0, 
      \text{and}~\lambda_i > 0~\mbox{for}~ i\in T.
    \end{equation*}
  \item There exists a set $T \subset [m]$ and $\lambda_i \ge 0$ such
    that
    \begin{equation*}
      a_{i_0} + \sum_{i\in T} \lambda_i a_i = 0.      
    \end{equation*}
  \end{enumerate}
\end{lemma}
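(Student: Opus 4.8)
The plan is to prove Lemma~\ref{lemma:special-treatment-for-linear-constraints} by a conical Carath\'eodory-type reduction that keeps the distinguished index $i_0$ in play throughout. First I would observe that membership $x\in S^0$ unpacks to $x = \sum_{i=1}^m \lambda_i a_i$ with every coefficient $\lambda_i > 0$ (here $a_i$ is the $i$-th column of $A^T$, i.e.\ the $i$-th row of $A$, as in the application to Lemma~\ref{lemma:perturbation-result-linear}). In particular $\lambda_{i_0} > 0$, so $x$ admits at least one representation
\[
  x = \lambda_0 a_{i_0} + \sum_{i\in T}\lambda_i a_i,
  \qquad \lambda_0 > 0,\quad \lambda_i > 0~\text{for}~i\in T,\quad T\subseteq [m]\setminus\{i_0\}.
\]
Among all representations of this shape I would fix one with $|T|$ as small as possible, and show that either it already exhibits case (i) or else case (ii) must hold.

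Next I would split on whether $\{a_{i_0}\}\cup\{a_i\}_{i\in T}$ is linearly independent. If it is, this is precisely case (i). If not, there is a nontrivial relation $\mu_0 a_{i_0} + \sum_{i\in T}\mu_i a_i = 0$; after possibly replacing $\mu$ by $-\mu$ I may assume $\mu_0 \ge 0$. The engine of the argument is that for any scalar $t$ the shifted coefficients $\lambda_0(t) = \lambda_0 + t\mu_0$ and $\lambda_i(t) = \lambda_i + t\mu_i$ still represent $x$, so I only need to pick $t$ driving one coefficient to $0$ while keeping the rest nonnegative and $\lambda_0(t) > 0$. If $\mu_0 = 0$, then $\sum_{i\in T}\mu_i a_i = 0$ with $\mu|_T\neq 0$; negating $\mu$ if needed so that it is positive somewhere on $T$ and taking $t^* = \min\{\lambda_i/\mu_i : i\in T,\ \mu_i > 0\} > 0$ keeps all $\lambda_i(t^*)\ge 0$, leaves $\lambda_0$ fixed and positive, and zeroes at least one index of $T$ --- contradicting minimality. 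If $\mu_0 > 0$ and $\mu_i < 0$ for some $i\in T$, then $t^* = \min\{-\lambda_i/\mu_i : i\in T,\ \mu_i < 0\} > 0$ makes $\lambda_0(t^*) = \lambda_0 + t^*\mu_0 > \lambda_0 > 0$, keeps the other coefficients nonnegative, and again kills an index of $T$, contradicting minimality. The only surviving possibility is $\mu_0 > 0$ with $\mu_i \ge 0$ for all $i\in T$, and then dividing the relation by $\mu_0$ yields $a_{i_0} + \sum_{i\in T}(\mu_i/\mu_0) a_i = 0$ with nonnegative coefficients --- exactly case (ii), with the degenerate subcase $a_{i_0} = 0$ absorbed by taking $T = \emptyset$.

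The step I expect to be the main obstacle is the sign bookkeeping surrounding $\mu_0$: one has to notice that the unique configuration in which the reduction cannot be performed without destroying $\lambda_0 > 0$ --- namely $\mu_0 > 0$ together with $\mu_i \ge 0$ on all of $T$ --- is exactly the configuration producing the conical relation of case (ii), and one must orient the perturbation in $t$ correctly relative to the signs of $\mu_0$ and the $\mu_i$. The remaining ingredients (existence of an initial positive representation, the extremal choice of a representation with minimal support, and checking that each reduction strictly shrinks $|T|$ while preserving strict positivity on the surviving indices) are routine.
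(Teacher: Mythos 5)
Your proposal is correct and follows essentially the same route as the paper: fix a representation with $\lambda_0>0$ and minimal support $T$, then use a Carath\'eodory-style perturbation $\lambda+t\mu$ along a dependence relation to either contradict minimality or extract the conical relation of case (ii). The only blemish is a sign slip in the $\mu_0=0$ subcase (with your convention $\lambda_i(t)=\lambda_i+t\mu_i$ and $\mu$ positive somewhere on $T$, you must take $t=-t^*$ to drive a coefficient to zero), which is trivially repaired.
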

\noindent
With this lemma established, we can prove the main perturbation result of
Lemma~\ref{lemma:perturbation-result-linear}. 

First, we note that if $A$ has full row rank, the conclusion of
Lemma~\ref{lemma:perturbation-result-linear} follows immediately from
Lemma~\ref{lemma:perturbation-result-nonlinear}. Thus, it remains to
consider the case that $A$ does not have full row rank.  Let $x_k$ be
shorthand for the optimum of the perturbed
problem~\eqref{eqn:perturbed-linear-optimization} with vector $v = v_k$ and
scalar $\delta = \delta_k$, while $x\opt$ denotes an optimum for the
unperturbed problem, where $A x\opt = b$.  We show that for any fixed row
$i_0 \in [m]$ of $A = [a_1 ~ \cdots ~ a_m]^T$, there exists $K \in \N$ such
that $k \ge K$ implies $a_{i_0}^T x_k = b_{i_0}$. As the number of rows of
$A$ is finite, this will imply the result.

By assumption on the linear
program~\eqref{eqn:perturbed-linear-optimization} with $v = 0, \delta = 0$,
there exists $\lambda\opt > 0$ such that $g + A^T \lambda\opt = 0$.
Applying Lemma~\ref{lemma:special-treatment-for-linear-constraints}, we have
two possible cases to consider on the independence structure of $A$. In the
first case, there is a set $I_0 \subset [m]$ with $i_0 \in I_0$ such that
$\{a_i\}_{i \in I_0}$ are linearly independent, and there are $\mu_i > 0$,
$i \in i_0$, such that
\begin{equation*}
  g + \sum_{i \in I_0} \mu_i a_i
  = g + A_{I_0}^T \mu = 0,
\end{equation*}
where $A_I$ denotes the sub-matrix of $A$ whose rows are those indexed by $I$.
In this case, by considering the perturbed optimization problem
\begin{equation*}
  \minimize_x ~ \<g + v_k, x\> + \frac{\delta_k}{2} \norm{x - x_0}^2
  ~~ \mbox{s.t.} ~ A_{I_0}x \le b_{I_0},
  ~ A_{I_0^c} x \le b_{I_0^c}, ~
  C x \le d,
\end{equation*}
we may apply Lemma~\ref{lemma:special-treatment-for-linear-constraints} to
$A_{I_0}$, which has independent rows, to obtain that $A_{I_0} x_k =
b_{I_0}$ for large enough $k$.  In the other case of
Lemma~\ref{lemma:special-treatment-for-linear-constraints}, there is a
subset $I_0 \subset [m]$ with $i_0 \in I_0$ and $\mu_i \ge 0$, $i \in I_0$,
such that
\begin{equation*}
  a_{i_0} + \sum_{i \in I_0 \setminus i_0} \mu_i a_i = 0.
\end{equation*}
Let $x$ be an arbitrary feasible point for the
problem~\eqref{eqn:perturbed-linear-optimization}.  Taking the inner product
of the preceding equality with $x - x\opt$, we have
\begin{equation*}
  0 = \<a_{i_0}, x - x\opt_0\> + \sum_{i \in I_0 \setminus i_0} \mu_i \<a_i, x - x\opt\>
  = \underbrace{\<a_{i_0}, x\> - b_{i_0}}_{\le 0}
  + \sum_{i \in I_0 \setminus i_0} \underbrace{\mu_i (\<a_i, x\> - b_i)}_{\le 0},
\end{equation*}
so that each of the terms $\<a_{i_0}, x\> - b_{i_0}$ and
$\mu_i (\<a_i, x\> - b_i)$ must be zero. That is,
$\<a_{i_0}, x\> = b_{i_0}$, and so it certainly must be the case
that $\langle a_{i_0}, x_k\rangle = b_{i_0}$ as $x_k$ is feasible.

\subsubsection{Proof of
  Lemma~\ref{lemma:special-treatment-for-linear-constraints}}
\label{sec:proof-special-treatment-linear}

The proof of the claim is similar to the standard proof of Carath\'eodory's
theorem on convex hulls~\cite[Chapter III]{HiriartUrrutyLe93ab}. Fixing the
index $i_0$, define the index set
\begin{equation*}
  I(x) \defeq \argmin_{T \subset [m] \setminus \{i_0\}}
  \left\{\card(T) \mid
  \exists \lambda_0 > 0, \lambda \in \R^m_+
  ~ \mbox{s.t.} ~
  x = a_{i_0} \lambda_0 + A \lambda,
  \lambda_T > 0,
  \lambda_{T^c} = 0
  \right\}
\end{equation*}
where $\lambda_T$ denotes the sub-vector of $\lambda$ indexed by $T$.

We must then have one of the following two cases:
\begin{enumerate}[(i)]
\item The vectors $a_{i_0} \cup \{a_i\}_{i \in I(x)}$ are linearly
  independent. This is case (i) of the conclusion of the lemma.
\item The vectors $a_{i_0} \cup \{a_i\}_{i \in I(x)}$ are linearly
  dependent. We claim that in this case, the vectors $\bigcup_{i\in I(x)}
  \{a_i\}$ are linearly independent. If not, there must be a vector $\mu \neq
  0$, where $\mu_i = 0$ for $i \not \in I(x) \cup \{i_0\}$, such that $\sum_{i
    \in I(x)} \mu_i a_i = 0$.  But then considering the quantity $\lambda + t
  \mu$ for $t \in \R$, we have $A (\lambda + t \mu) = A \lambda$ and we may
  choose $t$ such that $\lambda + t \mu \ge 0$ but
  \begin{equation*}
    \card(\{i : \lambda_i +
    t\mu_i \neq 0\}) < \card(\{i : \lambda_i \neq 0\}),
  \end{equation*}
  contradicting the definition of $I(x)$.
  Combined with the fact that $a_{i_0} \cup \{a_i\}_{i \in I(x)}$ are linearly
  dependent, we thus must have $\mu \neq 0$ such that
  \begin{equation*}
    a_{i_0} + \sum_{i\in I(x)} \mu_i a_i = 0.
  \end{equation*}
  Assume that some $\mu_i < 0$ (as otherwise this is exactly case (ii) of the
  lemma).  Let $\lambda_0 > 0, \lambda \in \R^m_+$ be minimizing values in the
  definition of $I(x)$ above.
  Setting $t = \min_{i \in I(x) : \mu_i < 0} |\lambda_i / \mu_i|$, we have
  $\lambda_i + t \mu_i \ge 0$ for all $i$, while
  $\lambda_i + t \mu_i = 0$ for some $i \in I(x)$.
  Then we have
  \begin{equation*}
    x = a_{i_0} \lambda_0 + A \lambda
    = a_{i_0} (t + \lambda_0) + A (\lambda + t \mu),
  \end{equation*}
  a contradiction to the definition of $\lambda_0, \lambda$ in the definition
  of $I(x)$.
\end{enumerate}

\section{The failure of dual averaging}

In this appendix, we collect the proofs of
Observations~\ref{observation:failure-dual-averaging}
and~\ref{observation:failure-classical-dual-averaging}.

\subsection{Proof of Observation~\ref{observation:failure-dual-averaging}}
\label{sec:proof-failure-dual-averaging}

Define $A_k = \sum_{i=1}^k \stepsize_i$ to be the partial sum of the
stepsizes, and recall that
$z_k = \sum_{i=1}^k \stepsize_i g_i = A_k x\opt+ \sum_{i=1}^k \stepsize_i
\noise_i$.
Theorem~\ref{theorem:manifold-identification} shows that
our variant of dual averaging identifies the active constraints in finite time
with probability one. Define by convention that $z/\norm{z} = 0$ when 
$z= 0$. This implies that, 
\begin{equation*}
  \frac{1}{k^\steppow} \sum_{i=1}^k\left(x_{i+1} - \frac{z_i}{\norm{z_i}}\right) 
  \cas 0,
\end{equation*}
because for large enough $i$, we will have 
$x_{i+1} = z_i / \norm{z_i}$. This implies that
The observation will thus follow if we show that
\begin{equation}
  \label{eqn:convergence-normalized-zs}
  \frac{1}{k^\steppow} \sum_{i=1}^k \left(\frac{z_i}{\norm{z_i}} - x\opt\right)
  \cd
  \normal \left(0, \sigma^2 (I- e_1 e_1^T)\right).
\end{equation}
To demonstrate the observation (via the
convergence~\eqref{eqn:convergence-normalized-zs}), then, we provide the
following three technical lemmas, whose proofs we present at the end of this
section in sections~\ref{sec:proof-main-term},
\ref{sec:proof-converge-to-zero}, and
\ref{sec:proof-convergence-of-first-coordinate}, respectively.
\begin{lemma}
  \label{lemma:main-term}
  Under the conditions of the observation, we have
  \begin{equation*}
    \frac{1}{k^\steppow} \sum_{i=1}^k \left(\frac{z_i}{A_i} - x\opt\right) 
    \cd
    \normal \left(0, \sigma^2 I\right).
  \end{equation*}
\end{lemma}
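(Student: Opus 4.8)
Since $z_k = A_k x\opt + \sum_{i=1}^k \stepsize_i \noise_i$ with $\noise_i \simiid \normal(0,I)$, we have $\frac{z_i}{A_i} - x\opt = \frac{1}{A_i}\sum_{j=1}^i \stepsize_j \noise_j$, so the quantity in question is a \emph{deterministic} linear combination of the i.i.d.\ Gaussian vectors $\noise_1,\dots,\noise_k$; in particular it is \emph{exactly} Gaussian for every $k$, and the whole proof reduces to computing its covariance and showing that covariance converges. The plan is first to swap the order of summation (a finite double sum, needing no justification),
\begin{equation*}
  \frac{1}{k^\steppow}\sum_{i=1}^k\Big(\frac{z_i}{A_i} - x\opt\Big)
  = \frac{1}{k^\steppow}\sum_{i=1}^k \frac{1}{A_i}\sum_{j=1}^i\stepsize_j\noise_j
  = \sum_{j=1}^k w_j^{(k)}\noise_j,
  \qquad w_j^{(k)} \defeq \frac{\stepsize_j}{k^\steppow}\sum_{i=j}^k\frac{1}{A_i}.
\end{equation*}
As the $\noise_j$ are independent $\normal(0,I)$, the left-hand side is distributed $\normal\big(0,\big(\sum_{j=1}^k (w_j^{(k)})^2\big)I\big)$, so it suffices to show $\sum_{j=1}^k (w_j^{(k)})^2 \to \sigma_\steppow^2$, where $\sigma^2 = \sigma_\steppow^2 = \frac{(1-\steppow)^2}{\steppow^2}\sum_{j=1}^\infty\stepsize_j^2$ is the constant of Observation~\ref{observation:failure-dual-averaging}; convergence in distribution of the Gaussians follows from convergence of their covariances.

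The second step is to evaluate the weights asymptotically. Since $\steppow\in(\half,1)$, the partial sums satisfy $A_i=\sum_{l=1}^i l^{-\steppow}\sim \frac{i^{1-\steppow}}{1-\steppow}$, hence $A_i^{-1}\sim\frac{1-\steppow}{i^{1-\steppow}}$; comparing $\sum_{i=j}^k i^{-(1-\steppow)}$ with $\int t^{-(1-\steppow)}\,dt$ then gives, for each \emph{fixed} $j$,
\begin{equation*}
  \sum_{i=j}^k\frac{1}{A_i} = (1-\steppow)\sum_{i=j}^k i^{-(1-\steppow)}(1+o(1))
  = \frac{1-\steppow}{\steppow}\,k^{\steppow}\,(1+o(1))
  \quad\text{as }k\to\infty ,
\end{equation*}
so $w_j^{(k)}\to \frac{1-\steppow}{\steppow}\stepsize_j$ and $(w_j^{(k)})^2\to \frac{(1-\steppow)^2}{\steppow^2}\stepsize_j^2$ pointwise in $j$.

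Finally I would upgrade this to convergence of the series $\sum_{j=1}^k (w_j^{(k)})^2$ by dominated convergence for sums. Using $A_i\ge c\,i^{1-\steppow}$ for all $i\ge 1$ (some $c>0$) gives the \emph{uniform} bound $\sum_{i=j}^k A_i^{-1}\le C k^{\steppow}$, hence $w_j^{(k)}\le C\stepsize_j$ for all $k\ge j$, and $\sum_j\stepsize_j^2<\infty$ precisely because $\steppow>\half$. Writing $\sum_{j=1}^k (w_j^{(k)})^2 = \sum_{j\ge 1}(w_j^{(k)})^2\,\mathbf{1}\{j\le k\}$ and applying dominated convergence with dominating summand $C^2\stepsize_j^2$ yields $\sum_{j=1}^k(w_j^{(k)})^2\to \frac{(1-\steppow)^2}{\steppow^2}\sum_{j\ge 1}\stepsize_j^2 = \sigma_\steppow^2$, completing the argument. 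The only genuinely delicate point is this last interchange of the $k\to\infty$ limit with the infinite sum; everything else is bookkeeping with the exact Gaussianity and the elementary asymptotics of $A_i$, and square-summability of the stepsizes (equivalently $\steppow>\half$) is exactly what makes the interchange legitimate.
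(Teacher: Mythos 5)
Your proposal is correct and follows essentially the same route as the paper: rearrange the double sum to write the quantity as $\sum_j w_j^{(k)}\noise_j$ with $w_j^{(k)}=\stepsize_j B_{j,k}$ (the paper's $B_{i,k}=k^{-\steppow}\sum_{j=i}^k A_j^{-1}$), establish the uniform bound $B_{j,k}\le C$ and the pointwise limit $B_{j,k}\to\frac{1-\steppow}{\steppow}$ via Riemann-sum asymptotics for $A_i$, and conclude that the (exactly Gaussian) sum has covariance converging to $\sigma_\steppow^2 I$. The only cosmetic difference is that you invoke dominated convergence for series directly where the paper carries out the same interchange by hand, splitting the sum at a fixed index $m$ and letting $k\to\infty$ before $m\to\infty$.
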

\begin{lemma}
  \label{lemma:converge-to-zero}
  Under the conditions of the observation, we have
  \begin{equation*}
    \frac{1}{k^\steppow} \sum_{i=1}^k \left(\frac{z_i}{\norm{z_i}} - x\opt\right)
    \left(\frac{A_i - \norm{z_i}}{A_i}\right) \cp 0.
  \end{equation*}
\end{lemma}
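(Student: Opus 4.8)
The plan is to expand everything around the deterministic leading term $A_i x\opt$ of $z_i$. Recall from the definition of $z_k$ that $z_i = A_i x\opt + W_i$, where $W_i \defeq \sum_{j=1}^i \stepsize_j \noise_j$. Since $\steppow > \half$ we have $\sum_j \stepsize_j^2 < \infty$, so $\{W_i\}$ is an $L^2$-bounded martingale and converges almost surely; in particular $M \defeq \sup_i \norm{W_i} < \infty$ a.s. Using $\norm{x\opt} = 1$ and $\norm{z_i}^2 = A_i^2 + 2 A_i \<x\opt, W_i\> + \norm{W_i}^2$, a one-term expansion of $t \mapsto \sqrt{1+t}$ shows that, on the a.s.\ event where $M < \infty$, for all $i$ larger than a random index $i_0$ (so that $\norm{W_i} \le A_i/4$) we have $\norm{z_i} = A_i + \<x\opt, W_i\> + r_i$ with $|r_i| \le C M^2 / A_i$ for a universal constant $C$. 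Consequently $|A_i - \norm{z_i}|/A_i \le (\norm{W_i} + |r_i|)/A_i \le 2M/A_i$ for $i \ge i_0$.

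Next I would bound the other factor. Since $z_i/\norm{z_i}$ and $x\opt$ are both unit vectors, $\norm{z_i/\norm{z_i} - x\opt}^2 = 2(1 - \<x\opt, z_i\>/\norm{z_i})$; and since $\<x\opt, z_i\> = A_i + \<x\opt, W_i\>$, the previous expansion gives $1 - \<x\opt, z_i\>/\norm{z_i} = r_i/\norm{z_i}$, so that $\norm{z_i/\norm{z_i} - x\opt} \le (2|r_i|/\norm{z_i})^{1/2} \le C' M/A_i$ for $i \ge i_0$ (using $\norm{z_i}\ge A_i/2$) and a fixed constant $C'$. Multiplying the two estimates, the $i$-th summand of the sum in question has Euclidean norm at most $K(\omega)/A_i^2$ for all $i \ge i_0(\omega)$, where $K(\omega) \defeq 2C' M(\omega)^2$. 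The finitely many terms with $i < i_0(\omega)$ — including any index with $z_i = 0$, of which there are a.s.\ only finitely many because $z_i/A_i \to x\opt \neq 0$, so those terms equal $-x\opt \cdot 1$ — are each bounded by a finite random quantity, hence contribute $o(1)$ after division by $k^{\steppow}$.

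It then remains to estimate $k^{-\steppow}\sum_{i=1}^k A_i^{-2}$. Since $A_i = \sum_{j=1}^i j^{-\steppow} \ge c\, i^{1-\steppow}$ for some $c > 0$, and $2(1-\steppow) \in (0,1)$ because $\steppow \in (\half,1)$, we get $\sum_{i=1}^k A_i^{-2} \le c^{-2}\sum_{i=1}^k i^{-2(1-\steppow)} \le C'' k^{2\steppow - 1}$, so $k^{-\steppow}\sum_{i=1}^k K(\omega)/A_i^2 \le K(\omega)\, c^{-2} C'' k^{\steppow - 1} \to 0$ since $\steppow < 1$. Combining the three contributions, the normalized sum converges to $0$ almost surely, which in particular yields the claimed convergence in probability.

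I expect the only genuine subtlety to be the uniform control of the random constants, i.e.\ establishing $\sup_i \norm{W_i} < \infty$ a.s.\ (which is precisely where $\steppow > \half$ enters, via $\sum_j \stepsize_j^2 < \infty$ and $L^2$-martingale convergence) and folding it into the Taylor remainders; once that is in hand, the remaining steps are elementary algebra and the rate computation above. A minor point to be careful about is the definition of $z/\norm{z}$ at $z=0$, handled by the observation that only finitely many indices have $z_i = 0$.
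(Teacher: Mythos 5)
Your proof is correct, and it in fact establishes almost sure convergence with an explicit rate, which is stronger than the $\cp 0$ claimed. The starting point is the same as the paper's: write $z_i = A_i x\opt + W_i$ with $W_i = \sum_{j\le i}\stepsize_j \noise_j$, and use $\sum_j \stepsize_j^2 < \infty$ to get $\sup_i \norm{W_i} < \infty$ a.s. From there the two arguments diverge in how they control the two factors. The paper bounds $|A_i - \norm{z_i}| \le \norm{W_i} \le R$ by a \emph{constant} and then relies only on the soft facts that $\Delta_i \defeq \norm{z_i/\norm{z_i} - x\opt} \to 0$ and $\sum_{i\le k} A_i^{-1} \asymp k^{\steppow}$, so the normalized sum is a weighted average of quantities tending to zero. (The paper's text asserts ``$\Delta_i = 0$ eventually,'' which is false for Gaussian noise --- only $\Delta_i \to 0$ holds --- but the Ces\`aro-type conclusion is unaffected.) You instead Taylor-expand $\norm{z_i}$ and the unit-vector difference to show \emph{both} factors are $O(M/A_i)$, so the summands are $O(M^2/A_i^2)$ and the normalized sum is $O(k^{\steppow - 1})$ a.s. Your route costs a little more bookkeeping (the remainder $r_i$, the random index $i_0$, the $z_i = 0$ convention) but buys a quantitative rate and sidesteps the soft Ces\`aro step; either argument suffices for the observation, since the first-coordinate cancellation needed there is handled separately in Lemma~\ref{lemma:convergence-of-first-coordinate}.
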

\begin{lemma}
  \label{lemma:convergence-of-first-coordinate}
  Let $[x]_j$ denote the $j$th coordinate of the vector $x$. Under the
  conditions of the observation, we have
  \begin{equation*}
    \left[\frac{1}{k^\steppow}
      \sum_{i=1}^k \left(\frac{z_i}{A_i} - x\opt\right)
      + \frac{1}{k^\steppow} \sum_{i = 1}^k \frac{A_i - \norm{z_i}}{A_i}
      x\opt
    \right]_1
    \cas 0.
  \end{equation*}
\end{lemma}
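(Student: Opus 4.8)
The plan is to exploit that, for this particular problem, the dual averaging running sum has the exact form $z_i = A_i x\opt + S_i$, where $S_i \defeq \sum_{j=1}^i \stepsize_j \noise_j$ and $A_i = \sum_{j=1}^i \stepsize_j$, together with the fact that $S_i$ stays bounded along almost every trajectory. Since $\steppow > \half$ gives $\sum_j \stepsize_j^2 < \infty$, the sequence $S_i$ is an $L^2$-bounded martingale, hence converges almost surely, so $M \defeq \sup_i \norm{S_i} < \infty$ almost surely (this is essentially the bound behind~\eqref{eqn:errors-negligible}). I would work henceforth on this probability-one event, and recall that $A_i \ge i^{1-\steppow} \to \infty$.

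The first step is a one-line scalar expansion of $\norm{z_i}$. Since $\norm{z_i}^2 = A_i^2 + 2A_i [S_i]_1 + \norm{S_i}^2$, writing $t_i \defeq 2[S_i]_1/A_i + \norm{S_i}^2/A_i^2$ (which satisfies $|t_i| \le \half$ for every $i$ past some finite random index $i_0$, because $\norm{S_i} \le M$ and $A_i \to \infty$), the elementary bound $|\sqrt{1+t} - 1 - t/2| \le t^2$ for $|t| \le \half$ yields
\[
  \norm{z_i} = A_i\sqrt{1 + t_i} = A_i + [S_i]_1 + e_i,
  \qquad |e_i| \le \frac{C(M)}{A_i} \quad (i \ge i_0),
\]
for a finite (random) constant $C(M)$; consequently $A_i - \norm{z_i} = -[S_i]_1 - e_i$.

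Next I would take the first coordinate. Using $[x\opt]_1 = 1$ and $[z_i]_1 = A_i + [S_i]_1$, the two summands in the lemma contribute respective first coordinates $[S_i]_1/A_i$ and $(A_i - \norm{z_i})/A_i = -[S_i]_1/A_i - e_i/A_i$, so the leading $[S_i]_1/A_i$ terms cancel exactly and
\[
  \left[\frac{1}{k^\steppow}\sum_{i=1}^k\Big(\frac{z_i}{A_i} - x\opt\Big)
  + \frac{1}{k^\steppow}\sum_{i=1}^k \frac{A_i - \norm{z_i}}{A_i}\,x\opt\right]_1
  = -\frac{1}{k^\steppow}\sum_{i=1}^k \frac{e_i}{A_i} + o(1),
\]
where the $o(1)$ absorbs the finitely many indices $i < i_0$ (a fixed finite sum divided by $k^\steppow$). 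Finally, $|e_i|/A_i \le C(M)/A_i^2 \le C(M) i^{-2(1-\steppow)}$, and since $\steppow \in (\half,1)$ we have $2(1-\steppow) \in (0,1)$, so $\sum_{i=1}^k i^{-2(1-\steppow)} \le C_\steppow k^{2\steppow-1}$; thus the right-hand side is $\lesssim k^{2\steppow - 1 - \steppow} = k^{\steppow-1} \to 0$, proving the claimed almost sure convergence.

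I do not expect a real obstacle: the only delicate point is legitimizing the Taylor expansion uniformly along the sample path, which is handled by restricting to $\{M<\infty\}$ and discarding an almost surely finite initial block of indices; the remainder is a routine comparison of sums. Conceptually, the content is that the fluctuations $[S_i]_1$ driving the first coordinate of $\tfrac{z_i}{A_i} - x\opt$ are precisely cancelled by the first-order radial change of $\norm{z_i}$, in contrast with the transverse coordinates handled in Lemmas~\ref{lemma:main-term} and~\ref{lemma:converge-to-zero}.
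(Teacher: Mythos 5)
Your proof is correct and follows essentially the same route as the paper: both exploit the exact cancellation of the first-order fluctuation $[S_i]_1/A_i$ between the two sums and bound the second-order remainder from expanding $\norm{z_i}$ by $O(1/A_i)$, then sum. Your version is if anything slightly more careful (explicit Taylor remainder bound, explicit $k^{\steppow-1}$ rate) than the paper's, which only notes that the numerator tends to zero.
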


The above three lemmas give an almost immediate proof of the
theorem. Expanding the differences $z_i / \norm{z_i} - x\opt$, we have
\begin{align*}
  \lefteqn{\frac{1}{k^\steppow} \sum_{i=1}^k \left(\frac{z_i}{\norm{z_i}} - x\opt\right)
  } \\
  & = \underbrace{
    \frac{1}{k^\steppow} \sum_{i=1}^k \left(\frac{z_i}{\norm{z_i}} - x\opt\right)
    \left(\frac{A_i - \norm{z_i}}{A_i}\right)}_{\eqdef T_1}
  + \underbrace{
    \frac{1}{k^\steppow} \left(\sum_{i=1}^k \frac{A_i- \norm{z_i}}{A_i}\right) x\opt}_{
    \eqdef T_2}
  + \underbrace{\frac{1}{k^\steppow} \sum_{i=1}^k 
    \left(\frac{z_i}{A_i} - x\opt\right)}_{\eqdef T_3}.
\end{align*}
The first term $T_1$ converges to zero in probability
(Lemma~\ref{lemma:converge-to-zero}),
the final $n - 1$ coordinates of the term $T_2$ are zero because $x\opt = e_1$, and the
last $n - 1$ coordinates of term $T_3$ are asymptotically
$\normal(0, \sigma^2 I_{n-1 \times n-1})$ (Lemma~\ref{lemma:main-term}).
Moreover, the first coordinate of $T_2 + T_3$ converges almost surely
to zero by Lemma~\ref{lemma:convergence-of-first-coordinate}.
An application of Slutsky's theorem gives the observation.

\subsubsection{Proof of Lemma~\ref{lemma:main-term}}
\label{sec:proof-main-term}

By assumption, the vector $k^{-\steppow} \sum_{i = 1}^k
(\frac{z_i}{A_i} - x\opt)$ has mean zero. We thus need only show that
the (asymptotic) covariance of the normalized sum is
$\sigma^2 (I - e_1 e_1^T)$. By rearranging the summation, we
have
\begin{align*}
  \frac{1}{k^\steppow}\sum_{i=1}^k \left(\frac{z_i}{A_i} - x\opt \right)
  &= \frac{1}{k^\steppow}\sum_{i=1}^k \frac{\sum_{j=1}^i \stepsize_j \noise_j}{A_i} 
    = \sum_{i=1}^k \left[\frac{1}{k^\steppow}\sum_{j = i}^k \frac{1}{A_j}\right]
    \stepsize_i \noise_i.
\end{align*}
For each $i, k \in \N$, define the normalized partial sums
$B_{i, k} = \frac{1}{k^\steppow}\sum_{j = i}^k \frac{1}{A_j}$,
where $B_{i, k} = 0$ for $i > k$.
We claim that
\begin{equation}
  \sum_{i = 1}^k \stepsize_i^2 B_{i,k}^2 \to \sigma^2
  \label{eqn:limiting-B-sums}
\end{equation}
as $k \to \infty$, which evidently implies the lemma.

To make the claim~\eqref{eqn:limiting-B-sums} formal, we provide the
following technical lemma.
\begin{lemma}
  \label{lemma:convergence-of-B-sequence}
  The set $\left\{B_{i, k}\right\}_{i \leq k}$ has a uniform
  upper bound $\sup_{i, k} B_{i,k} < \infty$, and
  for any fixed $N \in \N$, we have
  \begin{equation*}
    \lim_{k\rightarrow \infty} B_{N, k} = b^* \defeq \frac{1-\steppow}{\steppow}. 
  \end{equation*}
\end{lemma}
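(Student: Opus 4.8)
The plan is to reduce both assertions to the single asymptotic estimate $A_k = \frac{k^{1-\steppow}}{1-\steppow}\,(1 + o(1))$ followed by a Stolz--Ces\`aro argument. First I would pin down the growth of $A_k$: since in this observation $\stepsize_i = i^{-\steppow}$ with $\steppow \in (\half, 1)$, the elementary integral comparison
\begin{equation*}
  \int_1^{k+1} x^{-\steppow}\,dx \;\le\; A_k = \sum_{i=1}^k i^{-\steppow}
  \;\le\; 1 + \int_1^k x^{-\steppow}\,dx
\end{equation*}
gives $A_k = \frac{k^{1-\steppow}}{1-\steppow} + O(1)$; in particular $A_k \to \infty$ and $\frac{1}{A_k} = (1-\steppow)\,k^{\steppow-1}(1 + o(1))$.

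To obtain $\lim_{k} B_{N,k} = b^* = \frac{1-\steppow}{\steppow}$ for fixed $N$, set $b_k \defeq \sum_{j=N}^k \frac{1}{A_j}$ and $c_k \defeq k^\steppow$, so that $B_{N,k} = b_k/c_k$. The sequence $c_k$ is strictly increasing with $c_k \to \infty$, and by the mean value theorem applied to $x \mapsto x^\steppow$ we have $c_k - c_{k-1} = \steppow\, k^{\steppow - 1}(1 + o(1))$. Combining this with the estimate for $1/A_k$ above,
\begin{equation*}
  \frac{b_k - b_{k-1}}{c_k - c_{k-1}}
  = \frac{1/A_k}{k^\steppow - (k-1)^\steppow}
  \;\longrightarrow\; \frac{1-\steppow}{\steppow} = b^*,
\end{equation*}
so the Stolz--Ces\`aro theorem (the $\infty/\infty$ form) yields $B_{N,k} = b_k/c_k \to b^*$. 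Because changing $N$ alters $b_k$ only by the fixed finite amount $\sum_{j=N'}^{N-1} 1/A_j$, the limit does not depend on $N$, as claimed.

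For the uniform bound, observe that $1/A_j > 0$ for every $j$ (as $A_1 = 1 > 0$ and $A_j$ is increasing), so for all $i \le k$,
\begin{equation*}
  B_{i,k} = \frac{1}{k^\steppow}\sum_{j=i}^k \frac{1}{A_j}
  \;\le\; \frac{1}{k^\steppow}\sum_{j=1}^k \frac{1}{A_j} = B_{1,k}.
\end{equation*}
The case $N = 1$ of the convergence just proved shows $B_{1,k} \to b^* < \infty$, hence $\sup_k B_{1,k} < \infty$ and therefore $\sup_{i \le k} B_{i,k} \le \sup_k B_{1,k} < \infty$. I do not anticipate any genuine obstacle here; the only places that call for a little care are tracking the $(1 + o(1))$ factors through the difference quotient in the Stolz--Ces\`aro step, and noticing the monotonicity $B_{i,k} \le B_{1,k}$ that collapses the uniform bound to boundedness of a single convergent sequence.
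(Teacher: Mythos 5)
Your proof is correct, and it takes a genuinely different route from the paper's. Both arguments begin by establishing the same asymptotic $A_k \sim k^{1-\steppow}/(1-\steppow)$ (the paper via Riemann-sum limits of $k^{-\rho}\sum_{i\le k} i^{\rho-1}$, you via integral comparison — equivalent in substance). They diverge in how the limit of $B_{N,k}$ is extracted: the paper replaces $1/A_i$ term-by-term by its asymptotic equivalent $1/(c^* i^{1-\steppow})$, shows the normalized sum of the discrepancies vanishes, and then evaluates $k^{-\steppow}\sum_{i\le k} i^{\steppow-1}$ by another Riemann sum; you instead apply Stolz--Ces\`aro to $b_k/c_k$ with $c_k = k^\steppow$, which converts the whole question into the single difference quotient $(1/A_k)/(k^\steppow - (k-1)^\steppow) \to (1-\steppow)/\steppow$ and sidesteps the comparison-of-sums step entirely. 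Your treatment of the uniform bound is also slightly cleaner: the monotonicity $B_{i,k} \le B_{1,k}$ (all summands being positive) reduces it to boundedness of the single convergent sequence $B_{1,k}$, whereas the paper bounds $B_{i,k}$ directly from $A_j \gtrsim j^{1-\steppow}$. What the paper's approach buys is that its Riemann-sum lemma is reused verbatim elsewhere in that proof (e.g., in establishing $k^{\steppow-1}A_k \to c^*$); what yours buys is a shorter, more self-contained argument for the limit itself. No gaps.
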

\begin{proof}
  By definition of the Riemann integral, we obtain for any
  $\rho \in (0, 1)$ that
  \begin{equation*}
    \frac{1}{k^\rho} \sum_{i=1}^k i^{\rho-1} = 
    \frac{1}{k} \sum_{i=1}^k \left(\frac{i}{k}\right)^{\rho-1} 
    \mathop{\longrightarrow}_{k\uparrow\infty}
    \int_{0}^1 x^{\rho-1} dx = \frac{1}{\rho}.
  \end{equation*}
  Thus $k^{-\steppow} \sum_{i = 1}^k i^{\steppow - 1} \to \steppow^{-1}$ and
  $k^{1 - \steppow} \sum_{i = 1}^k i^{-\steppow} \to (1 - \steppow)^{-1}$.
  Rewriting the second limit, we thus have $k^{\steppow - 1} A_k \to c^* \defeq
  \frac{1}{1 - \steppow}$. This immediately implies that
  \begin{equation*}
    \frac{1}{k^\steppow} \sum_{i=1}^k \left(\frac{1}{A_i} - \frac{1}{c^* i^{1-\steppow}}\right)
    = \frac{1}{k^\steppow} \sum_{i=1}^k \left(\frac{c^* i^{1-\steppow} - A_i}{A_i c^* 
        i^{1-\steppow}}\right)
    = \frac{1}{k^\steppow}
    \sum_{i = 1}^k \frac{1}{A_i c^*} \left(c^* - \frac{A_i}{i^{1 - \steppow}}\right)
    \to 0
  \end{equation*}
  as $k \to \infty$, because $A_i \gtrsim i^{1 - \steppow}$ and
  the term inside the summation is therefore $o(i^{\steppow - 1})$.
  Finally, applying the first Riemann integral approximation above,
  we have
  $\frac{1}{k^\steppow} \sum_{i=1}^k \frac{1}{c^* i^{1-\steppow}} 
  \to \frac{1}{c^* \steppow} = \frac{1-\steppow}{\steppow}$
  as $k \to \infty$. Thus,
  we have
  $\frac{1}{k^\steppow} \sum_{i = 1}^k \frac{1}{A_i} = (1 + o(1)) \frac{1}{k^\steppow}
  \sum_{i = 1}^k (c^* i^{1 - \steppow}) = (1 + o(1)) \frac{1 - \steppow}{\steppow}$,
  and noting that we may ignore the first $N$ terms in the summation
  gives the limit $\lim_k B_{N,k} = b^*$.
  
  The claim of uniform boundedness is immediate because
  $A_j \gtrsim j^{1 - \steppow}$.
\end{proof}

Now we give the claim~\eqref{eqn:limiting-B-sums}. Let $C \geq \sup_{i,k}B_{i,k}$,
where $C < \infty$.
Fix $m \in \N$, and note that for any $k \ge m$, we have
\begin{align*}
  \left|\sum_{j=1}^k \stepsize_j^2 B_{j,k}^2 - \sum_{j=1}^\infty \stepsize_j^2 (b^*)^2 \right|
  & \leq (b^*)^2 \sum_{j=k+1}^\infty \stepsize_j^2 + (C + b^*)^2
    \sum_{j=m+1}^k \stepsize_j^2
    + \sum_{j=1}^m \stepsize_j^2 |B_{j,k}^2 - (b^*)^2| \\
  &\leq (C + b^*)^2 \sum_{j=m+1}^\infty \stepsize_j^2 + 
    (C + b^*) \sum_{j=1}^m \left|B_{j,k} - b^*\right|.
\end{align*}
Taking $k \to \infty$, the final term above tends to zero. As $m \in \N$ is
arbitrary and $\sum_{j = 1}^\infty \stepsize_j^2 < \infty$, this implies the
claim~\eqref{eqn:limiting-B-sums}.

\subsubsection{Proof of Lemma~\ref{lemma:converge-to-zero}}
\label{sec:proof-converge-to-zero}

Because $\sum_{i=1}^\infty \stepsize_i^2 < \infty$, the
standard Kolmogorov three series theorem implies
$S_\infty \defeq \sum_{i=1}^\infty \stepsize_i \noise_i$ exists and converges
almost surely.  In particular, with probability 1 we have the partial sum
bound $\sup_k \norms{\sum_{i=1}^k \stepsize_i \noise_i} < \infty$.  Let
$R = \sup_k \norms{\sum_{i = 1}^k \stepsize_i \noise_i}$ denote this
(random) upper bound.  By the triangle inequality, we then have
\begin{equation*}
  |A_i - \norm{z_i}| = |\norm{A_i x\opt} - \norm{z_i}| \leq \norm{A_i x\opt - z_i} 
  \leq R.
\end{equation*}
Therefore, if we denote $\Delta_i = \norm{x\opt - z_i/\norm{z_i}}$, we have
\begin{equation*}
  \left|\frac{1}{k^\steppow} \sum_{i=1}^k \left(\frac{z_i}{\norm{z_i}} - x\opt\right)
    \left(\frac{A_i - \norm{z_i}}{A_i}\right)\right| 
  \leq  \frac{R}{k^\steppow} \sum_{i=1}^k \frac{\Delta_i}{A_i}
\end{equation*}
where $\Delta_i = 0$ eventually.
In particular, $k^{-\steppow} \sum_{i = 1}^k \frac{\Delta_i}{A_i} \cas 0$,
giving the result.

\subsubsection{Proof of Lemma~\ref{lemma:convergence-of-first-coordinate}}
\label{sec:proof-convergence-of-first-coordinate}

Define the weighted sums $\wt{\noise}_k = \sum_{i=1}^k \stepsize_i \noise_i$,
which converge a.s.\ to a Gaussian vector
$\wt{\noise}_\infty = \sum_{i = 1}^\infty \stepsize_i \noise_i$ with variance
$\sum_i \stepsize_i^2$.  Then the first coordinate of the sum in the lemma is
\begin{equation*}
  \frac{1}{k^\beta} \sum_{i=1}^k \left[\frac{A_i +  \wt{\noise}_{i,1}
      - \norm{z_i}}{A_i} \right].
\end{equation*}
If we can show that the numerator terms in the preceding sum
converge to zero, then the fact that $A_i \gtrsim i^{1 - \steppow}$ will give the result.
Indeed, we know that $A_i + \wt{\noise}_{i,1} \cas \infty$, and thus we have
\begin{align*}
  A_i + \wt{\noise}_{i,1} - \norm{z_i}
  & = A_i + \wt{\noise}_{i,1}
  - \sqrt{(A_i + \wt{\noise}_{1,i})^2 + \sum_{j = 2}^n \wt{\noise}_{i,j}^2} \\
  & = A_i + \wt{\noise}_{i,1}
  - A_i + \wt{\noise}_{i,1}
  - \frac{\sum_{j = 2}^n \wt{\noise}_{i,j}^2}{2 \sqrt{
      A_i + \wt{\noise}_{i,1}}} (1 + o(1))  
    = -\frac{\sum_{j = 2}^n \wt{\noise}_{\infty,j}^2}{
    2 \sqrt{A_i}} (1 + o(1))
\end{align*}
almost surely as $i \to \infty$. Certainly the final term converges
to zero with probability 1, which gives the desired convergence result.

\subsection{Proof of
  Observation~\ref{observation:failure-classical-dual-averaging}}
\label{sec:proof-failure-classical-dual-averaging}

Recall that $x\opt = e_1$, the first standard basis vector.
By definition~\eqref{eqn:standard-dual-averaging} of the dual averaging
sequence $x_k$ and the fact that $g_k = -e_1 - \noise_k$ for
$\noise_k \simiid \normal(0, I)$, if we take $\wb{\noise}_k = \frac{1}{k}
\sum_{i = 1}^k \noise_i$ to be the average of the noise sequence, we have
\begin{equation*}
  x_{k + 1}
  = \begin{cases}
    \stepsize_k \left(k e_1 + k \wb{\noise}_k \right) & \mbox{if}~
    \stepsize_k \norm{k e_1 + k \wb{\noise}_k} \le 1 \\
    \frac{e_1 + \wb{\noise}_k}{\norm{e_1 + \wb{\noise}_k}}
    & \mbox{otherwise}.
  \end{cases}
\end{equation*}
As $\stepsize_k \gtrsim k^{-\steppow}$ for some $\steppow < 0$, with
probability 1 it is eventually the case that
$\norms{k e_1 + k \wb{\noise}_k} \ge \stepsize_k^{-1}$. Consequently, it is no loss
of generality to study the convergence of the iterates
\begin{equation*}
  x_{k} = \frac{x\opt + \wb{\noise}_k}{
    \norm{x\opt + \wb{\noise}_k}}.
\end{equation*}
To prove observation~\ref{observation:failure-classical-dual-averaging}, 
we invoke the three technical lemmas.
The first lemma shows that the variance of the first component
of $\sqrt{k}(\wb{x}_k - x\opt)$ is zero. The second and third lemmas
give the covergence of the latter $n - 1$ components of
$\sqrt{k}(\wb{x}_k - x\opt)$ to a $\normal(0, 2 I_{n-1 \times n-1})$ distribution.
\begin{lemma}
  \label{lemma:failure-convergence-to-zero}
  Under the conditions of the observation, the first coordinate
  of $\sqrt{k} (\wb{x}_k - x\opt)$ converges almost surely to 0.
\end{lemma}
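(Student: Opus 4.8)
The plan is to work with the reduced iterates $x_k = (x\opt + \wb{\noise}_k)/\norm{x\opt + \wb{\noise}_k}$, valid as argued just above the lemma since the two iteration schemes differ at only finitely many (random) indices almost surely, where $\wb{\noise}_k = \frac1k\sum_{i=1}^k \noise_i$ and $x\opt = e_1$. Writing $u_i \defeq \wb{\noise}_{i,1}$ and $v_i^2 \defeq \sum_{j=2}^n \wb{\noise}_{i,j}^2$, so that $\norm{e_1 + \wb{\noise}_i}^2 = (1 + u_i)^2 + v_i^2$, I would first rationalize the first coordinate to get the exact identity
\begin{equation*}
  1 - [x_i]_1
  = \frac{\norm{e_1 + \wb{\noise}_i} - (1 + u_i)}{\norm{e_1 + \wb{\noise}_i}}
  = \frac{v_i^2}{\norm{e_1 + \wb{\noise}_i}\,\big(\norm{e_1 + \wb{\noise}_i} + 1 + u_i\big)}.
\end{equation*}
Since $\wb{\noise}_i \cas 0$ by the strong law of large numbers, we have $u_i \cas 0$ and $\norm{e_1 + \wb{\noise}_i} \cas 1$, so there is an almost surely finite random index $N$ with denominator exceeding $1$ for all $i \ge N$; hence $0 \le 1 - [x_i]_1 \le v_i^2$ for $i \ge N$.

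Next I would reduce the lemma to a statement about partial sums of the $v_i^2$. The first coordinate of $\sqrt{k}(\wb{x}_k - x\opt)$ is $\frac{1}{\sqrt k}\sum_{i=1}^k ([x_i]_1 - 1)$, and $|[x_i]_1| \le 1$ since $x_i$ lies on the unit sphere, so splitting off the first $N$ terms,
\begin{equation*}
  \Big|\frac{1}{\sqrt k}\sum_{i=1}^k ([x_i]_1 - 1)\Big|
  \le \frac{2N}{\sqrt k} + \frac{1}{\sqrt k}\sum_{i=1}^k v_i^2.
\end{equation*}
As $N < \infty$ almost surely, the first term vanishes, and it remains only to show $\frac{1}{\sqrt k}\sum_{i=1}^k v_i^2 \cas 0$.

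For this last step, since $\E[\wb{\noise}_{i,j}^2] = 1/i$, the nondecreasing sequence $T_k \defeq \sum_{i=1}^k v_i^2$ satisfies $\E[T_k] = (n-1)\sum_{i=1}^k 1/i = O(\log k)$, which already gives $T_k/\sqrt{k} \cp 0$; to upgrade to almost sure convergence I would pass to the dyadic subsequence $k = 2^m$, where Markov's inequality gives $\P(T_{2^m} \ge \epsilon 2^{m/2}) \lesssim m\,2^{-m/2}$, summable in $m$, so Borel--Cantelli yields $T_{2^m} < \epsilon 2^{m/2}$ eventually, and monotonicity of $T_k$ bridges the gaps ($T_k/\sqrt{k} \le \sqrt{2}\,T_{2^{m+1}}/2^{(m+1)/2}$ for $2^m \le k < 2^{m+1}$); since $\epsilon>0$ is arbitrary this gives $T_k/\sqrt{k}\cas 0$. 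The only genuine obstacle is precisely this almost-sure upgrade: because $\E[T_k]$ diverges (logarithmically) the conclusion truly relies on the $\sqrt{k}$ normalization, and since $T_k$ is a sum of \emph{dependent}, non-martingale terms one cannot invoke a law of large numbers directly and must lean on the subsequence-plus-monotonicity device; everything else is routine algebra and the strong law.
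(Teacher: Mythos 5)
Your proof is correct. The algebraic core is identical to the paper's: you rationalize the first coordinate to get the exact identity $1 - [x_i]_1 = v_i^2 / \big(\norm{e_1 + \wb{\noise}_i}(\norm{e_1+\wb{\noise}_i} + 1 + u_i)\big)$ with denominator converging a.s.\ to $2$, reducing everything to controlling $\sum_{j>1}\wb{\noise}_{i,j}^2$. Where you diverge is the probabilistic ingredient for the final step. The paper invokes the law of the iterated logarithm to get a \emph{pathwise per-iterate} rate $\wb{\noise}_{i,j}^2 = o(i^{-2/3})$ a.s., so that $|[x_i]_1 - 1| = o(i^{-2/3})$ and the partial sums are $o(k^{1/3}) = o(\sqrt{k})$ by a one-line Ces\`aro argument. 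You instead work directly with the partial sums $T_k = \sum_{i\le k} v_i^2$, using only the first moment $\E[T_k] = O(\log k)$ together with Markov, Borel--Cantelli along the dyadic subsequence, and monotonicity of $T_k$ to bridge the gaps. Your route is slightly longer but more elementary and more robust: it needs only $\E[\norm{\noise_i}^2] < \infty$ rather than the LIL (hence would survive non-Gaussian noise with finite variance), whereas the paper's LIL argument is shorter and yields the stronger per-iterate statement. Both correctly exploit that the bias $1-[x_i]_1$ is nonnegative and second order in the noise, which is the real content of the lemma.
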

\begin{proof}
  Let $\wb{\noise}_{k, j}$ denote the $j$th coordinate of 
  $\wb{\noise}_k = \frac{1}{k}\sum_{i=1}^k \noise_i$. The
  first coordinate of $x_k - x\opt$ is
  \begin{equation*}
    \frac{1+ \wb{\noise}_{k, 1}}{\sqrt{(1+\wb{\noise}_{k, 1})^2 + \sum_{j \neq 1} 
        \wb{\noise}_{k, j}^2 }} - 1 = 
    \frac{\sum_{j\neq 1} \wb{\noise}_{k, j}^2}
    {\sqrt{(1+\wb{\noise}_{k, 1})^2 + \sum_{j \neq 1}
        \wb{\noise}_{k, j}^2 } (1+\wb{\noise}_{k, 1} + 
      \sqrt{(1+\wb{\noise}_{k, 1})^2 + \sum_{j \neq 1}
        \wb{\noise}_{k, j}^2 })}.
  \end{equation*}
  Evidently, the denominator converges almost surely to $2$, and
  as $k^{2/3} \wb{\noise}_{k,j}^2 = k^{-2/3} \sum_{i = 1}^k \noise_{i,j}^2
  \cas 0$ by the CLT and law of the iterated logarithm, whence
  \begin{equation*}
    k^{2/3} [\wb{x}_k - x\opt]_j
    = \left(\half + o(1)\right) k^{2/3} \sum_{j > 1} \wb{\noise}_{k,j}^2
    \cas 0
  \end{equation*}
  as desired.
\end{proof}

\begin{lemma}
  \label{lemma:asymptotic-variance}
  Let $\noise_i \simiid \normal(0, I)$. Then
  \begin{equation*}
    \frac{1}{\sqrt{k}} \sum_{i=1}^k \frac{1}{i}\sum_{j=1}^{i} \noise_j 
    \cd \normal \left(0, 2I \right)
  \end{equation*}
\end{lemma}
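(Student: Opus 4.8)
The plan is to exploit that the random vector in question is an explicit \emph{linear} image of the independent Gaussian vectors $\noise_1, \dots, \noise_k$, hence is itself exactly Gaussian with mean zero; it therefore suffices to compute its covariance matrix and show it converges to $2I$, after which Lévy's continuity theorem (equivalently, pointwise convergence of characteristic functions) gives the claimed weak convergence. Concretely, write $\bar{\noise}_i \defeq \frac{1}{i}\sum_{j=1}^i \noise_j$ and let $W_k \defeq \frac{1}{\sqrt{k}}\sum_{i=1}^k \bar{\noise}_i$. Since $\noise_j \simiid \normal(0,I)$ and $W_k$ is a fixed linear combination of $\noise_1, \dots, \noise_k$, we have $W_k \sim \normal(0, \Sigma_k)$ with $\Sigma_k = \cov(W_k)$; moreover, because distinct coordinates of each $\noise_j$ are independent, $\Sigma_k = \sigma_k^2 I$ where $\sigma_k^2$ is the variance of a single coordinate of $W_k$.

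The next step is to evaluate $\sigma_k^2$. For $i \le i'$, independence of the increments gives $\cov(\bar{\noise}_i, \bar{\noise}_{i'}) = \frac{1}{i i'}\cov\big(\sum_{j \le i}\noise_j, \sum_{l \le i'}\noise_l\big) = \frac{i}{i i'} I = \frac{1}{\max\{i, i'\}} I$, so that
\begin{equation*}
  \sigma_k^2 = \frac{1}{k}\sum_{i=1}^k\sum_{i'=1}^k \frac{1}{\max\{i, i'\}}.
\end{equation*}
Splitting the double sum into the diagonal and the two symmetric off-diagonal pieces, the diagonal contributes $\sum_{i=1}^k \frac{1}{i} = H_k$ (writing $H_k \defeq \sum_{i=1}^k 1/i$), while each off-diagonal piece contributes $\sum_{i'=2}^k \frac{i'-1}{i'} = (k-1) - (H_k - 1) = k - H_k$. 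Hence $\sum_{i,i'}\frac{1}{\max\{i,i'\}} = H_k + 2(k - H_k) = 2k - H_k$, giving
\begin{equation*}
  \sigma_k^2 = 2 - \frac{H_k}{k} \longrightarrow 2 \quad \text{as } k \to \infty,
\end{equation*}
since $H_k = \Theta(\log k)$.

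Finally, $W_k \sim \normal(0, \sigma_k^2 I)$ with $\sigma_k^2 \to 2$, so $W_k \cd \normal(0, 2I)$, which is the assertion. The only step requiring any care is the elementary combinatorial evaluation of $\sum_{i,i'} 1/\max\{i,i'\}$ (and the observation $H_k/k \to 0$); the Gaussianity and the passage to the limit in distribution are immediate. (As an alternative to the covariance computation one could instead swap the order of summation to write $W_k = \sum_{j=1}^k c_{j,k}\noise_j$ with $c_{j,k} = \frac{1}{\sqrt k}\sum_{i=j}^k \frac1i$, then show $\sum_j c_{j,k}^2 \to \int_0^1 (\log x)^2\,dx = 2$ via a Riemann-sum estimate; the direct covariance route above is cleaner since it avoids the harmonic-sum asymptotics.)
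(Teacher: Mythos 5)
Your proof is correct and rests on the same two pillars as the paper's: the sum is an exact linear image of i.i.d.\ Gaussians, hence exactly $\normal(0,\sigma_k^2 I)$, and one need only evaluate $\sigma_k^2$ and let $k \to \infty$. The only difference is bookkeeping: the paper reindexes to $\frac{1}{\sqrt{k}}\sum_j \bigl(\sum_{i \ge j} \tfrac{1}{i}\bigr)\noise_j$ and sums the squared weights, while you compute the Gram matrix $\cov(\bar{\noise}_i,\bar{\noise}_{i'}) = I/\max\{i,i'\}$ and evaluate the double sum in the closed form $2k - \sum_{l\le k} 1/l$, which if anything is cleaner than the paper's "tedious algebraic manipulation."
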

\begin{proof}
  As a linear combination of the independent gaussian random vectors, 
  we know that the average on the left side of the lemma is mean zero and Gaussian.
  Rearranging the sum, we obtain
  \begin{align*}
    \frac{1}{\sqrt{k}} \sum_{i=1}^k \sum_{j=1}^{i} \frac{1}{i}\noise_j = 
    \frac{1}{\sqrt{k}} \sum_{j=1}^k \left(\sum_{i=j+1}^{k} \frac{1}{i}\right)\noise_j,
  \end{align*}
  a sum of independent Gaussian vectors weighted by approximately
  $\log k - \log j$. The coordinates of $\noise_j$ are independent, so
  we need compute the variance only of single components. To that end,
  we note the following equality, which follows by tedious algebraic manipulation:
  \begin{equation*}
    \sigma_k^2 \defeq \frac{1}{k} \sum_{j = 1}^k \left(\sum_{i = j + 1}^k
      \frac{1}{i}\right)^2
    = 2 - \frac{1}{k} \sum_{l = 1}^k \frac{1}{l}
    - \frac{1}{k} \left(\sum_{l = 1}^k \frac{1}{l}\right)^2.
  \end{equation*}
  The last two terms are $O(k^{-1} \log^2 k)$, which gives the lemma.
\end{proof}


\begin{lemma}
  \label{lemma:failure-convergence-to-zero-two}
  Define the error $\delta_k = x_k - (x\opt + \wb{\noise}_k)$.
  Then for each coordinate $j \ge 2$, we have
  \begin{equation*}
    \frac{1}{\sqrt{k}} \sum_{i=1}^k\delta_{i, j} \cas 0.
  \end{equation*}
\end{lemma}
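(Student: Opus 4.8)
The plan is to obtain an explicit formula for $\delta_{k,j}$, bound it crudely by $\norm{\wb{\noise}_k}^2$ once $k$ is large, and then conclude via Kronecker's lemma after a one-line expectation estimate.

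First I would invoke the reduction made just before the lemma: with probability one the iterate equals $x_k = (x\opt + \wb{\noise}_k)/\norm{x\opt + \wb{\noise}_k}$ for all $k$ past some random, a.s.\ finite index, since the ball constraint eventually binds --- $\stepsize_k\norm{k e_1 + k\wb{\noise}_k} = \stepsize_0 k^{1-\steppow}\norm{e_1 + \wb{\noise}_k} \to \infty$ while $\wb{\noise}_k \cas 0$ by the strong law --- and the finitely many earlier terms contribute an $O(1)$, hence $o(\sqrt k)$, amount to $\sum_{i=1}^k \delta_{i,j}$; so it suffices to treat the normalized form. Using $x\opt = e_1$, whose $j$th coordinate vanishes for $j \ge 2$,
\begin{equation*}
  \delta_{k,j} = \wb{\noise}_{k,j}\Big(\frac{1}{\norm{e_1 + \wb{\noise}_k}} - 1\Big),
  \qquad
  |\delta_{k,j}|
  \le \frac{|\wb{\noise}_{k,j}|\,\big|\,\norm{e_1 + \wb{\noise}_k} - 1\,\big|}{\norm{e_1+\wb{\noise}_k}}
  \le \frac{\norm{\wb{\noise}_k}^2}{\norm{e_1+\wb{\noise}_k}},
\end{equation*}
where the last bound uses $|\wb{\noise}_{k,j}| \le \norm{\wb{\noise}_k}$ and the reverse triangle inequality $\big|\,\norm{e_1+\wb{\noise}_k} - 1\,\big| \le \norm{\wb{\noise}_k}$. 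Since $\wb{\noise}_k \cas 0$, for all $k$ beyond a random index we have $\norm{\wb{\noise}_k} \le \half$, hence $\norm{e_1 + \wb{\noise}_k} \ge \half$ and $|\delta_{k,j}| \le 2\norm{\wb{\noise}_k}^2$.

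It then remains to check that $\sum_{i=1}^\infty \norm{\wb{\noise}_i}^2/\sqrt{i} < \infty$ almost surely. Since $\wb{\noise}_i = \frac1i\sum_{l=1}^i \noise_l$ with $\noise_l \simiid \normal(0, I)$, we have $\E[\norm{\wb{\noise}_i}^2] = n/i$, so by Tonelli's theorem $\E\big[\sum_i \norm{\wb{\noise}_i}^2/\sqrt{i}\big] = n\sum_i i^{-3/2} < \infty$ and the nonnegative series converges a.s. Together with $|\delta_{i,j}| \le 2\norm{\wb{\noise}_i}^2$ for all $i$ past a random index (the earlier terms being a finite sum, hence harmless), this gives $\sum_i |\delta_{i,j}|/\sqrt{i} < \infty$ a.s., and Kronecker's lemma with $b_i = \sqrt{i} \uparrow \infty$ yields $\frac{1}{\sqrt k}\sum_{i=1}^k |\delta_{i,j}| \to 0$, a fortiori $\frac{1}{\sqrt k}\sum_{i=1}^k \delta_{i,j} \cas 0$. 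The steps are routine; the only point deserving a little care is the bookkeeping that replaces the actual dual averaging iterates by the closed form $(x\opt + \wb{\noise}_k)/\norm{x\opt + \wb{\noise}_k}$, i.e.\ checking that the ball constraint is active for all sufficiently large $k$.
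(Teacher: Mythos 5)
Your proof is correct and follows essentially the same route as the paper's: reduce to the closed form $x_k = (x\opt + \wb{\noise}_k)/\norm{x\opt + \wb{\noise}_k}$ for large $k$, observe that for $j \ge 2$ the coordinate $\delta_{k,j}$ is bounded by a constant multiple of $\norm{\wb{\noise}_k}^2$, and conclude. The only (harmless) difference is the last step: the paper deduces a pointwise rate $k^{2/3}|\delta_{k,j}| \cas 0$ via the law of the iterated logarithm, whereas you establish $\sum_i \norm{\wb{\noise}_i}^2/\sqrt{i} < \infty$ a.s.\ by Tonelli and finish with Kronecker's lemma.
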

\begin{proof}
  For suitably large $k$, we have
  \begin{align*}
    \delta_k
    & = 
      \frac{x\opt + \wb{\noise}_k}{\norm{x\opt + \wb{\noise}_k}}
      - (x\opt + \wb{\noise}_k)
      = (x\opt + \wb{\noise}_k)
      \left(\frac{1}{\norm{x\opt + \wb{\noise}_k}} - 1\right).
  \end{align*}
  Letting $\wb{\noise}_{k,j}$ be the $j$th coordinate of $\wb{\noise}_k$ as previously,
  the triangle inequality, coupled with Young's inequality, implies for $j \ge 2$ that
  \begin{equation*}
    \left|\delta_{k, j}\right| \leq \left|\wb{\noise}_{k, j}\right| \cdot 
    \frac{\norm{\wb{\noise}_k}}{\norm{x\opt + \wb{\noise}_k}}
    \leq \half \norm{\wb{\noise}_k}^2 
    + \frac{\norm{\wb{\noise}_k}^2}{2 \norm{x\opt + \wb{\noise}_k}^2}.
  \end{equation*}
  As in the proof of lemma~\ref{lemma:failure-convergence-to-zero}, 
  we have $k^{2/3} \norm{\wb{\noise}_k}^2 \cas 0$, implying
  $k^{2/3} |\delta_{k,j}| \cas 0$, whence the lemma follows.
\end{proof}

Combining
Lemmas~\ref{lemma:failure-convergence-to-zero}--\ref{lemma:failure-convergence-to-zero-two},
an application of Slutsky's theorem (or the continuous mapping theorem) yields
the observation.


\section{A generic asymptotic normality result}
\label{sec:generalization-of-Polyak-Juditsky}

In this section, we give the technical tool that serves as the keystone for
the proofs of both Theorems~\ref{theorem:asymptotic-normality} and
Theorem~\ref{theorem:asymptotic-normality-rsgd}.  In short, the result is a
generalizes Polyak and Juditsky's results~\cite{PolyakJu92} on
asymptotic normality in averaged stochastic gradient methods to an arbitrary
subspace of the finite dimensional space $\R^n$ with slightly relaxed
assumptions on the martingale structure the original
results~\cite{PolyakJu92} require.

To better elaborate the technical result, we first describe the
abstract setting along with the
assumptions we require. Let $\T$ be a subspace of $\R^n$. Denote $\projmatr$
to be the projection operator onto the vector space $\T$, so that
if $\T = \{x \in \R^n \mid Ax = 0\}$ for a full row rank $A \in \R^{m \times n}$,
we have
\begin{equation*}
  \projmatr(x) \defeq \argmin_{y \in \T} \norm{y - x}
  = (I - A^T (AA^T)^{-1} A) x.
\end{equation*}
With slight abuse of notation, we also use $\projmatr \in \R^{n \times n}$
to denote the matrix representation of this projection.
As in Assumption~\ref{assumption:noise-vectors},
we assume that $\noise_k$ have martingale difference structure
adapted to $\mc{F}_k$.

Now, let $\{x_k\}_{k\in \N}, \{\notquaderror_k\}_{k \in \N},
\{\offerror_k\}_{k\in \N}, \{\Delta_k\}_{k \in \N}$ be sequences of
vectors in $\R^n$ adapted to the filtration $\mc{F}_k$,
that is, $x_k,
\notquaderror_k, \offerror_k, \Delta_k \in \filt_{k-1}$,
where $\Delta_k = x_k - x\opt$ for a vector $x\opt \in \R^n$.
Assume that for a matrix $H \in \R^{n \times n}$,
we have the recursion
\begin{equation}
  \label{eqn:main-iterate}
  \Delta_{k+1} = \Delta_k - \stepsize_k \projmatr H \projmatr \Delta_k
  + \stepsize_k \projmatr (\noise_k(x_k) + \notquaderror_k)
  + \offerror_k ~~\text{for $k\in \N$},
\end{equation}
where $\Delta_0 \in \T$ and $\offerror_k \in \T$ for all $k$.
We now enumerate assumptions, which are a simplified variant
of Assumptions~\ref{assumption:restricted-strong-convexity}
and \ref{assumption:noise-vectors}, that are sufficient
for the asymptotic normality of the iteration~\eqref{eqn:main-iterate}.

\begin{assumption}
  \label{assumption:growth-and-noise}
  There exists $c > 0$ such that for all $w \in \T$, 
  \begin{equation*}
    w^T H w\ge c \norm{w}^2,
  \end{equation*}
  and there exist constants $0 < \epsilon, C < \infty$ such that
  for all $k \in \N$ and $x$ such that $\norm{x - x\opt} \le \epsilon$,
  \begin{equation*}
    \E\left[\normbig{\noisezero_k}^2 \mid \filt_{k-1}\right] \le C
    ~~~ \mbox{and} ~~~
    \E \left[\normbig{\noisier_k(x)}^2 \mid \filt_{k-1}\right]
    \le C\norm{x-x\opt}^2. 
  \end{equation*}
  Moreover, for some $\Sigma \succeq 0$,  
  \begin{equation*}
    \frac{1}{\sqrt{k}} \sum_{i=1}^{k} \noise_i^{(0)}
    \cd \normal(0, \Sigma).
  \end{equation*}
\end{assumption}

\begin{assumption}
  \label{assumption:convergence-as}
  The sequence $\{\notquaderror_k\}_{k \in \N}$ satisfies
  $\frac{1}{\sqrt{k}}\sum_{i=1}^k  \norm{\projmatr \notquaderror_i}
  \cas 0$, there exists a random variable $T < \infty$ such that
  $\offerror_k = 0$ for $k \ge T$, and the iterates
  $x_k$ satisfy
  \begin{equation*}
    x_k \cas x\opt
    ~~~ \mbox{and} ~~~
    \frac{1}{\sqrt{k}} \sum_{i = 1}^k
    \norm{x_i - x\opt}^2 \cas 0.
  \end{equation*}
\end{assumption}

With Assumptions~\ref{assumption:growth-and-noise} and
\ref{assumption:convergence-as} in place, the following
generalization of \citeauthor{PolyakJu92}'s result~\cite{PolyakJu92}
follows. There are some technicalities because of the subspace
$\T$ and existence of $\offerror_k$, and so we include the proof in
Section~\ref{sec:proof-generalization-polyak-juditsky}.

\begin{proposition}
  \label{proposition:generalization-of-Polyak-Juditsky}
  Let Assumptions~\ref{assumption:growth-and-noise} and
  \ref{assumption:convergence-as} hold, where $\Delta_k = x_k - x\opt$
  satisfies the recursion~\eqref{eqn:main-iterate}. Then
  \begin{equation*}
    \frac{1}{\sqrt{k}}\sum_{i=1}^k \Delta_i \cd
    \normal\left(0, (\projmatr H \projmatr)^\dag \projmatr
    \Sigma \projmatr (\projmatr H \projmatr)^\dag\right).
  \end{equation*}
\end{proposition}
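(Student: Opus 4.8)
The plan is to follow the averaging argument of Polyak and Juditsky~\cite{PolyakJu92}, carried out inside the subspace $\T$ and with the extra perturbations $\notquaderror_k$ and $\offerror_k$ tracked explicitly. First I would note that $\Delta_k \in \T$ for every $k$: this follows by induction, since $\Delta_0 \in \T$, the maps $\projmatr H \projmatr$ and $\projmatr$ take values in $\T$, and $\offerror_k \in \T$ by hypothesis, so the whole recursion~\eqref{eqn:main-iterate} lives in $\T$. On $\T$ the operator $G \defeq \projmatr H \projmatr$ is coercive: for $w \in \T$ we have $\projmatr w = w$, so $w^T G w = w^T H w \ge c \ltwo{w}^2$ by Assumption~\ref{assumption:growth-and-noise}. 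Hence $G$ is invertible as a map $\T \to \T$; its Moore--Penrose inverse $G^\dag$ acts as this inverse on $\T$ and annihilates $\T^\perp$, so $G^\dag \projmatr = G^\dag = \projmatr G^\dag$ and $G^\dag G = G G^\dag = \projmatr$ (I will take $H = H^T$, as in the applications, so that these pseudoinverse identities are clean). Moreover $\ltwo{(I - \stepsize_k G)w}^2 = \ltwo{w}^2 - 2\stepsize_k w^T G w + \stepsize_k^2 \ltwo{Gw}^2 \le (1 - \tfrac{c}{2}\stepsize_k)\ltwo{w}^2$ for $w \in \T$ once $\stepsize_k$ is small, so the propagators $\Phi_{k,j} \defeq \prod_{l=j}^k (I - \stepsize_l G)$ contract on $\T$ at rate $\exp(-\tfrac{c}{2}\sum_{l=j}^k \stepsize_l)$, which for $\stepsize_l \propto l^{-\beta}$ with $\beta\in(\half,1)$ is stretched-exponential in $k$.

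Next I would dispose of $\offerror_k$ by writing $\Delta_k = \Delta_k' + \Delta_k''$, where $\Delta''$ solves~\eqref{eqn:main-iterate} driven only by $\offerror_k$ (with $\Delta_0'' = 0$) and $\Delta'$ is driven only by the $\noise$ and $\notquaderror$ terms. Since $\offerror_k = 0$ past a finite random time and the $\Phi$ contract on $\T$ as above, one gets $\sum_k \ltwo{\Delta_k''} < \infty$ a.s., hence $\frac{1}{\sqrt k}\sum_{i \le k}\Delta_i'' \cas 0$. For $\Delta'$ I would use the variation-of-constants representation $\Delta_k' = \Phi_{k-1,1}\Delta_0 + \sum_{j=1}^{k-1}\Phi_{k-1,j+1}\stepsize_j \zeta_j$ with $\zeta_j \defeq \projmatr(\noise_j(x_j) + \notquaderror_j) = \projmatr\noisezero_j + \projmatr\noisier_j(x_j) + \projmatr\notquaderror_j$, so that, summing over $k \le N$ and interchanging the order of summation,
\[
\frac{1}{\sqrt N}\sum_{k=1}^N \Delta_k' = \frac{1}{\sqrt N}\sum_{k=1}^N \Phi_{k-1,1}\Delta_0 + \frac{1}{\sqrt N}\sum_{j=1}^{N-1}\Big(\sum_{k=j+1}^N \Phi_{k-1,j+1}\Big)\stepsize_j \zeta_j ,
\]
the first term vanishing a.s. by the contraction estimate and summability of $\sum_k \exp(-c' k^{1-\beta})$.

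The crux is the resolvent/summation lemma of Polyak and Juditsky, adapted to $\T$: I claim $\sum_{k=j+1}^N \Phi_{k-1,j+1} = \tfrac{1}{\stepsize_j}G^\dag + R_{j,N}$ on $\T$, where the triangular-array error satisfies $\sup_{j \le N}\ltwo{\stepsize_j R_{j,N}} \to 0$ and $\sum_{j=1}^{N-1}\ltwo{\stepsize_j R_{j,N}}^2 \to 0$ as $N\to\infty$; this uses only $\stepsize_j \to 0$, $\sum_j \stepsize_j = \infty$, and $(\stepsize_j - \stepsize_{j+1})/\stepsize_j^2 \to 0$, all of which hold for $\stepsize_j = \stepsize_0 j^{-\beta}$ with $\beta\in(\half,1)$ (the heuristic being that $\sum_{m\ge 0}(I - \stepsize G)^m = \tfrac{1}{\stepsize}G^\dag$ on $\T$ for a frozen $\stepsize$). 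Granting this, the displayed sum equals $G^\dag \tfrac{1}{\sqrt N}\sum_{j=1}^{N-1}\zeta_j + \tfrac{1}{\sqrt N}\sum_j R_{j,N}\stepsize_j \zeta_j$, and the remainder is negligible: its martingale part $\tfrac{1}{\sqrt N}\sum_j R_{j,N}\stepsize_j \projmatr\noisezero_j$ has conditional second moment $\lesssim \tfrac{1}{N}\sum_j \ltwo{\stepsize_j R_{j,N}}^2 \to 0$ by the variance bound in Assumption~\ref{assumption:growth-and-noise}; the $\projmatr\noisier_j(x_j)$ part is controlled by $\E[\ltwo{\noisier_j(x_j)}^2 \mid \filt_{j-1}] \le C\ltwo{\Delta_j}^2$ together with $\frac{1}{\sqrt N}\sum_{j\le N}\ltwo{\Delta_j}^2 \cas 0$ (Assumption~\ref{assumption:convergence-as}, which lets us apply the local bound since $\Delta_j\cas 0$); and the $\projmatr\notquaderror_j$ part by $\frac{1}{\sqrt N}\sum_j \ltwo{\projmatr\notquaderror_j} \cas 0$.

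It remains to identify the main term $G^\dag \tfrac{1}{\sqrt N}\sum_{j=1}^{N-1}\zeta_j$. Decomposing $\zeta_j$ as above: $\tfrac{1}{\sqrt N}\sum_j \projmatr\noisezero_j \cd \normal(0, \projmatr \Sigma \projmatr)$ by the CLT hypothesis of Assumption~\ref{assumption:growth-and-noise} and continuity of the linear map $\projmatr$; $\tfrac{1}{\sqrt N}\sum_j \projmatr\noisier_j(x_j) \cas 0$, since this is a martingale with predictable quadratic variation $\lesssim \sum_{j\le N}\ltwo{\Delta_j}^2 = o(\sqrt N)$, whence an Abel summation gives $\sum_j j^{-1}\ltwo{\Delta_j}^2 < \infty$ a.s.\ and Lemma~\ref{lemma:dembo-l2} with $b_N = \sqrt N$ applies; and $\tfrac{1}{\sqrt N}\sum_j \projmatr\notquaderror_j \cas 0$ by Assumption~\ref{assumption:convergence-as}. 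Therefore $G^\dag \tfrac{1}{\sqrt N}\sum_j \zeta_j \cd \normal\big(0,\, G^\dag \projmatr \Sigma \projmatr G^\dag\big) = \normal\big(0,\, (\projmatr H \projmatr)^\dag \projmatr \Sigma \projmatr (\projmatr H \projmatr)^\dag\big)$, using $G^\dag \projmatr = G^\dag$; Slutsky's theorem and the earlier reductions of the $\offerror$ and initial-condition contributions complete the proof. The main obstacle is the resolvent approximation of the third paragraph: making precise the statement and error bounds on $\sum_{k>j}\Phi_{k-1,j+1} - \tfrac{1}{\stepsize_j}G^\dag$ as a triangular array on the subspace $\T$, and checking that the relaxed assumption on $\noisezero_k$ (merely a CLT, plus a conditional-variance bound) suffices throughout the remainder estimates.
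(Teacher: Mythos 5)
Your proposal follows essentially the same route as the paper's proof: expand the recursion via the propagators $\Phi_{k,j}=\prod_{l=j}^k(I-\stepsize_l \projmatr H\projmatr)$, interchange the order of summation, discard the initial-condition and $\offerror$ contributions using the stretched-exponential contraction on $\T$, approximate the averaged propagators by $(\projmatr H\projmatr)^\dag/\stepsize_j$ (the Polyak--Juditsky resolvent lemma, which the paper imports as Lemmas~\ref{lemma:upper-bound-on-B-prods} and~\ref{lemma:approximate-error-matrices}), and then treat the three pieces of $\zeta_j$ exactly as the paper does in Lemmas~\ref{lemma:kill-error-noises} and~\ref{lemma:convergence-of-weird-terms} (in particular your Abel-summation step to get $\sum_j j^{-1}\ltwo{\Delta_j}^2<\infty$ and the appeal to Lemma~\ref{lemma:dembo-l2} is verbatim the paper's argument).

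The one point you must repair is the stated error control in your resolvent lemma. Writing $\stepsize_j R_{j,N}=\stepsize_j\sum_{k=j+1}^N\Phi_{k-1,j+1}-G^\dag$, the claims $\sup_{j\le N}\ltwo{\stepsize_j R_{j,N}}\to 0$ and $\sum_{j=1}^{N-1}\ltwo{\stepsize_j R_{j,N}}^2\to 0$ are both false: for $j$ close to $N$ the sum $\stepsize_j\sum_{k>j}\Phi_{k-1,j+1}$ is $O(\stepsize_j)$, so $\stepsize_j R_{j,N}\approx -G^\dag$ and each such term contributes an error of size $\matrixnorm{G^\dag}$ that does not vanish. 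What is true, and what Polyak and Juditsky actually prove (and the paper quotes as Lemma~\ref{lemma:approximate-error-matrices}), is uniform boundedness $\sup_{j,N}\ltwo{\stepsize_j R_{j,N}}\le M$ together with Ces\`aro convergence $\frac{1}{N}\sum_{j=1}^{N-1}\ltwo{\stepsize_j R_{j,N}}\to 0$; combining these two gives $\frac{1}{N}\sum_j\ltwo{\stepsize_j R_{j,N}}^2\le \frac{M}{N}\sum_j\ltwo{\stepsize_j R_{j,N}}\to 0$, which is exactly the quantity you use to kill the martingale part of the remainder. So the downstream estimates go through unchanged once the lemma is stated in its correct (weaker) form; as written, the lemma you propose to prove cannot be proved.
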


\section{Proof of Theorem~\ref{theorem:asymptotic-normality}}
\label{sec:proof-asymptotic-normality}

In this appendix, we provide the proof of
Theorem~\ref{theorem:asymptotic-normality}.  We prove the theorem in two
main steps, which show that the iterates obey a recursion analogous
to~\eqref{eqn:main-iterate}, which allows us to apply
Proposition~\ref{proposition:generalization-of-Polyak-Juditsky} to prove the
asymptotic normality.  First, we perform a few preliminary calculations that
make rigorous our heuristic that eventually, the (modified) dual averaging
iteration behaves eventually like stochastic gradient descent restricted to
the subspace $\{x : Ax = b\}$.  Second, we provide a convergence rate
argument to show that the conditions of
Proposition~\ref{proposition:generalization-of-Polyak-Juditsky} hold for an
alternative sequence, arguing that this is sufficient to demonstrate the
result for the true sequence of dual averaging iterates.  We defer the
proofs of more technical lemmas.

We begin by introducing a small amount of additional
notation to make our arguments cleaner.
Throughout, as in the statement of the theorem,
we let $H = \nabla^2 f(x\opt)$ and $\projmat = I - A^T (AA^T)^\dag A$.
We abuse notation and define the projected error
\begin{equation*}
  \Delta_k \defeq \projmat (x_k - x\opt).
\end{equation*}
Theorem~\ref{theorem:manifold-identification} shows that
$\Delta_k = x_k - x\opt$ for all large enough $k$ with probability 1,
as dual averaging identifies the active set $\{Ax = b\}$, so that
if we can prove the asymptotic normality of $\Delta_k$ we have proved
the asymptotic normality of $x_k - x\opt$.

By definition of the dual
averaging iteration as $x_{k + 1} = \argmin_{x \in \xdomain} \{\<z_k, x\> +
\half \norm{x}^2\}$, the KKT conditions for the optimizing $x_{k + 1}$ imply
there exist $\lambda_k \ge 0, \mu_k \ge 0$ such that
\begin{equation}
  \label{eqn:single-step-linear-update}
  x_{k + 1} + z_k + A^T \lambda_k + C^T \mu_k =
  x_{k+1} + \sum_{i=1}^k \stepsize_i g_i + A^T \lambda_k + C^T \mu_k = 0,
\end{equation}
where $\lambda_k^T (A x_{k + 1} - b) = 0$ and
$\mu_k^T (C x_{k + 1} - d) = 0$.
Based on the single term optimality~\eqref{eqn:single-step-linear-update},
the next lemma gives a concrete recursive form for the
projected error $\Delta_k$ based on two higher order error sequences.
\begin{lemma}
  \label{lemma:recursion-for-error}
  Define the error sequences
  \begin{equation}
    \label{eqn:extra-error-sequences}
    \begin{split}
      \notquaderror_k & \defeq
      \nabla f(x_k) - \nabla f(x\opt) - \nabla^2 f(x\opt) (x_k - x\opt)
      ~~~ \mbox{and} \\
      \offerror_k & \defeq \projmat C^T (\mu_{k-1} - \mu_k)
      - \stepsize_k \projmat \nabla^2 f(x\opt) (I - \projmat) (x_k - x\opt),
    \end{split}
  \end{equation}
  where $\notquaderror_k$ is the non-quadratic error in $f$ near $x\opt$ and
  $\offerror_k$ is the off-manifold error.  Then
  \begin{equation}
    \label{eqn:recursion-for-error}
    \Delta_{k + 1}
    = (I - \stepsize_k \projmat \nabla^2 f(x\opt) \projmat)
    \Delta_k
    - \stepsize_k \projmat \xi_k
    - \stepsize_k \projmat \notquaderror_k
    + \offerror_k.
  \end{equation}
\end{lemma}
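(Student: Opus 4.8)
The plan is to obtain~\eqref{eqn:recursion-for-error} by differencing two consecutive stationarity conditions~\eqref{eqn:single-step-linear-update} and projecting onto $\T = \{w : Aw = 0\}$. First I would write~\eqref{eqn:single-step-linear-update} at iterations $k$ and $k-1$, subtract, and use $z_k - z_{k-1} = \stepsize_k g_k = \stepsize_k(\nabla f(x_k) + \xi_k)$ with $\xi_k \defeq g_k - \nabla f(x_k)$ the mean-zero gradient noise, to record
\begin{equation*}
  x_{k+1} - x_k = -\stepsize_k \nabla f(x_k) - \stepsize_k \xi_k
  - A^T(\lambda_k - \lambda_{k-1}) - C^T(\mu_k - \mu_{k-1}).
\end{equation*}
Applying $\projmat = I - A^T(AA^T)^\dag A$, which is the orthogonal projection onto $\T = \ker A$, so that $\projmat A^T = 0$, and using $\Delta_j = \projmat(x_j - x\opt)$ together with $\projmat \Delta_j = \Delta_j$, the active-constraint multiplier differences drop out and this becomes
\begin{equation*}
  \Delta_{k+1} = \Delta_k - \stepsize_k\projmat\nabla f(x_k)
  - \stepsize_k\projmat\xi_k - \projmat C^T(\mu_k - \mu_{k-1}).
\end{equation*}

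Next I would expand $\nabla f$ about $x\opt$. By the definition of $\notquaderror_k$ in~\eqref{eqn:extra-error-sequences}, the identity $\nabla f(x_k) = \nabla f(x\opt) + \nabla^2 f(x\opt)(x_k - x\opt) + \notquaderror_k$ holds tautologically, and the stationarity condition~\eqref{eqn:strict-complementary} for the linear active constraints gives $\nabla f(x\opt) = -A^T\lambda\opt$, hence $\projmat\nabla f(x\opt) = 0$. Decomposing $x_k - x\opt = \Delta_k + (I - \projmat)(x_k - x\opt)$ and using $\projmat\Delta_k = \Delta_k$ then gives
\begin{equation*}
  \stepsize_k\projmat\nabla f(x_k) = \stepsize_k\projmat\nabla^2 f(x\opt)\projmat\Delta_k
  + \stepsize_k\projmat\nabla^2 f(x\opt)(I-\projmat)(x_k - x\opt)
  + \stepsize_k\projmat\notquaderror_k.
\end{equation*}
Substituting this into the previous display and observing that the definition of $\offerror_k$ in~\eqref{eqn:extra-error-sequences} is exactly $\offerror_k = \projmat C^T(\mu_{k-1}-\mu_k) - \stepsize_k\projmat\nabla^2 f(x\opt)(I-\projmat)(x_k-x\opt)$, so that it absorbs precisely the two remaining stray terms, yields~\eqref{eqn:recursion-for-error} with $H = \nabla^2 f(x\opt)$. (One should also note, as used elsewhere in the proof of Theorem~\ref{theorem:asymptotic-normality}, that $\Delta_k = x_k - x\opt$ once $k$ is large by the finite-time identification of Theorem~\ref{theorem:manifold-identification}, though that is not needed for the identity itself.)

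The derivation is purely algebraic, so there is no analytic obstacle; the only care needed is in the Lagrange-multiplier bookkeeping. The point to get right is that projection by $\projmat$ annihilates the \emph{active}-constraint multiplier increments $A^T(\lambda_k - \lambda_{k-1})$ but \emph{not} the inactive ones $C^T(\mu_k - \mu_{k-1})$; the latter, together with the cross-term $\stepsize_k\projmat\nabla^2 f(x\opt)(I-\projmat)(x_k-x\opt)$ coming from the off-manifold component of $x_k - x\opt$, are exactly what is packaged into $\offerror_k$, and matching this term—and its sign—to the stated definition is the one place a slip could occur. I expect that to be the main, and only mild, difficulty.
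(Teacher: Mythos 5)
Your proposal is correct and follows essentially the same route as the paper's proof: difference the consecutive KKT stationarity conditions to get the one-step update, kill the $A^T(\lambda_{k-1}-\lambda_k)$ term with $\projmat A^T = 0$, expand $\nabla f(x_k)$ about $x\opt$ using $\projmat \nabla f(x\opt) = 0$, and split $x_k - x\opt$ into its $\projmat$ and $I - \projmat$ components so that the leftover terms are exactly $\offerror_k$. The sign bookkeeping on the multiplier increments matches the stated definition, so nothing further is needed.
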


The recursion~\eqref{eqn:recursion-for-error} evidently has the same form as
the generic recursion~\eqref{eqn:main-iterate}, so that the asymptotic
normality of $\wb{\Delta}_k$ will follow if we can demonstrate that the
conditions of Assumption~\ref{assumption:convergence-as} hold, allowing us
to apply Proposition~\ref{proposition:generalization-of-Polyak-Juditsky}.

To that end, we state the following lemma, whose proof is somewhat
delicate but
tracks arguments of other researchers (cf.~\cite[Appendix A.3]{DuchiChRe15}
or~\cite[Proof of Theorem 2, Part 4]{PolyakJu92}). We defer the proof to
Appendix~\ref{sec:proof-convergence-rate-residual}.
\begin{lemma}
  \label{lemma:convergence-rate-of-residual}
  Let the conditions of Theorem~\ref{theorem:asymptotic-normality} hold, and
  assume that $\stepsize_k \propto k^{-\beta}$ for some $\beta \in (\half,
  1)$. Then
  \begin{equation*}
    \frac{1}{\sqrt{k}} \sum_{i=1}^k \norm{x_i - x\opt}^2
    \cas 0.
  \end{equation*}
\end{lemma}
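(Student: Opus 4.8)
The plan is to show that, once the modified dual averaging iteration has identified the active face, the projected error $\Delta_k$ obeys a strongly contractive noisy recursion, to extract from it the rate $\E[\norm{\Delta_k}^2] = O(\stepsize_k)$, and then to exploit $\beta > \half$ together with Kronecker's lemma to turn this rate into the claimed Ces\`aro convergence. First I would reduce to the post-identification regime. By Theorems~\ref{theorem:manifold-identification} and~\ref{theorem:as-convergence} there is an a.s.\ finite random time $K$ with $x_k \cas x\opt$ and, for $k \ge K$, $Ax_k = b$, $Cx_k < d$; hence $\Delta_k = \projmat(x_k - x\opt) = x_k - x\opt$, the multipliers $\mu_k$ vanish, $(I - \projmat)(x_k - x\opt) = 0$, so $\offerror_k = 0$ in~\eqref{eqn:extra-error-sequences} and the recursion~\eqref{eqn:recursion-for-error} collapses to $\Delta_{k+1} = (I - \stepsize_k \projmat \nabla^2 f(x\opt)\projmat)\Delta_k - \stepsize_k \projmat \xi_k - \stepsize_k \projmat \notquaderror_k$ with $\Delta_k$ lying in $\T = \{w : Aw = 0\}$. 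Since $\P(K < \infty) = 1$ and $M \defeq \sup_i \norm{x_i - x\opt} < \infty$ a.s.\ by Lemma~\ref{lemma:pair-convergence}, the head of the target sum satisfies $\frac{1}{\sqrt{k}}\sum_{i < K}\norm{x_i - x\opt}^2 \le K M^2/\sqrt{k} \cas 0$, so it suffices to bound $\frac{1}{\sqrt{k}}\sum_{i=1}^k \norm{\Delta_i}^2$.

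Next I would establish $\E[\norm{\Delta_k}^2] = O(\stepsize_k)$. Squaring the collapsed recursion and taking $\filt_{k-1}$-conditional expectations — using $\notquaderror_k, \Delta_k \in \filt_{k-1}$ and $\E[\xi_k \mid \filt_{k-1}] = 0$ — kills the leading cross term. On $\T$, Assumption~\ref{assumption:restricted-strong-convexity} (for linear constraints $H\opt = \nabla^2 f(x\opt)$ and $\tangentset_\xdomain(x\opt) = \T$) gives $w^T \nabla^2 f(x\opt) w \ge \mu\norm{w}^2$, hence $\norm{(I - \stepsize_k\projmat \nabla^2 f(x\opt)\projmat)\Delta_k} \le (1 - \mu\stepsize_k)\norm{\Delta_k}$ for $\stepsize_k$ small. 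The Taylor bound $\norm{\notquaderror_k} \le C\norm{\Delta_k}^2$ from Assumption~\ref{assumption:smoothness-of-objective} and the noise bound $\E[\norm{\xi_k}^2 \mid \filt_{k-1}] \le C(1 + \norm{\Delta_k}^2)$ from Assumption~\ref{assumption:noise-vectors} let the remaining cross- and higher-order terms be absorbed (here $\norm{\Delta_k}\cas 0$ makes the constants work), yielding, for $k$ beyond a random time, $\E[\norm{\Delta_{k+1}}^2 \mid \filt_{k-1}] \le (1 - c\stepsize_k)\norm{\Delta_k}^2 + C\stepsize_k^2$. Unrolling this recursion gives $\E[\norm{\Delta_k}^2] = O(\stepsize_k)$: the mixing factor $\prod_j (1 - c\stepsize_j)$ decays faster than any polynomial since $\sum_j \stepsize_j = \infty$, while $\sum_k \big(\prod_{j > k}(1 - c\stepsize_j)\big)\stepsize_k^2 = O(\stepsize_k)$. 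The complication that the favorable recursion only activates after a random time is handled by a stopping-time localization — running the estimate from a deterministic index $N$ on the event $\{K \le N,\ \sup_{j \ge N}\norm{\Delta_j}\ \text{small}\}$, whose probability tends to $1$, and controlling its complement using $x_k \cas x\opt$ and the a.s.\ summability $\sum_i \stepsize_i \norm{x_i - x\opt}^2 < \infty$ from Lemma~\ref{lemma:sum-stepsize-square-distance}.

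Finally, because $\beta \in (\half, 1)$ and $\stepsize_k = \stepsize_0 k^{-\beta}$, the rate gives $\sum_{k}\E[\norm{\Delta_k}^2]/\sqrt{k} \le C\sum_k k^{-(\beta + 1/2)} < \infty$; by Tonelli, $\sum_k \norm{\Delta_k}^2/\sqrt{k} < \infty$ a.s., and Kronecker's lemma with $b_k = \sqrt{k}\uparrow \infty$ yields $\frac{1}{\sqrt{k}}\sum_{i=1}^k \norm{\Delta_i}^2 \cas 0$. Combined with the reduction above, this gives $\frac{1}{\sqrt{k}}\sum_{i=1}^k \norm{x_i - x\opt}^2 \cas 0$. (An alternative to the Tonelli argument is to track the martingale part of the unrolled recursion pathwise and invoke Dembo's Lemma~\ref{lemma:dembo-l2}.) I expect the main obstacle to be precisely the rate step: obtaining $\E[\norm{\Delta_k}^2] = O(\stepsize_k)$ with clean constants while the strongly contractive recursion only takes effect after the random identification/smallness time, which is the delicate bookkeeping the statement alludes to.
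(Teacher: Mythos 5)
Your overall skeleton matches the paper's: reduce to the post-identification regime, extract a one-step contraction $\E[\norm{\Delta_{k+1}}^2 \mid \filt_{k-1}] \le (1 - c\stepsize_k)\norm{\Delta_k}^2 + C\stepsize_k^2$ from restricted strong convexity on $\T$, unroll it to get summability of $\E[\norm{\Delta_k}^2]/\sqrt{k}$ (the unrolled bound actually carries a harmless $\log k$ factor, cf.\ Lemma~\ref{lemma:funny-zero-sum}, and only the \emph{indicated} expectation is controlled, not the unconditional one), and finish with monotone convergence and Kronecker. That part is right and is exactly what the paper does in Section~\ref{sec:proof-convergence-rate-residual}.

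The genuine gap is the localization step, and it is not mere bookkeeping. The event you propose to condition on, $\{K \le N,\ \sup_{j \ge N}\norm{\Delta_j}\ \text{small}\}$, depends on the entire future of the noise sequence: the identification time $K$ is defined by a ``for all $l \ge k$'' condition, and the $\sup_{j \ge N}$ constraint is likewise anticipating. Multiplying the squared recursion by the indicator of such an event and then taking $\filt_{k-1}$-conditional expectations is invalid --- it destroys precisely the martingale property $\E[\xi_k \mid \filt_{k-1}] = 0$ that you need to kill the cross term, and the paper explicitly flags this failure mode in the discussion preceding Theorem~\ref{theorem:asymptotic-normality}. The paper's resolution has two ingredients you are missing. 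First, it introduces, for each \emph{deterministic} $t$, an auxiliary sequence $x_k^t$ that agrees with the dual averaging iterates for $k \le t$ and thereafter is by fiat a projected stochastic gradient iteration on $\{Ay = b, Cy \le d\}$ (with noise evaluated at $x_k^t$); the contraction then holds for all $k > t$ unconditionally, and since $x_k = x_k^T$ for the a.s.\ finite identification time $T$, proving the rate for every fixed $t$ suffices. Second, the smallness restriction is imposed only through the \emph{adapted} events $\event_{i/2, i-1} \in \mc{F}_{i-1}$ (smallness of past iterates only), so the conditional-expectation manipulations are legitimate; the indicators are removed pathwise at the very end because the events hold eventually. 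Without the auxiliary-sequence device (or an equivalent adapted stopping construction), the rate step of your argument does not go through as written.
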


We now argue that the recursion~\eqref{eqn:recursion-for-error} and these
results imply the conditions of
Proposition~\ref{proposition:generalization-of-Polyak-Juditsky}.  To see
this, note that Assumption~\ref{assumption:growth-and-noise} holds by the
conditions of Theorem~\ref{theorem:asymptotic-normality} immediately.  To
verify Assumption~\ref{assumption:convergence-as}, all we need to check is
that $\frac{1}{\sqrt{k}} \sum_{i = 1}^k \norms{\projmat \notquaderror_i}
\cas 0$, where we recall that $\notquaderror_i = \nabla f(x_i) - \nabla
f(x\opt) - \nabla^2 f(x\opt)(x_i - x\opt)$.  There exist $\epsilon > 0$ and
$c < \infty$ such that when $\norm{x_i - x\opt} \le \epsilon$, we have
$\norm{\notquaderror_i} \le c \norm{x_i - x\opt}^2$, so that
\begin{equation*}
  \frac{1}{\sqrt{k}} \sum_{i = 1}^k \norm{\projmat \notquaderror_i}
  \le \frac{1}{\sqrt{k}} \sum_{i = 1}^k
  \norm{\projmat \notquaderror_i} \indic{\norm{x_i - x\opt} > \epsilon}
  + \frac{c}{\sqrt{k}} \sum_{i=1}^k \norm{x_i - x\opt}^2.
\end{equation*}
The latter term converges almost surely to zero by
Lemma~\ref{lemma:convergence-rate-of-residual}, while the former converges
a.s.\ to as $x_i \cas x\opt$.
Finally, Proposition~\ref{proposition:generalization-of-Polyak-Juditsky}
shows that
\begin{equation*}
  \frac{1}{\sqrt{k}} \sum_{i = 1}^k
  \projmat (x_i - x\opt) \cd
  \normal\left(0, (\projmat H \projmat)^\dag \projmat
  \Sigma \projmat (\projmat H \projmat)^\dag \right).
\end{equation*}
As $\projmat (x_i - x\opt) = x_i - x\opt$ for all large enough $i$
and $\projmat^2 = \projmat$ because $\projmat$ is a projection matrix,
we have
$(\projmat P \projmat)^\dag \projmat \Sigma \projmat
(\projmat P \projmat)^\dag
= \projmat P^\dag \projmat \Sigma \projmat P^\dag \projmat$,
which gives the theorem.

\subsection{Proof of Lemma~\ref{lemma:recursion-for-error}}
\label{sec:proof-recursion-for-error}

By using $z_k - z_{k - 1} = \stepsize_k g_k = \stepsize_k (\nabla f(x_k) +
\noise_k)$, the optimality conditions~\eqref{eqn:single-step-linear-update}
imply
\begin{equation*}
  x_{k + 1} = x_k - \stepsize_k g_k + A^T (\lambda_{k - 1} - \lambda_k)
  + C^T (\mu_{k - 1} - \mu_k).
\end{equation*}
Of course, $\projmat A^T = A^T - A^T(AA^T)^\dag AA^T = 0$, so
subtracting $x\opt$ from each side of the above equality
we have
\begin{align*}
  \projmat (x_{k+1} - x\opt) &= \projmat (x_k - x\opt)
  - \stepsize_k \projmat g_k 
  + \projmat C^T (\mu_{k-1} - \mu_k) \\
  & = \projmat (x_k - x\opt) - \stepsize_k \projmat (\nabla f(x_k) + \xi_k)
  + \projmat C^T (\mu_{k-1} - \mu_k) \\
  & =  \projmat (x_k - x\opt) - \stepsize_k \projmat (\nabla f(x\opt) + 
  \nabla^2 f(x\opt)(x_k - x\opt) + \notquaderror_k
  + \xi_k)
  + \projmat C^T (\mu_{k-1} - \mu_k),
\end{align*} 
where we have used the definition of $\notquaderror_k$.
Recognizing that $\nabla f(x\opt) \in \image(A^T)$
because $x\opt$ is optimal,
we further have $\projmat \nabla f(x\opt) = 0$. Substituting
the definition of $\offerror_k$ and noting that
$\projmat^2 = \projmat$, we thus obtain
\begin{align*}
  \Delta_{k + 1} & = \Delta_k
  - \stepsize_k \projmat \nabla^2 f(x\opt) (x_k - x\opt)
  - \stepsize_k \projmat(\xi_k + \notquaderror_k)
  + \projmat C^T (\mu_{k - 1} - \mu_k) \\
  & = \Delta_k
  - \stepsize_k \projmat \nabla^2 f(x\opt) \projmat (x_k - x\opt)
  - \stepsize_k \projmat(\xi_k + \notquaderror_k) \\
  & \qquad ~ 
  - \stepsize_k \projmat \nabla^2 f(x\opt) (I - \projmat) (x_k - x\opt)
  + \projmat C^T (\mu_{k - 1} - \mu_k) \\
  & = \Delta_k
  - \stepsize_k \projmat \nabla^2 f(x\opt) \projmat \Delta_k
  - \stepsize_k \projmat(\xi_k + \notquaderror_k)
  + \offerror_k,
\end{align*}
which is our desired result.

\subsection{Proof of Lemma~\ref{lemma:convergence-rate-of-residual}}
\label{sec:proof-convergence-rate-residual}

\newcommand{\trueerror}{\delta}

Define $\proj$ to be the projection onto the affine set $\{y : Ay = b, Cy
\le d\}$, that is,
\begin{equation*}
  \proj(x) \defeq
  \argmin_{y} \left\{\norm{x - y}^2
  :  Ay = b, Cy \le d\right\}.
\end{equation*}
To demonstrate the result, for each $t \in \N$ we construct an alternative
projected version of $x_k$, which we denote by $x_k^t$, arguing the
convergence of this sequence.

For purely technical reasons, we also require a slightly
different noise sequence definition, where we recall from
Assumption~\ref{assumption:noise-vectors} that the oracle returns noise
\emph{functions} $\noise_k$ with $\E[\noise_k \mid \mc{F}_{k-1}] = 0$, and
$\noise_k(x) = \noisezero_k + \noisier_k(x)$.  For each $t \in \N$ we then
define
\begin{equation*}
  x_{k+1}^t =
  \begin{cases}
    x_{k+1} = x_k^t - \stepsize_k(\nabla f(x_k^t) + \noise_k) + 
    A^T (\lambda_{k-1} - \lambda_k) 
    + C^T (\mu_{k-1} - \mu_k) & \mbox{if}~
    k < t \\
    \proj\big(x_k^t - \stepsize_k (\projmat (\nabla f(x_k^t) + 
    \noise_k (x_k^t)))\big)
    & \mbox{if}~ k \ge t.
  \end{cases}
\end{equation*}
Note that we have used the noise $\noise_k(x_k^t)$ in the definition of
$x_k^t$, which means that the noisy stochastic gradients are computed at the
points $x_k^t$ for the preceding sequence.  In the proof that follows, we also
define the (true and unprojected) errors
\begin{equation}
  \label{eqn:true-error}
  \trueerror_k = x_k - x\opt
  ~~ \mbox{and} ~~
  \trueerror_k^t = x_k^t - x\opt.
\end{equation}

We now state two lemmas, the first of which is more or less standard,
that demonstrate our desired convergence.
\begin{lemma}
  \label{lemma:as-convergence-corrected-sequence}
  For any $t \in \N$, we have
  \begin{equation*}
    \trueerror_k^t \cas 0 ~ \mbox{as~} k \to \infty
    ~~ \mbox{and} ~~
    \sup_k \E\left[\norm{\trueerror_k^t}^2\right] < \infty.
  \end{equation*}
\end{lemma}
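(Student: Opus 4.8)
The structure of the argument is that the tail of the sequence $\{x_k^t\}$, namely the indices $k\ge t+1$, is an ordinary projected stochastic gradient recursion restricted to the polyhedron $\{y : Ay = b,\ Cy\le d\}\subseteq\xdomain$, so the plan is to apply the supermartingale convergence lemma (Lemma~\ref{lemma:robbins-siegmund}), using the growth property of Lemma~\ref{lemma:restricted-strong-convexity} in place of strong convexity. The finitely many initial iterates $x_1^t = x_1,\dots,x_t^t = x_t$ are irrelevant for a.s.\ convergence and contribute only finitely many finite terms to the $L^2$ bound, so everything reduces to the range $k\ge t+1$, on which $x_{k+1}^t = \proj\big(x_k^t - \stepsize_k\projmat(\nabla f(x_k^t) + \noise_k(x_k^t))\big)$ with $\proj$ the Euclidean projection onto $\{y: Ay=b,\ Cy\le d\}$.

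First I record the structural facts used. For $k\ge t+1$ we have $A x_k^t = b = A x\opt$, hence $\projmat\trueerror_k^t = \trueerror_k^t$ since $\projmat$ is the orthogonal projector onto $\ker A$; moreover $\proj$ is nonexpansive with $\proj(x\opt) = x\opt$, and $x_k^t$ is $\mc{F}_{k-1}$-measurable. Using nonexpansivity and expanding, for $k\ge t+1$,
\begin{align*}
  \norm{\trueerror_{k+1}^t}^2
  & \le \norm{\trueerror_k^t - \stepsize_k\projmat(\nabla f(x_k^t) + \noise_k(x_k^t))}^2 \\
  & = \norm{\trueerror_k^t}^2 - 2\stepsize_k\<\nabla f(x_k^t) + \noise_k(x_k^t),\, \trueerror_k^t\>
  + \stepsize_k^2\norm{\projmat(\nabla f(x_k^t) + \noise_k(x_k^t))}^2,
\end{align*}
where I used $\<\projmat v,\trueerror_k^t\> = \<v,\projmat\trueerror_k^t\> = \<v,\trueerror_k^t\>$. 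Taking $\E[\,\cdot\mid\mc{F}_{k-1}]$, together with $\norm{\projmat v}\le\norm{v}$, $\E[\noise_k(x)\mid\mc{F}_{k-1}] = 0$, the Lipschitz bound $\norm{\nabla f(x)}\le\norm{\nabla f(x\opt)} + L\norm{x-x\opt}$ (Assumption~\ref{assumption:smoothness-of-objective}), and $\E[\norm{\noise_k(x)}^2\mid\mc{F}_{k-1}]\le C(1+\norm{x-x\opt}^2)$ (Assumption~\ref{assumption:noise-vectors}), yields, for a new constant $C$ and all $k\ge t+1$,
\begin{equation*}
  \E\big[\norm{\trueerror_{k+1}^t}^2\mid\mc{F}_{k-1}\big]
  \le (1 + C\stepsize_k^2)\norm{\trueerror_k^t}^2 - 2\stepsize_k\<\nabla f(x_k^t),\, x_k^t - x\opt\> + C\stepsize_k^2,
\end{equation*}
and since $x_k^t\in\xdomain$, Lemma~\ref{lemma:restricted-strong-convexity} gives $\<\nabla f(x_k^t), x_k^t - x\opt\>\ge\epsilon\min\{\norm{\trueerror_k^t}^2,\norm{\trueerror_k^t}\}\ge 0$.

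Because $\steppow > 1/2$ in~\eqref{eqn:stepsizes} forces $\sum_k\stepsize_k^2 < \infty$ while $\sum_k\stepsize_k = \infty$, Lemma~\ref{lemma:robbins-siegmund} (after the usual index shift, exactly as in the proof of Lemma~\ref{lemma:pair-convergence}) produces a finite random variable $V_\infty$ with $\norm{\trueerror_k^t}^2\cas V_\infty$ and $\sum_k\stepsize_k\min\{\norm{\trueerror_k^t}^2,\norm{\trueerror_k^t}\} < \infty$ a.s.; the latter, with $\sum_k\stepsize_k = \infty$, forces $\liminf_k\norm{\trueerror_k^t} = 0$, hence $V_\infty = 0$, i.e.\ $\trueerror_k^t\cas 0$. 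For the uniform $L^2$ bound, taking unconditional expectations in the displayed recursion gives $\E[\norm{\trueerror_{k+1}^t}^2]\le(1+C\stepsize_k^2)\E[\norm{\trueerror_k^t}^2] + C\stepsize_k^2$ for $k\ge t+1$, so iterating gives $\sup_{k\ge t+1}\E[\norm{\trueerror_k^t}^2]\le\big(\E[\norm{\trueerror_{t+1}^t}^2] + C\sum_j\stepsize_j^2\big)\prod_j(1+C\stepsize_j^2) < \infty$ once $\E[\norm{\trueerror_{t+1}^t}^2] < \infty$; finiteness of $\E[\norm{\trueerror_{t+1}^t}^2]$ and of $\E[\norm{x_j - x\opt}^2]$ for $j\le t$ follows by a routine induction, since $x_{j+1} = \proj_{\xdomain}(-z_j)$ is $1$-Lipschitz in $z_j$ with $\E[\norm{z_j}^2] < \infty$ by the same smoothness and noise bounds, and a projected step preserves square-integrability. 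Combining the finitely many prefix terms with the tail bound gives $\sup_k\E[\norm{\trueerror_k^t}^2] < \infty$.

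The only delicate point is the bookkeeping at the transition index $k = t$: one must verify that from $k = t+1$ onward $\trueerror_k^t\in\ker A$ (so $\projmat$ acts as the identity and the cross term is a genuine descent term), that each such $x_k^t$ lies in $\xdomain$ (so the growth inequality of Lemma~\ref{lemma:restricted-strong-convexity} applies), and that the seed iterate $x_{t+1}^t$ is square-integrable. Once these are in place, the remainder is the standard stochastic-approximation supermartingale computation.
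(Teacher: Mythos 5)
Your proposal is correct and follows essentially the same route as the paper's proof: nonexpansiveness of the projection onto $\{y: Ay=b,\ Cy\le d\}$, the one-step almost-supermartingale recursion with the noise bounds of Assumption~\ref{assumption:noise-vectors}, Robbins--Siegmund to get a.s.\ convergence of $\norm{\trueerror_k^t}^2$ together with summability of the weighted descent terms, the growth inequality of Lemma~\ref{lemma:restricted-strong-convexity} to force the limit to be zero, and iteration of the unconditional one-step bound for the uniform $L^2$ estimate. The only (harmless) difference is that you feed the $\min\{\norm{\cdot}^2,\norm{\cdot}\}$ growth bound directly into the summable descent term, whereas the paper first uses $\<\nabla f(x_k^t),\trueerror_k^t\>\ge f(x_k^t)-f(x\opt)\ge 0$ and invokes the growth lemma afterward.
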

\noindent
See Section~\ref{sec:proof-as-convergence-corrected-sequence} for
a proof of the result.

\begin{lemma}
  \label{lemma:convergence-rate-sequence-da}
  Let the conditions of Theorem~\ref{theorem:asymptotic-normality} hold. 
  Then for any $t \in \N$,
  \begin{equation*}
    \frac{1}{\sqrt{k}} \sum_{i=1}^k \norm{\trueerror_i^t}^2 \cas 0.
  \end{equation*}
\end{lemma}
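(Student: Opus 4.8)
The plan is to treat the corrected sequence $x_k^t$, for $k \ge t+1$, as a projected stochastic gradient method running inside the affine set $\mc{A} \defeq \{y : Ay = b,\ Cy \le d\}$ (which in the linear-constraint case is contained in $\xdomain$ and contains $x\opt$), and to prove for it the classical $O(\stepsize_k)$ rate on $\E\norm{\trueerror_k^t}^2$. First I would dispose of the initial segment: the terms $\norm{\trueerror_i^t}^2$ for $i \le t$ contribute an a.s.\ finite (random) constant to $\sum_{i=1}^k \norm{\trueerror_i^t}^2$, hence are negligible after dividing by $\sqrt{k}$, so only the tail matters. For $k \ge t$, since $\proj$ is the nonexpansive Euclidean projection onto the convex set $\mc{A} \ni x\opt$,
\begin{equation*}
  \norm{\trueerror_{k+1}^t}^2
  \le \norm{\trueerror_k^t - \stepsize_k \projmat(\nabla f(x_k^t) + \noise_k(x_k^t))}^2 ;
\end{equation*}
moreover for $k \ge t+1$ we have $A x_k^t = b$, so $\projmat \trueerror_k^t = \trueerror_k^t$.

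The next step is to expand the square and condition on $\mc{F}_{k-1}$. Using $\E[\noise_k(x_k^t) \mid \mc{F}_{k-1}] = 0$ with $x_k^t \in \mc{F}_{k-1}$, the noise cross term vanishes; using $\projmat \nabla f(x\opt) = 0$ (because $\nabla f(x\opt) \in \image(A^T)$ by optimality) together with $\projmat \trueerror_k^t = \trueerror_k^t$ and the growth bound of Lemma~\ref{lemma:restricted-strong-convexity} gives $\<\projmat \nabla f(x_k^t), \trueerror_k^t\> = \<\nabla f(x_k^t), \trueerror_k^t\> \ge \epsilon \min\{\norm{\trueerror_k^t}^2, \norm{\trueerror_k^t}\}$; and Assumption~\ref{assumption:smoothness-of-objective} with the noise bound of Assumption~\ref{assumption:noise-vectors} controls the second-moment term by $C\stepsize_k^2(1 + \norm{\trueerror_k^t}^2)$. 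This yields, for $k \ge t+1$,
\begin{equation*}
  \E[\norm{\trueerror_{k+1}^t}^2 \mid \mc{F}_{k-1}]
  \le \norm{\trueerror_k^t}^2 - 2\epsilon \stepsize_k \min\{\norm{\trueerror_k^t}^2, \norm{\trueerror_k^t}\} + C\stepsize_k^2(1 + \norm{\trueerror_k^t}^2).
\end{equation*}
By Lemma~\ref{lemma:as-convergence-corrected-sequence}, $\trueerror_k^t \cas 0$, so past a (random) time the minimum equals $\norm{\trueerror_k^t}^2$; absorbing the $\stepsize_k^2\norm{\trueerror_k^t}^2$ term into the drift for large $k$, the recursion becomes the contractive form $\E[\norm{\trueerror_{k+1}^t}^2 \mid \mc{F}_{k-1}] \le (1 - \epsilon \stepsize_k)\norm{\trueerror_k^t}^2 + C\stepsize_k^2$.

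From this I would derive $\E\norm{\trueerror_k^t}^2 = O(\stepsize_k) = O(k^{-\steppow})$ by a Chung-type unrolling of $\prod_j(1 - \epsilon\stepsize_j)$, which decays polynomially because $\sum_j \stepsize_j = \infty$. The one genuine subtlety — and the step I expect to be the main obstacle — is that the contractive recursion holds only after a \emph{random} time, so to obtain a deterministic bound on $\E\norm{\trueerror_k^t}^2$ I would split on the event $\{\sup_{j \ge m}\norm{\trueerror_j^t} \le 1\}$ (whose probability tends to one as $m \to \infty$ by a.s.\ convergence) and, on its complement, fall back on the uniform second-moment bound $\sup_k \E\norm{\trueerror_k^t}^2 < \infty$ of Lemma~\ref{lemma:as-convergence-corrected-sequence} (together with the analogous higher-moment bounds produced by iterating the same recursion), exactly as in \cite[Appendix A.3]{DuchiChRe15} and \cite{PolyakJu92}; alternatively one can route this through Lemma~\ref{lemma:robbins-siegmund} to obtain pathwise control and a near-optimal almost sure rate, which suffices for what follows.

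Finally, granted $\E\norm{\trueerror_i^t}^2 \lesssim \stepsize_i$, summation gives $\E\big[\sum_{i=1}^k \norm{\trueerror_i^t}^2\big] \lesssim \sum_{i=1}^k \stepsize_i$, which is of order $k^{1-\steppow}$, hence $\E\big[\tfrac{1}{\sqrt{k}}\sum_{i=1}^k\norm{\trueerror_i^t}^2\big] \lesssim k^{1/2-\steppow} \to 0$ since $\steppow \in (\tfrac12, 1)$. To upgrade to almost sure convergence I would exploit monotonicity of the partial sums $S_k \defeq \sum_{i=1}^k \norm{\trueerror_i^t}^2$: along the dyadic subsequence $k_j = 2^j$, Markov's inequality gives $\P(S_{k_j}/\sqrt{k_j} > \varepsilon) \lesssim \varepsilon^{-1} 2^{j(1/2-\steppow)}$, which is summable, so Borel--Cantelli yields $S_{k_j}/\sqrt{k_j} \cas 0$; and for $k_j \le k < k_{j+1}$ one has $S_k/\sqrt{k} \le \sqrt{2}\, S_{k_{j+1}}/\sqrt{k_{j+1}} \to 0$ a.s. This gives $\tfrac{1}{\sqrt{k}}\sum_{i=1}^k \norm{\trueerror_i^t}^2 \cas 0$, as claimed.
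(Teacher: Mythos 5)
Your overall strategy --- nonexpansiveness of the projection, the one-step drift inequality $\E[\norm{\trueerror_{k+1}^t}^2 \mid \mc{F}_{k-1}] \le (1 - c\stepsize_k)\norm{\trueerror_k^t}^2 + C\stepsize_k^2$ valid once the iterate is near $x\opt$, a Chung-type unrolling to get an $O(\stepsize_k)$ rate, and then summation --- is exactly the route the paper takes, and you correctly identify the crux: the contraction only holds after a random time. But the specific mechanism you propose for handling that crux does not close the gap. Splitting on $\{\sup_{j \ge m}\norm{\trueerror_j^t} \le 1\}$ for a \emph{fixed} $m$ and falling back on uniform (second or higher) moment bounds on the complement yields at best $\E[\norm{\trueerror_k^t}^2] \lesssim \stepsize_k + \delta_m$ with $\delta_m$ independent of $k$ (e.g.\ $\delta_m = C\sqrt{\P(\mathrm{bad}_m)}$ via Cauchy--Schwarz); then $\E[k^{-1/2}\sum_{i\le k}\norm{\trueerror_i^t}^2] \lesssim k^{1/2-\steppow} + \sqrt{k}\,\delta_m$, and the second term diverges for any fixed $m$, so your Markov/Borel--Cantelli finish has nothing to feed on. A second, related problem is that $\{\sup_{j \ge m}\norm{\trueerror_j^t} \le 1\}$ depends on the entire future and is not $\mc{F}_{k-1}$-measurable, so it cannot be pulled through the conditional-expectation step that produces the contraction in the first place.

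The paper's fix is to use \emph{sliding, adapted} windows: with $\event_{i/2,i-1} = \{\norm{\trueerror_j^t} \le \eps,\ j = i/2,\dots,i-1\} \in \mc{F}_{i-1}$, the indicator can be carried through the conditional recursion, and iterating the contraction only over the window $[i/2, i]$ --- starting from the uniform bound $\sup_j \E[\norm{\trueerror_j^t}^2] < \infty$ of Lemma~\ref{lemma:as-convergence-corrected-sequence} --- gives $\E[\norm{\trueerror_{i+1}^t}^2 \indic{\event_{i/2,i}}] \le C e^{-c i^{1-\steppow}} + C (\log i)\, i^{-\steppow}$ (via Lemma~\ref{lemma:funny-zero-sum}), hence $\sum_i i^{-1/2}\,\E[\norm{\trueerror_i^t}^2\indic{\event_{i/2,i}}] < \infty$ since $\steppow > \half$. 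The bad event is then never bounded in expectation at all: one passes to an almost-sure statement by monotone convergence, uses that $\event_{i/2,i}$ holds for all large $i$ a.s.\ (so the indicators are eventually $1$ pathwise), and concludes with the Kronecker lemma. In short, your window must grow with $k$ (effectively $m = i/2$) and must be adapted, and the conversion to the a.s.\ claim must be done pathwise rather than through an unconditional moment bound; as written, your argument has a genuine gap at exactly the step you flagged as the main obstacle.
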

\noindent
See Section~\ref{sec:proof-convergence-rate-sequence-da}
for a proof of the result.

Now, let $T < \infty$
be the random (finite)
manifold identification time
that Theorem~\ref{theorem:manifold-identification} guarantees,
that is, the $T$ such that
for $k \geq T$, we have $A x_k = b$, $C x_k < d$.
Then by definition, we have that $x_k = x_k^T$ for all $k \in \N$,
and as $T < \infty$ with probability 1, we have that
the conclusions of Lemmas~\ref{lemma:as-convergence-corrected-sequence}
and~\ref{lemma:convergence-rate-sequence-da} apply to
$\trueerror_k = x_k - x\opt$.

\subsubsection{Proof of Lemma~\ref{lemma:as-convergence-corrected-sequence}}
\label{sec:proof-as-convergence-corrected-sequence}

Since the projection operator is non-expansive, we have for all $k > t$ that
\begin{align*}
  \half \norm{\trueerror_{k+1}^t}^2 &=  \half\norm{\proj(x_k^t - \stepsize_k \projmat g_k^t) - x\opt}^2 
   \leq \half \norm{\trueerror_k^t - \stepsize_k \projmat g_k^t}^2 \\
  & = \half\norm{\trueerror_k^t}^2 + \half \alpha_k^2 \norm{\projmat 
    (\nabla f(x_k^t) + \noise_k (x_k^t))}^2 - \alpha_k\left<
  \trueerror_k^t, \projmat (\nabla f(x_k^t) + \noise_k (x_k^t))\right>\\
  &\le \half\norm{\trueerror_k^t}^2 +  \half \alpha_k^2 \left(\norm{ 
    \nabla f(x_k^t)}^2 + \norm{\noise_k (x_k^t)}^2 + 
  \left<\nabla f(x_k^t), 2\noise_k (x_k^t)\right>\right) - \alpha_k\left<
  \trueerror_k^t, \nabla f(x_k^t) + \noise_k (x_k^t)\right>
\end{align*}
where in the last equation, we used the fact that projection operator is non-expansive
and the fact that $\projmat \trueerror_k^t = \trueerror_k^t$, as both $x_k^t$ and $x\opt$, 
by definition, lie on the manifold $Ax = b$. 

Notably, we still have $\E[\noise_k(x_k^t) \mid \mc{F}_{k - 1}] = 0$
for the $\sigma$-fields $\mc{F}_k = \sigma(\noise_1, \ldots,
\noise_k)$, and $\noise_k(x_k^t)  \in \mc{F}_k$ as well. Thus
we obtain
\begin{equation*}
  \half \E \left[\norm{\trueerror_{k+1}^t}^2 \mid \mc{F}_{k-1}\right]
  \le \half \norm{\trueerror_k^t}^2
  + \half \stepsize_k^2 \left(\norm{ \nabla f(x_k^t)}^2
  + \E \left[\norm{\noise_k}^2 \mid \mc{F}_{k-1}\right]\right)
  - \stepsize_k 
  \<\nabla f(\proj(x_k^t)), \trueerror_k^t\>.
\end{equation*}
Applying the definition of the noise
sequence $\noise_k(x_k^t) = \noisezero_k + \noisier_k(x_k^t)$,
we have
\begin{equation*}
  \E[\norm{\noise_k}^2 \mid \mc{F}_{k-1}]
  \le 2 \E[\norms{\noisezero_k}^2 \mid \mc{F}_{k-1}]
  + 2 \E[\norms{\noisier_k(x_k^t)}^2 \mid \mc{F}_{k-1}]
  \le 2 C\left(1 + \norm{x_k^t - x\opt}^2\right)
\end{equation*}
by Assumption~\ref{assumption:noise-vectors}. Noting that
$\norm{\nabla f(x)}^2 \le 2 \norm{\nabla f(x) - \nabla f(x\opt)}^2
+ 2 \norm{\nabla f(x\opt)}^2$, we obtain
\begin{equation}
  \label{eqn:one-step-true-error}
  \E \left[ \norm{\trueerror_{k+1}^t}^2 \mid \mc{F}_{k-1}\right]
  \leq (1 + C \stepsize_k^2)
  \norm{\trueerror_k^t}^2
  + \stepsize_k^2 C -
  \stepsize_k \<\nabla f(\proj(x_k^t)), \trueerror_k^t\> 
\end{equation}
whenever $k > t$.

Now we apply the almost supermartingale convergence theorem of Robbins and
Siegmund (Lemma~\ref{lemma:robbins-siegmund}). Because
\begin{equation*}
  \<\nabla f(x_k^t), \trueerror_k^t\>
  = \<\nabla f(x_k^t), x_k^t - x\opt\>
  \ge f(x_k^t) - f(x\opt) \ge 0,
\end{equation*}
using that $\proj(x_k^t) = x_k^t$ for $k > t$ we obtain that
$\norm{\trueerror_k^t}^2$ converges with probability one
to some random variable
$V_\infty < \infty$ and that
\begin{equation*}
  \sum_{k=1}^\infty \stepsize_k \left<\nabla f(\proj(x_k^t)),
  \trueerror_k^t\right> < \infty.
\end{equation*}
Therefore, by Lemma~\ref{lemma:restricted-strong-convexity}, we have 
$\sum_{k=1}^\infty \stepsize_k \indic{\norm{\trueerror_k^t} > \epsilon} < \infty$
for any $\epsilon > 0$.
Since $\trueerror_k^t$ converges and $\sum_{k=1}^\infty \stepsize_k = \infty$, we
know that $V_\infty = 0$ and $\trueerror_k^t \cas 0$ as $k \to \infty$.

For the second statement of the lemma, 
let $k > t$, and define $V_k = \norm{\trueerror_k^t}^2$ for shorthand.
Then in a single step, we have that
\begin{equation*}
  \E[V_{k+1}]
  \le (1 + C \stepsize_k^2) \E[V_k]
  + C \stepsize_k^2.
\end{equation*}
Thus, we find that
\begin{equation*}
  \E[V_{k+1}]
  \le \prod_{i = 1}^k (1 + C \stepsize_i^2) \E[V_1]
  + C \sum_{i = 1}^k
  \stepsize_i^2 \prod_{j = i + 1}^k (1 + C \stepsize_j^2).
\end{equation*}
Of course, we have $\prod_{k = 1}^\infty (1 + C \stepsize_k^2)
< \infty$ whenever $\sum_k \stepsize_k^2 < \infty$, which gives
the result.

\subsubsection{Proof of Lemma~\ref{lemma:convergence-rate-sequence-da}}
\label{sec:proof-convergence-rate-sequence-da}

Hiding the dependence on the index $t$, define the event $\event_{i, l} =
\{\norms{\Delta_j^t} \le \epsilon ~ \mbox{for~} j = i, \ldots, l\}$;
throughout, the remainder of the proof we assume that all indices are $>
t$, so that the iterates $x_k^t$ are simply projected stochastic gradient
descent with projections to the set $\{x : Ax = b, C x \le d\}$. Then we
have that $\event_{i, l} \in \mc{F}_l = \sigma(\noise_1, \ldots,
\noise_{l-1})$.  Moreover, by
Lemma~\ref{lemma:as-convergence-corrected-sequence}, we know that with
probability one
\begin{equation*}
  \event_{i/2, i}
  ~~ \mbox{holds~eventually}.
\end{equation*}
Now, we claim that
\begin{equation}
  \label{eqn:sum-indics-finite}
  \lim_{k \to \infty}
  \sum_{i = 1}^k \E\left[\frac{1}{\sqrt{i}} \norm{\trueerror_i^t}^{2}
    \indic{\event_{i/2, i-1}} \right] < \infty
\end{equation}
Deferring the proof of Eq.~\eqref{eqn:sum-indics-finite} to the sequel,
note that it almost immediately implies the lemma. First,
the monotone convergence theorem implies that
$\sum_{i = 1}^\infty \frac{1}{\sqrt{i}} \norms{\trueerror_i^t}^2
\indic{\event_{i/2,i}} < \infty$ with probability 1. As
$\event_{i/2,i}$ happens eventually,
we thus obtain that
$\sum_{i = 1}^\infty \frac{1}{\sqrt{i}} \norms{\trueerror_i^t}^2
< \infty$ with probability 1 as well. The Kronecker lemma
then implies the Lemma~\ref{lemma:convergence-rate-sequence-da}.

We return to demonstrate the deferred
claim Eq.~\eqref{eqn:sum-indics-finite}.
By the one-step bound of inequality~\eqref{eqn:one-step-true-error} and
Lemma~\ref{lemma:restricted-strong-convexity} we find that for any $l \le
i$,
\begin{align*}
  \E\left[\norm{\trueerror_{i+1}^t}^2
    \indic{\event_{l, i}} \mid \mc{F}_{i-1}\right]
  \le (1 - c \stepsize_i + C \stepsize_i^2) \norm{\trueerror_i^t}^2
  \indic{\event_{l, i}}
  + C \stepsize_i^2 \indic{\event_{l, i}}
\end{align*}
for some constants $c, C$, and for large enough $i$, we have
\begin{equation*}
  (1 - c \stepsize_i + C \stepsize_i^2)
  \le \exp(-c \stepsize_i).
\end{equation*}
In particular, for suitably large $i$ we have
\begin{align*}
  \E\left[\norm{\trueerror_{i+1}^t}^2 \indic{\event_{i/2, i}}\right]
  & \le \exp(-c \stepsize_i)
  \E\left[\norm{\trueerror_{i}^t}^2 \indic{\event_{i/2, i}}\right]
  + C \stepsize_i^2 \P(\event_{i/2, i}) \\
  & \le \exp(-c \stepsize_i)
  \E\left[\norm{\trueerror_{i}^t}^2 \indic{\event_{i/2, i-1}}\right]
  + C \stepsize_i^2 \\
  & \le \exp\left(-c \sum_{j = i/2}^i \stepsize_j\right)
  \E\left[\norms{\trueerror_{i/2}^t}^2\right]
  + C \sum_{l = i/2}^i \stepsize_l^2 \exp\bigg(-c \sum_{j = l}^{i-1}
  \stepsize_j \bigg).
\end{align*}
Noting that the final expectation is upper bounded by some $C < \infty$
by Lemma~\ref{lemma:as-convergence-corrected-sequence}
and that for $\stepsize_i \propto i^{-\steppow}$ we have
$\sum_{j = i/2}^i \stepsize_j \ge c i^{1 - \steppow}$ for some constant $c$,
we obtain that for all suitably large $i$,
\begin{equation*}
  \E\left[\norm{\trueerror_{i+1}^t}^2 \indic{\event_{i/2, i}}\right]
  \le C \exp\left(-c i^{1 - \steppow}\right)
  + C \sum_{l = i/2}^i l^{-2 \steppow}
  \exp\left(-c (i^{1 - \steppow} - l^{1 - \steppow})\right).
\end{equation*}
The next lemma provides a convergence guarantee for 
this sum.
\begin{lemma}[Asi and Duchi~\cite{AsiDu18}, Lemma A.7]
  \label{lemma:funny-zero-sum}
  Let $\steppow \in (\half, 1)$. For any $c > 0$ and $\rho > 1$, there
  exists a constant $C < \infty$ such that
  \begin{equation*}
    \sum_{k = 1}^t k^{-\rho \steppow} \exp\left(-c(t^{1 - \steppow}
    - k^{1 - \steppow})\right) 
    \le C \frac{\log t}{t^{(\rho - 1) \steppow}}
    ~~~ \mbox{for~all~} t \in \N.
  \end{equation*}
\end{lemma}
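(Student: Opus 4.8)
The plan is to prove a slightly stronger and cleaner statement, namely that there is $C<\infty$ with $\sum_{k=1}^t k^{-\rho\steppow}\exp(-c(t^{1-\steppow}-k^{1-\steppow}))\le C\, t^{-(\rho-1)\steppow}$ for all $t\in\N$; since $\log t\ge\log 2>0$ for $t\ge 2$, this immediately yields the stated bound (the factor $\log t$ in the statement is slack). Write $\gamma\defeq 1-\steppow$, so $\gamma\in(0,\tfrac12)$, and split the sum at $k=\lfloor t/2\rfloor$ into a ``bulk'' piece $S_t^{(1)}=\sum_{k\le t/2}(\cdots)$ and a ``near-diagonal'' piece $S_t^{(2)}=\sum_{t/2<k\le t}(\cdots)$, estimating each separately and then combining. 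The finitely many small values of $t$ will be absorbed at the end by enlarging $C$.

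For the bulk piece, every $k\le t/2$ satisfies $t^\gamma-k^\gamma\ge t^\gamma-(t/2)^\gamma=(1-2^{-\gamma})t^\gamma$, so each exponential factor is at most $e^{-c(1-2^{-\gamma})t^\gamma}$; using the crude bound $\sum_{k\le t/2}k^{-\rho\steppow}\le t$ (each summand is $\le 1$) gives $S_t^{(1)}\le t\,e^{-c(1-2^{-\gamma})t^\gamma}$. Since $\gamma>0$, the map $s\mapsto s^{1+(\rho-1)\steppow}e^{-c(1-2^{-\gamma})s^\gamma}$ is bounded on $(0,\infty)$, so $S_t^{(1)}\le C_1\, t^{-(\rho-1)\steppow}$ for all $t$: a stretched exponential beats any polynomial.

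For the near-diagonal piece, first use $k>t/2$ to replace $k^{-\rho\steppow}$ by $2^{\rho\steppow}t^{-\rho\steppow}$. Then, for $t/2<k<t$, the mean value theorem gives $t^\gamma-k^\gamma=\gamma\,\xi^{\gamma-1}(t-k)$ for some $\xi\in(k,t)$; since $\gamma-1<0$ and $\xi\le t$, the map $s\mapsto s^{\gamma-1}$ is decreasing, whence $t^\gamma-k^\gamma\ge\gamma\,t^{\gamma-1}(t-k)$ (the case $k=t$ being trivial). Substituting $j=t-k$ and extending the sum to all $j\ge 0$,
\[
S_t^{(2)}\le 2^{\rho\steppow}t^{-\rho\steppow}\sum_{j=0}^{\infty}e^{-c\gamma t^{\gamma-1}j}=\frac{2^{\rho\steppow}t^{-\rho\steppow}}{1-e^{-c\gamma t^{\gamma-1}}}.
\]
For $t$ large enough that $c\gamma t^{\gamma-1}\le 1$ we have $1-e^{-x}\ge x/2$ on $[0,1]$, so the denominator is at least $\tfrac12 c\gamma t^{\gamma-1}$, giving $S_t^{(2)}\le \frac{2^{\rho\steppow+1}}{c\gamma}t^{\steppow-\rho\steppow}=\frac{2^{\rho\steppow+1}}{c\gamma}t^{-(\rho-1)\steppow}$. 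Adding the two pieces proves the claim for all sufficiently large $t$, and enlarging $C$ handles the rest.

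The only genuinely delicate point is the linearization $t^\gamma-k^\gamma\ge\gamma\,t^{\gamma-1}(t-k)$ on the near-diagonal range: one must use the correct monotonicity (so that evaluating $\xi^{\gamma-1}$ at the \emph{larger} endpoint $\xi=t$ yields a valid \emph{lower} bound), after which the geometric-series estimate — which contributes the $t^{\gamma-1}$ in the denominator, i.e.\ the ``extra'' factor $t^{1-\gamma}=t^{\steppow}$ — is routine. Everything else (the bulk estimate, the small-$t$ cleanup, and the observation that the $\log t$ is not needed) is elementary.
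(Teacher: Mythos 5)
Your proof is correct. Note first that the paper itself gives no proof of this lemma: it is imported verbatim from Asi and Duchi \cite{AsiDu18}, so there is no in-paper argument to compare against. Your argument is the standard (and essentially the cited) one: split at $k = \lfloor t/2\rfloor$, kill the bulk with the stretched exponential $e^{-c(1-2^{-\gamma})t^\gamma}$ (which dominates any polynomial), and handle the near-diagonal range via the mean-value linearization $t^\gamma - k^\gamma \ge \gamma t^{\gamma-1}(t-k)$ — where you correctly use that $s\mapsto s^{\gamma-1}$ is decreasing so that evaluating at the larger endpoint gives a lower bound — followed by a geometric series whose sum $\asymp t^{1-\gamma} = t^{\steppow}$ produces exactly the gain from $t^{-\rho\steppow}$ to $t^{-(\rho-1)\steppow}$. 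Your observation that the $\log t$ factor is slack is also correct. The closest in-paper relative is Lemma~\ref{lemma:funny-gamma-integral}, which treats the continuous analogue by the change of variables $u = c(t^\kappa - a^\kappa)$ reducing to a Gamma integral; your discrete splitting is more elementary and self-contained, at the cost of not giving the refined $a\to\infty$ limit that the integral version provides (which is not needed here). One pedantic caveat that is a defect of the lemma's statement rather than of your proof: at $t=1$ the right-hand side is $C\log 1 = 0$ while the left-hand side equals $1$, so the inequality as literally stated cannot hold at $t=1$; your strengthened bound $C\,t^{-(\rho-1)\steppow}$ is the correct formulation and implies the stated one for all $t\ge 2$, which is how the lemma is used.
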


Substituting the estimate of Lemma~\ref{lemma:funny-zero-sum} into
the previeous inequality yields that
\begin{align*}
  \E\bigg[\sum_{i = 1}^k \frac{1}{\sqrt{i}}
    \norm{\trueerror_{i}^t}^{2}
    \indic{\event_{i/2, i-1}}\bigg]
  & \le C \sum_{i = 1}^k \frac{1}{\sqrt{i}} \exp(-c i^{1 - \steppow})
  + C \sum_{i = 1}^k \frac{\log i}{i^{\steppow + 1/2}}.
\end{align*}
Noting that $\steppow > \half$ gives the
limit~\eqref{eqn:sum-indics-finite}.

\section{Proof of Theorem~\ref{theorem:asymptotic-normality-rsgd}}
\label{sec:proof-asymptotic-normality-rsgd}

In this section, we provide the main part of the proof of Theorem
\ref{theorem:asymptotic-normality-rsgd}.  The rough roadmap is as follows:
we argue that eventually, Algorithm~\ref{alg:da-riemannian} is eventually
more or less performing the stochastic gradient method on the tangent space
$\T$, which allows us to develop a recursion analogous to the
iteration~\eqref{eqn:main-iterate} and apply the generic asymptotic
normality result in
Proposition~\ref{proposition:generalization-of-Polyak-Juditsky}.
To that end, in Section~\ref{sec:preliminary} we collect preliminary
calculus and smoothness results on the active manifold,
while in Section~\ref{sec:convergence-rates-rsgd} we provide
convergence rate guarantees. Finally, in Section~\ref{sec:rsgd-recursion}
we develop the recursion for Algorithm~\ref{alg:da-riemannian}
that shows it follows the form~\eqref{eqn:main-iterate}, allowing us
to apply Proposition~\ref{proposition:generalization-of-Polyak-Juditsky}.

\subsection{Preliminaries, Notation and Conventions}
\label{sec:preliminary}


Throughout the proof, we let $F: \R^n \to \R^{\numactive}$ be $F(x) =
[f_1(x), f_2(x), \ldots, f_{\numactive}(x)]^T$ and define $\M$ to be the
active manifold
\begin{equation*}
  \M \defeq \left\{x \in \R^n: F(x) = 0\right\}
  ~~~ \mbox{and} ~~~
  \M_{\X} \defeq \M \cap \X.
\end{equation*}
Then we have the tangent space
\begin{equation*}
  \T_{\M}(x) = \left\{v \in \R^n: \langle \grad F(x), v\rangle = 0\right\}.
\end{equation*}
and orthogonal projection
\begin{equation*}
  \projt{x} = I - \grad F(x) (\grad F(x)^T \grad F(x))^{\dag} \grad F(x)^T
\end{equation*}
so that $\projt{x}$ is the projection matrix onto $\T_{\M}(x)$ when
$x \in \M$.  Recall
the notation
\begin{equation*}
  H\opt = \Big(\grad^2 f + \sum_{i=1}^{\numactive} 
  \lambda_i\opt \grad^2 f_i\Big)(x\opt)
  ~~\text{and}~~\projmatr = \Pi_{\T_{\M}}(x\opt).
\end{equation*}

We provide a collection of analytical results that allow more
careful Taylor-like expansions throughout the proof.  We present the proofs
of these five results in Section~\ref{sec:proof-Calculus-rsgd} for
completeness.

\begin{lemma}
\label{lemma:stability-of-projection}
  There exist $0 < \eps, C < \infty$ such that $x\in \M \cap \ball(x\opt,
  \eps)$ implies
  \begin{equation*}
    \opnorm{\projt{x} - \projt{x\opt}} \le C\norm{x-x\opt}.
  \end{equation*}
\end{lemma}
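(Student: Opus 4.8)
The plan is to reduce the claim to the Lipschitz continuity of $x \mapsto \grad F(x)$ combined with the smoothness of the projection-onto-column-space formula, using the linear independence constraint qualification (Assumption~\ref{assumption:linear-independence-cq}.i) to control the pseudoinverse. Write $A(x) \defeq \grad F(x) \in \R^{n \times \numactive}$ and $A\opt \defeq A(x\opt)$. First I would note that Assumption~\ref{assumption:linear-independence-cq}.i says $\{\nabla f_i(x\opt)\}_{i=1}^{\numactive}$ are linearly independent, i.e.\ $A\opt$ has full column rank, so $A\opt{}^T A\opt \succ 0$. Since each $f_i$ is $\mc{C}^2$ near $x\opt$, the map $x \mapsto A(x)$ is $\mc{C}^1$ near $x\opt$; hence there are $\epsilon > 0$, $L < \infty$, and $c > 0$ with $\opnorm{A(x) - A\opt} \le L \norm{x - x\opt}$ and $A(x)^T A(x) \succeq c I$ for all $x \in \ball(x\opt, \epsilon)$. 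On this ball the Moore--Penrose inverse in the definition of $\projt{x}$ equals the ordinary inverse, so $\projt{x} = I - A(x) (A(x)^T A(x))^{-1} A(x)^T$. This is the one place the constraint qualification is essential, since $X \mapsto X^\dag$ is discontinuous at rank-deficient matrices.

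Next I would set $M(x) \defeq (A(x)^T A(x))^{-1}$, so that $\opnorm{M(x)} \le 1/c$ and $\opnorm{A(x)} \le \opnorm{A\opt} + L\epsilon =: R$ on $\ball(x\opt, \epsilon)$. From $A(x)^T A(x) - A\opt{}^T A\opt = A(x)^T(A(x) - A\opt) + (A(x) - A\opt)^T A\opt$ we get $\opnorm{A(x)^T A(x) - A\opt{}^T A\opt} \le 2RL\norm{x - x\opt}$, and the resolvent identity $M(x) - M(x\opt) = M(x)\bigl[A\opt{}^T A\opt - A(x)^T A(x)\bigr] M(x\opt)$ then gives $\opnorm{M(x) - M(x\opt)} \le 2RL c^{-2}\norm{x - x\opt}$.

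Finally I would telescope the difference of the two projection matrices:
\begin{align*}
  \projt{x} - \projt{x\opt}
  & = -\,(A(x) - A\opt)\, M(x)\, A(x)^T
      - A\opt\,(M(x) - M(x\opt))\, A(x)^T \\
  & \qquad {} - A\opt\, M(x\opt)\, (A(x) - A\opt)^T,
\end{align*}
and apply the triangle inequality together with submultiplicativity of $\opnorm{\cdot}$ and the three estimates above, which yields $\opnorm{\projt{x} - \projt{x\opt}} \le C\norm{x - x\opt}$ for an explicit $C$ depending only on $R$, $L$, $c$. I do not expect a real obstacle: the only subtlety is the discontinuity of $X \mapsto X^\dag$ at rank-deficient points, which forces one to invoke Assumption~\ref{assumption:linear-independence-cq}.i up front so that $A(x)$ stays full rank on a neighborhood before any estimate is made; everything after that is the standard perturbation calculus of orthogonal projectors onto column spaces, and the hypothesis $x \in \M$ is used only so that $\projt{x}$ genuinely is the orthogonal projector onto $\T_{\M}(x)$—it plays no role in the bound.
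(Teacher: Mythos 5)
Your proof is correct and follows essentially the same route as the paper: both arguments use the LICQ (Assumption~\ref{assumption:linear-independence-cq}.i) to keep $\grad F(x)$ full rank on a neighborhood, uniformly bound the (pseudo)inverse, and then factor the difference of projectors into products of bounded, locally Lipschitz terms. The only difference is cosmetic—the paper cites a Stewart--Sun perturbation bound for $X \mapsto X^\dag$, whereas you derive the equivalent Lipschitz estimate by hand via the resolvent identity for $(A^TA)^{-1}$, which makes your version self-contained.
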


\begin{lemma}
  \label{lemma:manifold-second-order-result}
  There exist $0 < \eps, C < \infty$ such that $x\in \M \cap \ball(x\opt,
  \eps)$ implies
  \begin{equation*}
    \norm{\projmatr(x-x\opt) - (x - x\opt)} \le C \norm{x-x\opt}^2. 
  \end{equation*}
\end{lemma}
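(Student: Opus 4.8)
The plan is to reduce the statement to a first-order Taylor expansion of the active constraint functions at $x\opt$ together with the linear independence constraint qualification. Write $A \defeq \grad F(x\opt) = [\grad f_1(x\opt) ~ \cdots ~ \grad f_{\numactive}(x\opt)] \in \R^{n \times \numactive}$. By Assumption~\ref{assumption:linear-independence-cq}.\ref{item:linear-independence-cq} the columns of $A$ are linearly independent, so $A^T A$ is invertible and the pseudoinverse in the definition of $\projmatr$ collapses to a genuine inverse: $\projmatr = I - A(A^T A)^{-1} A^T$. Hence, for every $x$,
\begin{equation*}
  \projmatr(x - x\opt) - (x - x\opt) = -A(A^T A)^{-1} A^T (x - x\opt),
\end{equation*}
and since $\matrixnorm{A(A^T A)^{-1}} = \sigma_{\min}(A)^{-1} < \infty$ is a constant depending only on the problem data, it suffices to bound $\norm{A^T(x - x\opt)}$ for $x \in \M \cap \ball(x\opt, \epsilon)$.

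First I would fix $\epsilon > 0$ small enough that each $f_i$, $i \le \numactive$, is $\mc{C}^2$ on $\ball(x\opt, \epsilon)$ (possible by Assumption~\ref{assumption:linear-independence-cq}), and set $M \defeq \max_{i \le \numactive} \sup_{y \in \ball(x\opt, \epsilon)} \matrixnorm{\grad^2 f_i(y)} < \infty$. For $x \in \ball(x\opt, \epsilon)$, Taylor's theorem with the Lagrange form of the remainder gives, for each $i \in [\numactive]$,
\begin{equation*}
  \big| f_i(x) - f_i(x\opt) - \grad f_i(x\opt)^T (x - x\opt) \big| \le \tfrac{M}{2} \norm{x - x\opt}^2 .
\end{equation*}
Now I would use that $x\opt \in \M$ and $x \in \M$, so $f_i(x\opt) = f_i(x) = 0$ for all $i \le \numactive$; the $i$th coordinate of $A^T(x - x\opt)$ is thus at most $\tfrac{M}{2}\norm{x - x\opt}^2$ in absolute value, whence $\norm{A^T(x - x\opt)} \le \tfrac{M\sqrt{\numactive}}{2}\norm{x - x\opt}^2$. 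Combining with the displayed identity yields
\begin{equation*}
  \norm{\projmatr(x - x\opt) - (x - x\opt)} \le \frac{M \sqrt{\numactive}}{2\,\sigma_{\min}(A)} \norm{x - x\opt}^2,
\end{equation*}
which is the claim with $C = \tfrac{M\sqrt{\numactive}}{2\sigma_{\min}(A)}$.

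There is no substantial obstacle here; the lemma is a routine second-order manifold estimate. The only point requiring care — and the place where the hypotheses are genuinely used — is the reduction of the Moore–Penrose pseudoinverse in $\projmatr$ to an ordinary inverse, which is exactly what the linear independence constraint qualification provides (equivalently, $\sigma_{\min}(\grad F(x\opt)) > 0$); everything else follows from the vanishing of $F$ at both $x$ and $x\opt$, so that only the quadratic Taylor remainder survives. (One could alternatively parametrize $\M$ near $x\opt$ as a graph over $\T$ via the implicit function theorem and read off the estimate, but the direct computation above is shorter.)
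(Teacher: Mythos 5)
Your proof is correct and follows essentially the same route as the paper's: both identify $(I-\projmatr)(x-x\opt)$ with $\grad F(x\opt)(\grad F(x\opt)^T\grad F(x\opt))^{-1}\grad F(x\opt)^T(x-x\opt)$, use the linear independence constraint qualification to bound the resulting pseudoinverse factor, and kill the first-order term via $F(x)=F(x\opt)=0$ plus a second-order Taylor remainder. The only cosmetic difference is that you carry out the Taylor estimate coordinatewise with an explicit constant, whereas the paper does it in vector form.
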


\begin{lemma}
  \label{lemma:second-order-projection-gradient}
  There exist $0 < \eps, C < \infty$ such that $x\in \M \cap \ball(x\opt,
  \eps)$ implies
  \begin{equation*}
    \norm{\projt{x} \grad f(x) - \projmatr H\opt \projmatr(x-x\opt)}
    \le C\norm{x-x\opt}^2.
  \end{equation*}
\end{lemma}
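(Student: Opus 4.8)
The plan is to expand $\projt{x}\grad f(x)$ to second order in $x-x\opt$, stripping off terms using the two preceding lemmas and the stationarity condition~\eqref{eqn:strict-complementary}. Fix $\eps>0$ small enough that Lemmas~\ref{lemma:stability-of-projection} and~\ref{lemma:manifold-second-order-result} both apply on $\M\cap\ball(x\opt,\eps)$. Throughout I would use repeatedly that $\projt{x}\grad F(x)=0$ --- the columns of $\grad F(x)$ span the orthogonal complement of $\T_\M(x)$, which is the subspace onto which $\projt{x}$ projects --- together with the $\mc{C}^2$ Taylor expansions from Assumption~\ref{assumption:smoothness-of-objective}: for $x$ near $x\opt$,
\[
  \grad f(x)=\grad f(x\opt)+\grad^2 f(x\opt)(x-x\opt)+r_0(x),
  \qquad
  \grad f_i(x)=\grad f_i(x\opt)+\grad^2 f_i(x\opt)(x-x\opt)+r_i(x),
\]
with $\norm{r_j(x)}\le C\norm{x-x\opt}^2$ for $j=0,1,\ldots,\numactive$.

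The crucial step is to estimate $\projt{x}\grad f(x\opt)$, which a priori is only $O(\norm{x-x\opt})$ since $\grad f(x\opt)$ need not lie in the range of $\grad F(x)$. Here I would invoke the KKT identity $\grad f(x\opt)=-\sum_{i=1}^\numactive\lambda_i\opt\grad f_i(x\opt)$ and, using $\projt{x}\grad f_i(x)=0$ together with the expansion of $\grad f_i$,
\[
  \projt{x}\grad f(x\opt)
  =-\sum_{i=1}^\numactive\lambda_i\opt\,\projt{x}\bigl(\grad f_i(x\opt)-\grad f_i(x)\bigr)
  =\projt{x}\Bigl(\sum_{i=1}^\numactive\lambda_i\opt\grad^2 f_i(x\opt)\Bigr)(x-x\opt)+O(\norm{x-x\opt}^2).
\]
Applying $\projt{x}$ to the expansion of $\grad f$ and adding, and recalling $H\opt=\grad^2 f(x\opt)+\sum_{i=1}^\numactive\lambda_i\opt\grad^2 f_i(x\opt)$, we obtain
\[
  \projt{x}\grad f(x)=\projt{x}H\opt(x-x\opt)+O(\norm{x-x\opt}^2).
\]

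It then remains to replace the projector $\projt{x}$ and the displacement $x-x\opt$ by their values at $x\opt$. Since $\projt{x}H\opt(x-x\opt)=\projmatr H\opt(x-x\opt)+(\projt{x}-\projmatr)H\opt(x-x\opt)$, Lemma~\ref{lemma:stability-of-projection} bounds the second term by $\opnorm{\projt{x}-\projmatr}\,\opnorm{H\opt}\,\norm{x-x\opt}\le C\norm{x-x\opt}^2$; and Lemma~\ref{lemma:manifold-second-order-result} gives $\projmatr H\opt(x-x\opt)=\projmatr H\opt\projmatr(x-x\opt)+\projmatr H\opt\bigl((x-x\opt)-\projmatr(x-x\opt)\bigr)$ with the last term again $O(\norm{x-x\opt}^2)$. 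Collecting the error terms proves the lemma. I expect the only genuinely delicate point to be the second-order bound on $\projt{x}\grad f(x\opt)$: gaining the extra power of $\norm{x-x\opt}$ forces one to route the estimate through the stationarity identity~\eqref{eqn:strict-complementary} and the identity $\projt{x}\grad F(x)=0$ rather than bounding $\projt{x}\grad f(x\opt)$ directly; once this is recognized, the rest is a routine application of the $\mc{C}^2$ expansions and Lemmas~\ref{lemma:stability-of-projection}--\ref{lemma:manifold-second-order-result}.
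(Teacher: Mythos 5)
Your proof is correct, but it takes a genuinely different route from the paper's. The paper proves the lemma by first establishing Lemma~\ref{lemma:manifold-hessian}: it differentiates the composite map $x \mapsto \projt{x}\grad f(x)$ at $x\opt$ via the Leibniz rule, which requires differentiating $\grad F(x)^\dag$ (using full column rank of $\grad F$ near $x\opt$) and then identifying the projected derivative with $\projmatr H\opt$ using $a(x\opt) = -\lambda\opt$; the main lemma then follows from a first-order Taylor expansion of this composite map about $x\opt$, where the constant term vanishes by stationarity. You instead never differentiate the projector: you expand $\grad f$ and $\grad f_i$ to second order at $x\opt$, and you kill the zeroth-order term $\projt{x}\grad f(x\opt)$ to quadratic accuracy by routing it through the KKT identity $\grad f(x\opt) = -\sum_i \lambda_i\opt \grad f_i(x\opt)$ and the exact pointwise identity $\projt{x}\grad f_i(x)=0$ --- which, as you note, is the crux, since a direct bound on $\projt{x}\grad f(x\opt)$ only gives $O(\norm{x-x\opt})$. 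The final swap of $\projt{x}$ for $\projmatr$ and of $x-x\opt$ for $\projmatr(x-x\opt)$ via Lemmas~\ref{lemma:stability-of-projection} and~\ref{lemma:manifold-second-order-result} is the same in both arguments. Your version is somewhat more economical: it needs only Lipschitz continuity of $x\mapsto\projt{x}$ rather than its differentiability, and it sidesteps the pseudo-inverse calculus of Lemma~\ref{lemma:manifold-hessian} entirely; the paper's version, in exchange, produces the explicit derivative formula for $\projt{x}\grad f(x)$, which is of some independent interest. One small point worth making explicit in your write-up: the quadratic remainder for $\grad f$ in Assumption~\ref{assumption:smoothness-of-objective} is stated only for $x\in\xdomain$, so you should note that for $\eps$ small enough $\M\cap\ball(x\opt,\eps)\subset\xdomain$ (the inactive constraints stay strictly negative near $x\opt$), exactly as the paper arranges via the condition~\eqref{eqn:eps-inactive}.
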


\begin{lemma}
  \label{lemma:restricted-strong-convexity-near-rsgd}
  There exist $c, \eps > 0$ such that $x\in \M \cap
  \ball(x\opt, \eps)$ implies
  \begin{equation*}
    \langle x - x\opt, \projt{x}\grad f(x) \rangle \ge c \norm{x-x\opt}^2. 
  \end{equation*}
\end{lemma}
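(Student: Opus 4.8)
The plan is to deduce the lemma from the restricted strong convexity of $H\opt$ on the tangent space (Assumption~\ref{assumption:restricted-strong-convexity}) by replacing the Riemannian gradient $\projt{x}\grad f(x)$ with the linear model $\projmatr H\opt \projmatr(x-x\opt)$ supplied by Lemma~\ref{lemma:second-order-projection-gradient} and then controlling the higher-order remainders. Writing $u \defeq x - x\opt$, I would first decompose
\begin{equation*}
  \langle u, \projt{x}\grad f(x)\rangle
  = \langle u, \projmatr H\opt \projmatr u\rangle
  + \langle u, \projt{x}\grad f(x) - \projmatr H\opt \projmatr u\rangle ,
\end{equation*}
where Cauchy--Schwarz and Lemma~\ref{lemma:second-order-projection-gradient} bound the last inner product in absolute value by $C\norm{u}^3$ on some ball $\ball(x\opt,\eps_1)$.

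For the leading term, since $\projmatr$ is symmetric and idempotent, $\langle u, \projmatr H\opt \projmatr u\rangle = (\projmatr u)^T H\opt (\projmatr u) = w^T H\opt w$ with $w \defeq \projmatr u$. The tangent space $\T = \{v : v^T \grad f_i(x\opt) = 0,\ i \in [\numactive]\}$ is precisely the critical tangent set $\tangentset_{\mc{X}}(x\opt)$ of Eq.~\eqref{eqn:critical-tangent} (the active constraints being $f_1,\dots,f_\numactive$), so $w \in \tangentset_{\mc{X}}(x\opt)$ and Assumption~\ref{assumption:restricted-strong-convexity} gives $w^T H\opt w \ge \mu \norm{w}^2$. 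Lemma~\ref{lemma:manifold-second-order-result} furnishes $\norm{w - u} \le C\norm{u}^2$, so on a possibly smaller ball $\ball(x\opt,\eps_2)$ on which $C\norm{u}\le\tfrac12$ we have $\norm{w}\ge\tfrac12\norm{u}$, hence $\langle u, \projmatr H\opt \projmatr u\rangle \ge \tfrac{\mu}{4}\norm{u}^2$.

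Combining the two bounds, on $\ball(x\opt,\min\{\eps_1,\eps_2\})$ we obtain $\langle u, \projt{x}\grad f(x)\rangle \ge \tfrac{\mu}{4}\norm{u}^2 - C\norm{u}^3$, which is at least $\tfrac{\mu}{8}\norm{u}^2$ whenever $\norm{u}\le\mu/(8C)$; taking $c = \mu/8$ and $\eps$ equal to the minimum of the radii above proves the lemma. There is no real obstacle internal to this argument: all the analytic substance is carried by Lemmas~\ref{lemma:manifold-second-order-result} and~\ref{lemma:second-order-projection-gradient} and by Assumption~\ref{assumption:restricted-strong-convexity}, and the only care required is to work on the common ball where the constants of those lemmas are simultaneously valid and to note the identification $\T = \tangentset_{\mc{X}}(x\opt)$ that licenses the use of the restricted strong convexity assumption.
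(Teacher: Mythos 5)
Your proof is correct and follows essentially the same route as the paper's: replace $\projt{x}\grad f(x)$ by the linear model $\projmatr H\opt \projmatr(x-x\opt)$ via Lemma~\ref{lemma:second-order-projection-gradient}, absorb the $O(\norm{x-x\opt}^3)$ remainder, apply Assumption~\ref{assumption:restricted-strong-convexity} on the tangent space, and pass from $\norm{\projmatr(x-x\opt)}$ back to $\norm{x-x\opt}$ with Lemma~\ref{lemma:manifold-second-order-result}. The only cosmetic difference is that the paper first inserts $\projt{x}$ on the left factor (invoking Lemma~\ref{lemma:stability-of-projection}), whereas you bound the error term directly by Cauchy--Schwarz, which is equally valid.
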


\begin{lemma}
  \label{lemma:second-order-control-of-projection}
  There exist $0 < \eps, C < \infty$ such that $x\in \M \cap
  \ball(x\opt, \eps)$ and $v \in \T_\M(x) \cap \ball(0, \eps)$ imply
  \begin{equation*}
    \norm{\proj_{\M}(x+v) - (x+v)} \le C\norm{v}^2. 
  \end{equation*}
\end{lemma}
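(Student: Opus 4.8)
The statement asserts that moving from $x\in\M$ in a direction $v$ tangent to $\M$ at $x$ leaves the point $x+v$ within $O(\norm{v}^2)$ of $\M$; since $\norm{\proj_\M(x+v)-(x+v)}$ is precisely the Euclidean distance from $x+v$ to $\M$, it suffices to bound that distance. Write $F=(f_1,\dots,f_\numactive)$, so that $\M=\{z:F(z)=0\}$ near $x\opt$, $\grad F(z)=[\grad f_1(z)\ \cdots\ \grad f_\numactive(z)]\in\R^{n\times\numactive}$, and $\T_\M(z)=\nullspace(\grad F(z)^T)$. If the active constraints are affine (Assumption~\ref{assumption:linear-independence-cq}.\ref{item:affine-cq}) then $\M$ is an affine subspace, $\T_\M$ is constant, $x+v\in\M$, and the left side is $0$; so assume the linear independence qualification~\ref{assumption:linear-independence-cq}.\ref{item:linear-independence-cq}. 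Then $\grad F(x\opt)$ has full column rank $\numactive$, and by continuity of $z\mapsto\grad F(z)$ (the $f_i$ are $\mc{C}^2$ near $x\opt$) there are $\sigma>0$, $\delta>0$ with $\matrixnorm{\grad F(z)^\dag}\le\sigma^{-1}$ for $z\in\ball(x\opt,\delta)$, and, since the $\grad^2 f_i$ are continuous, a Lipschitz bound $\matrixnorm{\grad F(z)-\grad F(z')}\le L\norm{z-z'}$ on that ball.

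\textbf{Main steps.} First I would bound $\norm{F(x+v)}$: a first-order Taylor expansion of $F$ about $x$ with integral remainder gives $F(x+v)=F(x)+\grad F(x)^T v+r$ with $\norm{r}\le\frac{L}{2}\norm{v}^2$; since $x\in\M$ we have $F(x)=0$ and since $v\in\T_\M(x)$ we have $\grad F(x)^T v=0$, so $\norm{F(x+v)}\le\frac{L}{2}\norm{v}^2$. Second, I would convert this into a distance bound by a quantitative implicit-function argument: set $y_0=x+v$ and run the Gauss--Newton iteration $y_{j+1}=y_j-\grad F(y_j)^\dag F(y_j)$. Using $\norm{y_{j+1}-y_j}\le\sigma^{-1}\norm{F(y_j)}$ and $\norm{F(y_{j+1})}\le\frac{L}{2}\norm{y_{j+1}-y_j}^2\le\frac{L}{2\sigma^2}\norm{F(y_j)}^2$, once $\eps$ is small enough that $\norm{x-x\opt}\le\eps$, $\norm{v}\le\eps$ and $\frac{L}{2}\eps^2$ is small relative to $\sigma,\delta$, all iterates stay in $\ball(x\opt,\delta)$, $\norm{F(y_j)}$ decays doubly exponentially, and the iteration converges to a point $y_\infty\in\M$ with $\norm{y_\infty-y_0}\le 2\sigma^{-1}\norm{F(y_0)}$. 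Combining, $\norm{\proj_\M(x+v)-(x+v)}\le\norm{y_\infty-y_0}\le 2\sigma^{-1}\norm{F(x+v)}\le L\sigma^{-1}\norm{v}^2$, giving the lemma with $C=L/\sigma$.

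\textbf{Main obstacle.} The delicate point is the second step: establishing the linear error bound $\norm{y_0-\proj_\M(y_0)}\le 2\sigma^{-1}\norm{F(y_0)}$ with constants uniform over all $x\in\M\cap\ball(x\opt,\eps)$ and $v\in\T_\M(x)\cap\ball(0,\eps)$ --- concretely, verifying that the Newton iterates never leave $\ball(x\opt,\delta)$ (here $\norm{y_j-x\opt}\le 2\eps+\sigma^{-1}L\eps^2<\delta$ for small $\eps$), so that the quadratic convergence is legitimate; everything else is routine Taylor bookkeeping. An alternative that avoids the iteration is to invoke a standard metric-regularity result (the Lyusternik--Graves theorem, cf.~\cite{DontchevRo14}): since $\grad F(x\opt)^T$ is surjective, $F$ is metrically regular at $x\opt$ for $0$, so $\norm{y-\proj_\M(y)}\le c\,\norm{F(y)}$ for $y$ near $x\opt$, and plugging in $\norm{F(x+v)}\le\frac{L}{2}\norm{v}^2$ from the first step yields the claim directly.
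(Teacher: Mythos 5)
Your proof is correct, but it takes a genuinely different route from the paper's. The paper works directly with the optimality conditions of the projection itself: writing $x_v=\proj_\M(x+v)$, it first gets the crude bound $\norm{x_v-x}\le 2\norm{v}$ from non-expansiveness of the distance, then writes the KKT system $x_v=x+v+\grad F(x_v)\lambda$, multiplies by $\grad F(x)^T$ (killing the $v$ term by tangency), and uses uniform invertibility of the Gram matrix $\<\grad f_i(x),\grad f_j(x_v)\>$ together with $F(x)=F(x_v)=0$ to conclude $\lambda=O(\norm{v}^2)$, hence $\norm{x_v-(x+v)}=O(\norm{v}^2)$. You instead never touch the projection: you bound the constraint residual $\norm{F(x+v)}\le\frac{L}{2}\norm{v}^2$ (same tangency cancellation) and then convert residual to distance by constructing \emph{some} point of $\M$ within $O(\norm{F(x+v)})$ of $x+v$, via a quantitative Newton iteration or, equivalently, Lyusternik--Graves metric regularity of $F$ at $x\opt$ (legitimate under LICQ, which both proofs need; your separate treatment of the affine case is a sensible extra). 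Both arguments are sound and of comparable length; yours is more modular (the residual-to-distance step is a reusable black box and makes the constant $C=L/\sigma$ explicit), while the paper's avoids any iteration by exploiting the projection's own multipliers. One cosmetic slip: with the paper's convention $\grad F(y)\in\R^{n\times\numactive}$, the Newton step should be $y_{j+1}=y_j-(\grad F(y_j)^T)^\dag F(y_j)$ rather than $y_j-\grad F(y_j)^\dag F(y_j)$ (the dimensions of the latter do not match); the estimates you state are the correct ones for the intended step, and the full-row-rank identity $\grad F(y_j)^T(\grad F(y_j)^T)^\dag=I$ is what makes $F(y_{j+1})$ purely the second-order remainder.
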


Throughout the proof of Theorem~\ref{theorem:asymptotic-normality-rsgd}, we
let $\eps > 0$ be a constant small enough that each of the statements of
Lemma~\ref{lemma:stability-of-projection} through
Lemma~\ref{lemma:second-order-control-of-projection} hold for some constants
$0 < c, C < \infty$. We also without mention assume this $\eps > 0$ is
small enough that
\begin{equation}
  \label{eqn:eps-inactive}
  f_i(x) < 0~\text{for all $x \in \ball(x\opt, \eps)$}~\text{and}~i >
  \numactive.
\end{equation}

Throughout the entire proof, we use $c, C$ to denote finite constants
depending only on $\X$, $\eps$, and the stepsize sequence $\stepsize_i$,
whose values may change from line to line.  We use the letter $N$ to denote
large but non-random integer that may depend solely on $\X$ and $\eps$, and
use the letter $T$ to denote large but (possibly) random integer.

\newcommand{\rerror}{e}

\subsection{Convergence rates in
  Theorem~\ref{theorem:asymptotic-normality-rsgd}}
\label{sec:convergence-rates-rsgd}

To follow the roadmap we outline at the beginning of the
section, we require a number of convergence rate and
identification results, which we do in this section.

We begin by proving a convergence rate for the slowly updated dual averaging
sequence $\{x_k\supda\}_{k \in \N}$.  Recall $\wb{x}_k\supda$ is the
weighted average of the actual updated iterates through time $k$, while
$\{\rsgiter_k\}_{k\in \N}$ are the iterates Algorithm~\ref{algorithm:Rdg}
generates.  Lemma~\ref{lemma:pair-convergence} implies $\sum_{i \in
  \timesda}^\infty \stepsize\supda_i [f(x_i\supda) - f(x\opt)] < \infty$, so
that $(\sum_{i \in \timesda}^{i \le k} \stepsize \supda_i)
[f(\wb{x}\supda_k) - f(x\opt)] \cas 0$ by the Kronecker lemma. Using
Theorem~\ref{theorem:as-convergence} that $x\supda_k \cas x\opt$ and that
near $x\opt$, we have $f(x) - f(x\opt) \ge c \norm{x - x\opt}^2$ (recall
Lemma~\ref{lemma:restricted-strong-convexity}), we have
the following convergence lemma.
\begin{lemma}
  \label{lemma:eventually-opt-in-ball}
  Let the conditions of Theorem~\ref{theorem:asymptotic-normality-rsgd}.
  Then with probability 1,
  \begin{equation*}
    \lim_{k \to \infty}
    \left(\sum_{i \in \timesda, i \le k} \stepsize_i\supda\right)
    \cdot \norm{x\opt - \wb{x}\supda_k}^2
    = 0
    ~~~ \mbox{and} ~~~
    \lim_{k \to \infty}
    \epsilon_k^{-\frac{1}{2q}}
    \norm{x\opt - \wb{x}_k\supda} = 0.
  \end{equation*}
\end{lemma}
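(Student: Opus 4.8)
The plan is to reduce both displayed limits to the single a.s.\ statement $W_k \norm{x\opt - \wb{x}_k\supda}^2 \to 0$, where I write $W_k \defeq \sum_{i \in \timesda, i \le k} \stepsize_i\supda$, and then establish this by a standard ``split the average'' argument. First, the two limits coincide: by the choice $\epsilon_k = W_k^{-q}$ in Algorithm~\ref{alg:da-riemannian} we have $\epsilon_k^{-1/(2q)} = W_k^{1/2}$, so $\epsilon_k^{-1/(2q)}\norm{x\opt - \wb{x}_k\supda}\to 0$ is exactly $W_k\norm{x\opt - \wb{x}_k\supda}^2\to 0$. Moreover $W_k\to\infty$: since $|\{i \in \timesda : i \le k\}| \gtrsim k^\rho$ and $\stepsize_i\supda = \stepsize_0 t_i^{-\steppow}$ with $t_i = |\{j \in \timesda : j \le i\}|$ the rank of $i$ within $\timesda$, summation gives $W_k \gtrsim k^{\rho(1 - \steppow)} \to \infty$ because $\steppow < 1$ and $\rho \in (0,1)$.

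The key input is the summability
\begin{equation*}
  V_\infty \defeq \sum_{i \in \timesda} \stepsize_i\supda \norm{x_i\supda - x\opt}^2 < \infty
  \quad \text{a.s.}
\end{equation*}
The sub-sampled sequence $\{x_i\supda\}_{i \in \timesda}$ is itself a dual-averaging iteration of the form~\eqref{eqn:variant-dual-averaging} with stepsizes $\stepsize_0 m^{-\steppow}$, $m = 1, 2, \dots$, so Lemma~\ref{lemma:pair-convergence} applies and yields $\sum_{i \in \timesda} \stepsize_i\supda[f(x_i\supda) - f(x\opt)] < \infty$, while Theorem~\ref{theorem:as-convergence} gives $x_i\supda \cas x\opt$ (both under the hypotheses of Theorem~\ref{theorem:asymptotic-normality-rsgd}, as Assumption~\ref{assumption:restricted-strong-convexity} supplies the quadratic growth of Lemma~\ref{lemma:restricted-strong-convexity} required by Theorem~\ref{theorem:as-convergence}). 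Combining $x_i\supda \cas x\opt$, the a.s.\ boundedness $\sup_i \norm{x_i\supda - x\opt} < \infty$, and the growth bound $f(x) - f(x\opt) \ge \epsilon\min\{\norm{x - x\opt}, \norm{x - x\opt}^2\}$ exactly as in the derivation of Lemma~\ref{lemma:sum-stepsize-square-distance} upgrades the $f$-value sum to the distance sum $V_\infty < \infty$; write $V_{>N} \defeq \sum_{i \in \timesda, i > N} \stepsize_i\supda \norm{x_i\supda - x\opt}^2$, so $V_{>N} \to 0$ as $N \to \infty$.

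Now fix $N$ and split
\begin{equation*}
  \wb{x}_k\supda - x\opt
  = \frac{1}{W_k} \sum_{i \in \timesda, i \le N} \stepsize_i\supda (x_i\supda - x\opt)
  + \frac{1}{W_k} \sum_{i \in \timesda, N < i \le k} \stepsize_i\supda (x_i\supda - x\opt).
\end{equation*}
The first term is a fixed finite (random) vector divided by $W_k$, so $W_k$ times its squared norm is $O(1/W_k) \to 0$. For the second term, Jensen's inequality applied to the normalized weights $\stepsize_i\supda / (W_k - W_N)$, together with $W_k - W_N \le W_k$, bounds $W_k$ times its squared norm by $\sum_{i \in \timesda, N < i \le k} \stepsize_i\supda \norm{x_i\supda - x\opt}^2 \le V_{>N}$. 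Hence, using $\norm{a + b}^2 \le 2\norm{a}^2 + 2\norm{b}^2$, we get $\limsup_k W_k \norm{\wb{x}_k\supda - x\opt}^2 \le 2 V_{>N}$, and letting $N \to \infty$ gives $\lim_k W_k \norm{\wb{x}_k\supda - x\opt}^2 = 0$, which is both claims.

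The argument is essentially routine, and I do not anticipate a genuine obstacle; the only points needing care are (i) verifying that Lemma~\ref{lemma:pair-convergence}, Theorem~\ref{theorem:as-convergence}, and the growth-to-distance conversion of Lemma~\ref{lemma:sum-stepsize-square-distance} transfer to the sub-sampled dual-averaging sequence (they do, after re-indexing $\timesda$ and matching the two stepsize conventions $\stepsize_i\supda = \stepsize_{t_i}$), and (ii) the elementary bookkeeping in the split, in particular the divergence $W_k \to \infty$, which is where the conditions $\steppow < 1$ and $\rho \in (0,1)$ are used.
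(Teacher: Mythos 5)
Your proof is correct, and it reaches the conclusion by a route that differs from the paper's in its final step. Both arguments rest on the same key input, namely the a.s.\ summability $\sum_{i \in \timesda} \stepsize_i\supda \norm{x_i\supda - x\opt}^2 < \infty$ obtained from Lemma~\ref{lemma:pair-convergence}, Theorem~\ref{theorem:as-convergence}, and the growth-to-distance conversion of Lemma~\ref{lemma:sum-stepsize-square-distance} applied to the re-indexed dual-averaging subsequence (your point (i) is exactly the transfer the paper also performs). The paper then argues at the level of function values: it bounds $f(\wb{x}_k\supda) - f(x\opt)$ by the weighted average of $f(x_i\supda) - f(x\opt)$ via convexity, invokes the Kronecker lemma to conclude $W_k\,[f(\wb{x}_k\supda) - f(x\opt)] \cas 0$, and converts back to distance through the local quadratic growth $f(x) - f(x\opt) \ge c\norm{x - x\opt}^2$. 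You instead stay with the vector average and do a head--tail decomposition, treating the head $W_k^{-1}\sum_{i \le N}\stepsize_i\supda(x_i\supda - x\opt)$ as a fixed vector (so its contribution to $W_k\norm{\cdot}^2$ is $O(1/W_k)$) and applying Cauchy--Schwarz to the tail. Your version is actually the more airtight of the two: the convexity bound by itself only yields $W_k\,[f(\wb{x}_k\supda) - f(x\opt)] \le \sum_{i \le k}\stepsize_i\supda\,[f(x_i\supda) - f(x\opt)]$, which is bounded but need not vanish because the head of the sum survives multiplication by $W_k$; making the paper's one-line Kronecker appeal rigorous requires handling the early terms at the level of the first moment of the iterates rather than their function values, which is precisely what your split accomplishes. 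So your argument can be read as the detailed justification of the step the paper compresses into ``by the Kronecker lemma.''
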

\noindent
Thus, most iterates are eventually near $x\opt$, and (in fact)
much nearer than $\epsilon_k = (\sum_{i \in \timesda}^{i \le k}
\stepsize_i\supda)^{-q}$.

Before providing convergence rate guarantees for
the iterates $\rsgiter_k$, we collect a few
additional facts on the stepsize sequences and other
constants associated with the rates of convergence we prove.
Let $\steppow_1$ satisfy
$2 q \rho (1 - \steppow) < \steppow_1 < 2 \steppow - 1$,
which is possible by our choice of $q$ in Alg.~\ref{alg:da-riemannian}.
Define $\gamma_k = k^{-\steppow_1}$. Then the stepsize
sequence $\stepsize_k = \stepsize_0 k^{-\steppow}$,
so that
$\eps_k = (\sum_{i \in \timesda}^{i \le k} \stepsize_i\supda)^{-q}
\asymp (\sum_{i = 1}^{k^{\rho}} \stepsize_i)^{-q}
\asymp k^{q\rho(\steppow - 1)} \to 0$ as $k \to \infty$.
Consequently, we have
\begin{equation}
  \label{eqn:def-eps-gamma-k}
  \begin{split}
    \frac{\eps_{k}}{\eps_{k+1}} = 1 + O(1/k),
    ~~~
    & \frac{\gamma_k}{\gamma_{k+1}} = 1 + O(1/k),
    ~~~
    \lim_{k \to \infty} \frac{\gamma_k^{1/2}}{\eps_k} = 0, \\
    \sum \frac{\stepsize_k^2}{\eps_k} < \infty,
    &
    ~~~\mbox{and} ~~~
    \sum \frac{\stepsize_k^2}{\gamma_k} < \infty.
  \end{split}
\end{equation} 
The conditions~\eqref{eqn:def-eps-gamma-k} are all that we require on
the stepsize sequences to prove
Theorem~\ref{theorem:asymptotic-normality-rsgd}; our conditions
on $\eps_k$, $\stepsize_k$, and $\timesda$ are simply sufficient for them.


With Lemma~\ref{lemma:eventually-opt-in-ball}
in place, we have the probability one convergence
of the iterates; we can build off of this to provide slightly stronger
rate of convergence guarantees. To do so, define
the errors
\begin{equation*}
  \Delta_k = \rsgiter_k - x\opt,
  ~~~ \mbox{and} ~~~
  \Delta^{\manifold}_k = \rsgiter^{\manifold}_k - x\opt.
\end{equation*}
We then have the following lemma,
whose proof we provide in
Section~\ref{sec:convergence-rate-rsgd-proof}.

\begin{lemma}
  \label{lemma:convergence-rate-sequence-rsgd}
  Let the conditions of Theorem~\ref{theorem:asymptotic-normality-rsgd}
  hold. Then
  \begin{equation}
    \label{eqn:convergence-rate-quick-convergence}
    \begin{split}
      & \gamma_k^{-1} \norm{\Delta_k}^2 \cas 0,
      ~~~
      \gamma_k^{-1} \normbig{\Delta^{\manifold}_k}^2 \cas 0, \\
      & \stepsize_k \projt{\rsgiter_k}\noise_k \cas 0,
      ~~~\text{and}~~~
      \stepsize_k \projt{\rsgiter_k}g_k \cas 0.
    \end{split}
  \end{equation}
  Additionally, we have
  \begin{equation}
    \label{eqn:average-quad-quick-convergence}
    \frac{1}{\sqrt{k}}\sum_{i=1}^k \norm{\Delta_i}^2 \cas 0
    ~~\text{and}~~
    \frac{1}{\sqrt{k}} \sum_{i=1}^k \stepsize_i
    \norm{\proj_{\M_i}(\rsgiter_i)g_i}^2 \cas 0.
  \end{equation}
\end{lemma}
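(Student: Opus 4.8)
The plan is to show that after a random finite time the iterates $\rsgiter_k$ are performing projected stochastic gradient descent tangent to the active manifold $\M = \{x : f_i(x) = 0,\ i\le\numactive\}$, to derive a one-step contraction for the error, and then to apply the almost-supermartingale convergence theorem (Lemma~\ref{lemma:robbins-siegmund}) followed by the localized window estimate behind Lemma~\ref{lemma:convergence-rate-sequence-da}. For the reduction: by Theorem~\ref{theorem:manifold-identification}, $\indset_k = [\numactive]$ and hence $\M_k = \M = \M_{k-1}$ for all $k$ past a random finite time; by Theorem~\ref{theorem:as-convergence} (applied to $x\supda_k$, together with the a.s.\ convergence $\rsgiter_k \cas x\opt$ noted before the lemma) and Lemma~\ref{lemma:eventually-opt-in-ball} we may also assume $\rsgiter_k \in \M \cap \ball(x\opt,\eps)$; and since $\M\cap\ball(x\opt,\eps)\subset\X$ by~\eqref{eqn:eps-inactive}, the last branch of line~\ref{state:riemann-sgd-stay-X} never fires and its middle branch is a safety net that a bootstrap (discussed below) shows is eventually never invoked. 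A Chebyshev bound $\P(\stepsize_k\norm{\noise_k}>\eps\mid\mc{F}_{k-1})\le C\stepsize_k^2/\eps^2$ with $\sum\stepsize_k^2<\infty$ and Borel--Cantelli lets us also assume $\stepsize_k\norm{\noise_k}\le\eps$ eventually. The upshot is that for $k$ past a random finite time $T$ the iterate obeys the pure Riemannian recursion $\rsgiter_{k+1} = \rsgiter^\manifold_{k+1} = \proj_\M(\rsgiter_k - \stepsize_k \projt{\rsgiter_k} g_k)$ with $\stepsize_k\projt{\rsgiter_k}g_k \in \T_\M(\rsgiter_k)\cap\ball(0,\eps)$.

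\emph{One-step contraction and a.s.\ convergence.} Set $y_k = \rsgiter_k - \stepsize_k\projt{\rsgiter_k}g_k$. By Lemma~\ref{lemma:second-order-control-of-projection}, $\norm{\Delta_{k+1}} \le \norm{y_k - x\opt} + C\stepsize_k^2\norm{\projt{\rsgiter_k}g_k}^2$, and by Lemma~\ref{lemma:second-order-projection-gradient} $\norm{\projt{\rsgiter_k}\grad f(\rsgiter_k)} \le C\norm{\Delta_k}$, whence $\norm{\projt{\rsgiter_k}g_k}^2 \le C\norm{\Delta_k}^2 + C\norm{\noise_k}^2$. Expanding $\norm{y_k - x\opt}^2$, taking $\E[\,\cdot\mid\mc{F}_{k-1}]$ (so the term linear in $\noise_k$ vanishes), using the restricted strong convexity $\<\Delta_k,\projt{\rsgiter_k}\grad f(\rsgiter_k)\>\ge c\norm{\Delta_k}^2$ of Lemma~\ref{lemma:restricted-strong-convexity-near-rsgd}, and the bound $\stepsize_k\norm{\noise_k}\le\eps$ to collapse every term carrying three or more factors of $\stepsize_k\norm{\noise_k}$ into $\eps^{j-2}\stepsize_k^2\norm{\noise_k}^2$, I expect to obtain, for $k$ large,
\[
\E\bigl[\norm{\Delta_{k+1}}^2\mid\mc{F}_{k-1}\bigr] \le (1 - c\stepsize_k)\norm{\Delta_k}^2 + C\stepsize_k^2\bigl(1 + \norm{\noise_k}^2\bigr),
\]
the $O(\stepsize_k^2)$ and $O(\stepsize_k\norm{\Delta_k}^3)$ corrections being absorbed into the $(1-c\stepsize_k)$ coefficient since $\Delta_k\to 0$. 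Since $\sum\stepsize_k^2\norm{\noise_k}^2<\infty$ a.s.\ (as $\sum\stepsize_k^2\E[\norm{\noise_k}^2\mid\mc{F}_{k-1}] \le C\sum\stepsize_k^2(1+\E\norm{\Delta_k}^2) < \infty$ once $\sup_k\E\norm{\Delta_k}^2<\infty$ is established by a Robbins--Siegmund argument as in Lemma~\ref{lemma:as-convergence-corrected-sequence}), Lemma~\ref{lemma:robbins-siegmund} gives $\norm{\Delta_k}^2\to V_\infty$ and $\sum\stepsize_k\norm{\Delta_k}^2<\infty$; as $\sum\stepsize_k=\infty$, $V_\infty=0$, so $\norm{\Delta_k}^2\cas 0$, and (using $\norm{\projt{\rsgiter_k}}\le 1$ with $\stepsize_k\noise_k\cas 0$ and $\stepsize_k\grad f(\rsgiter_k)\cas 0$) also $\stepsize_k\projt{\rsgiter_k}\noise_k\cas 0$ and $\stepsize_k\projt{\rsgiter_k}g_k\cas 0$; since $\Delta_k = \Delta^\manifold_k$ eventually, the same holds for $\Delta^\manifold_k$.

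\emph{Sharp rates and the remaining claims.} To upgrade to $\gamma_k^{-1}\norm{\Delta_k}^2\cas 0$ and $k^{-1/2}\sum_{i\le k}\norm{\Delta_i}^2\cas 0$, I would localize on the windows $\{\norm{\Delta_j}\le\eps : i/2\le j\le i\}$ (a.s.\ valid eventually), iterate the one-step bound across each window, and sum the resulting geometric decay with Lemma~\ref{lemma:funny-zero-sum}, exactly as in the proofs of Lemmas~\ref{lemma:convergence-rate-of-residual} and~\ref{lemma:convergence-rate-sequence-da}; here $\steppow_1 < 2\steppow-1$ is what makes the window estimate decay quickly enough, and $\gamma_k^{1/2}/\eps_k\to 0$ from~\eqref{eqn:def-eps-gamma-k} is used below. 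This yields~\eqref{eqn:convergence-rate-quick-convergence} and the first assertion of~\eqref{eqn:average-quad-quick-convergence}. For the last assertion of~\eqref{eqn:average-quad-quick-convergence}, note $\norm{\proj_{\M_i}(\rsgiter_i)g_i}^2 = \norm{\projt{\rsgiter_i}g_i}^2 \le C\norm{\Delta_i}^2 + C\norm{\noise_i}^2$ as above, so $k^{-1/2}\sum_{i\le k}\stepsize_i\norm{\Delta_i}^2 \le \stepsize_0 k^{-1/2}\sum_{i\le k}\norm{\Delta_i}^2\cas 0$, while $k^{-1/2}\sum_{i\le k}\stepsize_i\norm{\noise_i}^2\cas 0$ by Kronecker's lemma since $\sum_i\stepsize_i\norm{\noise_i}^2/\sqrt{i}$ has summable expectation ($\stepsize_i/\sqrt{i}\propto i^{-\steppow-1/2}$ with $\steppow>\tfrac12$).

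\emph{Main obstacle.} Two points are delicate. First, unlike Euclidean projection onto an affine subspace, $\proj_\M$ onto a curved manifold is not nonexpansive (cf.\ the footnote in Section~\ref{sec:riemannian-fun}), so the iteration carries a genuine second-order error $O(\stepsize_k^2\norm{g_k}^2)$; with only the second-moment noise control of Assumption~\ref{assumption:noise-vectors} this must be tamed by the Borel--Cantelli fact $\stepsize_k\norm{\noise_k}\le\eps$, which reduces every higher power of $\stepsize_k\norm{\noise_k}$ to the summable quadratic one. Second, the two safety-net branches of line~\ref{state:riemann-sgd-stay-X} must be shown eventually inactive: this is a bootstrap, in which one first proves the stated rate for a surrogate sequence that always takes the primary Riemannian branch, then observes that this rate, $\norm{\cdot - x\opt} = o(\gamma_k^{1/2}) = o(\eps_k)$ by~\eqref{eqn:def-eps-gamma-k} and Lemma~\ref{lemma:eventually-opt-in-ball}, is precisely what keeps the test $\proj_\X(\rsgiter^\manifold_{k+1})\in\ball(\daaverage_k,3\eps_k)$ satisfied, and concludes by induction that the true iterates coincide with the surrogate past a random finite time.
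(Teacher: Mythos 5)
Your skeleton (reduce to a Riemannian SGD recursion on $\M$, one-step contraction via Lemmas~\ref{lemma:second-order-control-of-projection}--\ref{lemma:restricted-strong-convexity-near-rsgd}, Robbins--Siegmund, then windowed localization for the $\gamma_k^{-1}$ and $k^{-1/2}$ rates) matches the paper's. The gap is in how you dispose of the safety-net branches of line~\ref{state:riemann-sgd-stay-X}. You propose a bootstrap: first analyze a surrogate that \emph{always} takes the primary branch $\proj_\X(\rsgiter^\manifold_{k+1})$, prove the rate for it, and then conclude the true iterates coincide with it. But the pure-Riemannian surrogate is exactly the object the algorithm was designed to avoid: without the localization to $\ball_{k,3}$ there is no a priori boundedness of the surrogate, hence no guarantee it stays in $\ball(x\opt,\eps)$ where Lemmas~\ref{lemma:stability-of-projection}--\ref{lemma:second-order-control-of-projection} and the restricted strong convexity apply; $\proj_\M$ is not nonexpansive, so a single excursion outside the neighborhood can be amplified and the Robbins--Siegmund recursion cannot even be written down. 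Your Borel--Cantelli step has the same circularity: $\P(\stepsize_k\norm{\noise_k}>\eps\mid\mc{F}_{k-1})\le C\stepsize_k^2(1+\norm{\Delta_k}^2)/\eps^2$ is summable only once $\sup_k\norm{\Delta_k}<\infty$ is known, which for your surrogate it is not.

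The paper breaks this circularity differently, and the difference is essential. Its auxiliary sequence (Algorithm~\ref{algorithm:Rsgd-fake}) \emph{retains} all three branches of line~\ref{state:riemann-sgd-stay-X} and only replaces the estimated manifold $\M_k$ by the true $\M$ after time $t$; the key step is Lemma~\ref{lemma:logic-lemma}, which shows that on the event $\{x\opt\in\ball_{k,1}\}$ (guaranteed eventually by Lemma~\ref{lemma:eventually-opt-in-ball}), \emph{whichever} branch fires, the update is non-expansive relative to the manifold step and keeps the iterate in $\M_\X\cap\ball_{k,3}$, hence within $4\eps_k$ of $x\opt$. The complementary events contribute only terms $\bR_k^2\indic{\event_k^c}/\gamma_{k+1}$, which are a.s.\ finite sums, so Robbins--Siegmund applies directly to $\norm{\Delta_k^t}^2/\gamma_k$ with boundedness supplied for free by the safety net. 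The conclusion that only the primary branch eventually fires is then \emph{derived from} the established rate (via $\gamma_k^{1/2}/\eps_k\to 0$ in Lemma~\ref{lemma:ultimate-rsgd-recursion}), not used to prove it. To repair your argument you would need to replace the ``always-primary-branch'' surrogate with one that keeps the localization, and prove the analogue of Lemma~\ref{lemma:logic-lemma}; the windowed rate argument and the final Kronecker step are then fine as you describe them (the indexing by $t$ in the paper's auxiliary construction also cleanly handles the fact that the identification time $T$ is not a stopping-time-compatible conditioning set, a point your sketch passes over).
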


\subsection{The basic recursion in
  Theorem~\ref{theorem:asymptotic-normality-rsgd}}
\label{sec:rsgd-recursion}

The key result of Lemma~\ref{lemma:convergence-rate-sequence-rsgd} is that
it allows us to show that eventually the iterates $\rsgiter_k$ of
Alg.~\ref{algorithm:Rdg} satisfy the simple recursion $\rsgiter_{k+1} =
\proj_{\M} (\rsgiter_k - \stepsize_k \projt{\rsgiter_k}g_k)$.
This is relatively easy with
Lemmas~\ref{lemma:eventually-opt-in-ball} and
\ref{lemma:convergence-rate-sequence-rsgd}: we have
\begin{lemma}
  \label{lemma:ultimate-rsgd-recursion}
  There exists a random integer $T$, finite with probability
  one, such that  
  \begin{enumerate}[(i)]
  \item \label{item:manifold-identified}
    For all $k \ge T$, we have $\M_k = \M$.
  \item \label{item:iterates-close}
    For all $k \ge T$, we have 
    \begin{equation*}
      \rsgiter_k\in \ball(x\opt, \eps) \cap \M_{\X},
      ~~~
      \rsgiter^{\manifold}_k \in \ball(x\opt, \eps) \cap \M_{\X},
      ~~~\text{and}~~~
      \stepsize_k \projt{\rsgiter_k}g_k \in \ball(0, \eps).
    \end{equation*}
  \item \label{item:essential-recursion} For all $k \ge T$,
    \begin{equation*}
      \rsgiter_{k+1} =
      \proj_{\M} \left(\rsgiter_k - \stepsize_k \projt{\rsgiter_k}g_k\right).
    \end{equation*}
  \end{enumerate}
\end{lemma}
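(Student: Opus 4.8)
The plan is to show that after a random but a.s.\ finite time $T$ the two safeguard branches in Algorithm~\ref{alg:da-riemannian} --- the ``otherwise'' case of the Riemannian gradient step (line~\ref{state:riemann-sgd-manifold-update}) and the last two cases of the stay-in-$\xdomain$ step (line~\ref{state:riemann-sgd-stay-X}) --- are never triggered, so that the iteration collapses to the clean recursion in~(iii), from which (i) and~(ii) follow. I would begin with claim~(i): the slow subsequence $\{x\supda_k\}$ follows exactly the update~\eqref{eqn:variant-dual-averaging}, so Theorem~\ref{theorem:manifold-identification} applies under the hypotheses of Theorem~\ref{theorem:asymptotic-normality-rsgd} and yields a random $K_1 < \infty$ such that $f_i(x\supda_k) = 0$ for $i \le \numactive$ and $f_i(x\supda_k) < 0$ for $i > \numactive$ at every dual averaging time $k \ge K_1$. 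Hence $\indset_k = [\numactive]$ and $\M_k = \cap_{i\in[\numactive]}\{x : f_i(x)=0\} = \M$ for all $k \ge K_1$, which is~(i); in particular $\M_k = \M_{k-1} = \M$ for $k \ge K_1+1$, so the first branch of line~\ref{state:riemann-sgd-manifold-update} is in force and $\rsgiter^{\manifold}_{k+1} = \proj_{\M}\big(\rsgiter_k - \stepsize_k\projt{\rsgiter_k}g_k\big)$ for such $k$.

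Next I would record that everything concentrates near $x\opt$. Lemma~\ref{lemma:convergence-rate-sequence-rsgd} gives $\gamma_k^{-1}\norm{\Delta_k}^2\cas 0$, $\gamma_k^{-1}\norms{\Delta^{\manifold}_k}^2\cas 0$, and $\stepsize_k\projt{\rsgiter_k}g_k\cas 0$; since $\gamma_k\to 0$ this yields $\rsgiter_k\cas x\opt$ and $\rsgiter^{\manifold}_k\cas x\opt$. Because $f_i(x) < 0$ on $\ball(x\opt,\eps)$ for $i > \numactive$ (Eq.~\eqref{eqn:eps-inactive}), one has $\M\cap\ball(x\opt,\eps)\subseteq\xdomain$, hence $\M\cap\ball(x\opt,\eps) = \M_{\X}\cap\ball(x\opt,\eps)$. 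Thus, once the iterates enter $\ball(x\opt,\eps)$, the first two containments of~(ii) reduce to membership in $\M$, and the third containment holds because $\stepsize_k\projt{\rsgiter_k}g_k\cas 0$. It remains only to determine which branch of line~\ref{state:riemann-sgd-stay-X} is taken.

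The hard part will be exactly this last point: showing the Riemannian step $\rsgiter^{\manifold}_{k+1}$ lands inside the safeguard ball $\ball_{k,3} = \ball(\daaverage_k, 3\eps_k)$, which requires comparing three vanishing rates. By Lemma~\ref{lemma:eventually-opt-in-ball}, $\norm{x\opt - \daaverage_k} = o(\eps_k)$ --- the exponent there is chosen against $\eps_k = (\sum_{i\in\timesda,\,i\le k}\stepsize\supda_i)^{-q}$ using the parameter constraints on $q,\rho,\steppow$ in Algorithm~\ref{alg:da-riemannian}. From the first paragraph, $\rsgiter^{\manifold}_{k+1} = \proj_{\M}\big(\rsgiter_k - \stepsize_k\projt{\rsgiter_k}g_k\big)$, so $\norms{\Delta^{\manifold}_{k+1}} = o(\gamma_{k+1}^{1/2})$, and by~\eqref{eqn:def-eps-gamma-k} ($\gamma_k^{1/2}/\eps_k\to 0$ and $\gamma_{k+1}/\gamma_k = 1 + O(1/k)$) this is also $o(\eps_k)$. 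The triangle inequality then gives
\begin{equation*}
  \norm{\rsgiter^{\manifold}_{k+1} - \daaverage_k}
  \le \norms{\Delta^{\manifold}_{k+1}} + \norm{x\opt - \daaverage_k}
  = o(\eps_k) < 3\eps_k
\end{equation*}
for all large $k$, so $\rsgiter^{\manifold}_{k+1}\in\ball_{k,3}$ eventually; moreover $\rsgiter^{\manifold}_{k+1}\in\M\cap\ball(x\opt,\eps)\subseteq\xdomain$, so $\proj_{\xdomain}(\rsgiter^{\manifold}_{k+1}) = \rsgiter^{\manifold}_{k+1}\in\ball_{k,3}$, i.e.\ the first branch of line~\ref{state:riemann-sgd-stay-X} applies. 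This rate bookkeeping, and the fact that the conditions~\eqref{eqn:def-eps-gamma-k} were set up precisely to make it go through, is where the real work lies.

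Finally I would take $T < \infty$ (a.s.) large enough that for all $k \ge T$ the above hold and $\rsgiter_k,\rsgiter^{\manifold}_k\in\ball(x\opt,\eps)$, $\stepsize_k\projt{\rsgiter_k}g_k\in\ball(0,\eps)$. Fixing $k\ge T$: by the first paragraph $\rsgiter^{\manifold}_{k+1} = \proj_{\M}\big(\rsgiter_k - \stepsize_k\projt{\rsgiter_k}g_k\big)\in\M$ and lies in $\ball(x\opt,\eps)$, hence in $\M\cap\ball(x\opt,\eps)\subseteq\xdomain$; by the third paragraph the first branch of line~\ref{state:riemann-sgd-stay-X} gives
\begin{equation*}
  \rsgiter_{k+1} = \proj_{\xdomain}(\rsgiter^{\manifold}_{k+1}) = \rsgiter^{\manifold}_{k+1}
  = \proj_{\M}\big(\rsgiter_k - \stepsize_k\projt{\rsgiter_k}g_k\big),
\end{equation*}
which is~(iii); and then $\rsgiter_{k+1}\in\M\cap\ball(x\opt,\eps) = \M_{\X}\cap\ball(x\opt,\eps)$, which with the second paragraph gives~(ii), while~(i) is the first paragraph. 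No genuine induction is needed, since the a.s.\ convergence from Lemma~\ref{lemma:convergence-rate-sequence-rsgd} keeps both $\rsgiter_k$ and $\rsgiter^{\manifold}_k$ in $\ball(x\opt,\eps)$ for all $k\ge T$ at once.
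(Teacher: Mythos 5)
Your proposal is correct and follows essentially the same route as the paper's proof: manifold identification via Theorem~\ref{theorem:manifold-identification} gives $\M_k = \M$; the rates in Lemma~\ref{lemma:convergence-rate-sequence-rsgd} and Lemma~\ref{lemma:eventually-opt-in-ball}, combined with $\gamma_k^{1/2}/\eps_k \to 0$ from~\eqref{eqn:def-eps-gamma-k}, force the first branch of line~\ref{state:riemann-sgd-stay-X}; and the choice of $\eps$ in~\eqref{eqn:eps-inactive} makes $\M \cap \ball(x\opt,\eps) \subseteq \xdomain$ so that $\proj_\xdomain$ acts as the identity. The only cosmetic difference is that the paper bounds $\eps_k^{-1}\norm{\proj_{\X}(\rsgiter^{\manifold}_k) - \wb{x}_k\supda}$ via nonexpansivity to place the iterate in $\ball_{k,1}$, whereas you bound $\norm{\rsgiter^{\manifold}_{k+1} - \daaverage_k}$ directly against $3\eps_k$; both are equivalent bookkeeping.
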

\begin{proof}
  The manifold identification guarantee of
  Theorem~\ref{theorem:manifold-identification} implies that the dual
  averaging iterates eventually lie on $\M$, and
  Lemma~\ref{lemma:convergence-rate-sequence-rsgd} implies that for some
  finite $T_1 < \infty$, $k \ge T_1$ implies
  \begin{equation}
    \label{eqn:T-1-is-good}
    \M_k = \M,
    ~~~\rsgiter_k \in \ball(x\opt, \eps),
    ~~~\rsgiter^{\manifold}_k \in \ball(x\opt, \eps),
    ~~~\text{and}~~~
    \stepsize_k \projt{\rsgiter_k}g_k \in \ball(0, \eps).
  \end{equation} 
  This shows item~\eqref{item:manifold-identified}.  The triangle inequality
  and the fact that $\proj_{\X}$ is nonexpansive imply
  \begin{align*}
    \eps_{k}^{-1} \norms{\proj_{\X}({\rsgiter^{\manifold}_k}) - \wb{x}_k\supda}
    & \le  \eps_k^{-1}\left(\norms{\rsgiter^{\manifold}_k - x\opt}
    +\norms{\wb{x}_k\supda - x\opt}\right). 
  \end{align*}
  Since $\gamma_k^{1/2}/\eps_k \to 0$ by Eq.~\eqref{eqn:def-eps-gamma-k},
  Lemmas~\ref{lemma:eventually-opt-in-ball} and
  \ref{lemma:convergence-rate-sequence-rsgd}
  (Eq.~\eqref{eqn:convergence-rate-quick-convergence}) thus imply
  \begin{align*}
    \eps_k^{-1}
    \norm{\proj_{\X}({\rsgiter^{\manifold}_k}) - \wb{x}_k\supda}
    \cas 0,
  \end{align*}
  indicating that for some finite $T_2 < \infty$, we have
  $\proj_{\X}(\rsgiter^{\manifold}_k) \in \ball_{k, 1}$ for $k \ge T_2$. Let
  $T = \max\{T_1, T_2\}$.

  It remains to show that items~\eqref{item:iterates-close}
  and~\eqref{item:essential-recursion} hold for the $T < \infty$ we have
  just constructed. Recall the definition of $\rsgiter_k,
  \rsgiter^{\manifold}_k$ from Algorithm~\ref{algorithm:Rdg}.
  Then Eq.~\eqref{eqn:T-1-is-good} and the definition of $\rsgiter^{\manifold}_k$
  imply that $\rsgiter_k^{\manifold} \in \ball(x\opt, \eps) \cap \M$
  for $k \ge T$, and as
  $\eps$ is small enough that $f_i(x) < 0$ for $\norm{x - x\opt} \le \eps$
  and $i > \numactive$ (recall assumption~\eqref{eqn:eps-inactive}),
  we have that $k \ge T$ implies
  \begin{equation*}
    \rsgiter^{\manifold}_k\in \M_{\X}.
  \end{equation*}
  Coupled with the containments~\eqref{eqn:T-1-is-good} we have
  item~\eqref{item:iterates-close}. Finally,
  the definition of $\rsgiter_k$ in Alg.~\ref{algorithm:Rdg}
  gives that the recursion in item~\eqref{item:essential-recursion} holds.
\end{proof}

Our third step is to prove asymptotic normality for the averaged projected
errors $\wt{\Delta}_k \defeq \projmatr \Delta_k$.
\begin{lemma}
  \label{lemma:asymptotic-normality-projection-rsgd}
  Let the conditions of Theorem~\ref{theorem:asymptotic-normality-rsgd}
  hold. Then
  \begin{equation*}
    \frac{1}{\sqrt{k}}\sum_{i=1}^k \wt{\Delta}_i
    \cd
    \normal\left(0,
    (\projmatr P\projmatr)^\dag \Sigma (\projmatr P\projmatr)^\dag\right).
  \end{equation*}
\end{lemma}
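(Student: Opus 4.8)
The plan is to reduce the lemma to the generic asymptotic normality result Proposition~\ref{proposition:generalization-of-Polyak-Juditsky}, applied with subspace $\T = \T_{\M}(x\opt)$ (which coincides with the critical tangent set $\tangentset_{\mc{X}}(x\opt)$, since the active constraints are exactly $1,\dots,\numactive$), matrix $H = H\opt$, and projector $\projmatr = \projt{x\opt}$. Throughout I would work on the probability-one event furnished by Lemma~\ref{lemma:ultimate-rsgd-recursion}: there is a random $T<\infty$ with, for all $k\ge T$, $\M_k=\M$, $\rsgiter_k\in\ball(x\opt,\eps)\cap\M_{\X}$, $\stepsize_k\projt{\rsgiter_k}g_k\in\ball(0,\eps)$, and the clean recursion $\rsgiter_{k+1}=\proj_{\M}(\rsgiter_k-\stepsize_k\projt{\rsgiter_k}g_k)$. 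The goal is to show the projected error $\wt{\Delta}_k=\projmatr\Delta_k$, $\Delta_k=\rsgiter_k-x\opt$, satisfies a recursion of the form~\eqref{eqn:main-iterate} with leading noise $\projmatr\noisezero_k$ and error terms negligible after averaging.

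First I would derive the recursion. Lemma~\ref{lemma:second-order-control-of-projection}, with $v_k=-\stepsize_k\projt{\rsgiter_k}g_k\in\T_{\M}(\rsgiter_k)\cap\ball(0,\eps)$, gives $\rsgiter_{k+1}=\rsgiter_k-\stepsize_k\projt{\rsgiter_k}g_k+\rho_k$ with $\norm{\rho_k}\le C\stepsize_k^2\norm{\projt{\rsgiter_k}g_k}^2$. Subtracting $x\opt$, applying $\projmatr$, and writing $g_k=\grad f(\rsgiter_k)+\noise_k(\rsgiter_k)$, I would use: (i) Lemma~\ref{lemma:second-order-projection-gradient}, $\projt{\rsgiter_k}\grad f(\rsgiter_k)=\projmatr H\opt\projmatr\Delta_k+r_k$ with $\norm{r_k}\le C\norm{\Delta_k}^2$, so that $\projmatr\projt{\rsgiter_k}\grad f(\rsgiter_k)=\projmatr H\opt\projmatr\wt\Delta_k+\projmatr r_k$ (using $\projmatr^2=\projmatr$ and $\projmatr\Delta_k=\wt\Delta_k=\projmatr\wt\Delta_k$); (ii) Lemma~\ref{lemma:stability-of-projection}, $\projt{\rsgiter_k}=\projmatr+E_k$ with $\opnorm{E_k}\le C\norm{\Delta_k}$, together with $\noise_k(\rsgiter_k)=\noisezero_k+\noisier_k(\rsgiter_k)$ and the bounds of Assumption~\ref{assumption:noise-vectors}, to write $\projmatr\projt{\rsgiter_k}\noise_k(\rsgiter_k)=\projmatr\noisezero_k+\noisier'_k$ with $\noisier'_k=\projmatr E_k\noisezero_k+\projmatr\projt{\rsgiter_k}\noisier_k(\rsgiter_k)$ a martingale difference, valued in $\T$, with $\E[\norm{\noisier'_k}^2\mid\mc{F}_{k-1}]\le C\norm{\Delta_k}^2$; and (iii) Lemma~\ref{lemma:manifold-second-order-result}, $\norm{\Delta_k-\wt\Delta_k}\le C\norm{\Delta_k}^2$, which near $x\opt$ yields $\norm{\Delta_k}\le 2\norm{\wt\Delta_k}$ and lets every $C\norm{\Delta_k}^2$ bound be rewritten as $C'\norm{\wt\Delta_k}^2$. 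Collecting, for $k\ge T$,
\begin{equation*}
  \wt\Delta_{k+1}=\wt\Delta_k-\stepsize_k\projmatr H\opt\projmatr\wt\Delta_k
  -\stepsize_k\bigl(\projmatr\noisezero_k+\noisier'_k\bigr)
  -\stepsize_k\projmatr r_k+\projmatr\rho_k,
\end{equation*}
and for $k<T$ the finitely many pre-$T$ discrepancies go into a term $\offerror_k\in\T$ vanishing for $k\ge T$.

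Next I would match this to~\eqref{eqn:main-iterate}: the noise is $-(\noisezero_k+\noisier'_k)$, whose leading part $-\noisezero_k$ still obeys $\tfrac{1}{\sqrt{k}}\sum_{i=1}^k(-\noisezero_i)\cd\normal(0,\Sigma)$ by Assumption~\ref{assumption:noise-vectors} and Gaussian symmetry, and whose state-dependent part has conditional second moment $\le C\norm{\wt\Delta_k}^2$; the non-quadratic error is defined by $\stepsize_k\projmatr\notquaderror_k=-\stepsize_k\projmatr r_k+\projmatr\rho_k$, so that, using $\stepsize_i^{-1}\norm{\rho_i}\le C\stepsize_i\norm{\projt{\rsgiter_i}g_i}^2$,
\begin{equation*}
  \frac{1}{\sqrt{k}}\sum_{i=1}^k\norm{\projmatr\notquaderror_i}
  \le \frac{C}{\sqrt{k}}\sum_{i=1}^k\norm{\Delta_i}^2
  + \frac{C}{\sqrt{k}}\sum_{i=1}^k\stepsize_i\norm{\projt{\rsgiter_i}g_i}^2,
\end{equation*}
which converges a.s.\ to $0$ by the two limits of Eq.~\eqref{eqn:average-quad-quick-convergence} (the non-predictable part of $\rho_k$ being treated by the usual predictable/martingale-difference split, its remainder negligible in the same sum). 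Assumption~\ref{assumption:growth-and-noise} holds since $w^T H\opt w\ge\mu\norm{w}^2$ on $\T=\tangentset_{\mc{X}}(x\opt)$ is exactly Assumption~\ref{assumption:restricted-strong-convexity} and the moment bounds are Assumption~\ref{assumption:noise-vectors}; Assumption~\ref{assumption:convergence-as} holds because $\wt\Delta_k\cas 0$, $\tfrac{1}{\sqrt{k}}\sum_{i=1}^k\norm{\wt\Delta_i}^2\le\tfrac{1}{\sqrt{k}}\sum_{i=1}^k\norm{\Delta_i}^2\cas 0$ (Lemma~\ref{lemma:convergence-rate-sequence-rsgd}), and $\offerror_k=0$ for $k\ge T$. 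Proposition~\ref{proposition:generalization-of-Polyak-Juditsky} then gives $\tfrac{1}{\sqrt{k}}\sum_{i=1}^k\wt\Delta_i\cd\normal(0,(\projmatr H\opt\projmatr)^\dag\projmatr\Sigma\projmatr(\projmatr H\opt\projmatr)^\dag)$; since $\projmatr H\opt\projmatr$ is symmetric, maps into $\T$, and annihilates $\T^\perp$, its pseudoinverse satisfies $(\projmatr H\opt\projmatr)^\dag\projmatr=(\projmatr H\opt\projmatr)^\dag=\projmatr(\projmatr H\opt\projmatr)^\dag$, so the covariance collapses to $(\projmatr P\projmatr)^\dag\Sigma(\projmatr P\projmatr)^\dag$, which is the claim (whence Theorem~\ref{theorem:asymptotic-normality-rsgd} follows, as $\tfrac{1}{\sqrt{k}}\sum_{i=1}^k(\rsgiter_i-x\opt)-\tfrac{1}{\sqrt{k}}\sum_{i=1}^k\wt\Delta_i=\tfrac{1}{\sqrt{k}}\sum_{i=1}^k(\Delta_i-\wt\Delta_i)\cas 0$).

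The main obstacle is the curvature of the active manifold $\M$: because $\Delta_k=\rsgiter_k-x\opt$ is not a tangent vector, one must pass to $\wt\Delta_k=\projmatr\Delta_k$, and the argument hinges on showing that all three geometric defects — the gap $\Delta_k-\wt\Delta_k$, the retraction error $\rho_k$ from projecting onto $\M$, and the drift of $\projt{\rsgiter_k}$ from $\projmatr$ — are genuinely second order, so that after $\tfrac{1}{\sqrt{k}}\sum$-averaging they disappear. This is exactly where the sharp rates of Lemma~\ref{lemma:convergence-rate-sequence-rsgd} ($\tfrac{1}{\sqrt{k}}\sum\norm{\Delta_i}^2\cas 0$ and $\tfrac{1}{\sqrt{k}}\sum\stepsize_i\norm{\projt{\rsgiter_i}g_i}^2\cas 0$) are indispensable — a crude almost-sure bound on $\Delta_k$ would not suffice — and the careful (if routine) bookkeeping of these second-order estimates, rather than any single deep step, is the real content.
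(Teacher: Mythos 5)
Your proposal is correct and follows essentially the same route as the paper: use Lemma~\ref{lemma:ultimate-rsgd-recursion} to obtain the clean recursion for $k \ge T$, expand via Lemmas~\ref{lemma:stability-of-projection}, \ref{lemma:second-order-projection-gradient}, and \ref{lemma:second-order-control-of-projection} into the form~\eqref{eqn:main-iterate}, verify Assumptions~\ref{assumption:growth-and-noise} and \ref{assumption:convergence-as} via Lemma~\ref{lemma:convergence-rate-sequence-rsgd}, and invoke Proposition~\ref{proposition:generalization-of-Polyak-Juditsky}. The only cosmetic differences are your extra (harmless) pass through Lemma~\ref{lemma:manifold-second-order-result} to rewrite $\norm{\Delta_k}^2$ bounds in terms of $\norm{\wt\Delta_k}^2$, and the unnecessary mention of a predictable/martingale split for the retraction error $\rho_k$, which the paper (and your own displayed bound) handles directly via $\tfrac{1}{\sqrt{k}}\sum_i \stepsize_i\norm{\projt{\rsgiter_i}g_i}^2 \cas 0$.
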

\begin{proof}
  We pick $T$ (random) sufficiently large that the conclusions of
  Lemma~\ref{lemma:ultimate-rsgd-recursion} hold, and in particular, that
  $\rsgiter_{k+1} = \proj_{\M} (\rsgiter_k - \stepsize_k
  \projt{\rsgiter_k}g_k)$ for all $k$. This is sufficient to develop a
  recursion of the form~\eqref{eqn:main-iterate}.  By
  Lemma~\ref{lemma:second-order-control-of-projection},
  when $k \ge T$, we have the expansion
  \begin{equation*}
    \rsgiter_{k+1}
    = \proj_{\M}(\rsgiter_k - \stepsize_k \projt{\rsgiter_k} g_k)
    = \rsgiter_k - \stepsize_k \projt{\rsgiter_k} g_k
    + \stepsize_k e_k, 
  \end{equation*}
  where the error satisfies $\norms{e_k} \le C \stepsize_k \norms{
    \projt{\rsgiter_k}g_k}^2$ for some constant $C < \infty$. If we
  subtract $x\opt$ and project both sides of this equality onto the linear
  subspace $\T_{\M}(x\opt)$, for $\wt{\Delta}_k = \projmatr \Delta_k$,
  $\wt{e}_k = \projmatr e_k$, and $k \ge T$ we
  have
  \begin{equation*}
    \wt{\Delta}_{k+1}
    = \wt{\Delta}_k - \stepsize_k \projmatr  \projt{\rsgiter_k} g_k
    + \stepsize_k \wt{e}_k.
  \end{equation*}
  By Lemma~\ref{lemma:second-order-projection-gradient}, for
  $k \ge T$ we have the expansion
  \begin{align*}
    \lefteqn{\projt{\rsgiter_k} g_k
      = \projt{\rsgiter_k} \grad f(\rsgiter_k)
      + \projt{\rsgiter_k} \noise_k}
    \\
    &~~= \projmatr H\opt \projmatr  \wt{\Delta}_k +
    \projmatr \noisezero_k + 
    \left[(\projt{\rsgiter_k} - \projmatr) \noisezero_k + 
      \projt{\rsgiter_k} \noisier_k(\rsgiter_k)\right] + \rerror_k'
  \end{align*}
  for an error term $\rerror_k'$ satisfying $\norm{\rerror_k'} \le
  C\norm{\rsgiter_k - x\opt}^2$.
  Substituting this into the preceding display,
  we see that if we define
  \begin{equation*}
    \notquaderror_k \defeq \rerror_k + \rerror_k'
    ~~ \mbox{and} ~~
    \noise_k^{\manifold}(x)
    \defeq (\projt{x} - \projmatr) \noisezero_k
    + \projt{x} \noisier_k(x),
  \end{equation*}
  for $k \ge T$ we obtain
  \begin{equation*}
    \wt{\Delta}_{k+1} =
    \wt{\Delta}_k - \stepsize_k \projmatr H\opt \projmatr \wt{\Delta}_k
    - \stepsize_k \projmatr \noisezero_k -
    \stepsize_k \projmatr \noise_k^{\manifold}(\rsgiter_k)
    + \projmatr \notquaderror_k.
  \end{equation*}
  Evidently, we may introduce a random variable $\offerror_k$ with
  $\offerror_k = 0$ for $k \ge T$ such that
  for all $k$,
  \begin{equation}
    \wt{\Delta}_{k+1} =
    \wt{\Delta}_k - \stepsize_k \projmatr H\opt \projmatr \wt{\Delta}_k
    - \stepsize_k \projmatr \noisezero_k -
    \stepsize_k \projmatr \noise_k^{\manifold}(\rsgiter_k)
    + \projmatr \notquaderror_k
    + \projmatr \offerror_k.
    \label{eqn:fourth-step-expansion}
  \end{equation}

  By inspection, the recursion~\eqref{eqn:fourth-step-expansion}
  is of the form~\eqref{eqn:main-iterate}, so we apply
  Proposition~\ref{proposition:generalization-of-Polyak-Juditsky} to the
  iterates $\wt{\Delta}_k$. It remains to verify
  Assumptions~\ref{assumption:growth-and-noise} and
  \ref{assumption:convergence-as}.

  \paragraph{Verifying Assumption~\ref{assumption:growth-and-noise}}
  The growth condition of
  Assumption~\ref{assumption:growth-and-noise} is immediate by
  Assumption~\ref{assumption:restricted-strong-convexity}.  To see that the
  conditions on the noise sequence of
  Assumption~\ref{assumption:growth-and-noise} hold, note that both terms
  $\projmatr \noisezero_k$ and $\noise_k^{\manifold}$ are martingale difference
  sequences. Assumption~\ref{assumption:noise-vectors}
  immediately implies that
  $\sup_k \E[\norms{\noisezero_k}^2 \mid \mc{F}_{k-1}] < \infty$ and
  $k^{-1/2} \sum_{i = 1}^k \projmatr \noisezero_i \cd \normal(0,
  \projmatr \Sigma \projmatr)$.
  Finally, we note that for all $x\in \R^n$,
  \begin{align*}
    \lefteqn{\E\left[\normbig{
          \noise_k^{\manifold}(x)}^2 \mid \filt_{k-1}\right]
      \indic{\norm{x - x\opt} \le \eps}} \nonumber \\
    & \stackrel{(i)}{\le}
    2 \left(\E \left[\normbig{(\projt{x} - \projmatr) \noisezero_k}^2
      \mid \filt_{k-1}\right]^2	
    +  \E \left[\normbig{\projt{x} \noisier_k(x)}^2 \mid \filt_{k-1}\right]^2
    \right)
    \indic{\norm{x - x\opt} \le \eps} \nonumber \\
    &~\le 2 \left(\opnorm{\projt{x} - \projmatr}^2
    \E\left[\normbig{\noisezero_k}^2 \mid \filt_{k-1}\right]^2
    + \E \left[\normbig{\noisier_k(x)}_2^2 \mid \filt_{k-1}\right]^2\right)
    \indic{\norm{x - x\opt} \le \eps}  \nonumber \\
    & \stackrel{(ii)}{\le} C \norm{x - x\opt}^2 \indic{\norm{x - x\opt} \le \eps}, 
  \end{align*}
  where $(i)$ follows because $(a + b)^2 \le 2a^2 + 2b^2$ for all $a, b \in \R$
  and the
  triangle inequality, and $(ii)$ follows
  from Lemma~\ref{lemma:stability-of-projection} and
  Assumption~\ref{assumption:noise-vectors}.
  This gives the last condition of 
  Assumption~\ref{assumption:growth-and-noise}.

  \paragraph{Verifying Assumption~\ref{assumption:convergence-as}}
  By the triangle inequality, for some constant $C > 0$, the error terms
  $\notquaderror_k$ satisfy
  \begin{align*}
    \frac{1}{\sqrt{k}}
    \sum_{i = 1}^k \norm{\notquaderror_i}
    & \le \frac{1}{\sqrt{k}}
    \sum_{i = 1}^k \left[\norm{\rerror_i} + \norm{\rerror_i'}\right] \\
    & \le \frac{1}{\sqrt{k}}
    \sum_{i = 1}^{T-1} \left[\norm{\rerror_i} + \norm{\rerror_i'}\right]
    +
    \frac{1}{\sqrt{k}}
    \sum_{i = T}^k C \left[\norm{\rsgiter_i - x\opt}^2
      + \stepsize_i \norm{\projt{\rsgiter_i} g_i}^2 \right]
    \cas 0,
  \end{align*}
  convergence is a consequence of
  Lemma~\ref{lemma:convergence-rate-sequence-rsgd} and that $T < \infty$
  with probability 1.
  That $\offerror_k = 0$ for all large $k$ is immediate by the definition
  of the random time $T$ (recall Lemma~\ref{lemma:ultimate-rsgd-recursion}),
  and finally, we obtain that $\frac{1}{\sqrt{k}}
  \sum_{i = 1}^k \norm{\rsgiter_k - x\opt}^2 \cas 0$ by
  Lemma~\ref{lemma:convergence-rate-sequence-rsgd}.

  With the conditions of
  Proposition~\ref{proposition:generalization-of-Polyak-Juditsky} verified,
  we obtain from the recursion~\eqref{eqn:fourth-step-expansion}
  that
  \begin{equation*}
    \frac{1}{\sqrt{k}} \sum_{i = 1}^k \wt{\Delta}_i
    \cd \normal\left(0, (\projmatr H\opt \projmatr)^\dag \projmatr
    \Sigma \projmatr (\projmatr H\opt \projmatr)^\dag \right).
  \end{equation*}
  Lemma~\ref{lemma:asymptotic-normality-projection-rsgd}
  follows by noting that $\projmatr (\projmatr H\opt \projmatr)^\dag
  = (\projmatr H\opt \projmatr)^\dag$.
\end{proof}

Finally, we translate the asymptotic normality of
$\wt{\Delta}_i = \projmatr \Delta_i$
Lemma~\ref{lemma:asymptotic-normality-projection-rsgd} provides to the
unprojected error sequence $\{\Delta_i\}_{i \in \N}$. Let $T < \infty$ be the
random integer such that the conclusions of
Lemma~\ref{lemma:ultimate-rsgd-recursion}
hold. Lemma~\ref{lemma:stability-of-projection} shows that for $k
\ge T$,
\begin{equation*}
  \norms{\Delta_k - \wt{\Delta}_k}
  = \norm{(\rsgiter_k - x\opt) - \projmatr(\rsgiter_k - x\opt)} 
  \le C \norm{\rsgiter_k - x\opt}^2 = C \norm{\Delta_k}^2. 
\end{equation*}
The triangle inequality implies that
\begin{align*}
  \frac{1}{\sqrt{k}}\norm{\sum_{i=1}^k \Delta_i - \sum_{i=1}^k\wt{\Delta}_i}
  & \le \frac{1}{\sqrt{k}} \sum_{i=1}^T
  \norms{\Delta_i- \wt{\Delta}_i} + \frac{1}{\sqrt{k}}
  \sum_{i=T+1}^k \norms{\Delta_i - \wt{\Delta}_i} \\
  &\le \frac{1}{\sqrt{k}} \sum_{i=1}^T \norms{\Delta_i - \wt{\Delta}_i}
  + \frac{C}{\sqrt{k}}
  \sum_{i=1}^k \norm{\Delta_i}^2
  \cas 0,
\end{align*}
where the final convergence guarantee follows from
Lemma~\ref{lemma:convergence-rate-sequence-rsgd} and $T < \infty$.
Slutsky's lemma thus gives the theorem,
as $\frac{1}{\sqrt{k}} \sum_{i = 1}^k
(\Delta_i - \wt{\Delta}_i) \cas 0$.

\section{Proof of Lemma~\ref{lemma:convergence-rate-sequence-rsgd}}
\label{sec:convergence-rate-rsgd-proof}

\newcommand{\bR}{R}

The proof of Lemma~\ref{lemma:convergence-rate-sequence-rsgd} is complex,
and as in our proof of Theorem~\ref{theorem:asymptotic-normality}
(Lemma~\ref{lemma:convergence-rate-of-residual},
Section~\ref{sec:proof-convergence-rate-residual}), we develop a purely
technical auxiliary sequence that has the convergence guarantees we desire,
arguing that this sequence eventually is identical to the true iteration
$\rsgiter_k$.  First, we introduce the
auxiliary sequences $\{\rsgiter_k^t\}_{k\in \N}$ in
Algorithm~\ref{algorithm:Rsgd-fake}. Of course
Algorithm~\ref{algorithm:Rsgd-fake} is not implementable, as it requires
knowledge of the unknown active manifold $\M$. The iterates
$\{\rsgiter_k^t\}_{k\in \N}$, however, are well defined, and they serve as a
mathematical tool to prove
Lemma~\ref{lemma:convergence-rate-sequence-rsgd}.

\begin{algorithm}[htp]
  \caption{Dual Averaging + Riemannian Stochastic Gradient $(t)$ Algorithm}
  \begin{algorithmic}[1] 
    \State Compute the first $t$ iterates as as in
    Algorithm~\ref{alg:da-riemannian}, that is,
    \begin{equation*}
      \rsgiter_k^t = \rsgiter_k
      ~~ \mbox{for}~~ k \le t.
    \end{equation*}

    \State
    Let $g_k^t = \grad f(\rsgiter_k^t) + \noise_k^t$,
    where $\noise_k^t = \noisezero_k + \noisier_k(\rsgiter_k^t)$. 
    Compute the iterate
    \begin{equation*}
      \rsgiter_{k+1}^{\manifold,t} = \proj_{\M}(\rsgiter_k^t - \stepsize_k \projt{\rsgiter_k^t} g_k^t).
    \end{equation*}

    \State Let $\ball_{k, 1} = \ball(\wb{x}_k\supda, \eps_k)$ and $\ball_{k,
      3} = \ball(\wb{x}_k\supda, 3\eps_k)$.

    \State \label{state:all-fake-cases}
    Compute the next iterate
    \begin{equation*}
      \rsgiter_{k+1}^t
      = \begin{cases}
        \proj_{\X}(\rsgiter_{k+1}^{\manifold,t})&\text{if $\proj_{\X}(\rsgiter_{k+1}^{\manifold,t}) \in \ball_{k, 3}$} 
        \\
        \argmin \left\{\norm{x} \mid x \in\M_{\X} \cap \ball_{k, 1}\right\}
        &\text{if}~
        \proj_{\X}(\rsgiter_{k+1}^{\manifold,t}) \not\in \ball_{k, 3}
        ~ \mbox{and} ~
	\M_{\X} \cap \ball_{k, 1} \neq \emptyset
        \\
        \argmin \left\{\norm{x} \mid x \in\M_{\X}\right\}
        &
        \text{otherwise}.
      \end{cases}
    \end{equation*}
  \end{algorithmic}
  \label{algorithm:Rsgd-fake}
\end{algorithm}

Now, we introduce the two error sequences
\begin{equation*}
  \Delta_k^t \defeq \rsgiter_k^t - x\opt
  ~~\text{and}~~
  \Delta^{\manifold,t}_k
  \defeq \rsgiter^{\manifold,t}_k - x\opt. 
\end{equation*}
We show that the guarantees of
Lemma~\ref{lemma:convergence-rate-sequence-rsgd} hold
for each of the artificial sequences $\{\rsgiter_k^t\}_{k \in \N}$,
for all $t \in \N$, which we transfer to $\{\rsgiter_k\}_{k \in \N}$
shortly. We enumerate the results in turn; each tacitly
assumes the conditions of Lemma~\ref{lemma:convergence-rate-sequence-rsgd}
hold.
\begin{lemma}
  \label{lemma:fake-error-cas}
  For all $t \in \N$,
  $\gamma_k^{-1} \norms{\Delta^t_k}^2 \cas 0$.
\end{lemma}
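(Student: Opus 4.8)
The plan is to show that the artificial iterates $\rsgiter_k^t$ of Algorithm~\ref{algorithm:Rsgd-fake} eventually evolve as an (unprojected) Riemannian stochastic gradient method on the tangent space $\T = \T_\M(x\opt)$, and then to run a Robbins--Siegmund rate argument on $W_k \defeq \gamma_k^{-1}\norm{\Delta_k^t}^2$. Fix $t \in \N$; since the first $t$ iterates coincide with $\rsgiter_k$, only the tail $k > t$ matters, and throughout I work on the a.s.\ event that the conclusions of Lemma~\ref{lemma:eventually-opt-in-ball} and of Theorem~\ref{theorem:manifold-identification} hold.

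First I would establish the plain a.s.\ convergence $\rsgiter_k^t \cas x\opt$ together with a crude bound $\norm{\Delta_k^t} = o(\eps_k)$. The mechanism is the safety net in line~\ref{state:all-fake-cases} of Algorithm~\ref{algorithm:Rsgd-fake}: the min-norm reset onto $\M_{\X}$ (third branch) shuts off once $\norm{x\opt - \wb{x}_k\supda} < \eps_k$, since then $x\opt \in \M_{\X} \cap \ball_{k,1}$, which holds eventually by Lemma~\ref{lemma:eventually-opt-in-ball}; thereafter $\rsgiter_{k+1}^t \in \ball_{k,3} = \ball(\wb{x}_k\supda, 3\eps_k)$ in every branch, and combining with $\wb{x}_k\supda \cas x\opt$ and $\eps_k \downarrow 0$ gives $\rsgiter_k^t \cas x\opt$. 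Refining this using the quantitative estimate $\eps_k^{-1/(2q)}\norm{x\opt - \wb{x}_k\supda} \to 0$ of Lemma~\ref{lemma:eventually-opt-in-ball} together with the rate separations built into $q,\rho,\steppow,\steppow_1$ (recorded in Eq.~\eqref{eqn:def-eps-gamma-k}, in particular $\gamma_k^{1/2}/\eps_k \to 0$) yields $\norm{\Delta_k^t} = o(\eps_k)$.

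Second, I would show that the \emph{reset} branches (the second and third cases of line~\ref{state:all-fake-cases}) are inactive for all $k$ past some a.s.\ finite random time $T$, so that for $k \ge T$ the update reduces to $\rsgiter_{k+1}^t = \proj_{\M}(\rsgiter_k^t - \stepsize_k \projt{\rsgiter_k^t} g_k^t)$. The idea is that once $\rsgiter_k^t \in \ball(x\opt,\eps)$, Lemmas~\ref{lemma:second-order-projection-gradient} and \ref{lemma:second-order-control-of-projection} give
$\rsgiter_{k+1}^{\manifold,t} - x\opt = \Delta_k^t - \stepsize_k\projmatr H\opt \projmatr \Delta_k^t - \stepsize_k \projt{\rsgiter_k^t}\noise_k^t + O(\stepsize_k\norm{\Delta_k^t}^2 + \stepsize_k^2\norm{\projt{\rsgiter_k^t}g_k^t}^2)$,
so $\norm{\rsgiter_{k+1}^{\manifold,t} - x\opt}$ is of order $\gamma_k^{1/2}$ plus an $\ell^2$-summable noise increment; since $\gamma_k^{1/2} = o(\eps_k)$ and $\norm{x\opt - \wb{x}_k\supda} = o(\eps_k)$, the point $\proj_{\X}(\rsgiter_{k+1}^{\manifold,t})$ lands strictly inside $\ball_{k,3}$ for all large $k$, which is precisely the first branch. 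Making this rigorous requires a bootstrap: feed the crude $o(\eps_k)$ bound of the first step into the Riemannian recursion applied to a \emph{stopped} version of the process (stopped when it would leave $\ball(\wb{x}_k\supda,3\eps_k)$), then show the stopping time is a.s.\ finite via Borel--Cantelli using $\sum \stepsize_k^2 < \infty$ and the conditional variance bounds of Assumption~\ref{assumption:noise-vectors}. I expect this to be the main obstacle, since it is exactly where the interplay between the slowly shrinking dual-averaging balls $\ball_{k,1},\ball_{k,3}$ and the faster Riemannian iterates, and the lack of global nonexpansiveness of $\proj_\M$, must be controlled.

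Finally, on $\{k \ge T\}$ the clean recursion holds, and projecting onto $\T$ (using Lemma~\ref{lemma:manifold-second-order-result} to pass between $\Delta_k^t$ and $\projmatr \Delta_k^t$ at the cost of an $O(\norm{\Delta_k^t}^2)$ correction), it takes the form $\Delta_{k+1}^t = (I - \stepsize_k \projmatr H\opt \projmatr)\Delta_k^t - \stepsize_k \projt{\rsgiter_k^t}\noise_k^t + \notquaderror_k$ with $\norm{\notquaderror_k} \le C\stepsize_k\norm{\Delta_k^t}^2 + C\stepsize_k^2\norm{\projt{\rsgiter_k^t}g_k^t}^2$. Using restricted strong convexity on the manifold (Lemma~\ref{lemma:restricted-strong-convexity-near-rsgd}), the martingale property $\E[\noise_k^t \mid \mc{F}_{k-1}] = 0$, and the bounds $\E[\norm{\noisezero_k}^2 \mid \mc{F}_{k-1}] \le C$, $\E[\norm{\noisier_k(x)}^2 \mid \mc{F}_{k-1}] \le C\norm{x - x\opt}^2$, one obtains for large $k$ a one-step inequality $\E[W_{k+1} \mid \mc{F}_k] \le (1 - c\stepsize_k)W_k + b_k$: the contraction absorbs the $\stepsize_k\norm{\Delta_k^t}^2$ corrections (they are $O(\stepsize_k)W_k$) and the factor $\gamma_k/\gamma_{k+1} = 1 + O(1/k)$ contributes only $o(\stepsize_k)W_k$ since $\steppow < 1$, while $b_k \lesssim \stepsize_k^2/\gamma_k$ is summable by Eq.~\eqref{eqn:def-eps-gamma-k}. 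Robbins--Siegmund (Lemma~\ref{lemma:robbins-siegmund}), applied to $W_{k\vee T}$ to absorb the localization, then gives that $W_k$ converges a.s.\ and $\sum_k \stepsize_k W_k < \infty$ a.s.; since $\sum_k \stepsize_k = \infty$ the limit must be zero, i.e.\ $\gamma_k^{-1}\norm{\Delta_k^t}^2 \cas 0$.
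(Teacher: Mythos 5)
Your overall architecture --- establish a crude localization, show the algorithm eventually reduces to the clean Riemannian recursion $\rsgiter_{k+1}^t = \proj_{\M}(\rsgiter_k^t - \stepsize_k \projt{\rsgiter_k^t} g_k^t)$, then run Robbins--Siegmund on $\gamma_k^{-1}\norms{\Delta_k^t}^2$ --- has the logical order backwards, and the step you yourself flag as ``the main obstacle'' is a genuine, essentially circular, gap. The safety net in line~\ref{state:all-fake-cases} only yields $\norms{\Delta_{k+1}^t} \le 4\eps_k$ (the iterate lands in $\ball_{k,3}$ and $\norms{\wb{x}_k\supda - x\opt} \le \eps_k$); it does \emph{not} yield $\norms{\Delta_k^t} = o(\eps_k)$, and the quantitative estimate of Lemma~\ref{lemma:eventually-opt-in-ball} controls only the dual-averaging average $\wb{x}_k\supda$, not the Riemannian iterates. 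With only the $4\eps_k$ envelope, $\proj_{\X}(\rsgiter_{k+1}^{\manifold,t})$ can sit at distance $\approx 4\eps_k > 3\eps_k$ from $\wb{x}_k\supda$, so you cannot conclude that the first branch fires. Upgrading $O(\eps_k)$ to $o(\eps_k)$ requires precisely the contraction-plus-martingale analysis of your final step, which you only run \emph{after} the clean recursion is in force. A further technical problem: your random time $T$ (``the reset branches are inactive for all $k \ge T$'') depends on the entire future noise sequence and is not a stopping time, so conditioning on $\{k \ge T\}$ inside a Robbins--Siegmund argument is not legitimate as written --- this is the same failure mode the paper warns about for the naive argument in the linear case.

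The missing idea is to make the rate argument \emph{branch-independent}. The paper's Lemma~\ref{lemma:logic-lemma} shows that on the $\filt_{k-1}$-measurable event $\event_{k,1} = \{x\opt \in \ball_{k,1}\}$ (which holds eventually by Lemma~\ref{lemma:eventually-opt-in-ball}), \emph{whichever} branch fires one has $\norms{\rsgiter_{k+1}^t - x\opt} \le \norms{\rsgiter_{k+1}^{\manifold,t} - x\opt}$: in the first branch this is nonexpansivity of $\proj_{\X}$, and in the second branch the selected point lies within $2\eps_k$ of $x\opt$ while $\rsgiter_{k+1}^{\manifold,t}$ is at least $2\eps_k$ away (its $\X$-projection escaped $\ball_{k,3}$). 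This lets one bound $\norms{\Delta_{k+1}^t}^2$ by $\norms{\Delta_{k+1}^{\manifold,t}}^2$ on predictable good events and run the contraction/Robbins--Siegmund recursion (with the bad events absorbed via Markov's inequality and the a.s.\ bound $\sum_k \bR_k^2 \indic{\event_k^c} < \infty$) \emph{without ever deciding which branch fires}. The eventual reduction to the clean recursion (your step 2) is then a downstream consequence, proved in Lemma~\ref{lemma:ultimate-rsgd-recursion} \emph{using} the rate $\gamma_k^{-1}\norms{\Delta_k^{\manifold,t}}^2 \cas 0$, not an input to it. Without this device your plan cannot be closed as sketched.
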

\noindent
See Section~\ref{sec:proof-fake-error-cas} for the proof.
We now demonstrate convergence of the gradient and noise terms.
\begin{lemma}
  \label{lemma:fake-projected-cas}
  For all $t \in \N$,
  \begin{equation*}
    \frac{\stepsize_k}{\sqrt{\gamma_k}} \projt{\rsgiter_k^t}\noise_k^t \cas 0
    ~~~ \mbox{and} ~~~
    \frac{\stepsize_k}{\sqrt{\gamma_k}} \projt{\rsgiter_k^t}g_k^t \cas 0.
  \end{equation*}
\end{lemma}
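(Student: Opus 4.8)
Write $\noise_k^t = \noisezero_k + \noisier_k(\rsgiter_k^t)$ as in Algorithm~\ref{algorithm:Rsgd-fake} and decompose $\projt{\rsgiter_k^t}g_k^t = \projt{\rsgiter_k^t}\grad f(\rsgiter_k^t) + \projt{\rsgiter_k^t}\noise_k^t$. The plan is to establish the claim first for the noise term $\frac{\stepsize_k}{\sqrt{\gamma_k}}\projt{\rsgiter_k^t}\noise_k^t$ and then to deduce the claim for $\frac{\stepsize_k}{\sqrt{\gamma_k}}\projt{\rsgiter_k^t}g_k^t$. The three inputs I will use are: Lemma~\ref{lemma:fake-error-cas}, which gives $\gamma_k^{-1}\norm{\Delta_k^t}^2 \cas 0$ and hence (since $\gamma_k \to 0$) $\rsgiter_k^t \cas x\opt$, so that for some random $T < \infty$ we have $\rsgiter_k^t \in \M \cap \ball(x\opt, \eps)$ for all $k \ge T$; the second-order expansion of Lemma~\ref{lemma:second-order-projection-gradient}; and the summability $\sum_k \stepsize_k^2 / \gamma_k < \infty$ recorded in Eq.~\eqref{eqn:def-eps-gamma-k}.

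For the noise term, since $\projt{\rsgiter_k^t}$ is an orthogonal projection it has operator norm at most $1$, so it suffices to prove $\frac{\stepsize_k}{\sqrt{\gamma_k}}\norm{\noise_k^t} \cas 0$. Because $\rsgiter_k^t$ is $\mc{F}_{k-1}$-measurable and $\rsgiter_k^t \cas x\opt$, Assumption~\ref{assumption:noise-vectors} (equivalently Assumption~\ref{assumption:growth-and-noise}) gives a random $T < \infty$ so that for $k \ge T$,
\[ \E\bigl[\norm{\noise_k^t}^2 \mid \mc{F}_{k-1}\bigr] \le 2\,\E\bigl[\norm{\noisezero_k}^2 \mid \mc{F}_{k-1}\bigr] + 2\,\E\bigl[\norm{\noisier_k(\rsgiter_k^t)}^2 \mid \mc{F}_{k-1}\bigr] \le 2C + 2C\,\norm{\Delta_k^t}^2, \]
which, since $\Delta_k^t \cas 0$ implies $\sup_k \norm{\Delta_k^t} < \infty$ a.s., is bounded a.s.\ by a random constant $R_t < \infty$. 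Fix $\eps > 0$; the conditional Markov inequality yields
\[ \P\Bigl(\tfrac{\stepsize_k}{\sqrt{\gamma_k}}\norm{\noise_k^t} > \eps \,\Big|\, \mc{F}_{k-1}\Bigr) \le \frac{\stepsize_k^2}{\eps^2 \gamma_k}\,\E\bigl[\norm{\noise_k^t}^2 \mid \mc{F}_{k-1}\bigr], \]
and summing over $k \ge T$ and using the a.s.\ bound $R_t$ together with $\sum_k \stepsize_k^2/\gamma_k < \infty$ shows $\sum_k \P(\tfrac{\stepsize_k}{\sqrt{\gamma_k}}\norm{\noise_k^t} > \eps \mid \mc{F}_{k-1}) < \infty$ with probability one. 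L\'evy's conditional extension of the Borel--Cantelli lemma then implies that a.s.\ the events $\{\tfrac{\stepsize_k}{\sqrt{\gamma_k}}\norm{\noise_k^t} > \eps\}$ occur only finitely often; intersecting over $\eps \in \{1/m\}_{m \in \N}$ gives $\frac{\stepsize_k}{\sqrt{\gamma_k}}\norm{\noise_k^t} \cas 0$, hence $\frac{\stepsize_k}{\sqrt{\gamma_k}}\projt{\rsgiter_k^t}\noise_k^t \cas 0$.

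For the gradient term it remains to control $\frac{\stepsize_k}{\sqrt{\gamma_k}}\projt{\rsgiter_k^t}\grad f(\rsgiter_k^t)$. For $k \ge T$ we have $\rsgiter_k^t \in \M \cap \ball(x\opt, \eps)$, so Lemma~\ref{lemma:second-order-projection-gradient} applies and gives $\norm{\projt{\rsgiter_k^t}\grad f(\rsgiter_k^t)} \le \opnorm{\projmatr H\opt \projmatr}\,\norm{\Delta_k^t} + C\,\norm{\Delta_k^t}^2 \le C'\,\norm{\Delta_k^t}$ once $\norm{\Delta_k^t}$ is small. Therefore
\[ \frac{\stepsize_k}{\sqrt{\gamma_k}}\,\norm{\projt{\rsgiter_k^t}\grad f(\rsgiter_k^t)} \le C'\,\stepsize_k\,\sqrt{\gamma_k^{-1}\norm{\Delta_k^t}^2} \cas 0, \]
since $\stepsize_k$ is bounded and $\gamma_k^{-1}\norm{\Delta_k^t}^2 \cas 0$ by Lemma~\ref{lemma:fake-error-cas}. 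Adding this to the already-proved convergence of the noise term gives $\frac{\stepsize_k}{\sqrt{\gamma_k}}\projt{\rsgiter_k^t}g_k^t \cas 0$, as claimed.

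The one nonroutine point is the appeal to the conditional Borel--Cantelli lemma rather than an unconditional summation: the bound $\E[\norm{\noisier_k(x)}^2 \mid \mc{F}_{k-1}] \le C\norm{x - x\opt}^2$ only holds near $x\opt$, so the conditional second moments $\E[\norm{\noise_k^t}^2 \mid \mc{F}_{k-1}]$ are controlled only on the (almost sure) event that $\rsgiter_k^t$ has entered a neighbourhood of $x\opt$; the conditional formulation absorbs this random entry time cleanly, and its hypothesis $\sum_k \P(\cdot \mid \mc{F}_{k-1}) < \infty$ holds precisely because the stepsize schedule was arranged so that $\sum_k \stepsize_k^2/\gamma_k < \infty$ (i.e.\ $\steppow_1 < 2\steppow - 1$).
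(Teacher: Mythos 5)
Your proof is correct and follows the same strategy as the paper's: both arguments rest on the summability $\sum_k \stepsize_k^2/\gamma_k < \infty$ from Eq.~\eqref{eqn:def-eps-gamma-k} together with the conditional second-moment bound $\E[\norm{\noise_k^t}^2\mid\filt_{k-1}] \le C(1+\norm{\Delta_k^t}^2)$ and the a.s.\ boundedness of $\Delta_k^t$, and both then dispose of the gradient term deterministically. The two places where you deviate are minor. First, to get the noise term to zero you invoke the conditional (L\'evy) Borel--Cantelli lemma, whereas the paper shows that the series $\sum_i \gamma_i^{-1/2}\stepsize_i\projt{\rsgiter_i^t}\noise_i^t$ converges a.s.\ by the local convergence theorem for square-integrable martingales (Lemma~\ref{lemma:dembo-l2}) and reads off that its terms vanish; these are interchangeable here, both being driven by the a.s.\ finiteness of the sum of conditional variances. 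Second, for the gradient term you use the quantitative bound $\norm{\projt{\rsgiter_k^t}\grad f(\rsgiter_k^t)} \le C\norm{\Delta_k^t}$ from Lemma~\ref{lemma:second-order-projection-gradient} together with $\gamma_k^{-1}\norm{\Delta_k^t}^2 \cas 0$, while the paper only needs the qualitative facts $\projt{\rsgiter_k^t}\grad f(\rsgiter_k^t) \cas \projmatr\grad f(x\opt) = 0$ and $\stepsize_k/\sqrt{\gamma_k}\to 0$; your route is fine but slightly over-engineered, and note that its hypothesis $\rsgiter_k^t \in \M$ for large $k$ does not follow from $\rsgiter_k^t \cas x\opt$ alone, as you assert---it is a consequence of the algorithm's construction via Lemma~\ref{lemma:logic-lemma} (on the eventually-occurring event $\event_{k,1}$ the update lands in $\M_{\X}\cap\ball_{k,3}$), so you should cite that rather than mere convergence.
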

\noindent
See Section~\ref{sec:proof-fake-projected-cas} for the proof.
We can also provide a rate of convergence estimate for
the manifold-based errors $\Delta_k^{\manifold, t} = \rsgiter_k^{\manifold,t}
- x\opt$.
\begin{lemma}
  \label{lemma:fake-manifold-error-cas}
  For all $t \in \N$, $\gamma_k^{-1} \norms{\Delta_k^{\manifold, t}}^2
  \cas 0$.
\end{lemma}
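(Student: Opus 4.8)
The plan is to show that, for $k$ large, the auxiliary manifold iterate obeys the one-step identity $\rsgiter_{k+1}^{\manifold,t} = \rsgiter_k^t - \stepsize_k\projt{\rsgiter_k^t}g_k^t + e_k$ with a remainder $e_k$ that is quadratic in the (tiny) projected gradient, and then to read off the rate from the convergence guarantees already in hand for $\Delta_k^t$ and for the projected gradient. By Lemma~\ref{lemma:fake-error-cas} we have $\norms{\Delta_k^t} = o(\gamma_k^{1/2})$ a.s., and by Lemma~\ref{lemma:fake-projected-cas} we have $\norms{\stepsize_k\projt{\rsgiter_k^t}g_k^t} = \gamma_k^{1/2}\norms{\frac{\stepsize_k}{\sqrt{\gamma_k}}\projt{\rsgiter_k^t}g_k^t} = o(\gamma_k^{1/2})$; since $\gamma_k^{1/2}/\eps_k\to 0$ and $\eps_k/\eps_{k+1}\to 1$ by~\eqref{eqn:def-eps-gamma-k}, both quantities are $o(\eps_k)$, and combined with $\norms{\wb{x}_k\supda - x\opt} = o(\eps_k)$ from Lemma~\ref{lemma:eventually-opt-in-ball}, assumption~\eqref{eqn:eps-inactive}, and the structure of line~\ref{state:all-fake-cases} of Algorithm~\ref{algorithm:Rsgd-fake} (together with the fact, established along the way in the proof of Lemma~\ref{lemma:fake-error-cas}, that the auxiliary iterates eventually lie on $\M$), a short induction yields a random, almost surely finite $T$ such that for all $k\ge T$ one has $\rsgiter_k^t\in\M\cap\ball(x\opt,\eps)$ and $-\stepsize_k\projt{\rsgiter_k^t}g_k^t\in\T_\M(\rsgiter_k^t)\cap\ball(0,\eps)$. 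Verifying this "stays on $\M$ near $x\opt$" step — which is where the interplay of $\gamma_k$, $\eps_k$, and $\stepsize_k$ in~\eqref{eqn:def-eps-gamma-k} is really used — is the delicate ingredient; the remainder is bookkeeping.

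Second, on the event $\{k\ge T\}$ I would apply Lemma~\ref{lemma:second-order-control-of-projection} with $x = \rsgiter_k^t\in\M\cap\ball(x\opt,\eps)$ and $v = v_k\defeq-\stepsize_k\projt{\rsgiter_k^t}g_k^t\in\T_\M(\rsgiter_k^t)\cap\ball(0,\eps)$ to get
\begin{equation*}
  \rsgiter_{k+1}^{\manifold,t} = \proj_\M(\rsgiter_k^t + v_k) = \rsgiter_k^t + v_k + e_k,
  \qquad \norms{e_k}\le C\norms{v_k}^2 = C\stepsize_k^2\norms{\projt{\rsgiter_k^t}g_k^t}^2,
\end{equation*}
so that $\Delta_{k+1}^{\manifold,t} = \Delta_k^t + v_k + e_k$ and $\norms{\Delta_{k+1}^{\manifold,t}}^2 \le 3\norms{\Delta_k^t}^2 + 3\norms{v_k}^2 + 3\norms{e_k}^2$.

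Third, I would divide by $\gamma_{k+1}$, use $\gamma_k/\gamma_{k+1} = 1 + O(1/k)$ from~\eqref{eqn:def-eps-gamma-k}, and check that each of the three terms, multiplied by $\gamma_{k+1}^{-1}$, tends to $0$ almost surely: $\gamma_{k+1}^{-1}\norms{\Delta_k^t}^2\cas 0$ by Lemma~\ref{lemma:fake-error-cas}; $\gamma_{k+1}^{-1}\norms{v_k}^2 = (1+o(1))\norms{\frac{\stepsize_k}{\sqrt{\gamma_k}}\projt{\rsgiter_k^t}g_k^t}^2\cas 0$ by Lemma~\ref{lemma:fake-projected-cas}; and $\gamma_{k+1}^{-1}\norms{e_k}^2 \le C^2\gamma_{k+1}^{-1}\norms{v_k}^4 = C^2(1+o(1))\gamma_k\norms{\frac{\stepsize_k}{\sqrt{\gamma_k}}\projt{\rsgiter_k^t}g_k^t}^4\cas 0$ since $\gamma_k\to 0$ and the bracketed factor converges to $0$. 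Hence $\gamma_{k+1}^{-1}\norms{\Delta_{k+1}^{\manifold,t}}^2\cas 0$, and shifting the index gives $\gamma_k^{-1}\norms{\Delta_k^{\manifold,t}}^2\cas 0$ for every $t\in\N$, as claimed. The main obstacle is entirely contained in the identification step of the first paragraph; given the earlier lemmas, the present statement is essentially a corollary, the only arithmetic subtlety being that the squared remainder $\norms{e_k}^2$ carries an extra factor $\gamma_k$ that kills it after division by $\gamma_{k+1}$.
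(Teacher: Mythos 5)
Your proposal is correct and follows essentially the same route as the paper: expand $\rsgiter_{k+1}^{\manifold,t}=\proj_\M(\rsgiter_k^t - \stepsize_k\projt{\rsgiter_k^t}g_k^t)$ via Lemma~\ref{lemma:second-order-control-of-projection} once the iterates are eventually on $\M$ near $x\opt$ (the paper packages this "eventually" step in Lemma~\ref{lemma:logic-lemma} and the events $\event_k'$, which you correctly defer to the machinery of Lemma~\ref{lemma:fake-error-cas}), then control the three resulting terms by Lemmas~\ref{lemma:fake-error-cas} and~\ref{lemma:fake-projected-cas} together with $\gamma_k/\gamma_{k+1}=1+O(1/k)$. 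The only cosmetic difference is that you square via $\norm{a+b+c}^2\le 3(\norm{a}^2+\norm{b}^2+\norm{c}^2)$ where the paper first applies the triangle inequality and then Cauchy--Schwarz; both yield the same three vanishing terms.
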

\noindent
See Section~\ref{sec:proof-fake-manifold-error-cas} for the proof.
Our final lemma provides an alternative convergence rate guarantee
for the errors and projected stochastic gradients; we provide
the proof in Section~\ref{sec:proof-fake-sequence-rsgd-cas}.
\begin{lemma}
  \label{lemma:fake-sequence-rsgd-cas}
  For any fixed $t \in \N$,
  \begin{equation*}
    \frac{1}{\sqrt{k}}\sum_{i=1}^k \norm{\Delta_i^t}^2 \cas 0
    ~~~ \mbox{and} ~~~
    \frac{1}{\sqrt{k}}
    \sum_{i=1}^k \stepsize_i \norm{\projt{\rsgiter_i^t}g_i^t}^2
    \cas 0.
  \end{equation*}
\end{lemma}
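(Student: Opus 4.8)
The plan is to transcribe the convergence-rate argument of the linear case (the proof of Lemma~\ref{lemma:convergence-rate-sequence-da}, Section~\ref{sec:proof-convergence-rate-sequence-da}) to the manifold setting. For the auxiliary sequence $\{\rsgiter_k^t\}$ of Algorithm~\ref{algorithm:Rsgd-fake} the active manifold $\M$ is known, and I would first show that past a random finite time the iteration is an honest Riemannian stochastic gradient step on $\M$ that stays near $x\opt$, and that it obeys a one-step contraction of the same shape as~\eqref{eqn:one-step-true-error}. The curvature-induced error terms appearing in place of the polyhedral projection $\proj_{\{Ax=b,Cx\le d\}}$ will be controlled by the calculus Lemmas~\ref{lemma:stability-of-projection}--\ref{lemma:second-order-control-of-projection} and the manifold restricted strong convexity of Lemma~\ref{lemma:restricted-strong-convexity-near-rsgd}.

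First I would produce a random time $T<\infty$ past which the iteration simplifies. Lemmas~\ref{lemma:fake-error-cas}, \ref{lemma:fake-projected-cas} and~\ref{lemma:fake-manifold-error-cas} give $\gamma_k^{-1}\norms{\Delta_k^t}^2\cas0$, $\gamma_k^{-1}\norms{\Delta_k^{\manifold,t}}^2\cas0$ and $\stepsize_k\projt{\rsgiter_k^t}g_k^t\cas0$; combined with $\gamma_k^{1/2}/\eps_k\to0$ from~\eqref{eqn:def-eps-gamma-k} and $\eps_k^{-1}\norm{x\opt-\wb{x}_k\supda}\to0$ from Lemma~\ref{lemma:eventually-opt-in-ball}, this forces, for $k\ge T$, that $\rsgiter_k^t\in\M\cap\ball(x\opt,\eps)$, that $\stepsize_k\norms{\projt{\rsgiter_k^t}g_k^t}\le\eps$, and that $\proj_{\X}(\rsgiter_{k+1}^{\manifold,t})\in\ball_{k,3}$, so the first branch of line~\ref{state:all-fake-cases} is taken and $\rsgiter_{k+1}^t=\proj_\M(\rsgiter_k^t-\stepsize_k\projt{\rsgiter_k^t}g_k^t)$. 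By Lemma~\ref{lemma:second-order-control-of-projection} this equals $\rsgiter_k^t-\stepsize_k\projt{\rsgiter_k^t}g_k^t+r_k$ with $\norms{r_k}\le C\stepsize_k^2\norms{\projt{\rsgiter_k^t}g_k^t}^2\le C\eps\,\stepsize_k\norms{\projt{\rsgiter_k^t}g_k^t}$ on this event. Expanding $\norms{\Delta_{k+1}^t}^2$ with $g_k^t=\grad f(\rsgiter_k^t)+\noisezero_k+\noisier_k(\rsgiter_k^t)$, I would use (i) Lemma~\ref{lemma:restricted-strong-convexity-near-rsgd} to bound $\langle\Delta_k^t,\projt{\rsgiter_k^t}\grad f(\rsgiter_k^t)\rangle\ge c\norms{\Delta_k^t}^2$; (ii) that $\noisezero_k$ and $\noisier_k(\rsgiter_k^t)$ are conditionally mean zero, which kills the noise cross term under $\E[\cdot\mid\mc{F}_{k-1}]$; and (iii) the second-moment bounds of Assumption~\ref{assumption:noise-vectors} together with $\norms{\projt{\rsgiter_k^t}\grad f(\rsgiter_k^t)}\le C\norms{\Delta_k^t}$ (Lemma~\ref{lemma:second-order-projection-gradient}) to absorb the $\stepsize_k^2\norms{g_k^t}^2$, $\langle\Delta_k^t,r_k\rangle$ and $\norms{r_k}^2$ contributions into $C\stepsize_k^2(1+\norms{\Delta_k^t}^2)$. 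The outcome is, on the localizing event, $\E[\norms{\Delta_{k+1}^t}^2\mid\mc{F}_{k-1}]\le(1-2c\stepsize_k+C\stepsize_k^2)\norms{\Delta_k^t}^2+C\stepsize_k^2$, the exact analogue of~\eqref{eqn:one-step-true-error}.

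From here the first assertion follows just as in Lemma~\ref{lemma:convergence-rate-sequence-da}: setting $\event_{i,l}=\{\norms{\Delta_j^t}\le\eps$ and the simplified recursion holds for $j=i,\dots,l\}$, which is $\mc{F}_{l-1}$-measurable and, by the previous paragraph, satisfies that $\event_{\lceil i/2\rceil,i}$ holds for all large $i$ with probability one, iterating the one-step inequality on $\event_{i/2,i}$ and invoking Lemma~\ref{lemma:funny-zero-sum} with $\rho=2$ gives $\sum_i i^{-1/2}\E[\norms{\Delta_i^t}^2\indic{\event_{i/2,i-1}}]<\infty$; monotone convergence plus the fact that $\event_{i/2,i-1}$ eventually holds give $\sum_i i^{-1/2}\norms{\Delta_i^t}^2<\infty$ almost surely, and Kronecker's lemma yields $k^{-1/2}\sum_{i=1}^k\norms{\Delta_i^t}^2\cas0$. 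For the second assertion I split $\projt{\rsgiter_i^t}g_i^t=\projt{\rsgiter_i^t}\grad f(\rsgiter_i^t)+\projt{\rsgiter_i^t}\noisezero_i+\projt{\rsgiter_i^t}\noisier_i(\rsgiter_i^t)$ for $i\ge T$; the gradient part is $O(\norms{\Delta_i^t}^2)$ by Lemma~\ref{lemma:second-order-projection-gradient}, so $k^{-1/2}\sum_i\stepsize_i\norms{\projt{\rsgiter_i^t}\grad f(\rsgiter_i^t)}^2\le Ck^{-1/2}\sum_i\norms{\Delta_i^t}^2\cas0$ by the first assertion, while for the two noise parts Kronecker reduces the claim to the almost sure finiteness of $\sum_i i^{-1/2}\stepsize_i\norms{\projt{\rsgiter_i^t}\noise_i^t}^2$: the $\noisezero_i$ part has conditional means $\le Ci^{-\steppow-1/2}$ (summable since $\steppow>\half$) and the $\noisier_i$ part has conditional means $\le Ci^{-\steppow-1/2}\norms{\Delta_i^t}^2$, which is almost surely summable because $\sum_i i^{-1/2}\norms{\Delta_i^t}^2<\infty$ was already established, so both nonnegative series converge a.s.\ by the conditional Borel--Cantelli lemma.

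The main obstacle is the middle paragraph: one must check carefully that, near $x\opt$, Algorithm~\ref{algorithm:Rsgd-fake} really performs the step $\rsgiter_{k+1}^t=\proj_\M(\rsgiter_k^t-\stepsize_k\projt{\rsgiter_k^t}g_k^t)$ and that every deviation from the linear-subspace picture --- the retraction error in $\proj_\M$, the mismatch between $\projt{\rsgiter_k^t}$ and $\projmatr$, and the curvature term $\langle\Delta_k^t,r_k\rangle$ --- is genuinely second order in $\norms{\Delta_k^t}$, or is crushed by $\stepsize_k$ together with the a.s.\ bound $\stepsize_k\norms{\projt{\rsgiter_k^t}g_k^t}\le\eps$. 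This is exactly what Lemmas~\ref{lemma:stability-of-projection}--\ref{lemma:second-order-control-of-projection} and~\ref{lemma:restricted-strong-convexity-near-rsgd} supply; once the one-step inequality holds, the remainder is a routine transcription of the linear-constraint argument together with the elementary Kronecker and Borel--Cantelli estimates above.
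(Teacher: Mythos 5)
Your proposal is correct and follows essentially the same route as the paper: localize on the $\mc{F}_{k-1}$-measurable events under which the iteration reduces to $\proj_\M(\rsgiter_k^t - \stepsize_k\projt{\rsgiter_k^t}g_k^t)$, derive the one-step contraction via Lemmas~\ref{lemma:second-order-control-of-projection}, \ref{lemma:second-order-projection-gradient}, and \ref{lemma:restricted-strong-convexity-near-rsgd}, iterate and apply Lemma~\ref{lemma:funny-zero-sum} to get summability of $\sum_i i^{-1/2}\E[\norms{\Delta_i^t}^2\indic{\event_{i/2,i}}]$, then conclude by monotone convergence and Kronecker. The only cosmetic difference is in the second assertion, where the paper bounds $\E[\norms{\projt{\rsgiter_i^t}g_i^t}^2\mid\filt_{i-1}]\le C(1+\norms{\Delta_i^t}^2)$ on the localizing event and reuses the same summable-expectation claim, whereas you split off the deterministic gradient and handle the noise by a conditional Borel--Cantelli argument; both are valid.
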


With these lemmas, we can finish the proof of
Lemma~\ref{lemma:convergence-rate-sequence-rsgd}.  Let $T < \infty$ be the
random integer the manifold identification result of
Theorem~\ref{theorem:manifold-identification} guarantees,
so that for all $k \ge T$,
$k \in \timesda$, we have $x_k\supda \in \M$. Then
\begin{equation}
  \label{eqn:finite-time-fake-equal-true-rsgd}
  \rsgiter_k = \rsgiter_k^{T},
  ~~\Delta_k^{\manifold} = \Delta_k^{\manifold, T},
  ~~\text{and}~~
  \Delta_k = \Delta_k^T
  ~~~ \mbox{for~all~} k \in \N.
\end{equation}

Let us now demonstrate the convergence
guarantees~\eqref{eqn:convergence-rate-quick-convergence}.  To show the
first assertion of Eq.~\eqref{eqn:convergence-rate-quick-convergence}, we
note that Lemma~\ref{lemma:fake-error-cas} coupled with the
equalities~\eqref{eqn:finite-time-fake-equal-true-rsgd} guarantees that
$\norm{\Delta_k} \cas 0$.  For the second assertion of
Eq.~\eqref{eqn:convergence-rate-quick-convergence}, we use the
equalities~\eqref{eqn:finite-time-fake-equal-true-rsgd} to obtain
\begin{equation*}
  \left\{\lim_k \gamma_k^{-1} \norms{\Delta_k^{\manifold}}^2
  = 0 \right\}
  \supset \bigcap_{t =1}^{\infty}
  \left\{\lim_k \gamma_k^{-1} \norm{\Delta_k^{\manifold, t}}^2 = 0\right\}.
\end{equation*}
By Lemma~\ref{lemma:fake-manifold-error-cas}, the right hand event
occurs with probability 1. The two remaining assertions of
Eq.~\eqref{eqn:convergence-rate-quick-convergence}
follow similarly, where we replace application
of Lemma~\ref{lemma:fake-manifold-error-cas} with
Lemma~\ref{lemma:fake-projected-cas}.

The proof of the assertions~\eqref{eqn:average-quad-quick-convergence} is
immediate given the
equivalence~\eqref{eqn:finite-time-fake-equal-true-rsgd}.

\subsection{Proof of Lemma~\ref{lemma:fake-error-cas}}
\label{sec:proof-fake-error-cas}

We show that the iterates $\rsgiter_k^t$ behave similarly to a
stochastic gradient iteration, exhibiting both
contractive properties as well as an almost supermartingale behavior
for $\norms{\Delta^{t}_k}^2$. We first define
events sufficient to guarantee the iterates
do not leave appropriate neighborhoods of $x\opt$.
For $k \in \N$, define
the $\mc{F}_{k-1}$-measurable events
\begin{equation}
  \begin{split}
    \label{eqn:event-projections-fine}
    & \event_{k,1} \defeq \{x\opt \in \ball_{k,1}\},
    ~~~
    \event_{k,2} \defeq \left\{\stepsize_{k-1}
    \norms{\projt{\rsgiter_{k-1}^t} g_{k-1}^t} \le \eps / 2 \right\},
    \\
    & \event_k \defeq \event_{k-1,1} \cap \event_{k,1},
    ~~~ \mbox{and} ~~~
    \event_k' \defeq \event_{k-1} \cap \event_{k,2}
    = \event_{k-2,1} \cap \event_{k-1,1} \cap \event_{k,2}.
  \end{split}
\end{equation}
The key is that on
these events, the iterates $\rsgiter^t_k$ exhibit
non-expansivity:
\begin{lemma}
  \label{lemma:logic-lemma}
  Fix $t \in \N$ and let $k > t$ be such that $2\eps_k \le \eps$.
  Then on event $\event_{k, 1}$,
  \begin{equation*}
    \norm{\Delta_{k+1}^t} \le 4 \eps_k,
    ~~~
    \norm{\rsgiter^t_{k+1} - x\opt} \le
    \norms{\rsgiter^{\manifold,t}_{k+1} - x\opt},
    ~~ \mbox{and} ~~
    \rsgiter^t_{k+1} \in \M_{\X} \cap \ball_{k,3}.
  \end{equation*}
\end{lemma}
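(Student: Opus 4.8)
The statement is a deterministic claim about which branch of line~\ref{state:all-fake-cases} of Algorithm~\ref{algorithm:Rsgd-fake} fires and where it lands, so the proof is a case analysis. Two standing facts do most of the work. First, $x\opt$ is feasible with all of its active constraints zero, so $x\opt\in\M_{\X}$, while $\X$ is convex with $x\opt\in\X$, so $\proj_{\X}$ is nonexpansive and fixes $x\opt$. Second, by \eqref{eqn:eps-inactive} (for $\eps$ small enough that the local geometry near $x\opt$ holds) we have $\M\cap\ball(x\opt,\eps)\subseteq\X$ — on $\M$ the active constraints vanish and the inactive ones are strictly negative — hence $\M\cap\ball(x\opt,\eps)=\M_{\X}\cap\ball(x\opt,\eps)$ and $\proj_{\X}$ is the identity on this set. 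Now on $\event_{k,1}$ we have $x\opt\in\ball_{k,1}$, so $x\opt\in\M_{\X}\cap\ball_{k,1}\neq\emptyset$; this rules out the ``otherwise'' branch of line~\ref{state:all-fake-cases}, and only two possibilities remain, each of which I treat in turn (the hypothesis $2\eps_k\le\eps$ keeps the relevant iterates inside neighborhoods of $x\opt$ where the local structure applies).

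\textbf{Recentering branch.} If $\rsgiter_{k+1}^t=\argmin\{\norm{x}\mid x\in\M_{\X}\cap\ball_{k,1}\}$ (attained, since $\M_{\X}\cap\ball_{k,1}$ is nonempty and compact on $\event_{k,1}$), then $\rsgiter_{k+1}^t\in\M_{\X}\cap\ball_{k,1}\subseteq\M_{\X}\cap\ball_{k,3}$ by construction, and the triangle inequality through $\wb{x}_k\supda$ together with $\norm{\wb{x}_k\supda-x\opt}\le\eps_k$ gives $\norm{\rsgiter_{k+1}^t-x\opt}\le 2\eps_k\le 4\eps_k$, which is conclusion (1). For conclusion (2), this branch is entered only when $\proj_{\X}(\rsgiter_{k+1}^{\manifold,t})\notin\ball_{k,3}$, i.e.\ $\norm{\proj_{\X}(\rsgiter_{k+1}^{\manifold,t})-\wb{x}_k\supda}>3\eps_k$; combining this with $\norm{\wb{x}_k\supda-x\opt}\le\eps_k$ and with nonexpansiveness of $\proj_{\X}$ (which fixes $x\opt$) yields $\norm{\rsgiter_{k+1}^{\manifold,t}-x\opt}\ge\norm{\proj_{\X}(\rsgiter_{k+1}^{\manifold,t})-x\opt}>2\eps_k\ge\norm{\rsgiter_{k+1}^t-x\opt}$.

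\textbf{Projection branch.} If $\rsgiter_{k+1}^t=\proj_{\X}(\rsgiter_{k+1}^{\manifold,t})\in\ball_{k,3}$, then $\norm{\rsgiter_{k+1}^t-x\opt}\le 3\eps_k+\eps_k=4\eps_k$, and nonexpansiveness of $\proj_{\X}$ with $x\opt$ fixed gives $\norm{\rsgiter_{k+1}^t-x\opt}\le\norm{\rsgiter_{k+1}^{\manifold,t}-x\opt}$, so conclusions (1) and (2) are immediate; the remaining point is $\rsgiter_{k+1}^t\in\M_{\X}\cap\ball_{k,3}$, for which (given $\rsgiter_{k+1}^t\in\ball_{k,3}$ already) it suffices, by the second standing fact, to show $\rsgiter_{k+1}^{\manifold,t}\in\ball(x\opt,\eps)$ — then $\rsgiter_{k+1}^{\manifold,t}\in\X$, so $\proj_{\X}$ fixes it and $\rsgiter_{k+1}^t=\rsgiter_{k+1}^{\manifold,t}\in\M\cap\X=\M_{\X}$. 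I would obtain this from $\rsgiter_{k+1}^{\manifold,t}=\proj_{\M}(y_k)$ with $y_k=\rsgiter_k^t-\stepsize_k\projt{\rsgiter_k^t}g_k^t$: since $x\opt\in\M$ is a candidate for the projection, $\norm{\proj_{\M}(y_k)-y_k}\le\norm{x\opt-y_k}$, whence $\norm{\rsgiter_{k+1}^{\manifold,t}-x\opt}\le\norm{\proj_{\M}(y_k)-y_k}+\norm{y_k-x\opt}\le 2\norm{y_k-x\opt}\le 2\norm{\rsgiter_k^t-x\opt}+2\stepsize_k\norm{\projt{\rsgiter_k^t}g_k^t}$, which is $\le\eps$ under the neighborhood and step-size control that accompanies this lemma's use inside the proof of Lemma~\ref{lemma:fake-error-cas} (recall the events $\event_{k,1},\event_{k,2}$).

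\textbf{Main obstacle.} Everything except that last step is the triangle inequality plus nonexpansiveness of the convex projection $\proj_{\X}$. The delicate part is exactly the claim that the single projection back onto $\X$ in the projection branch does not move the iterate off $\M$: one cannot invoke nonexpansiveness of $\proj_{\M}$ (it fails — precisely the pathology this whole construction is designed to circumvent), and instead must route through $x\opt\in\M$ and the resulting factor-$2$ bound, together with the one-step control on $y_k=\rsgiter_k^t-\stepsize_k\projt{\rsgiter_k^t}g_k^t$, to certify $\rsgiter_{k+1}^{\manifold,t}\in\ball(x\opt,\eps)$.
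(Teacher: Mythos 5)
Your handling of the first two claims and of the recentering branch of the third claim is correct and matches the paper's argument (triangle inequalities through $\wb{x}_k\supda$ on $\event_{k,1}$, plus nonexpansiveness of $\proj_\X$ fixing $x\opt$). The genuine gap is exactly at the point you flag as the "main obstacle": in the projection branch you certify $\rsgiter_{k+1}^t = \rsgiter_{k+1}^{\manifold,t}\in\M_\X$ by trying to place $\rsgiter_{k+1}^{\manifold,t}$ itself inside $\ball(x\opt,\eps)$, via $\norms{\rsgiter_{k+1}^{\manifold,t}-x\opt}\le 2\norms{\rsgiter_k^t-x\opt}+2\stepsize_k\norms{\projt{\rsgiter_k^t}g_k^t}$. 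Neither term on the right is controlled by the lemma's sole hypothesis $\event_{k,1}=\{x\opt\in\ball_{k,1}\}$: bounding $\norms{\rsgiter_k^t-x\opt}$ requires the conclusion at step $k-1$ on $\event_{k-1,1}$, and bounding the gradient step requires $\event_{k+1,2}$. The lemma must hold on $\event_{k,1}$ alone — this matters downstream, because in the proofs of Lemmas~\ref{lemma:fake-error-cas} and~\ref{lemma:fake-manifold-error-cas} the third claim is invoked at index $k-1$ on events that do \emph{not} contain $\event_{k-2,1}$ or $\event_{k,2}$, so your strengthened hypotheses cannot simply be absorbed into the statement.

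The paper's proof avoids needing any control on the location of $\rsgiter_{k+1}^{\manifold,t}$. It only uses what is available after the projection: $\rsgiter_{k+1}^t=\proj_\X(\rsgiter_{k+1}^{\manifold,t})\in\ball_{k,3}\subset\ball(x\opt,\eps)$, so $f_i(\rsgiter_{k+1}^t)<0$ strictly for $i>\numactive$ by~\eqref{eqn:eps-inactive}, while $\rsgiter_{k+1}^{\manifold,t}\in\M$ by construction of $\proj_\M$. If $\rsgiter_{k+1}^{\manifold,t}\notin\X$, then $f_i(\rsgiter_{k+1}^{\manifold,t})>0$ for some inactive $i$, and the convex combination $(1-\lambda)\rsgiter_{k+1}^t+\lambda\,\rsgiter_{k+1}^{\manifold,t}$ is, for small $\lambda>0$, still feasible (convexity of the $f_i$ handles the active constraints, strict negativity at $\rsgiter_{k+1}^t$ handles the inactive ones) yet strictly closer to $\rsgiter_{k+1}^{\manifold,t}$, contradicting the optimality of the Euclidean projection onto $\X$. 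Hence $\rsgiter_{k+1}^{\manifold,t}\in\X$, $\proj_\X$ fixes it, and $\rsgiter_{k+1}^t=\rsgiter_{k+1}^{\manifold,t}\in\M_\X$. Your standing fact $\M\cap\ball(x\opt,\eps)\subseteq\X$ is correct and clean, but you would need to apply it to a point you cannot locate; replacing your last step with this contradiction argument closes the gap.
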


Deferring the proof of Lemma~\ref{lemma:logic-lemma} temporarily
(see Sec.~\ref{sec:proof-logic-lemma}),
we use it to prove Lemma~\ref{lemma:fake-error-cas}.
Define the radius $\bR_k = \sup_{x \in \ball_{k, 3}}\norm{x - x\opt} +
2\norm{x\opt}$. Then by definition of $\rsgiter_{k+1}^t$, $k >
t$ implies
\begin{equation*}
  \norm{\Delta_{k+1}^t} = \normbig{\rsgiter^{t}_{k+1} - x\opt} \le \bR_k.
\end{equation*}
Let $k > t$ satisfy $8\eps_k \le \eps$. By 
Lemma~\ref{lemma:logic-lemma}, we have 
\begin{align}
  \norm{\Delta_{k+1}^t}^2 & \le
  \normbig{\Delta_{k+1}^{\manifold, t}}^2
  \indic{\event_{k+1}'}
  + \bR_k^2  \indic{(\event_{k+1}')^c} \nonumber \\
  &\le \normbig{
    \Pi_{\M}(\rsgiter_k^t - \stepsize_k \projt{\rsgiter_k^t}g_k^t) - x\opt}^2  
  \indic{\event_{k+1}'} + \bR_k^2  \indic{(\event_{k+1}')^c},
  \label{eqn:first-step-contrative-bound-deterministic}
\end{align}
where we recall the definition~\eqref{eqn:event-projections-fine}
of the events $\event_k$ and $\event_k'$.

On event $\event_{k+1}'$, $\rsgiter_k^t \in \M$ by
Lemma~\ref{lemma:logic-lemma}. Using
Lemma~\ref{lemma:second-order-control-of-projection} and the
triangle
inequality, we obtain
\begin{equation}
  \label{eqn:step-one-convergence-fake-rsgd}
  \normbig{\Pi_{\M}(\rsgiter_k^t - \stepsize_k \projt{\rsgiter_k^t}g_k^t) - x\opt}
  \le \normbig{\Delta_k^t - \stepsize_k \projt{\rsgiter_k^t} g_k^t}
  + C\stepsize_k^2 \normbig{\projt{\rsgiter_k^t}g_k^t}^2.
\end{equation}
Now, by definition~\eqref{eqn:event-projections-fine} of $\event_{k+1}'$, we
know that on $\event_{k+1}'$,
\begin{equation*}
  \stepsize_k \normbig{\projt{\rsgiter_k^t}g_k^t} \le \eps.
\end{equation*}
By Lemma~\ref{lemma:logic-lemma} and triangle inequality, we
know on event $\event_{k+1}'$,
\begin{equation*}
  \norm{\Delta_k^t - \stepsize_k \projt{\rsgiter_k^t}g_k^t}
  \le \norm{\Delta_k^t} + \stepsize_k \normbig{\projt{\rsgiter_k^t}g_k^t}
  \le 4\eps_k + \eps/2 \le \eps.
\end{equation*}

For any $0 \le a, b \le \eps$, if $\eps < 1/3$ we have
$(a + b)^2 = a^2 + 2ab + b^2 \le a^2 + b$ whenever $\eps < 1/3$. Consequently,
squaring both sides of Eq.~\eqref{eqn:step-one-convergence-fake-rsgd}
and using the $\eps$ bounds of the preceding displays,
there exists a universal constant $C < \infty$ such that
on $\event_{k+1}'$, we have the crucial estimate
\begin{equation}
  \label{eqn:intermediate-step-contrative-bound-deterministic}
  \norm{\Pi_{\M}(\rsgiter_k^t - \stepsize_k \projt{\rsgiter_k^t}g_k^t)
    - x\opt}^2
  \le 
  \normbig{\Delta_k^t - \stepsize_k \projt{\rsgiter_k^t}g_k^t}^2
  + 2C \stepsize_k^2 \normbig{\projt{\rsgiter_k^t} g_k^t}^2.
\end{equation}
The inclusion $\event_{k+1}' \subset \event_k$ coupled with
inequalities~\eqref{eqn:first-step-contrative-bound-deterministic} and
\eqref{eqn:intermediate-step-contrative-bound-deterministic} shows that
for $k \ge N$, we have
\begin{align*}
  \lefteqn{\norm{\Delta_{k+1}^t}^2} \\
  & \le 
  \left(\norm{\Delta_k^t - \stepsize_k \projt{\rsgiter_k^t}g_k^t}^2
  + 2C \stepsize_k^2 \norm{\projt{\rsgiter_k^t} g_k^t}^2\right)
  \indic{\event_{k+1}'} + \bR_k^2 \indic{(\event_{k+1}')^c} \\
  & \le \left(\norm{\Delta_k^t - \stepsize_k \projt{\rsgiter_k^t}g_k^t}^2
  + 2C \stepsize_k^2 \norm{\projt{\rsgiter_k^t} g_k^t}^2\right)
  \indic{\event_k} + \bR_k^2 \indic{(\event_{k+1}')^c}.
\end{align*}
Taking expectations conditional on $\mc{F}_{k-1}$, and using
that $\event_k \in \mc{F}_{k-1}$, we obtain the intermediate
single-step progress bound
\begin{align}
  \E \left[\norm{\Delta_{k+1}^t}^2 \mid \filt_{k-1}\right]
  & \le \E\left[\norm{\Delta_k^t - \stepsize_k \projt{\rsgiter_k^t}g_k^t}^2
    + 2C \stepsize_k^2 \norm{\projt{\rsgiter_k^t}g_k^t}^2 \mid \filt_{k-1}
    \right] \indic{\event_k} \nonumber \\
  &~~~~~~ + \bR_k^2 \indic{\event_k^c}
  + \bR_k^2 \P(\event_{k+1, 2}^c \mid \filt_{k-1}) \indic{\event_k}
  \label{eqn:first-step-conditional-expectation}
\end{align}

Inequality~\eqref{eqn:first-step-conditional-expectation} allows us to
develop a recursive inequality bounding the norm of $\Delta_{k+1}^t$,
which allows us to apply more standard convergence techniques (such as the
Robbins-Siegmund convergence lemma).  To that end, we analyze the
quantities on the right hand side of the one-step
bound~\eqref{eqn:first-step-conditional-expectation} individually.  We
first see that
\begin{align}
  \lefteqn{\E\left[\norm{\Delta_k^t -
        \stepsize_k \projt{\rsgiter_k^t}g_k^t}^2 \mid \filt_{k-1}\right]
    \indic{\event_k}} \nonumber \\
  & = \left(\norm{\Delta_k^t}^2 -
  2 \stepsize_k \< \Delta_k^t, \projt{\rsgiter_k^t} \grad f(\rsgiter_k^t) \>
  + \stepsize_k^2 \norm{\projt{\rsgiter_k^t} \grad f(\rsgiter_k^t)}^2\right)
  \indic{\event_k}  \nonumber \\
  &~~~~~~~ + \stepsize_k^2
  \E\left[\norm{\projt{\rsgiter_k^t}\noise_k^t}^2 \mid \filt_{k-1}\right]
  \indic{\event_k} \nonumber \\
  & \stackrel{(i)}{\le}
  \left((1 - c \stepsize_k + C \stepsize_k^2)
  \norm{\Delta_k^t}^2
  + \stepsize_k^2 \E\left[\normbig{\projt{\rsgiter_k^t} \noise_k^t}^2
    \mid \filt_{k-1}\right]\right) \indic{\event_k} \nonumber \\
  & \stackrel{(ii)}{\le}
  \left((1 + C \stepsize_k^2) \norm{\Delta_k^t}^2
  - c \stepsize_k \norm{\Delta_k^t}^2\right)\indic{\event_k}.
  \label{eqn:riemmanian-descent-expectation}
\end{align}
Here inequality $(i)$ follows because when $\rsgiter_k^t \in \ball(x\opt,
\epsilon)$, which event $\event_k$ guarantees by
Lemma~\ref{lemma:logic-lemma},
Lemma~\ref{lemma:second-order-projection-gradient} gives
$\norms{\projt{\rsgiter_k^t} \grad f(\rsgiter_k^t)} \le C
\norms{\Delta_k^t}$, while
Lemma~\ref{lemma:restricted-strong-convexity-near-rsgd} implies that
$\<\Delta_k^t, \projt{\rsgiter_k^t} \grad f(\rsgiter_k^t)\> \ge c
\norm{\Delta_k^t}^2$ in the same situation.  Inequality~$(ii)$ is a
consequence of Assumption~\ref{assumption:noise-vectors} and that
$\projt{\rsgiter_k^t}$ is a projection matrix.

We now turn to the second term in
inequality~\eqref{eqn:first-step-conditional-expectation}.  Here
we immediately obtain
$\E[\norms{\projt{\rsgiter_k^t}g_k^t}^2 \mid \filt_{k-1}]\indic{\event_k}
\le C(1 + \norms{\Delta_k^t}^2) \indic{\event_k}$ by
Assumption~\ref{assumption:noise-vectors}. For the final
term in the bound~\eqref{eqn:first-step-conditional-expectation},
we can apply Markov's inequality to find that for
a constant $C = C(\epsilon) < \infty$, for $k \ge N$
\begin{equation*}
  \P(\event_{k+1, 2}^c\mid \filt_{k-1})\indic{\event_k}
  \le \frac{4\stepsize_k^2}{\eps^2}
  \E\left[\norm{\projt{\rsgiter_k^t} g_k^t}^2 \mid \filt_{k-1}\right]
  \indic{\event_k} 
  \le C \stepsize_k^2 (1+ \norm{\Delta_k^t}^2)\indic{\event_k},
\end{equation*}
where the last inequality again uses
Assumption~\ref{assumption:noise-vectors}.  Substituting the preceding
bounds and inequality~\eqref{eqn:riemmanian-descent-expectation} into the
single step Eq.~\eqref{eqn:first-step-conditional-expectation}, we see
that for all $k \ge N$,
\begin{align}
  \lefteqn{\E[\norm{\Delta_{k+1}^t}^2 \mid \filt_{k-1}]} \nonumber \\
  & \le (1-c\stepsize_k + C\stepsize_k^2 (1+\bR_k^2))
  \norm{\Delta_k^t}^2 \indic{\event_k}+ C \stepsize_k^2(1 + \bR_k^2)  \indic{\event_k} + \bR_k^2 \indic{\event_k^c}.
  \label{eqn:crucial-estimate}
\end{align}

Inequality~\eqref{eqn:crucial-estimate} is the crucial estimate that
allows us to apply the Robbins-Siegmund lemma to obtain a
convergence rate. Dividing each side of
inequality~\eqref{eqn:crucial-estimate} by $\gamma_k$ and setting
$\kappa_k \defeq \frac{\gamma_k - \gamma_{k+1}}{\gamma_{k+1}}$, we
obtain
\begin{equation*}
  \E\left[\frac{\norms{\Delta_{k+1}^t}^2}{\gamma_{k+1}}
    \mid \filt_{k-1}\right] \le (1+\kappa_k)
  (1-c\stepsize_k + C\stepsize_k^2 (1+\bR_k^2)) \frac{\norms{\Delta_k^t}^2}{
    \gamma_k} +
  C  \frac{\stepsize_k^2}{\gamma_{k+1}}(1+\bR_k^2) + 
  \frac{\bR_k^2}{\gamma_{k+1}} \indic{\event_k^c}.
\end{equation*}
By assumption on $\{\gamma_k\}_{k\in \N}$, for
all large enough $k$ we have $\kappa_k \le (c \stepsize_k / 2) \wedge 1$,
and thus for such $k$ we have
\begin{equation*}
  (1+\kappa_k)(1-c\stepsize_k + C\stepsize_k^2(1+\bR_k^2))  
  \le 1 - c\stepsize_k/2 + 2C\stepsize_k^2 (1+\bR_k^2).
\end{equation*}
Substituting this above, we have
for large enough $k$ that 
\begin{equation}
  \label{eqn:crucial-estimate-rate-convergence-simplify}
  \E\left[\frac{\norms{\Delta_{k+1}^t}^2}{\gamma_{k+1}}
    \mid \filt_{k-1}\right]
  \le 
  (1+C\stepsize_k^2(1+\bR_k^2))\frac{\norms{\Delta_k^t}^2}{\gamma_k}
  - c\stepsize_k \frac{\norms{\Delta_k^t}^2}{\gamma_k}
  + C \frac{\stepsize_k^2}{\gamma_{k+1}}(1 + \bR_k^2) + 
  \frac{\bR_k^2}{\gamma_{k+1}} \indic{\event_k^c}.
\end{equation}
By Theorem~\ref{theorem:as-convergence}, we have
$\wb{x}_k\supda \cas x\opt$, and thus
\begin{equation*}
  \{\event_k~\text{occurs for only finitely many}~k\}
  ~~~\text{and}~~~
  \limsup_{k\to \infty} \bR_k \le 2\norm{x\opt}
\end{equation*}
with probability $1$.  Our assumptions on $\{\gamma_k\}_{k \in \N}$ in
Eq.~\eqref{eqn:def-eps-gamma-k} and that the stepsizes $\stepsize_k$ are
square summable imply that
\begin{equation*}
  \sum_{k=1}^\infty \stepsize_k^2(1+\bR_k^2) < \infty,~
  \sum_{k=1}^\infty \frac{\stepsize_k^2}{\gamma_{k+1}}(1+\bR_k^2) < \infty~\text{and}~
  \sum_{k=1}^\infty  \frac{\bR_k^2}{\gamma_{k+1}} \indic{\event_k^c} <\infty
\end{equation*}
with probability 1.  By applying the Robbins-Siegmund
Lemma~\ref{lemma:robbins-siegmund} to the
recursion~\eqref{eqn:crucial-estimate-rate-convergence-simplify}, we
conclude that $\norm{\Delta_{k}^t}^2 / \gamma_k \cas V$ for some finite
random variable $V$ and that $\sum_{k=1}^\infty \stepsize_k
\norms{\Delta_k^t}^2 / \gamma_k < \infty$. As $\sum_{k=1}^\infty
\stepsize_k \to \infty$, we must have $V = 0$, giving
the desired claim of the lemma.

\subsubsection{Proof of Lemma~\ref{lemma:logic-lemma}}
\label{sec:proof-logic-lemma}

Recall that $\M_{\X} = \X \cap \M$.  On the event $\event_{k, 1}$, we know
that $x\opt \in \M_{\X} \cap \ball_{k, 1}$, and so $\M_{\X} \cap \ball_{k,
  1} \neq \emptyset$. By definition (Alg.~\ref{algorithm:Rsgd-fake},
line~\ref{state:all-fake-cases}) of the
iterates $\rsgiter_{k+1}^t$, either
\begin{equation}
  \label{eqn:logic-case-one}
  \rsgiter_{k+1}^t= \proj_{\X}(\rsgiter_{k+1}^{\manifold,t})
  ~\text{and}~
  \proj_{\X}(\rsgiter_{k+1}^{\manifold,t}) \in \ball_{k, 3}
\end{equation}
or 
\begin{equation}
  \label{eqn:logic-case-two}
  \rsgiter_{k+1}^t =
  \argmin_x \left\{\norm{x} \mid x \in\M_{\X} \cap \ball_{k, 1}\right\}
  ~\text{and}~ 
  \proj_{\X}(\rsgiter_{k+1}^{\manifold,t}) \not\in \ball_{k, 3}
\end{equation}
In either of the two cases
$\rsgiter_{k+1}^t \in \ball_{k, 3}$ and thus
\begin{equation*}
  \norm{\Delta_{k+1}^t} = \norm{\rsgiter_{k+1}^t - x\opt} \le 
  \norms{\rsgiter_{k+1}^t - \wb{x}_k\supda} + \norms{\wb{x}_k\supda - x\opt}
  \le 4\eps_k.
\end{equation*}
This proves the first claim of the lemma.

For the second claim, the non-expansivity result, we note that
the convexity of $\X$ immediately implies
\begin{equation*}
  \norm{\proj_{\X}(\rsgiter_{k+1}^{\manifold,t}) - x\opt}
  \le \norm{\rsgiter_{k+1}^{\manifold,t} - x\opt}.
\end{equation*}
We divide our discussion into two cases. 
\begin{enumerate}[1.]
\item In the first case, we assume that Eq.~\eqref{eqn:logic-case-one}
  holds.  In this case, we know that $\rsgiter_{k+1}^t =
  \proj_{\X}(\rsgiter_{k+1}^{\manifold,t})$ and hence the preceding
  display implies the claim that $\norm{\rsgiter^t_{k+1} - x\opt} \le
  \norm{\rsgiter_{k+1}^{\manifold, t} - x\opt}$.
\item In the second case, we assume that 
  Eq.~\eqref{eqn:logic-case-two} holds. In this case, we know that 
  \begin{equation*}
    \norm{\rsgiter_{k+1}^t - x\opt}\le 
    \norm{\rsgiter_{k+1}^t - \wb{x}_k\supda}
    + \norm{\wb{x}_k\supda - x\opt}
    \le 2\eps_k,
  \end{equation*}
  and
  \begin{equation*}
    \norm{\rsgiter_{k+1}^{\manifold,t} - x\opt} \ge 
    \norm{\proj_{\X}(\rsgiter_{k+1}^{\manifold,t}) - x\opt}
    \ge \norm{\proj_{\X}(\rsgiter_{k+1}^{\manifold,t}) - \wb{x}_k\supda} - 
    \norm{\wb{x}_k\supda - x\opt} \ge 2\eps_k.
  \end{equation*}
  Thus $\norm{\rsgiter_{k+1}^t - x\opt} \le
  \norm{\rsgiter_{k+1}^{\manifold, t} - x\opt}$ as desired.
\end{enumerate}

Lastly, we show the final claim of Lemma~\ref{lemma:logic-lemma},
that is, that $\rsgiter_{k+1}^t \in \M\cap \ball_{k, 3}$.
Again, we divide our proof into the cases~\eqref{eqn:logic-case-one}
and~\eqref{eqn:logic-case-two}.
\begin{enumerate}[1.]
\item  In the first case that Eq.~\eqref{eqn:logic-case-one} holds,
  we have
  \begin{equation*}
    \rsgiter_{k+1}^t
    = \proj_{\X}(\rsgiter_{k+1}^{\manifold,t}) \in \X \cap \ball_{k, 3}.
  \end{equation*}
  If we can show that
  \begin{equation*}
    \rsgiter_{k+1}^{\manifold,t} \in \X,
  \end{equation*}
  then we are done, as $\rsgiter_{k+1}^t =
  \proj_{\X}(\rsgiter^{\manifold,t}_{k+1}) =
  \rsgiter^{\manifold,t}_{k+1}$.  To see this, recall the event
  $\event_{k,1}$ (Eq.~\eqref{eqn:event-projections-fine}), and note that
  $\epsilon_k \le \epsilon/4$ so that $x\opt \in \ball_{k,1}$ and hence
  $\ball_{k,3} \subset \ball(x\opt, \eps)$ on $\event_{k,1}$. By
  assumption~\eqref{eqn:eps-inactive} on $\eps$, we know that
  $f_i(\rsgiter_{k+1}^t) < 0$ for $i > \numactive$, as
  $\norm{\rsgiter_{k+1}^t - x\opt} \le \eps$.  For the sake of
  contradiction, assume that $\rsgiter_{k+1}^{\manifold,t} \not \in
  \X$. As $\rsgiter_{k+1}^{\manifold,t} \in \M$, we must have
  $f_i(\rsgiter_{k+1}^{\manifold,t}) > 0$ for some $i > \numactive$. Now,
  for $\lambda \in (0, 1)$ define
  \begin{equation*}
    \rsgiter_{k+1}^{t}(\lambda)
    \defeq (1-\lambda) \rsgiter_{k+1}^t + \lambda \rsgiter_{k+1}^{\manifold,t}.
  \end{equation*}
  By convexity of the functions $\{f_i\}_{i \in [m]}$, it is clear that
  for $\lambda$ sufficiently small,
  \begin{equation*}
    \max_{i > \numactive} f_i(\rsgiter_{k+1}^{t}(\lambda)) < 0
  \end{equation*}
  so that $\rsgiter_{k+1}^{t}(\lambda) \in \X$. Moreover,
  $\norm{\rsgiter_{k+1}^t(\lambda) - \rsgiter_{k+1}^{\manifold, t}}
  < \norm{\rsgiter_{k+1}^t - \rsgiter_{k+1}^{\manifold, t}}$,
  contradicting our assumption 
  that $\rsgiter_{k+1}^t = \proj_{\X}(\rsgiter_{k+1}^{\manifold,t})$. Thus
  we have $\rsgiter_{k+1}^{\manifold, t} \in \X$ as desired.
\item In the second case, we assume that Eq.~\eqref{eqn:logic-case-two}
  holds. Then by construction $\rsgiter_{k+1}^t \in \M_{\X}\cap
  \ball_{k, 3}$, giving the final claim of Lemma~\ref{lemma:logic-lemma}.
\end{enumerate}
This completes the proof of all three claims.

\subsection{Proof of Lemma~\ref{lemma:fake-projected-cas}}
\label{sec:proof-fake-projected-cas}

First, we prove $\stepsize_k \projt{\rsgiter_k^t}(\noise_k^t) \cas 0$. To
do so, note that (i) the sequence $\{\sum_{i=1}^k \stepsize_i
\projt{\rsgiter_i^t}\noise_{i}^t\}_{k =1}^\infty$ is a square integrable
martingale adapted to $\filt_k$ and (ii), we have
\begin{equation*}
  \sum_{i=1}^\infty \frac{1}{\gamma_i}\E \left[\norm{\alpha_i \projt{\rsgiter_i^t}\noise_{i}^t}^2 \mid \filt_{i-1}\right] 
  \stackrel{(i)}{\le}
  C \sum_{i=1}^\infty \frac{\stepsize_i^2}{\gamma_i}
  (1 + \norm{\Delta_i^t}^2)
  \stackrel{(ii)}{<} \infty,
\end{equation*}
where $(i)$ follows by Assumption~\ref{assumption:noise-vectors} and $(ii)$
because $\Delta_k^t \cas 0$ by Lemma~\ref{lemma:fake-error-cas} and our
choice of sequence $\gamma_i$ satisfying $\sum_{i=1}^\infty
\stepsize_i^2/\gamma_i < \infty$ (recall Eq.~\eqref{eqn:def-eps-gamma-k}).
Thus, standard martingale convergence theorems on
square-integrable martingales~\cite[Theorem 5.3.33 (a)]{Dembo16}
imply that $\sum_{i=1}^\infty \stepsize_i
\projt{\rsgiter_i^t}\noise_{i}^t/\gamma_i^{1/2}$ converges w.p.\ 1, and in
particular
\begin{equation}
  \label{eqn:quick-convergence-of-noise-gamma-k}
  \frac{\stepsize_k}{\sqrt{\gamma_k}}
  \projt{\rsgiter_k^t}\noise_{k}^t \cas 0.
\end{equation} 
As $\gamma_k\to 0$, this proves the first claim. 

Next, we prove $\stepsize_k \projt{\rsgiter_k^t}g_k^t \cas 0$. Note that
$\stepsize_k^2/\gamma_k \to 0$ since $\sum_{i=1}^\infty
\stepsize_i^2/\gamma_i < \infty$. Moreover, we have
$\norms{\projt{\rsgiter_k^t} \grad f(\rsgiter_k^t)} \cas 0$, since
$\rsgiter_k^t \cas x\opt$ by
Lemma~\ref{lemma:fake-error-cas} and $\projmatr\grad
f(x\opt) = 0$.  Therefore, by Cauchy-Schwarz and
Eq.~\eqref{eqn:quick-convergence-of-noise-gamma-k}, we have
\begin{equation*}
  \gamma_k^{-1} \norm{\stepsize_k \projt{\rsgiter_k^t} g_k^t}^2 
  \le 2 \stepsize_k^2 \gamma_k^{-1} \norm{ \projt{\rsgiter_k^t} \grad f(\rsgiter_k^t)}^2 + 
  2 \gamma_k^{-1} \norm{\stepsize_k \projt{\rsgiter_k^t}(\noise_k^t)}^2 \cas 0. 
\end{equation*}
As $\gamma_k\to 0$, this proves the second claim. 

\subsection{Proof of Lemma~\ref{lemma:fake-manifold-error-cas}}
\label{sec:proof-fake-manifold-error-cas}
We have $y_k^t \cas x\opt$ by Lemma~\ref{lemma:fake-error-cas} and
$\stepsize_k \projt{\rsgiter_k^t} g_k^t \cas 0$ by
Lemma~\ref{lemma:fake-projected-cas}.  Thus, for some (random) $T < \infty$,
we have $y_k^t \in \ball(x\opt, \eps)$ and $\stepsize_k \projt{\rsgiter_k^t}
g_k^t \in \ball(0, \eps)$ for all $k \ge
T$. Recall the definition~\eqref{eqn:event-projections-fine} of
the events $\event_k$ and $\event_k'$.
Lemmas~\ref{lemma:second-order-control-of-projection} and
\ref{lemma:logic-lemma} show that, for some constant $C < \infty$, if
$k \ge T$ then
\begin{align*}
  \normbig{\Delta_{k+1}^{\manifold, t}} \indic{\event_k'}
  & =
  \norm{\proj_{\M}(y_k^t - \stepsize_k \projt{\rsgiter_k^t} g_k^t) - x\opt}
  \indic{\event_k'} \\
  & \le  \norm{\Delta_k^t - \stepsize_k \projt{\rsgiter_k^t} g_k^t}
  + C \norm{\stepsize_k \projt{\rsgiter_k^t} g_k^t}^2. 
\end{align*}
By Cauchy-Schwarz, we have
\begin{equation*}
  \gamma_k^{-1}\normbig{\Delta_{k+1}^{\manifold, t}}^2\indic{\event_k'}
  \le C \gamma_k^{-1}
  \left( \norm{\Delta_k^t}^2 +\norm{\stepsize_k \projt{\rsgiter_k^t} g_k^t}^2
  + \norm{\stepsize_k \projt{\rsgiter_k^t} g_k^t}^4
  \right).
\end{equation*}
Thus, Lemmas~\ref{lemma:fake-error-cas} and
\ref{lemma:fake-projected-cas} imply that
$\gamma_k^{-1}\normbig{\Delta_{k+1}^{\manifold, t}}^2 \indic{\event_k}
\cas 0$.
Since $\stepsize_k \projt{\rsgiter_k^t}g_k^t \cas 0$ and 
$\eps_k^{-1} \norm{\wb{x}_k\supda - x\opt} \cas 0$ by 
Lemma~\ref{lemma:eventually-opt-in-ball},
we get $\indic{\event_k} = 1$ eventually. Thus we have
$\gamma_k^{-1}\norms{\Delta_{k+1}^{\manifold, t}}^2 \cas 0$
as desired.

\subsection{Proof of Lemma~\ref{lemma:fake-sequence-rsgd-cas}}
\label{sec:proof-fake-sequence-rsgd-cas}

The proof of this result parallels
Lemma~\ref{lemma:convergence-rate-sequence-da}.
For indices $i_1 \le i_2$, define the events
\begin{equation*}
  \event_{i_1, i_2} \defeq \bigcap_{l=i_1}^{i_2} \event_l' \in \filt_{i_2 - 1}
  ~~\text{and}~~
  \event_{i_1, i_2}' \defeq
  \bigcap_{l=i_1}^{i_2} \event_l' \cap \event_{i_2} \in \filt_{i_2 - 1},
\end{equation*}
where we recall the events~\eqref{eqn:event-projections-fine}.
By Lemmas~\ref{lemma:eventually-opt-in-ball}
and \ref{lemma:fake-projected-cas}, we know that the event 
$\event_{i/2, i}'$ happens eventually. We claim that
\begin{equation}
  \label{eqn:expectation-go-to-zero}
  \lim_{k \to \infty}
  \sum_{i=1}^k \E \left[\frac{1}{\sqrt{i}}
    \norm{\Delta_i^t}^2 \indic{\event_{i/2, i}}\right] < \infty.
\end{equation}
Defering the proof of the claim temporarily, let us show
how it yields the lemma. Monotone convergence implies
$\sum_{i = 1}^\infty \frac{1}{\sqrt{i}} \norms{\Delta_i^t}^2 \indic{\event_{i/2,i}} < \infty$ with probability 1, and the Kronecker lemma
gives
$\frac{1}{\sqrt{k}} \sum_{i = 1}^k \norms{\Delta_i^k} \indic{\event_{i/2,i}}
\cas 0$. As $\event_{i/2,i}$ occurs eventually, the first
claim of the lemma follows.

To see the second claim of the lemma,
we use Assumption~\ref{assumption:noise-vectors} and that
$\event_{k/2,k}' \in \mc{F}_{k-1}$ to obtain
\begin{equation*}
  \E\left[\norm{\projt{\rsgiter_k^t}g_k^t}^2 \mid \filt_{k-1}\right]
  \indic{\event_{k/2, k}'}
  \le C(1+\norm{\Delta_k^t}^2)\indic{\event_{k/2, k}'}.
\end{equation*}
Therefore
\begin{align*}
  \sum_{i = 1}^\infty \frac{\stepsize_i}{\sqrt{i}}
  \E\left[\norms{\projt{\rsgiter_i^t} g_i^t}^2 \indic{\event_{i/2, i}'}
    \right]
  & \le C \sum_{i = 1}^\infty \frac{\stepsize_i}{\sqrt{i}}
  \left(1 + \E\left[\norms{\Delta_i^t}^2 \indic{\event_{i/2,i}}\right]
  \right)
  < \infty,
\end{align*}
where in the last step, we used the
claim~\eqref{eqn:expectation-go-to-zero}
and that $\stepsize_i \to 0$.
Again, we have $\sum_{i = 1}^\infty \frac{\stepsize_i}{\sqrt{i}}
\norms{\projt{\rsgiter_i^t} g_i^t}^2 \indics{\event_{i/2,i}'}
< \infty$ with probability 1, and identical reasoning as above
yields the second claim of the lemma.

Finally, we return to we prove the deferred
claim~\eqref{eqn:expectation-go-to-zero}. We essentially mimic the proof
of Lemma~\ref{lemma:fake-error-cas} in
Section~\ref{sec:proof-fake-error-cas}. As $\event_{i,k+1} \subset
\event_{k+1}'$ and $\event_{i,k+1} \subset \event_{i,k}'$, a derivation
identical, \emph{mutatis mutandis}, to that to derive
inequality~\eqref{eqn:crucial-estimate-rate-convergence-simplify}
yields that for all large enough $k$ and $i \le k$, we have
\begin{align*}
  \E \left[\norm{\Delta_{k+1}^t}^2 \indic{\event_{i, k+1}} \mid
    \filt_{k-1} \right]
  & \le (1- c\stepsize_k + C\stepsize_k^2) \norm{\Delta_k^t}^2
  \indic{\event_{i, k}} + C\stepsize_k^2.
\end{align*}
If $k$ is large enough that $1 - c\stepsize_k + C\stepsize_k^2 \le 1-
c\stepsize_k/2 \le \exp(-c\stepsize_k/2)$, this inequality implies
\begin{equation*}
  \E \left[\norm{\Delta_{k+1}^t}^2 \indic{\event_{i, k+1}} \right] \le \exp(-c\stepsize_k)
  \E \left[\norm{\Delta_k^t}^2 \indic{\event_{i, k}} \right] + C\stepsize_k^2.
\end{equation*}
Recursively applying this
inequality
yields that there exists $0 < c, C < \infty$ such that for all $k$
\begin{equation*}
  \E \left[\norm{\Delta_{k}^t}^2 \indic{\event_{i, k}} \right] 
  \le C \exp\bigg(-c \sum_{j=i}^{k-1} \stepsize_j\bigg) + C
  \sum_{l = i}^{k-1} \stepsize_l^2 
  \exp\bigg(-c\sum_{j = i}^{k-2} \stepsize_j\bigg).
\end{equation*}
As $\stepsize_k \propto k^{-\steppow}$, we have
$\sum_{j = i}^k \stepsize_j \gtrsim k^{1 - \steppow} - i^{1 - \steppow}$,
so we obtain the estimate
\begin{align*}
  \E \left[\norm{\Delta_{k}^t}^2 \indic{\event_{k/2, k}} \right]
  & \le C \exp\left(-\sum_{j=k/2}^{k-1} \stepsize_j\right) 
  + C \sum_{l=k/2}^{k-1} \stepsize_l^2 \exp\left(-c\sum_{j=l}^{k-2}
  \stepsize_j\right) \\
  & \le C \exp\left(-c k^{1 - \steppow}\right)
  + C \sum_{i = k/2}^k i^{-2 \steppow} \exp\left(-c(k^{1 - \steppow}
  - i^{1 - \steppow})\right).
\end{align*}
Applying Lemma~\ref{lemma:funny-zero-sum}
then gives the claim~\eqref{eqn:expectation-go-to-zero}.

\section{Proofs of preliminary
  calculus results (Section~\ref{sec:preliminary})}
\label{sec:proof-Calculus-rsgd}

\subsection{Proof of Lemma~\ref{lemma:stability-of-projection}}

It is equivalent to show that the mapping $x \mapsto \projt{x} = I_n -
\grad F(x) \grad F(x)^{\dag}$ is Lipschitz near $x\opt$. To do so,
we first note that $\grad F(x)$ is differentiable and is thus locally
Lipschitz. Next, we show that $\grad F(x)^{\dag}$ is also
locally Lipschitz: classical matrix perturbation theory results~\cite[Theorem
  3.8]{StewartSu90} show that for a universal constant $C < \infty$,
\begin{align}
  \label{eqn:pseudo-inverse-Lipschitz}
  & \opnorms{\grad F(x)^{\dag} - \grad F(x\opt)^{\dag}} \\
  \nonumber
  & ~~~ \le C \max 
  \left\{\opnorms{\grad F(x)^\dag}, \opnorms{\grad F(x\opt)^{\dag}}\right\}^2
  \opnorm{\grad F(x) - \grad F(x\opt)}.
\end{align}
As $\grad F(x\opt)$ is full column rank by
Assumption~\ref{assumption:linear-independence-cq}, it has
positive minimum singular value, so
$\inf_{x \in \ball(x\opt, \epsilon)} \sigma_{\numactive}(\nabla F(x))
> 0$ for some $\epsilon > 0$. Consequently,
\begin{equation*}
  \sup_{x \in \ball(x\opt, \eps)} \opnorms{\grad F(x)^{\dag}} < \infty.
\end{equation*}
This, coupled with Eq.~\eqref{eqn:pseudo-inverse-Lipschitz}, shows that the
mapping $x \mapsto \grad F(x)^{\dag}$ is Lipschitzian near $x\opt$, and
the lemma follows.

\subsection{Proof of Lemma~\ref{lemma:manifold-second-order-result}}

Note that $F(x) = F(x\opt) = 0$ for any $x \in \M$. Thus, for some $0 < \eps, C
< \infty$, if $x \in \M \cap \ball(x\opt, \epsilon)$ we have
\begin{equation*}
  \norm{\grad F(x\opt)^T(x-x\opt)}
  =  \norm{F(x) - F(x\opt) - \grad F(x\opt)^T(x-x\opt)}
  \le C\norm{x-x\opt}^2.
\end{equation*}
Thus, using the definition of $\projmatr$,
\begin{equation*}
  \norm{\projmatr(x-x\opt) - (x - x\opt)} =
  \normBig{(\grad F(x\opt)^T)^{\dag} \grad F(x\opt)^T(x-x\opt)}
  \le C \norm{x-x\opt}^2
\end{equation*}
for $x \in \M$ near $x\opt$, as $\opnorm{\nabla F(x\opt)^\dag} < \infty$
by Assumption~\ref{assumption:linear-independence-cq}.

\subsection{Proof of Lemma~\ref{lemma:second-order-projection-gradient}}

We start by showing the following
\begin{lemma}
  \label{lemma:manifold-hessian}
  We have the identity
  \begin{equation*}
    \projt{x}
    \cdot
    \grad \left.\left(\projt{x} \grad f(x)\right)\right|_{x = x\opt}
    = \projt{x\opt}
    \left[\grad^2 f(x\opt) + \sum_{i=1}^{\numactive} \lambda_i\opt \grad^2 f_i
      (x\opt)\right].
  \end{equation*}
\end{lemma}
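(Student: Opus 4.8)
The plan is to differentiate the vector field $\Phi(x) \defeq \projt{x}\grad f(x)$ head-on, using two structural facts: the exact identity $\projt{x}\grad F(x) \equiv 0$ (valid for all $x$, since $\projt{x}$ annihilates the column span of $\grad F(x)$), and the stationarity condition~\eqref{eqn:strict-complementary} at $x\opt$. First I would record that $\Phi$ is differentiable near $x\opt$: by Assumption~\ref{assumption:linear-independence-cq} the matrix $\grad F$ is $\mc{C}^1$ and of full column rank in a neighborhood of $x\opt$, so $\grad F(\cdot)^\dag$ and hence $\projt{\cdot}$ are $\mc{C}^1$ there, while $\grad f$ is differentiable at $x\opt$ with Jacobian $\grad^2 f(x\opt)$ by Assumption~\ref{assumption:smoothness-of-objective}. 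Applying the product rule to the matrix--vector product $\projt{x}\grad f(x)$ gives
\begin{equation*}
  \grad \Phi(x\opt) = J_{v}\big|_{v=\grad f(x\opt)} + \projt{x\opt}\grad^2 f(x\opt),
\end{equation*}
where $J_v$ denotes the Jacobian at $x\opt$ of the map $x\mapsto \projt{x}v$ with the vector $v\in\R^n$ frozen; since $\projt{x}v$ is linear in $v$, the assignment $v\mapsto J_v$ is linear.

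Next I would compute $J_v$ on the vectors that matter. Differentiating the identity $\projt{x}\grad f_i(x)\equiv 0$ (the $i$-th column of $\projt{x}\grad F(x)=0$, for $i\in[\numactive]$) at $x\opt$ via the same product rule yields $J_{\grad f_i(x\opt)} + \projt{x\opt}\grad^2 f_i(x\opt) = 0$, i.e. $J_{\grad f_i(x\opt)} = -\projt{x\opt}\grad^2 f_i(x\opt)$. By Assumption~\ref{assumption:linear-independence-cq} we have $\grad f(x\opt) = -\sum_{i=1}^{\numactive}\lambda_i\opt\grad f_i(x\opt)$, so by linearity of $v\mapsto J_v$,
\begin{equation*}
  J_{\grad f(x\opt)} = -\sum_{i=1}^{\numactive}\lambda_i\opt J_{\grad f_i(x\opt)} = \sum_{i=1}^{\numactive}\lambda_i\opt\,\projt{x\opt}\grad^2 f_i(x\opt).
\end{equation*}
Substituting into the product-rule expansion gives $\grad\Phi(x\opt) = \projt{x\opt}\bigl(\grad^2 f(x\opt) + \sum_{i=1}^{\numactive}\lambda_i\opt\grad^2 f_i(x\opt)\bigr) = \projt{x\opt}H\opt$. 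Finally, since $\grad F(x\opt)$ has full column rank, $\projt{x\opt}$ is a symmetric idempotent projection matrix, so left-multiplying by $\projt{x\opt}$ and using $\projt{x\opt}^2=\projt{x\opt}$ yields exactly the claimed identity.

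The content here is short, so the only real obstacle is bookkeeping: keeping the ``frozen-$v$'' derivative $J_v$ cleanly separated from the $\grad^2 f$ term when applying the product rule to a matrix-valued-times-vector-valued map, and being careful that the crucial relation $\projt{x}\grad F(x)=0$ is used as an identity in $x$ (so that it can be differentiated), rather than merely at the point $x\opt$. If one wants to avoid abstract differentials entirely, an equivalent route is to pick an arbitrary fixed direction $h\in\R^n$ and compute $\tfrac{d}{dt}\projt{x\opt+th}\grad f(x\opt+th)\big|_{t=0}$ coordinate-wise, which reduces everything to the scalar calculus above; I expect this to be the cleanest way to write the final version.
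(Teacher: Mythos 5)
Your proof is correct, and it reaches the paper's identity by the same basic route (product rule on $x \mapsto \projt{x}\grad f(x)$ at $x\opt$, plus the KKT relation $\grad f(x\opt) = -\sum_i \lambda_i\opt \grad f_i(x\opt)$), but with a cleaner mechanism for the derivative of the projector. The paper writes $\projt{x}\grad f(x) = \grad f(x) - \grad F(x)\,a(x)$ with $a(x) = \grad F(x)^\dag \grad f(x)$, applies Leibniz to $\grad F(x)a(x)$, and then needs the \emph{outer} multiplication by $\projt{x\opt}$ to kill the residual terms $\sum_i \grad f_i(x\opt)\grad a_i(x\opt)^T$; it also uses $a(x\opt) = -\lambda\opt$. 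You instead never differentiate the pseudo-inverse: you differentiate the identity $\projt{x}\grad f_i(x) \equiv 0$ to get $J_{\grad f_i(x\opt)} = -\projt{x\opt}\grad^2 f_i(x\opt)$ exactly, and use linearity of the frozen-vector derivative $v \mapsto J_v$ together with the stationarity condition. This buys you a marginally stronger conclusion --- $\grad(\projt{x}\grad f(x))\big|_{x=x\opt} = \projt{x\opt}H\opt$ already, so the outer projection in the statement is only needed for idempotence, not to cancel anything --- and it avoids the matrix-perturbation bookkeeping for $\grad F(\cdot)^\dag$ (you still need its $\mc{C}^1$ regularity, which LICQ supplies, exactly as the paper does via Lemma~\ref{lemma:stability-of-projection}). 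Both arguments are sound; yours is a legitimate simplification of the same computation.
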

\begin{proof}
  The first order conditions for optimality of $x\opt$
  guarantee that $\projt{x\opt} \grad f(x\opt) =
  0$. The standard Leibniz rule yields that
  \begin{equation*}
    \nabla \left(\projt{x} \grad f(x)\right)
    = \nabla^2 f(x) - \nabla \left(\nabla F(x) \nabla F(x)^\dag
    \nabla f(x)\right).
  \end{equation*}
  Now, let $a(x) = \nabla F(x)^\dag \nabla f(x)$, $a : \R^n \to \R^{\numactive}$.
  The Leibniz rule implies that
  $\nabla (\nabla F(x) a(x)) =
  \sum_{i = 1}^n a_i(x) \nabla^2 f_i(x)
  + \sum_{i = 1}^{\numactive} \nabla f_i(x) \nabla a_i(x)^T$.
  Standard properties of pseudo-inverses, and the fact that
  $\nabla F(x)$ is full column rank near $x\opt$, imply that
  $\projt{x} \nabla f_i(x) = 0$ for all $i$. Finally,
  we use that $\grad f(x\opt) = -\nabla F(x\opt) \lambda\opt$,
  which gives $a(x\opt) = -\lambda\opt$, and so
  \begin{align*}
    \projt{x\opt}
    \grad \left(\grad F(x\opt) \grad F(x\opt)^\dag \nabla f(x\opt)\right)
    = - \projt{x\opt} \sum_{i=1}^\numactive \lambda_i\opt
    \nabla^2 f_i(x\opt)
  \end{align*}
  Lemma~\ref{lemma:manifold-hessian} follows.
\end{proof}

The proof of Lemma~\ref{lemma:second-order-projection-gradient} is now
nearly immediate.  By the first-order optimality conditions for $x\opt$, we
know that $\projt{x\opt} \grad f(x\opt) = 0$, so
Lemma~\ref{lemma:manifold-hessian} and the fact that $f$ is $\mathcal{C}^2$
imply
\begin{align*}
  \projt{x} \grad f(x)
  & = (\grad (\projt{x\opt} \grad f(x\opt))
  (x - x\opt) + O(\norm{x-x\opt}^2) \\
  & = \projmatr H\opt (x - x\opt)  +  O(\norm{x-x\opt}^2). 
\end{align*}
By Lemma~\ref{lemma:manifold-second-order-result}, we have $x-x\opt =
\projmatr (x-x\opt) + O(\norm{x-x\opt}^2)$ for $x \in \M$ near $x\opt$.
Substituting this estimate above yields the lemma.


\subsection{Proof of Lemma~\ref{lemma:restricted-strong-convexity-near-rsgd}}
This follows by the following computations:
\begin{align*}
  \< x - x\opt, \projt{x}\grad f(x) \> &= \< \projt{x}(x - x\opt), \projt{x}\grad f(x) \>\\
  &\stackrel{(i)}{=} 
  \< \projmatr(x - x\opt), \projmatr H\opt
  \projmatr(x-x\opt) \> + O(\norm{x-x\opt}^3) \\
  &\stackrel{(ii)}{\ge} \mu \norm{\projmatr(x-x\opt)}^2 + O(\norm{x-x\opt}^3) \\
  &\stackrel{(iii)}{\ge} \mu \norm{x-x\opt}^2 + O(\norm{x-x\opt}^3).
\end{align*}
Here, in~$(i)$, we use Lemma~\ref{lemma:stability-of-projection} and Lemma
\ref{lemma:second-order-projection-gradient}; in~$(ii)$, we use
Assumption~\ref{assumption:restricted-strong-convexity}; and in~$(iii)$, we
use Lemma~\ref{lemma:manifold-second-order-result}.

\subsection{Proof of Lemma~\ref{lemma:second-order-control-of-projection}}

This result is very similar to the typical implicit function
theorem results in optimization~\cite{DontchevRo14}, though we could
not find a precise statement with the appropriate differentiability
guarantees, so we provide a proof.
Let $x_v = \proj_{\M}(x+v)$. First, by definition of projection, we have, 
\begin{equation*}
\norm{x_v - (x+v)} \le \norm{x-(x+v)} \le \norm{v}.
\end{equation*}
The triangle inequality implies that
\begin{equation}
\label{eqn:very-loose-bound-projection}
\norm{\proj_{\M}(x+v) - \proj_{\M}(x)} = \norm{x_v - x} 
	\le \norm{x_v - (x+v)} + \norm{v} \le 2\norm{v},
\end{equation}
giving that $\proj_{\M}(x+\cdot)$ is continuous at $0$ in $\T_{\M}(x)$.
Now, denote $A(x_1, x_2) \in \R^{\numactive \times \numactive}$ be the inner
product matrix $A_{i, j}(x_1, x_2) = \<\grad f_i(x_1), \grad
f_j(x_2)\>$. Note that $A(x\opt, x\opt)$ is full rank
Assumption~\ref{assumption:linear-independence-cq}. Since the mapping
$F(\cdot)$ is $\mc{C}^2$ around $x\opt$, we have $\sup_{x_1, x_2 \in
  \ball(x\opt, 3\eps)}\opnorm{A(x_1, x_2)^{-1}} \le C < \infty$ for some
constants $0 < \eps, C < \infty$.

Using the KKT conditions for $x_v$ to be the projection of $x+v$ onto $\M$
(which hold for small $v$ by our constraint
qualifications~\cite{DontchevRo14}), we have for some $\lambda(x, v) \in
\R^{\numactive}$ that
\begin{equation}
  \label{eqn:KKT-condition}
  x_v = x+ v + \sum_{i \in [\numactive]} \lambda_i(x, v) \grad f_i(x_v)
  = x + v + \nabla F(x_v) \lambda(x, v),
\end{equation}
so $\< v, \grad f_j(x)\> = 0$ for $j \in [\numactive]$, as 
$v \in \T_{\M}(x)$. Multiplying both sides of Eq.~\eqref{eqn:KKT-condition} by 
$\grad F(x) \in \R^{n \times m_0}$, we obtain
\begin{equation*}
  \grad F(x)^T (x_v - x) = \grad F(x)^T \grad F(x_v) \lambda(x, v)
  = A(x, x_v) \lambda(x, v).
\end{equation*}
By Eq.~\eqref{eqn:very-loose-bound-projection}, $\norm{x_v - x} \le 2
\norm{v}$ and so $A(x, x_v)$ is invertible for small enough $v$,
so that for small $\norm{v}$,
\begin{equation*}
  \lambda(x, v) = A(x, x_v)^{-1} \grad F(x)^T(x_v - x)
  = O(\norm{v}^2),
\end{equation*}
where we have used that $\grad F(x)^T (x_v - x)
= F(x) - F(x_v) + O(\norm{x_v - x}^2) = O(\norm{v}^2)$, because
$F(x) = F(x_v) = 0$.

Using the KKT conditions~\eqref{eqn:KKT-condition}, we have $\norm{x_v - (x
  + v)} \le \opnorm{\nabla F(x_v)} \norm{\lambda(x, v)} = O(\norm{v}^2)$,
as $\nabla F$ is bounded near $x\opt$.

\section{Proof of
  Proposition~\ref{proposition:generalization-of-Polyak-Juditsky}}
\label{sec:proof-generalization-polyak-juditsky}

With slight abuse of notation, we write
$\noise_k = \noise_k(x_k)$ throughout the proof for simplicity. Define
the product matrices and associated weighted sums
\begin{equation}
  \label{eqn:product-matrices}
  B_i^k = \prod_{j=i}^k (I - \stepsize_j \projmatr H \projmatr)
  ~~ \mbox{where} ~~
  B_i^{i-1} = I_{n \times n},
  ~~ \mbox{and} ~~
  \wb{B}_i^k = \stepsize_i \sum_{l=i}^{k-1} B_{i+1}^{l}.
\end{equation}
In this case, by expanding the recursion for $\Delta_k$ and
letting $\wb{\Delta}_k = \frac{1}{k} \sum_{i = 1}^k \Delta_i$,
we immediately
obtain~\cite{PolyakJu92}
\begin{equation}
  k \wb{\Delta}_k  = \sum_{j=1}^k \Delta_j
  =
  \sum_{i=1}^k B_1^{i-1} \Delta_1
  - \sum_{i=1}^k \wb{B}_i^k \projmatr \xi_i
  - \sum_{i=1}^k \wb{B}_i^k \projmatr \notquaderror_i
  + \sum_{i=1}^k \bigg(\sum_{l=i+1}^{k} B_{i+1}^{l-1} \bigg) \offerror_i.
  \label{eqn:expansion-summed-errors}
\end{equation}
We study each of the terms in the 
expansion~\eqref{eqn:expansion-summed-errors} in turn, showing 
that the second term is asymptotically normal when normalized by 
$k^{-\half}$ and the rest of the three terms are of order $o(\sqrt{k})$.

We now collect a few lemmas, most modifications
of results due to \citet{PolyakJu92},
that allow us to analyze the sequences~\eqref{eqn:expansion-summed-errors}.
For the first lemma, we define the seminorm
\begin{equation*}
  \normT{A}
  \defeq \sup \left\{\norm{A x} : x \in \T, \norm{x} \le 1 \right\},
\end{equation*}
where we take $\normT{A} = 0$ if $\T = \{0\}$. Clearly $\normT{A} \le
\opnorm{A}$, where $\opnorm{A} \defeq \sup \{\norm{A x} : \norm{x} \le 1\}$
is the $\ell_2$-operator norm.  With this definition, we see
that the matrices $B_i^k$ shrink quickly to zero.
\begin{lemma}
  \label{lemma:upper-bound-on-B-prods}
  For any nonnegative sequence $\{\stepsize_k\}$ satisfying
  $\alpha_k \rightarrow 0$, there exists $\lambda > 0$ 
  and $M < \infty$ such that, for any $j \in \N$ and $k \geq j$,
  \begin{equation*}
    \normTs{B_j^k} \leq M
    \exp \bigg(-\lambda \sum_{i=j}^{k} \stepsize_i \bigg).
  \end{equation*}
\end{lemma}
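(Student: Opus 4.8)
The plan is to exploit the restricted positive definiteness $w^{T}Hw \ge c\norm{w}^{2}$ on $\T$ (Assumption~\ref{assumption:growth-and-noise}): each factor $I - \stepsize_{k}\projmatr H\projmatr$ acts as a genuine contraction on $\T$ once $\stepsize_{k}$ is small, so the tail of the product decays geometrically in $\sum_{i}\stepsize_{i}$, and the finitely many early factors (where $\stepsize_{k}$ need not be small) can be absorbed into the constant $M$. This is the subspace-restricted version of Polyak and Juditsky's estimate~\cite{PolyakJu92}. The case $\T = \{0\}$ is trivial since then $\normTs{\cdot}\equiv 0$, so assume $\T \neq \{0\}$.

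First I would record that $\T$ is invariant under every factor: since $\projmatr$ is the orthogonal projection onto $\T$, for $x \in \T$ we have $(I - \stepsize_{k}\projmatr H\projmatr)x = x - \stepsize_{k}\projmatr Hx \in \T$. Hence each $B_{j}^{k}$ maps $\T$ into $\T$, and the seminorm $\normTs{\cdot}$ is submultiplicative along such products, namely $\normTs{AB}\le\normTs{A}\normTs{B}$ whenever $B(\T)\subseteq\T$, because $\norm{ABx}\le\normTs{A}\norm{Bx}\le\normTs{A}\normTs{B}\norm{x}$ for $x \in \T$. Next, for $x \in \T$ I would expand
\begin{equation*}
  \norm{(I - \stepsize_{k}\projmatr H\projmatr)x}^{2}
  = \norm{x}^{2} - 2\stepsize_{k}\<x, Hx\> + \stepsize_{k}^{2}\norm{\projmatr Hx}^{2}
  \le \left(1 - 2c\stepsize_{k} + \opnorm{H}^{2}\stepsize_{k}^{2}\right)\norm{x}^{2},
\end{equation*}
using $\<x,\projmatr Hx\> = \<x, Hx\> \ge c\norm{x}^{2}$ (as $\projmatr$ is self-adjoint with $\projmatr x = x$ on $\T$) and $\norm{\projmatr Hx}\le\opnorm{H}\norm{x}$. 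Since $\stepsize_{k}\to 0$, choose $N$ with $\opnorm{H}^{2}\stepsize_{k}\le c$ for all $k\ge N$; then for $k\ge N$ the bracket is at most $1 - c\stepsize_{k}\le e^{-c\stepsize_{k}}$, whence $\normTs{I - \stepsize_{k}\projmatr H\projmatr}\le e^{-\frac{c}{2}\stepsize_{k}}$.

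Finally I would split the product at index $N$. For $j\ge N$, submultiplicativity gives $\normTs{B_{j}^{k}}\le\prod_{i=j}^{k}e^{-\frac{c}{2}\stepsize_{i}} = e^{-\frac{c}{2}\sum_{i=j}^{k}\stepsize_{i}}$. For $j < N$, write $B_{j}^{k} = B_{j}^{(N-1)\wedge k}\,B_{N}^{k}$ (the second factor being the identity when $k < N$); the first factor is a product of at most $N$ terms, each with $\normTs{\cdot}\le 1 + \stepsize_{\max}\opnorm{H}$ where $\stepsize_{\max}\defeq\sup_{k}\stepsize_{k}<\infty$, so $\normTs{B_{j}^{(N-1)\wedge k}}\le(1+\stepsize_{\max}\opnorm{H})^{N}=:M_{0}$, while $\normTs{B_{N}^{k}}\le e^{-\frac{c}{2}\sum_{i=N}^{k}\stepsize_{i}}$. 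Since $\sum_{i=j}^{N-1}\stepsize_{i}\le N\stepsize_{\max}$, in every case $\normTs{B_{j}^{k}}\le M_{0}e^{\frac{c}{2}N\stepsize_{\max}}\,e^{-\frac{c}{2}\sum_{i=j}^{k}\stepsize_{i}}$, so the claim holds with $\lambda = c/2$ and $M = M_{0}e^{\frac{c}{2}N\stepsize_{\max}}$. The only slightly delicate points are the $\T$-invariance and submultiplicativity bookkeeping for $\normTs{\cdot}$ and keeping $M_{0}$ uniform in $j$ and $k$; beyond this there is no substantive obstacle.
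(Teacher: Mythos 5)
Your proof is correct. The paper does not actually prove this lemma---it defers to \citet[Lemma 1, part 3]{PolyakJu92}, remarking only that the restriction to the subspace $\T$ introduces ``trivial technicalities''---and your argument is precisely the standard contraction estimate from that source, with the subspace bookkeeping (invariance of $\T$ under each factor, submultiplicativity of $\normTs{\cdot}$ along $\T$-invariant products, and the quadratic-form bound $\<x, Hx\> \ge c\norm{x}^2$ on $\T$ from Assumption~\ref{assumption:growth-and-noise}) carried out explicitly and correctly. The split at the threshold index $N$ where $\stepsize_k$ becomes small, with the finitely many early factors absorbed into $M$, is exactly the intended argument, so your write-up supplies a complete proof of what the paper only cites.
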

\noindent
Except for trivial technicalities because of the restriction
to the subspace $\T$, this result is known~\cite[Lemma 1, part 3]{PolyakJu92}.

Our second lemma addresses the error terms $\{\offerror_k\}_{k \in \N}$, 
which we show are negligible. 
We have the following lemma, whose technical proof
we provide in Appendix~\ref{sec:proof-offerror-zeros}.
\begin{lemma}
  \label{lemma:offerror-zeros}
  Let $\stepsize_k \propto k^{-\steppow}$, where $\steppow \in (0, 1)$
  and assume $\offerror_k = 0$ for all $k \ge T$. Then
  there is a constant $C = C(\steppow) < \infty$ such that
  \begin{equation*}
    \normbigg{
      \sum_{i=1}^k \bigg(\sum_{l=i+1}^{k} B_{i+1}^{l-1} \bigg) \offerror_i
    }
    \le C T^{1 + \steppow} \max_{i \le T} \norm{\offerror_i}.
  \end{equation*}
\end{lemma}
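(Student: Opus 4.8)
The plan is to reduce the sum to its finitely many nonzero terms, move the product matrices $B_{i+1}^{l-1}$ past the triangle inequality by exploiting the containment $\offerror_i\in\T$, and then show that for each fixed $i$ the geometric decay of $\normT{B_{i+1}^{l-1}}$ in $l$ makes $\sum_{l\ge i+1}\normT{B_{i+1}^{l-1}}$ of order $i^\steppow$; summing the resulting bound over the at most $T$ contributing indices then produces the rate $T^{1+\steppow}$.

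First I would note that $\offerror_i=0$ for $i\ge T$, so only indices $i\le\min\{k,T-1\}$ appear, and each such $i$ satisfies $i+1\le T$. Since $\offerror_i\in\T$, the definition of the seminorm $\normT{\cdot}$ gives $\norm{B_{i+1}^{l-1}\offerror_i}\le\normT{B_{i+1}^{l-1}}\norm{\offerror_i}$, so by the triangle inequality and nonnegativity of the terms,
\[
\normbigg{\sum_{i=1}^k\Big(\sum_{l=i+1}^k B_{i+1}^{l-1}\Big)\offerror_i}
\le\Big(\max_{i\le T}\norm{\offerror_i}\Big)\sum_{i=1}^{T-1}\sum_{l=i+1}^{\infty}\normT{B_{i+1}^{l-1}}.
\]
Thus the whole problem reduces to bounding $\sum_{l\ge i+1}\normT{B_{i+1}^{l-1}}$ by $C(i+1)^\steppow$ uniformly in $i$.

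For this I invoke Lemma~\ref{lemma:upper-bound-on-B-prods} (applicable since $\stepsize_k\to0$), which supplies $M<\infty$ and $\lambda>0$ with $\normT{B_{i+1}^{l-1}}\le M\exp(-\lambda\sum_{j=i+1}^{l-1}\stepsize_j)$. Writing $\stepsize_j=\stepsize_0 j^{-\steppow}$ and comparing the partial sum to an integral gives $\sum_{j=i+1}^{l-1}\stepsize_j\ge\frac{\stepsize_0}{1-\steppow}\big(l^{1-\steppow}-(i+1)^{1-\steppow}\big)$, so with $m=i+1$ and $\mu=\lambda\stepsize_0/(1-\steppow)$ it suffices to bound $\sum_{l\ge m}\exp\big(-\mu(l^{1-\steppow}-m^{1-\steppow})\big)$ by $C\,m^\steppow$. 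I would split at $l=2m$. For $m\le l\le 2m$, concavity of $x\mapsto x^{1-\steppow}$ gives $l^{1-\steppow}-m^{1-\steppow}\ge(1-\steppow)l^{-\steppow}(l-m)\ge(1-\steppow)(2m)^{-\steppow}(l-m)$, so this portion is a geometric series with ratio $\exp(-c\,m^{-\steppow})$, $c=\lambda\stepsize_0 2^{-\steppow}$, hence at most $\big(1-e^{-c m^{-\steppow}}\big)^{-1}\le C\,m^\steppow$ (using $1-e^{-t}\ge t/2$ for small $t$, with the finitely many small $m$ absorbed into $C$). For $l>2m$ one has $m^{1-\steppow}\le 2^{-(1-\steppow)}l^{1-\steppow}$, so $l^{1-\steppow}-m^{1-\steppow}\ge(1-2^{-(1-\steppow)})l^{1-\steppow}$ and this tail is at most $\sum_{l\ge1}\exp(-c'l^{1-\steppow})<\infty$, a constant independent of $m$. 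Combining, $\sum_{l\ge i+1}\normT{B_{i+1}^{l-1}}\le C(i+1)^\steppow$.

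Inserting this into the display above and using $(i+1)^\steppow\le T^\steppow$ for $i\le T-1$ together with $\sum_{i=1}^{T-1}1\le T$ yields the claimed bound $C\,T^{1+\steppow}\max_{i\le T}\norm{\offerror_i}$, the constant $C$ depending only on $\steppow$ and the fixed quantities $\stepsize_0,\lambda,M$. The only point needing genuine work is the geometric-sum estimate of the previous paragraph — essentially the standard fact that $\sum_l\normT{B_i^l}$ is of order $1/\stepsize_i\asymp i^\steppow$, familiar from averaged stochastic approximation (cf.~\cite[Lemma~1, part~3]{PolyakJu92}) — and the split at $l=2m$ reduces it to an elementary computation.
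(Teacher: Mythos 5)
Your proof is correct and follows the same overall route as the paper's: reduce to the indices $i < T$, pull out $\max_{i\le T}\norm{\offerror_i}$ via the seminorm bound $\norm{B_{i+1}^{l-1}\offerror_i}\le\normT{B_{i+1}^{l-1}}\norm{\offerror_i}$ (valid because $\offerror_i\in\T$), invoke Lemma~\ref{lemma:upper-bound-on-B-prods} to replace $\normT{B_{i+1}^{l-1}}$ by $M\exp(-\lambda\sum_j\stepsize_j)$, and show that for each $i$ the resulting inner sum over $l$ is $O(i^\steppow)$ before summing over $i\le T$. The one place you diverge is in that inner estimate: the paper compares $\sum_l \exp(-c(l^{1-\steppow}-i^{1-\steppow}))$ to an integral and invokes Lemma~\ref{lemma:funny-gamma-integral} (a change-of-variables/Gamma-function computation), whereas you give a self-contained discrete argument splitting at $l=2m$, controlling the near range by concavity of $x\mapsto x^{1-\steppow}$ plus a geometric series with ratio $\exp(-cm^{-\steppow})$, and the far range by a tail that is summable uniformly in $m$. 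Both yield the same $Cm^\steppow$ bound; your version avoids the auxiliary integral lemma at the cost of a slightly longer elementary computation, and the details (the concavity inequality, the $1-e^{-t}\ge t/2$ step with finitely many small $m$ absorbed into the constant, and the dependence of $C$ on the fixed quantities $M,\lambda,\stepsize_0$ as well as $\steppow$) are all handled correctly.
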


Our third lemma addresses the error terms $\{\notquaderror_k\}_{k \in \N}$.
By noting that
$\sup_{i,k} \normTs{\wb{B}_i^k} < \infty$ (cf.~\cite[Lemma 2]{PolyakJu92}),
we have
\begin{lemma}
  \label{lemma:notquaderror-convergence}
  There is a constant $c < \infty$ such that, for all $k \in \N$
  \begin{equation*}
    \norm{\sum_{i=1}^k \wb{B}_i^k \projmatr \notquaderror_i}
    \le c \sum_{i=1}^k \norms{\projmatr \notquaderror_i}.
  \end{equation*}
\end{lemma}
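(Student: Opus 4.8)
The plan is to derive the bound directly from the triangle inequality together with submultiplicativity of the seminorm $\normT{\cdot}$, once we record that the weighted partial-sum matrices $\wb{B}_i^k$ from~\eqref{eqn:product-matrices} are uniformly bounded in this seminorm. The key structural observation is that $\T$ is an invariant subspace for all operators in sight: since $\projmatr$ is the orthogonal projection onto $\T$, each vector $\projmatr\notquaderror_i$ lies in $\T$, and each factor $I-\stepsize_j\projmatr H\projmatr$ maps $\T$ into itself (for $v\in\T$ one has $\projmatr v=v$, hence $\projmatr H\projmatr v=\projmatr H v\in\T$, so $v-\stepsize_j\projmatr H v\in\T$). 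Therefore the products $B_{i+1}^l$, and the sums $\wb{B}_i^k=\stepsize_i\sum_{l=i}^{k-1}B_{i+1}^l$, leave $\T$ invariant, so $\norm{\wb{B}_i^k\projmatr\notquaderror_i}\le\normT{\wb{B}_i^k}\,\norm{\projmatr\notquaderror_i}$. Applying the triangle inequality then gives
\begin{equation*}
\normbigg{\sum_{i=1}^k\wb{B}_i^k\projmatr\notquaderror_i}
\le\sum_{i=1}^k\normT{\wb{B}_i^k}\,\norm{\projmatr\notquaderror_i}
\le\Big(\sup_{i,k}\normT{\wb{B}_i^k}\Big)\sum_{i=1}^k\norm{\projmatr\notquaderror_i},
\end{equation*}
so the whole claim reduces to showing $c\defeq\sup_{i,k}\normT{\wb{B}_i^k}<\infty$.

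For that uniform bound I would invoke Lemma~\ref{lemma:upper-bound-on-B-prods}: there exist $\lambda>0$ and $M<\infty$ with $\normTs{B_j^k}\le M\exp(-\lambda\sum_{l=j}^k\stepsize_l)$ for all $j\le k$. Summing over $l$ in the definition of $\wb{B}_i^k$ yields
\begin{equation*}
\normTs{\wb{B}_i^k}
\le\stepsize_i\sum_{l=i}^{k-1}M\exp\Big(-\lambda\sum_{j=i+1}^l\stepsize_j\Big),
\end{equation*}
and the right side is bounded uniformly in $i$ and $k$. Writing $A_l=\sum_{j=1}^l\stepsize_j$, the exponents are $A_l-A_i$, which are strictly increasing in $l$ and slowly varying (because $\stepsize_j\to0$ while $A_l\to\infty$), so comparing $\stepsize_i\sum_{l\ge i}\exp(-\lambda(A_l-A_i))$ with the integral $\int_0^\infty e^{-\lambda t}\,dt=\lambda^{-1}$ shows the sum stays $O(1)$ uniformly in $i$, with the finitely many large initial stepsizes absorbed into $M$. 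This is precisely \cite[Lemma 2]{PolyakJu92}; the only modification is restricting the operator norm to $\T$, which is harmless since, as noted, every matrix occurring in $\wb{B}_i^k$ preserves $\T$. Combining the two displays gives the lemma with $c=\sup_{i,k}\normT{\wb{B}_i^k}<\infty$.

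I do not expect a real obstacle here: the single nontrivial ingredient is the uniform bound $\sup_{i,k}\normT{\wb{B}_i^k}<\infty$, a standard fact about weighted products of near-identity contractions under the stepsize conditions ($\stepsize_k\propto k^{-\steppow}$, $\steppow\in(\half,1)$), already available from the Polyak--Juditsky analysis. Everything else is the triangle inequality plus the observation that $\T$ is invariant under each $I-\stepsize_j\projmatr H\projmatr$.
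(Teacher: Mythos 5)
Your proof is correct and follows the same route as the paper: the paper's entire argument is the triangle inequality combined with the uniform bound $\sup_{i,k}\normTs{\wb{B}_i^k}<\infty$, which it cites directly from \cite[Lemma 2]{PolyakJu92}. You simply supply the details the paper leaves implicit (the invariance of $\T$ under each factor, which justifies the seminorm submultiplicativity, and the sum-versus-integral estimate behind the uniform bound), so there is nothing to flag.
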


Our fourth lemma gives the most important result for the theorem, which is
asymptotic normality of the noise sequence $\wt{\noise}_k$ on the subspace
defined by the projection matrices $\projmatr$.  By showing that the matrices
$\wb{B}_i^k$ approximate the projected pseudo-inverse $(\projmatr H
\projmatr)^\dag$, we have the following asymptotic normality result, whose
proof we defer to Appendix~\ref{sec:proof-asymptotic-normality-main-terms}.
\begin{lemma}
  \label{lemma:asymptotic-normality-of-main-terms}
  Let the conditions of
  Proposition~\ref{proposition:generalization-of-Polyak-Juditsky} hold. Then
  \begin{equation*}
    \frac{1}{\sqrt{k}} \sum_{i=1}^{k} \wb{B}_i^k \wt{\noise}_i 
    \cd \normal \left(0, (\projmatr H \projmatr)^\dag \projmatr 
    \Sigma \projmatr (\projmatr H \projmatr)^\dag\right).
  \end{equation*}
\end{lemma}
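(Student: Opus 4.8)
The plan is to adapt the reasoning behind Lemma~2 and the remarks around equation~(17) of Polyak and Juditsky~\cite{PolyakJu92}, with two wrinkles: everything is confined to the subspace $\T$, so the limiting ``inverse Hessian'' is the Moore--Penrose pseudo-inverse $(\projmatr H\projmatr)^\dag$ rather than an inverse, and we only assume the central limit theorem $k^{-1/2}\sum_{i=1}^k\noisezero_i\cd\normal(0,\Sigma)$ instead of convergence of conditional covariances. Write $G\defeq\projmatr H\projmatr$; by Assumption~\ref{assumption:growth-and-noise}, $G$ annihilates $\T^\perp$ and restricts to an invertible operator on $\T$, so $G^\dag=(G|_{\T})^{-1}$ extended by zero, and $G$, $B_i^k$, $\wb B_i^k$ and $G^\dag$ all preserve $\T$. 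Since $\wt\noise_i=\projmatr\xi_i=\projmatr\noisezero_i+\projmatr\noisier_i(x_i)$ lies in $\T$, the whole computation takes place inside $\T$. I would start from the splitting
\begin{equation*}
  \frac{1}{\sqrt k}\sum_{i=1}^k\wb B_i^k\wt\noise_i
  =G^\dag\cdot\frac{1}{\sqrt k}\sum_{i=1}^k\wt\noise_i
  +\frac{1}{\sqrt k}\sum_{i=1}^k\big(\wb B_i^k-G^\dag\big)\wt\noise_i ,
\end{equation*}
and show the first term is asymptotically normal and the second is $o_P(1)$.

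The deterministic core---and the step I expect to be the main obstacle---is the Ces\`aro estimate
\begin{equation*}
  \sup_{i\le k}\normTs{\wb B_i^k}<\infty
  \qquad\text{and}\qquad
  \frac{1}{k}\sum_{i=1}^k\normTs{\wb B_i^k-G^\dag}^2\longrightarrow 0 .
\end{equation*}
Inside $\T$ the matrices $I-\stepsize_j G$ are polynomials in $G$ and hence commute, so the telescoping identity $\sum_{l=i+1}^k\stepsize_l\,G\prod_{j=i+1}^{l-1}(I-\stepsize_j G)=I-B_{i+1}^k$, combined with the slow variation $\stepsize_l/\stepsize_{l+1}\to1$ of the stepsizes (so that $\stepsize_l\approx\stepsize_i$ over the $O(1/\stepsize_i)=o(i)$ indices $l$ for which the products are non-negligible), gives $\wb B_i^k\to G^\dag$ on $\T$ as $k\to\infty$ for each fixed $i$. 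Lemma~\ref{lemma:upper-bound-on-B-prods} (the bound $\normTs{B_j^k}\le M\exp(-\lambda\sum_{i=j}^k\stepsize_i)$) upgrades this to boundedness uniform over $i$ and convergence uniform once $i$ and $k-i$ are large; there are only $O(\sqrt k)$ ``boundary'' indices $i$ near $k$ and finitely many small $i$, each contributing a uniformly bounded term, so their total is $o(k)$ after dividing by $k$. This extends the boundedness $\sup_{i,k}\normTs{\wb B_i^k}<\infty$ of \cite[Lemma~2]{PolyakJu92} to the subspace/pseudo-inverse setting.

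Granting the Ces\`aro estimate, the first term is routine. Splitting $\wt\noise_i=\projmatr\noisezero_i+\projmatr\noisier_i(x_i)$: by Assumption~\ref{assumption:growth-and-noise} and the continuous mapping theorem, $k^{-1/2}\sum_{i=1}^k\projmatr\noisezero_i\cd\normal(0,\projmatr\Sigma\projmatr)$, while $k^{-1/2}\sum_{i=1}^k\projmatr\noisier_i(x_i)$ is a martingale whose predictable quadratic variation is at most $C\sum_{i=1}^k\|\Delta_i\|^2=o(\sqrt k)$ almost surely by Assumption~\ref{assumption:convergence-as}, so (after a standard stopping-time truncation to secure integrability) it tends to $0$ in probability. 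Hence $k^{-1/2}\sum_{i=1}^k\wt\noise_i\cd\normal(0,\projmatr\Sigma\projmatr)$, and applying the fixed matrix $G^\dag$ with Slutsky's theorem gives that the first term converges to $\normal\big(0,G^\dag\projmatr\Sigma\projmatr G^\dag\big)=\normal\big(0,(\projmatr H\projmatr)^\dag\projmatr\Sigma\projmatr(\projmatr H\projmatr)^\dag\big)$.

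For the second term, fix $k$ and note the matrices $\wb B_i^k-G^\dag$ are deterministic while $\{\wt\noise_i\}$ is a martingale difference sequence adapted to $\filt_i$; after truncating so the increments are square-integrable, orthogonality of the increments gives
\begin{equation*}
  \E\Big\|\frac{1}{\sqrt k}\sum_{i=1}^k\big(\wb B_i^k-G^\dag\big)\wt\noise_i\Big\|^2
  =\frac1k\sum_{i=1}^k\E\big\|\big(\wb B_i^k-G^\dag\big)\wt\noise_i\big\|^2
  \le\frac{C}{k}\sum_{i=1}^k\normTs{\wb B_i^k-G^\dag}^2\big(1+\E\|\Delta_i\|^2\big),
\end{equation*}
and after the same localization (restricting to the almost sure event on which $\|\Delta_i\|$ is bounded beyond a random index, exactly as in Section~\ref{sec:proof-fake-sequence-rsgd-cas}, the finitely many earlier terms contributing $O(1/k)$), the Ces\`aro estimate forces the right-hand side to $0$; hence the second term is $o_P(1)$. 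A final application of Slutsky's theorem yields the claimed limit. The delicate points are thus entirely the matrix-analysis of Step~2 (the uniform Ces\`aro convergence of $\wb B_i^k$ to $(\projmatr H\projmatr)^\dag$ on $\T$) and the bookkeeping of the truncation/localization needed because Assumption~\ref{assumption:convergence-as} only provides almost sure, not $L^2$, control of $\{\Delta_i\}$.
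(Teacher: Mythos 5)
Your proposal takes essentially the same route as the paper's proof: the identical decomposition $\wb{B}_i^k = (\projmatr H \projmatr)^\dag - \errmat_i^k$, the same Ces\`aro estimate $\sup_{i,k}\normTs{\errmat_i^k}<\infty$ and $k^{-1}\sum_i \normTs{\errmat_i^k}\to 0$ (which the paper imports directly from Polyak and Juditsky rather than re-deriving), the same martingale-orthogonality second-moment bound for the error term with a predictable localization on $\{\norm{x_i - x\opt}\le \epsilon\}$, and Slutsky's theorem to conclude. The only minor variation is your treatment of the state-dependent noise $\noisier_i(x_i)$ via a quadratic-variation argument yielding convergence in probability, where the paper establishes $\sum_i i^{-1}\norm{x_i - x\opt}^2 < \infty$ by Abel summation and invokes an almost-sure martingale convergence lemma; both are valid and lead to the same conclusion.
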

Returning to the equality~\eqref{eqn:expansion-summed-errors},
we have
\begin{align*}
  \frac{1}{\sqrt{k}} \sum_{i=1}^k \Delta_i
  & = 
  \frac{1}{\sqrt{k}} \sum_{i=1}^{k} B_1^{i-1} \Delta_1 - 
  \frac{1}{\sqrt{k}} \sum_{i=1}^{k} \wb{B}_i^k \wt{\xi}_i
  -\frac{1}{\sqrt{k}} \sum_{i=1}^{k} \wb{B}_i^k \wt{\notquaderror}_i + 
  \frac{1}{\sqrt{k}} \sum_{i=1}^k\sum_{l=i+1}^{k}
  B_{i+1}^{l-1}\offerror_i
\end{align*}
Taking norms, we have
$\norm{B_1^{i-1} \Delta_1} \le
\normTs{B_1^{i-1}} \norm{\Delta_1}$,
and Lemma~\ref{lemma:upper-bound-on-B-prods}
implies $k^{-\half} \sum_{i = 1}^k \normTs{B_1^{i-1}} \to 0$.
By Lemma~\ref{lemma:offerror-zeros}, defining
$T < \infty$ to be the (random but finite) time such that
$\offerror_k = 0$ for $k \ge T$, we have
\begin{equation*}
  \frac{1}{\sqrt{k}} \normbigg{
    \sum_{i=1}^k\sum_{l=i+1}^{k}
    B_{i+1}^{l-1}\offerror_i}
  \le c \frac{1}{\sqrt{k}} T^{1 + \steppow} \max_{i \le T}
  \norm{\offerror_i}
  \to 0
\end{equation*}
as $k \to \infty$. By Lemma~\ref{lemma:notquaderror-convergence}
and Assumption~\ref{assumption:convergence-as}, we have
\begin{equation*}
  \frac{1}{\sqrt{k}}\norm{\sum_{i=1}^k \wb{B}_i^k \projmatr \notquaderror_i} 
  \le \frac{c}{\sqrt{k}} \sum_{i=1}^k \norms{\projmatr \notquaderror_i}
  \cas 0.
\end{equation*}
Applying Lemma~\ref{lemma:asymptotic-normality-of-main-terms} and 
Slutsky's theorem
to the sum thus gives
\begin{equation*}
  \frac{1}{\sqrt{k}} \sum_{i=1}^k \Delta_i
  = -\frac{1}{\sqrt{k}} \sum_{i = 1}^k \wb{B}_i^k \projmatr \noise_i
  + o_P(1)
  \cd 
  \normal\left(0, (\projmatr H \projmatr)^\dag \projmatr 
  \Sigma \projmatr (\projmatr H \projmatr)^\dag\right),
\end{equation*}
which is the desired claim of the proposition.

\section{Proofs of technical lemmas for
  Proposition~\ref{proposition:generalization-of-Polyak-Juditsky}}
\label{sec:proof-generalization-of-Polyak-Judistky-technical-lemma}

In this section, we collect the proofs of the various technical lemmas
required for the proof of
Proposition~\ref{proposition:generalization-of-Polyak-Juditsky}.
Before proving the lemmas, we state one technical result, similar
to~\cite[Lemma 13]{DuchiChRe15}, that is useful for what follows.  (We
include a proof for completeness in
Section~\ref{sec:proof-funny-gamma-integral}).
\begin{lemma}
  \label{lemma:funny-gamma-integral}
  Let $c > 0$ and $\kappa, \rho \in (0, 1)$ be constants and $b \ge a >
  0$. Then
  \begin{equation*}
    \int_a^b (t^\rho - a^\rho)
    \exp\left(-c (t^\kappa - a^\kappa)\right) dt
    \le O_{c,\kappa,\rho}(1)
    \left[\Gamma\Big(\frac{1 + \rho}{\kappa}\Big)
      + a^{1 + \rho - \kappa}\right],
  \end{equation*}
  where $O_{c,\kappa,\rho}(1)$ denotes a multiplicative constant
  dependent only on $c, \kappa, \rho$. If
  $\kappa = \rho$, then we have moreover that
  \begin{equation*}
    \lim_{a \to \infty} \frac{1}{a} \int_a^\infty (t^\rho - a^\rho)
    \exp\left(-c (t^\kappa - a^\kappa)\right) dt
    = 0.
  \end{equation*}
\end{lemma}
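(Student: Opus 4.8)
The plan is to convert the integral into standard Gamma‑function integrals via the substitution $v = c(t^\kappa - a^\kappa)$, which is a smooth increasing bijection of $[a,\infty)$ onto $[0,\infty)$, with $t = (a^\kappa + v/c)^{1/\kappa}$ and $dt = \frac{1}{c\kappa}(a^\kappa + v/c)^{1/\kappa - 1}\,dv$. For the first inequality I would first use that the integrand is nonnegative on $[a,b]$ to replace the upper limit $b$ by $+\infty$, then the crude bound $t^\rho - a^\rho \le t^\rho$, so that with $q \defeq \frac{1+\rho}{\kappa} - 1$,
\[
  \int_a^b (t^\rho - a^\rho)\, e^{-c(t^\kappa - a^\kappa)}\,dt
  \;\le\; \frac{1}{c\kappa}\int_0^\infty \big(a^\kappa + v/c\big)^{q}\, e^{-v}\,dv,
\]
since $t^\rho (a^\kappa + v/c)^{1/\kappa - 1} = (a^\kappa + v/c)^{(1+\rho)/\kappa - 1}$. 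The decisive (trivial) observation is that $q > 0$ for all $\rho,\kappa \in (0,1)$, because $1 + \rho > 1 > \kappa$; hence I may apply $(x+y)^q \le C_q (x^q + y^q)$ with $C_q = \max\{1, 2^{q-1}\}$ and split the integral. The $a^{\kappa q}$‑piece contributes $a^{\kappa q}\int_0^\infty e^{-v}dv = a^{1+\rho-\kappa}$ and the $v^q$‑piece contributes $c^{-q}\int_0^\infty v^q e^{-v}dv = c^{-q}\Gamma(q+1) = c^{-q}\Gamma\big(\tfrac{1+\rho}{\kappa}\big)$; absorbing the $c,\kappa,\rho$‑dependent prefactors into $O_{c,\kappa,\rho}(1)$ gives the claimed bound.

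For the refined statement when $\kappa = \rho$, the trick is to keep — rather than discard — the factor $t^\rho - a^\rho$, which under the same substitution is exactly $v/c$. This yields the identity
\[
  \int_a^\infty (t^\rho - a^\rho)\, e^{-c(t^\rho - a^\rho)}\,dt
  \;=\; \frac{1}{c^2 \rho}\int_0^\infty v\, e^{-v}\,\big(a^\rho + v/c\big)^{\frac1\rho - 1}\,dv .
\]
Since $\tfrac1\rho - 1 > 0$, the same subadditivity inequality bounds the right‑hand side by a constant depending only on $c,\rho$ times $a^{1-\rho} + c^{1-1/\rho}\Gamma(\tfrac1\rho + 1)$, using $\int_0^\infty v e^{-v}\,dv = 1$ and $\rho\big(\tfrac1\rho-1\big) = 1-\rho$. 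Dividing by $a$ leaves $O_{c,\rho}(1)\big(a^{-\rho} + a^{-1}\big)$, which tends to $0$ as $a\to\infty$ because $\rho > 0$.

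I do not anticipate any real difficulty: the whole argument is two changes of variables and one application of $(x+y)^q \le C_q(x^q+y^q)$. The only point needing a moment's care is checking that the relevant exponents, $q = \frac{1+\rho}{\kappa}-1$ in the first part and $\frac1\rho-1$ in the second, are strictly positive so that this subadditivity inequality is applied in the correct direction — this is precisely where the hypothesis $\kappa,\rho \in (0,1)$ enters — together with verifying that $t \mapsto v = c(t^\kappa-a^\kappa)$ is a bijection of $[a,\infty)$ onto $[0,\infty)$ with the stated Jacobian, which is immediate since $t\mapsto t^\kappa$ is a smooth increasing bijection of $(0,\infty)$.
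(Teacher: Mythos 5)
Your proof is correct. The first part follows essentially the same route as the paper: the same substitution $u = c(t^\kappa - a^\kappa)$, followed by splitting $(x+y)^q \le C_q(x^q + y^q)$ for $q = \frac{1+\rho}{\kappa}-1 > 0$; your simplification of discarding the $-a^\rho$ term before splitting (rather than carrying it through as the paper does) is harmless since the target bound does not require the cancellation. The second part is where you genuinely diverge: the paper bounds the integrand once more, divides by $a$, and invokes dominated convergence on $\int_0^\infty \left[(a^{-1/b}cu+1)^b - 1\right]e^{-u}\,du$ with dominating function $(cu+1)^b e^{-u}$, obtaining the limit $0$ without a rate. You instead keep the exact identity $t^\rho - a^\rho = v/c$ and reuse the subadditivity splitting on $(a^\rho + v/c)^{1/\rho - 1}$, which yields the explicit bound $O_{c,\rho}(1)\,(a^{-\rho} + a^{-1})$ after dividing by $a$. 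Your version is slightly stronger (quantitative decay rather than a bare limit) and avoids the DCT domination argument; both are valid, and the only hypotheses either argument needs are exactly the positivity of the exponents $\frac{1+\rho}{\kappa}-1$ and $\frac{1}{\rho}-1$, which you correctly identify as the place where $\kappa, \rho \in (0,1)$ is used.
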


\subsection{Proof of Lemma~\ref{lemma:offerror-zeros}}
\label{sec:proof-offerror-zeros}

By assumption, $\offerror_k = 0$ when $k \ge T$.
Then we have
\begin{equation*}
  \normbigg{\sum_{i=1}^k
    \bigg(\sum_{l=i+1}^{k} B_{i+1}^{l-1} \bigg) \offerror_i}
  \le \sum_{i = 1}^T \sum_{l = i + 1}^k \normT{B_{i+1}^{l - 1}}
  \norm{\offerror_i}
  \le M \sum_{i = 1}^T
  \sum_{l = i + 1}^k \exp\bigg(-\lambda \sum_{j = i + 1}^l
  \stepsize_j\bigg) \norm{\offerror_i},
\end{equation*}
where $M < \infty$ and $\lambda > 0$ are defined as in
Lemma~\ref{lemma:upper-bound-on-B-prods}.  But because $\stepsize_j
\propto j^{-\steppow}$ for some $\steppow < 1$, we have for constants $c,
c' \in (0, \infty)$ that may change from line to line that
\begin{align*}
  \sum_{i = 1}^T
  \sum_{l = i}^k
  \exp\left(-\lambda \sum_{j = i}^l \stepsize_j\right)
  & \le \sum_{i = 1}^T \sum_{l = i}^k \exp\left(-c (l^{1 - \steppow}
  - i^{1 - \steppow})\right) \\
  & \le c' \sum_{i = 1}^T
  \int_i^k \exp\left(-c (t^{1 - \steppow} - i^{1 - \steppow})\right)
  dt \\
  & \le c' \sum_{i = 1}^T (1 + i^\steppow),
\end{align*}
where the final inequality is a consequence of
Lemma~\ref{lemma:funny-gamma-integral}. Using $\sum_{i = 1}^T i^\steppow \le
c T^{1 + \steppow}$ gives the final result.


\subsection{Proof of Lemma~\ref{lemma:asymptotic-normality-of-main-terms}}
\label{sec:proof-asymptotic-normality-main-terms}

\newcommand{\errmat}{E}

For shorthand, define the matrix $\wt{H} \defeq \projmatr H \projmatr$.
The first step in our proof is to argue that the averaged matrices
$\wb{B}_j^k$ approximate the pseudo-inverse $\wt{H}^\dag$.
Our argument is similar to that of
\citeauthor{PolyakJu92} in the case that $H$ is invertible and the problem is
unconstrained~\cite[Lemma 1]{PolyakJu92}, where one obtains $\wb{B}_j^k \to
H^{-1}$ in an appropriate sense. To that end, we state two technical lemmas.

\begin{lemma}
  \label{lemma:invertible}
  Let Assumption~\ref{assumption:growth-and-noise} hold.
  Then for any $x\in \T$,
  \begin{equation*}
    \wt{H}^\dag \wt{H} x = x ~~\text{and}~~\wt{H} \wt{H}^\dag x = x.
  \end{equation*}
\end{lemma}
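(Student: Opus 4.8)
The plan is to show that $\wt{H} = \projmatr H \projmatr$ restricts to a linear bijection of the subspace $\T$ onto itself, after which the two identities follow from the standard fact that $\wt{H}\wt{H}^\dag$ and $\wt{H}^\dag\wt{H}$ are the orthogonal projectors onto $\range(\wt{H})$ and $\range(\wt{H}^T)$, respectively. Writing $\Pi \defeq \projmatr$ for the orthogonal projector onto $\T$ (so $\Pi = \Pi^T = \Pi^2$ and $\Pi x = x$ iff $x \in \T$), I would first record the elementary facts $\wt{H} = \Pi \wt{H} = \wt{H}\Pi$ and $\wt{H}^T = \Pi H^T \Pi$, which yield $\range(\wt{H}) \subseteq \T$, $\range(\wt{H}^T) \subseteq \T$, and $\range(\wt{H}) = \wt{H}(\T)$.

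The one substantive step is injectivity of $\wt{H}$ on $\T$: if $x \in \T$ and $\wt{H}x = 0$, then $x^T \wt{H} x = (\Pi x)^T H (\Pi x) = x^T H x = 0$, and the quadratic growth bound $x^T H x \ge c\norm{x}^2$ of Assumption~\ref{assumption:growth-and-noise} then forces $x = 0$. Since $\T$ is finite dimensional, injectivity of $\wt{H}|_\T : \T \to \T$ upgrades to surjectivity, so $\range(\wt{H}) = \wt{H}(\T) = \T$. Running the identical argument with $H$ replaced by $H^T$ (which satisfies $w^T H^T w = w^T H w \ge c\norm{w}^2$ on $\T$, as a scalar equals its transpose) gives $\range(\wt{H}^T) = \T$ as well.

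To finish, both ranges being equal to $\T$ gives $\wt{H}\wt{H}^\dag = \wt{H}^\dag\wt{H} = \projmatr$, and evaluating at $x \in \T$ with $\projmatr x = x$ yields $\wt{H}^\dag\wt{H} x = x$ and $\wt{H}\wt{H}^\dag x = x$. I do not anticipate a genuine obstacle; the only care needed is that Proposition~\ref{proposition:generalization-of-Polyak-Juditsky} does not assume $H$ symmetric, so the range computation must be performed for $\wt{H}$ and $\wt{H}^T$ separately — when $H$ is symmetric, as in the applications of the proposition, a single pass suffices since then $\wt{H}^T = \wt{H}$.
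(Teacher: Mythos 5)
Your proof is correct and matches the paper's intent: the paper simply declares the lemma ``immediate from the definition of the pseudoinverse and that $\projmatr$ is an orthogonal projector,'' and your argument fills in exactly those details (injectivity of $\wt{H}$ on $\T$ via the growth bound of Assumption~\ref{assumption:growth-and-noise}, hence $\range(\wt{H}) = \range(\wt{H}^T) = \T$, hence both $\wt{H}\wt{H}^\dag$ and $\wt{H}^\dag\wt{H}$ equal $\projmatr$). Your remark about treating $\wt{H}^T$ separately when $H$ is not assumed symmetric is a careful and correct touch.
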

\noindent
This is immediate from the definition of the pseudoinverse and that
$\projmatr$ is an orthogonal projector.  The next lemma,
paralleling \citet[Lemma 1]{PolyakJu92}, shows how $\wb{B}_j^k$ approximates
the pseudo-inverse $\wt{H}^\dag$. We define the error
sequence
\begin{equation*}
  \errmat_j^k \defeq \wt{H}^\dag - \wb{B}_j^k
\end{equation*}
and recall the norm $\normT{A} = \sup\{ \norm{Ax} : x\in \T, \norm{x} \le 1\}$,
which results in the following
\begin{lemma}[Polyak and Juditsky~\cite{PolyakJu92}, Lemmas 1 and 2]
  \label{lemma:approximate-error-matrices}
  Suppose $\stepsize_k \propto k^{-\steppow}$, where $\steppow \in (\half,
  1)$. Then there exists $M < \infty$ such that
  \begin{equation*}
    \sup_{j, k}\normTs{\errmat_j^k} \leq M 
    ~~ \mbox{and} ~~
    \lim_{k\rightarrow \infty} \frac{1}{k} \sum_{j=0}^{k-1} \normTs{\errmat_j^k}= 0.
  \end{equation*}
\end{lemma}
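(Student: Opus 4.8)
The plan is to adapt the arguments of \citet[Lemmas 1 and 2]{PolyakJu92} to the subspace $\T$ and to the possibly non-symmetric matrix $H$. Throughout, observe that each factor $I - \stepsize_i \wt{H}$ (with $\wt{H} = \projmatr H \projmatr$) maps $\T$ into itself and restricts to the identity on $\T^\perp$; hence so do $B_i^k$ and $\wb{B}_i^k$, and it suffices to control these operators on $\T$. By Assumption~\ref{assumption:growth-and-noise}, $\norm{\wt{H}w}\,\norm{w} \ge w^T\wt{H}w \ge c\norm{w}^2$ for $w \in \T$, so $\wt{H}$ is invertible on $\T$ with $\normTs{\wt{H}^\dag} \le 1/c$; together with Lemma~\ref{lemma:invertible} this lets us treat $\wt{H}^\dag$ as the genuine inverse of $\wt{H}|_\T$ in all the manipulations below, using only this one-sided lower bound on the symmetric part of $\wt{H}$ rather than self-adjointness.

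First I would record an exact telescoping identity. Since $B_{j+1}^{l+1} = B_{j+1}^l(I - \stepsize_{l+1}\wt{H})$ we have $\stepsize_{l+1}B_{j+1}^l\wt{H} = B_{j+1}^l - B_{j+1}^{l+1}$, so
\begin{align*}
  \wb{B}_j^k\wt{H}
  & = \stepsize_j\sum_{l=j}^{k-1}B_{j+1}^l\wt{H}
  = \sum_{l=j}^{k-1}\frac{\stepsize_j}{\stepsize_{l+1}}\big(B_{j+1}^l - B_{j+1}^{l+1}\big) \\
  & = I - B_{j+1}^k + \sum_{l=j}^{k-1}\Big(\frac{\stepsize_j}{\stepsize_{l+1}} - 1\Big)\big(B_{j+1}^l - B_{j+1}^{l+1}\big),
\end{align*}
using $\sum_{l=j}^{k-1}(B_{j+1}^l - B_{j+1}^{l+1}) = I - B_{j+1}^k$. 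Right-multiplying by $\wt{H}^\dag$ and working on $\T$ gives
\begin{equation*}
  \errmat_j^k = \wt{H}^\dag - \wb{B}_j^k
  = B_{j+1}^k\wt{H}^\dag - \sum_{l=j}^{k-1}\Big(\frac{\stepsize_j}{\stepsize_{l+1}} - 1\Big)\big(B_{j+1}^l - B_{j+1}^{l+1}\big)\wt{H}^\dag .
\end{equation*}
Lemma~\ref{lemma:upper-bound-on-B-prods} and $\stepsize_i \asymp i^{-\steppow}$ give $\normTs{B_{j+1}^l} \le M\exp(-c((l+1)^{1-\steppow} - (j+1)^{1-\steppow}))$, hence $\normTs{B_{j+1}^l - B_{j+1}^{l+1}} \le \stepsize_{l+1}\opnorm{\wt{H}}\normTs{B_{j+1}^l}$, and a comparison with $\int_j^\infty\exp(-c(t^{1-\steppow} - j^{1-\steppow}))\,dt = O(j^\steppow)$ (Lemma~\ref{lemma:funny-gamma-integral}) yields $\sum_{l\ge j}\normTs{B_{j+1}^l} = O(j^\steppow)$ and $\normTs{\wb{B}_j^k} \le \stepsize_j\sum_{l\ge j}\normTs{B_{j+1}^l} = O(1)$. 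This proves the uniform bound $\sup_{j,k}\normTs{\errmat_j^k} \le \normTs{\wt{H}^\dag} + \sup_{j,k}\normTs{\wb{B}_j^k} < \infty$.

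For the Ces\`aro statement I would show $\normTs{\errmat_j^k} = O(j^{\steppow-1})$ (up to an error negligible after summing) whenever $k-j$ exceeds a logarithmic factor times $k^\steppow$, and handle the remaining $O(k^\steppow\log^2 k)$ indices $j$ with the uniform bound. For the main estimate, split the $l$-sum at $l = 2j$: on $j \le l \le 2j$, the inequality $x^\steppow - 1 \le \steppow(x-1)$ for $x \in [1,2]$ gives $|\tfrac{\stepsize_j}{\stepsize_{l+1}} - 1| = |(\tfrac{l+1}{j})^\steppow - 1| \lesssim \tfrac{l-j+1}{j}$, while $\normTs{B_{j+1}^l} \lesssim \exp(-\lambda'(l-j)j^{-\steppow})$ is concentrated within width $\asymp j^\steppow$ of $l = j$; multiplying by $\stepsize_{l+1} \asymp j^{-\steppow}$ and summing gives a contribution $\lesssim j^{-1-\steppow}\sum_{m\ge1}m\,e^{-\lambda'm j^{-\steppow}} = O(j^{\steppow-1})$. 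On $l > 2j$ the factor $|(\tfrac{l+1}{j})^\steppow - 1|$ grows only polynomially while $\normTs{B_{j+1}^l} \le M\exp(-\lambda'(j^{1-\steppow} + l^{1-\steppow}))$ decays super-polynomially in $j$, so that range is negligible; the truncation error $\wb{B}_j^\infty - \wb{B}_j^k$ and the term $B_{j+1}^k\wt{H}^\dag$ are likewise negligible once $k-j \gtrsim k^\steppow\log^2 k$. Since $\sum_{j\le k}j^{\steppow-1} \asymp k^\steppow$ and $k^\steppow\log^2 k = o(k)$ for $\steppow < 1$, dividing by $k$ and summing yields $\tfrac1k\sum_{j=0}^{k-1}\normTs{\errmat_j^k} \to 0$.

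The step I expect to be the main obstacle is this last estimate: making rigorous the heuristic that $B_{j+1}^l$ concentrates within $O(j^\steppow)$ of the diagonal $l = j$ — where the stepsize ratio $\stepsize_j/\stepsize_{l+1}$ is within $O(j^{\steppow-1})$ of $1$ — while simultaneously controlling the tail $l \gg j$ (where the ratio can be as large as $(k/j)^\steppow$) against the exponential decay of $\normTs{B_{j+1}^l}$, and isolating the off-diagonal regime $k-j = O(k^\steppow)$ in which $\wb{B}_j^k$ is genuinely far from $\wt{H}^\dag$ but only few indices are involved. This is precisely the delicate part of the Polyak--Juditsky argument; the only essentially new bookkeeping is keeping every pseudoinverse identity confined to $\T$ (licensed by Lemma~\ref{lemma:invertible}) and replacing self-adjointness of $H$ by the positivity of its symmetric part on $\T$.
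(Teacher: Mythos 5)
Your proof is correct and follows exactly the route the paper intends: the paper gives no argument here beyond citing Polyak and Juditsky's Lemmas 1 and 2 and asserting that the restriction to $\T$ introduces only trivialities, and your telescoping identity $\wb{B}_j^k \wt{H} = I - B_{j+1}^k + \sum_{l}(\stepsize_j/\stepsize_{l+1} - 1)(B_{j+1}^l - B_{j+1}^{l+1})$, inverted on $\T$ via Lemma~\ref{lemma:invertible} and combined with the near-diagonal/tail splitting and the coercivity bound $\normTs{\wt{H}^\dag} \le 1/c$, is precisely that argument carried out for the subspace and for non-symmetric $H$. The only blemish is cosmetic: you invoke Lemma~\ref{lemma:funny-gamma-integral} for $\int_j^\infty \exp(-c(t^{1-\steppow} - j^{1-\steppow}))\,dt = O(j^{\steppow})$, but as stated that lemma carries a factor $(t^\rho - a^\rho)$ that vanishes when $\rho = 0$; the bound you need follows from the same change of variables used in its proof, so nothing is lost.
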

\noindent
Except for trivialities to deal with the subspace $\T$, this
result is contained in \cite[Lemmas 1 \& 2]{PolyakJu92}.

Finally, we can show the desired asymptotic normality. Similar to $H$,
for any vector $v$ define the shorthand  $\wt{v} \defeq \projmatr v$.
By
algebraic manipulations, we have
\begin{align}
  \frac{1}{\sqrt{k}} \sum_{i=1}^k \wb{B}_i^k \projmatr \noise_i
  & = \wt{H}^\dag \frac{1}{\sqrt{k}} \sum_{i=1}^k \projmatr \noisezero_i
  + \wt{H}^\dag \frac{1}{\sqrt{k}} \sum_{i=1}^k \projmatr \noisier_i(x_i)
  - \frac{1}{\sqrt{k}} \sum_{i=1}^k \errmat_i^k \wt{\xi}_i,
  \label{eqn:decompose-weighted-noise-sequence}
\end{align}
where we have used the decomposition $\noise_i = \noisezero_i +
\noisier_i(x_i)$ that Assumption~\ref{assumption:growth-and-noise} defines. We
control each of the terms in the
expansion~\eqref{eqn:decompose-weighted-noise-sequence}, starting with the
last two, which converge to zero.
\begin{lemma}
  \label{lemma:kill-error-noises}
  Let the conditions of
  Proposition~\ref{proposition:generalization-of-Polyak-Juditsky} hold. Then
  \begin{equation*}
    \frac{1}{\sqrt{k}} \sum_{i = 1}^k \errmat_i^k \wt{\noise}_i \cp 0.
  \end{equation*}
\end{lemma}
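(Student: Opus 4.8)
The plan is to show that the two ``error'' terms in the decomposition~\eqref{eqn:decompose-weighted-noise-sequence} are asymptotically negligible, handling the $\notquaderror$-type term (i.e., $\noisier_i(x_i)$) by a direct $L^2$/Cauchy--Schwarz bound using Assumption~\ref{assumption:growth-and-noise}, and the $\errmat_i^k$ term by combining the uniform boundedness and Ces\`aro convergence of $\normTs{\errmat_i^k}$ from Lemma~\ref{lemma:approximate-error-matrices} with a maximal inequality for the partial sums of $\wt{\noise}_i$. Concretely, I would first write $\frac{1}{\sqrt{k}}\sum_{i=1}^k \errmat_i^k \wt{\noise}_i$ and use summation by parts (Abel summation) to re-express it in terms of the partial sums $S_m \defeq \sum_{i=1}^m \noise_i$ and the differences $\errmat_{i}^k - \errmat_{i+1}^k$; this is the standard device in Polyak--Juditsky's argument and lets one trade the oscillation of $\errmat_i^k$ against the (slow) growth of $S_m$.

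The key steps, in order: (1) Note that $\{\noisezero_i\}$ and $\{\noisier_i(x_i)\}$ are martingale difference sequences adapted to $\mc{F}_k$, so that $S_m^{(0)} = \sum_{i\le m}\noisezero_i$ satisfies $\E\|S_m^{(0)}\|^2 \le Cm$ by Assumption~\ref{assumption:growth-and-noise}, while $\sum_i \E[\|\noisier_i(x_i)\|^2 \mid \mc{F}_{i-1}] \le C\sum_i \|x_i - x\opt\|^2$, which is $o(\sqrt{k})$ almost surely by Assumption~\ref{assumption:convergence-as}; hence $\frac{1}{\sqrt k}\sum_{i=1}^k \wt H^\dag \projmatr \noisier_i(x_i) \cas 0$ directly (no summation by parts needed there — just the Kronecker lemma applied to the a.s.\ convergent martingale $\sum_i \noisier_i(x_i)/\sqrt i$, whose increments are square-summable in conditional $L^2$ after dividing by $\sqrt i$). (2) For the $\errmat_i^k$ term, fix a large $N$ and split $\sum_{i=1}^k = \sum_{i\le N} + \sum_{N<i\le k}$. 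The first (finite) piece is $O(1/\sqrt k)$ times a fixed random vector since $\sup_{j,k}\normTs{\errmat_j^k} \le M$. For the tail, apply summation by parts: $\sum_{i=N+1}^k \errmat_i^k \wt{\noise}_i = \errmat_k^k \wt S_k - \errmat_{N+1}^k \wt S_N + \sum_{i=N+1}^{k-1} (\errmat_{i+1}^k - \errmat_i^k)\wt S_i$, and bound $\|\wt S_i\| \le \|\projmatr\| \cdot (\|S_i^{(0)}\| + \|\sum_{j\le i}\noisier_j\|)$, which is $o(i^{1/2+\epsilon})$ a.s.\ by the law of the iterated logarithm for the martingale $S^{(0)}$ and the a.s.\ smallness just established for the $\noisier$ part. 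Then $\frac{1}{\sqrt k}$ times this is controlled by $\frac{1}{\sqrt k}\sum_i \normTs{\errmat_{i+1}^k - \errmat_i^k}\cdot o(i^{1/2+\epsilon})$, and using $\frac1k\sum_i \normTs{\errmat_i^k}\to 0$ together with uniform boundedness of the differences (which follows from the recursive structure $\wb B_i^k = \stepsize_i I + (I - \stepsize_i \projmatr H\projmatr)\wb B_{i+1}^k$, giving $\errmat_{i}^k - \errmat_{i+1}^k = \stepsize_i(\errmat_{i+1}^k - I + \projmatr H\projmatr \wb B_{i+1}^k \cdots)$, i.e.\ each difference is $O(\stepsize_i)$ in $\normT{\cdot}$) one concludes the whole tail is $o(1)$ in probability.

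The main obstacle I anticipate is making the summation-by-parts bound quantitatively tight enough: one needs both $\sum_i \normTs{\errmat_{i+1}^k - \errmat_i^k}$ to be only $O(\sum_i \stepsize_i) = O(k^{1-\steppow})$ \emph{and} the partial sums $\|\wt S_i\|$ to be $o(\sqrt i \cdot \mathrm{polylog})$, and then to check that the product, normalized by $k^{-1/2}$, still vanishes — here the constraint $\steppow > \half$ is exactly what saves the day (so $k^{-1/2}\cdot k^{1-\steppow}\cdot k^{1/2} = k^{1-\steppow}\cdot k^{\epsilon} \not\to 0$ naively, so one must instead exploit the \emph{Ces\`aro} vanishing $\frac1k\sum_i\normTs{\errmat_i^k}\to 0$ rather than the crude $O(k^{1-\steppow})$ bound, together with splitting at $N$ and letting $N\to\infty$ after $k\to\infty$). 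Managing the interplay of these two limits ($N$ and $k$) in the right order, and invoking the iterated-logarithm bound for the martingale $S^{(0)}$ to handle the $\|\wt S_i\|$ factors, is the delicate part; the rest is bookkeeping that mirrors \cite[Lemmas 1--2]{PolyakJu92}. Alternatively — and this may be cleaner — one avoids summation by parts entirely and instead bounds $\E\|\frac1{\sqrt k}\sum_{i=1}^k \errmat_i^k \wt{\noisezero}_i\|^2$ directly via the martingale property: since the $\errmat_i^k$ are \emph{not} $\mc{F}_{i-1}$-measurable (they depend on $k$ but not on the noise), the cross terms vanish and the expectation equals $\frac1k\sum_i \E\|\errmat_i^k\projmatr\noisezero_i\|^2 \le \frac{C}{k}\sum_i \normTs{\errmat_i^k}^2 \le \frac{CM}{k}\sum_i\normTs{\errmat_i^k}\to 0$, so $\frac1{\sqrt k}\sum_i \errmat_i^k\wt{\noisezero}_i \cp 0$; combined with the a.s.\ handling of the $\noisier$ contribution this gives the lemma without any iterated-logarithm input, and this is the route I would actually take.
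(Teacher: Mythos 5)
Your preferred route (the final ``alternatively'' paragraph) is essentially the paper's proof of this lemma: the matrices $\errmat_i^k$ are deterministic (hence trivially $\mc{F}_{i-1}$-measurable --- what matters is only that they are non-random), so expanding the square kills all cross terms by the martingale-difference property of the noise, and the diagonal is bounded by $\frac{C}{k}\sum_i\normTs{\errmat_i^k}^2 \le \frac{CM}{k}\sum_i\normTs{\errmat_i^k}\to 0$ via Lemma~\ref{lemma:approximate-error-matrices}. The summation-by-parts machinery you sketch first is unnecessary, and, as you yourself observe, it does not close without the Ces\`aro input anyway.

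There is, however, a genuine gap in how you dispose of the state-dependent noise. The lemma concerns $\sum_i\errmat_i^k\wt{\noise}_i$ with $\noise_i = \noisezero_i + \noisier_i(x_i)$, and your $L^2$ computation is applied only to the $\noisezero_i$ part. For the $\noisier_i(x_i)$ part you appeal to ``the a.s.\ handling of the $\noisier$ contribution,'' but what that step establishes is $\frac{1}{\sqrt k}\sum_i\noisier_i(x_i)\cas 0$, i.e.\ smallness of the \emph{unweighted} sum; this does not control $\frac{1}{\sqrt k}\sum_i\errmat_i^k\projmatr\noisier_i(x_i)$, since each summand carries a different matrix weight, and retreating to $\frac{1}{\sqrt k}\sum_i\normTs{\errmat_i^k}\norm{\noisier_i(x_i)}$ is hopeless because Assumption~\ref{assumption:convergence-as} only gives $\frac{1}{\sqrt k}\sum_i\norm{x_i - x\opt}^2\to 0$, whence $\frac{1}{\sqrt{k}}\sum_i \norm{x_i-x\opt}$ is merely $o(k^{1/4})$ and may diverge. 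Nor can your clean $L^2$ bound be applied to $\noisier_i(x_i)$ as is, because $\E[\norm{\noisier_i(x_i)}^2\mid\mc{F}_{i-1}]\le C\norm{x_i-x\opt}^2$ is not uniformly bounded. The paper's fix is to run the same orthogonality computation on the \emph{full} noise $\wt{\noise}_i$ after inserting the $\mc{F}_{i-1}$-measurable indicator $\indic{\norm{x_i-x\opt}\le\epsilon}$: on that event $\E[\norm{\wt{\noise}_i}^2\mid\mc{F}_{i-1}]\le C(1+\epsilon^2)$ is bounded and the cross terms still vanish, while the complementary event occurs only finitely often a.s.\ by Assumption~\ref{assumption:convergence-as}, so its contribution to the normalized sum tends to zero almost surely. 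With that truncation device your argument goes through verbatim.
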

\begin{proof}
  We break the sum into two terms, depending on whether
  $\norm{x_i - x\opt} > \epsilon$ or $\norm{x_i - x\opt} \le \epsilon$.  In the
  former case, Assumption~\ref{assumption:convergence-as}
  guarantees that $\norm{x_i - x\opt} > \epsilon$ occurs only finitely often.
  In the second case, we note that the event that $\norm{x_i - x\opt} \le
  \epsilon$ belongs to the $\sigma$-field $\mc{F}_{i-1}$. Thus,
  by expanding a square, we have
  \begin{align*}
    \lefteqn{\E\left[\norm{\frac{1}{\sqrt{k}} \sum_{i=1}^k \errmat_i^k \wt{\xi}_i
          \indics{\norm{x_i - x\opt} \le \epsilon}}^2\right]
      = \frac{1}{k} \sum_{i,j}^k
      \E\left[\<\errmat_i^k \wt{\noise}_i, \errmat_j^k \wt{\noise}_j\>
        \indics{\norm{x_i - x\opt} \le \epsilon}
        \indics{\norm{x_j - x\opt} \le \epsilon} \right]} \\
    & \qquad\qquad\qquad\qquad\qquad\qquad
    = \frac{2}{k} \sum_{i < j}
    \E\Big[\underbrace{\E\left[\<\errmat_i^k \wt{\noise}_i, \errmat_j^k \wt{\noise}_j\>
        \mid \mc{F}_{j-1}\right]}_{= 0}
      \indics{\norm{x_i - x\opt} \le \epsilon,
        \norm{x_j - x\opt} \le \epsilon}\Big] \\
    & \qquad\qquad\qquad\qquad\qquad\qquad\qquad
    ~ + \frac{1}{k} \sum_{i = 1}^k
    \E\left[\norm{\errmat_i^k \wt{\noise}_i}^2
      \indics{\norm{x_i - x\opt} \le \epsilon}\right].
  \end{align*}
  Noting that $\norms{\errmat_i^k \wt{\noise}_i} \le
  \normTs{\errmat_i^k} \norms{\wt{\noise}_i}$, since $\wt{\noise}_i \in \T$ by 
  construction of $\wt{\noise}$. Thus we have
  \begin{equation*}
    \E\left[\norm{\frac{1}{\sqrt{k}} \sum_{i=1}^k \errmat_i^k \wt{\xi}_i
        \indics{\norm{x_i - x\opt} \le \epsilon}}^2\right]
    \le 
    \frac{1}{k} \sum_{i = 1}^k
    \normTs{\errmat_i^k}^2
    \E\left[\norm{\wt{\noise}_i}^2
    \indics{\norm{x_i - x\opt} \le \epsilon}\right].
  \end{equation*}
  But of course, Assumption~\ref{assumption:growth-and-noise} guarantees that
  \begin{equation*}
    \E[\norms{\wt{\noise}_i}^2 \indics{\norm{x_i - x\opt} \le \epsilon}] \le c
    \E\left[(1 + \norm{x_i - x\opt}^2) \indic{\norm{x_i - x\opt} \le \epsilon}
      \right] \le 2c
  \end{equation*}
  for some constant $c < \infty$, and
  using Lemma~\ref{lemma:approximate-error-matrices}, we thus obtain
  \begin{equation*}
    \E\left[\norm{\frac{1}{\sqrt{k}} \sum_{i=1}^k \errmat_i^k \wt{\xi}_i
        \indics{\norm{x_i - x\opt} \le \epsilon}}^2\right]
    \le 
    \frac{c}{k} \sum_{i = 1}^k
    \normTs{\errmat_i^k}^2
    \le \frac{M c}{k} \sum_{i = 1}^k \normTs{\errmat_i^k}
    \to 0
  \end{equation*}
  as $k \to \infty$. Thus
  \begin{equation*}
    \frac{1}{\sqrt{k}} \sum_{i = 1}^k \errmat_i^k \wt{\noise}_i
    = \underbrace{\frac{1}{\sqrt{k}} \sum_{i = 1}^k \errmat_i^k \wt{\noise}_i
    \indic{\norm{x_i - x\opt} > \epsilon}}_{\cas 0}
    + \underbrace{\frac{1}{\sqrt{k}} \sum_{i = 1}^k \errmat_i^k \wt{\noise}_i
      \indic{\norm{x_i - x\opt} \le \epsilon}}_{\cp 0}
    \cp 0,
  \end{equation*}
  as desired.
\end{proof}
\begin{lemma}
  \label{lemma:convergence-of-weird-terms}
  Let the conditions of
  Proposition~\ref{proposition:generalization-of-Polyak-Juditsky} hold.
  Then
  \begin{equation*}
    \frac{1}{\sqrt{k}} \sum_{i=1}^k \noisier_i(x_i)
    \cas 0.
  \end{equation*}
\end{lemma}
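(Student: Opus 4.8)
The plan is to view the partial sums $Z_k := \sum_{i=1}^k \noisier_i(x_i)$ as a martingale adapted to the filtration $\mc{F}_k$ and apply the weighted martingale strong law of Lemma~\ref{lemma:dembo-l2} with the deterministic normalization $b_k = \sqrt{k}$. Because $x_i \in \mc{F}_{i-1}$ (the iterate is fixed before the $i$th noise is realized) and $\noisier_i$ is a martingale difference adapted to $\mc{F}_i$, each $\noisier_i(x_i)$ is $\mc{F}_i$-measurable with $\E[\noisier_i(x_i) \mid \mc{F}_{i-1}] = 0$, so $Z_k$ is indeed a martingale. By Lemma~\ref{lemma:dembo-l2} it then suffices to show $\sum_{k=1}^\infty k^{-1} \E[\norm{\noisier_k(x_k)}^2 \mid \mc{F}_{k-1}] < \infty$ almost surely (as in the earlier applications of this lemma in the paper, the conclusion holds on the event where this series converges).

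To apply the conditional variance bound of Assumption~\ref{assumption:growth-and-noise} only where it is valid, I would truncate: write $Z_k = Z_k^{(1)} + Z_k^{(2)}$ with $Z_k^{(1)} = \sum_{i \le k} \noisier_i(x_i) \indic{\norm{x_i - x\opt} \le \epsilon}$ and $Z_k^{(2)} = \sum_{i \le k} \noisier_i(x_i) \indic{\norm{x_i - x\opt} > \epsilon}$; since the indicators lie in $\mc{F}_{i-1}$, both remain martingales. As $x_k \cas x\opt$ by Assumption~\ref{assumption:convergence-as}, the event $\{\norm{x_i - x\opt} > \epsilon\}$ occurs only finitely often, so $Z_k^{(2)}$ is eventually constant and $k^{-1/2} Z_k^{(2)} \cas 0$ trivially. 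For $Z_k^{(1)}$, Assumption~\ref{assumption:growth-and-noise} gives $\E[\norm{Z_k^{(1)} - Z_{k-1}^{(1)}}^2 \mid \mc{F}_{k-1}] \le C \norm{x_k - x\opt}^2$ for a finite constant $C$ and every $k$, so the required series is dominated by $C \sum_{k=1}^\infty k^{-1} \norm{x_k - x\opt}^2$.

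It remains to show $\sum_{k=1}^\infty k^{-1} \norm{x_k - x\opt}^2 < \infty$ almost surely, which is where the Ces\`aro control $\frac{1}{\sqrt{k}} \sum_{i=1}^k \norm{x_i - x\opt}^2 \cas 0$ of Assumption~\ref{assumption:convergence-as} enters. Putting $a_i = \norm{x_i - x\opt}^2$ and $A_k = \sum_{i \le k} a_i$, this says $A_k = o(\sqrt{k})$, hence on the almost sure event in question $A_k \le R \sqrt{k}$ for all $k$ and some finite random $R$. A summation by parts gives $\sum_{i=1}^N a_i / i = A_N / N + \sum_{i=1}^{N-1} A_i / (i(i+1))$, and since $A_N/N \to 0$ and $A_i/(i(i+1)) = o(i^{-3/2})$ is summable, we conclude $\sum_i a_i / i < \infty$ a.s. Combining with the previous paragraph verifies the hypothesis of Lemma~\ref{lemma:dembo-l2} for $Z_k^{(1)}$, so $k^{-1/2} Z_k^{(1)} \cas 0$ and therefore $\frac{1}{\sqrt{k}} \sum_{i=1}^k \noisier_i(x_i) = k^{-1/2} Z_k \cas 0$. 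There is no real obstacle here; the only steps needing a little care are the truncation (so that Assumption~\ref{assumption:growth-and-noise} may be invoked term by term) and the summation-by-parts passage from the Ces\`aro bound $A_k = o(\sqrt{k})$ to the series convergence $\sum_i a_i / i < \infty$.
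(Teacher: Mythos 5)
Your proof is correct and follows essentially the same route as the paper's: the same truncation on the event $\{\norm{x_i - x\opt} \le \epsilon\}$ (with the complementary event handled by almost sure convergence of $x_k$), the same summation-by-parts passage from the Ces\`aro bound $\sum_{i \le k}\norm{x_i - x\opt}^2 = o(\sqrt{k})$ to $\sum_i i^{-1}\norm{x_i - x\opt}^2 < \infty$, and the same final appeal to Lemma~\ref{lemma:dembo-l2}. No gaps.
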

\begin{proof}
  We again break the sum into two terms, depending on whether $\norm{x_i -
    x\opt} > \epsilon$ or $\norm{x_i - x\opt} \le \epsilon$.  In the former
  case, Assumption~\ref{assumption:convergence-as} guarantees
  that $\norm{x_i - x\opt} > \epsilon$ occurs only finitely often.  In the
  second case, we first show that,
  \begin{equation}
    \label{eqn:almost-sure-bound-harmonic-average}
    \sum_{i=1}^{\infty} \frac{1}{i} \norm{x_i - x\opt}^2 < \infty.
  \end{equation}
  Indeed, for $k \in \N$, we have the identity
  \begin{equation*}
    \sum_{i=1}^k \frac{1}{i} \norm{x_i - x\opt}^2 = \frac{1}{k}
    \sum_{i=1}^k \norm{x_i - x\opt}^2 + \sum_{i=1}^{k-1}\frac{1}{i(i+1)}
    \Big(\sum_{j=1}^i \norm{x_j - x\opt}^2\Big). 
  \end{equation*}
  By Assumption~\ref{assumption:convergence-as}, there exists a 
  random variable $M$ with $M < \infty$ such that for all $k \in \N$
  $k^{-1/2}\sum_{j=1}^k \norm{x_j - x\opt}^2 \le M$.
  Substituting this estimate into the preceding display,
  we immediately obtain
  \begin{equation*}
    \sum_{i=1}^{\infty} \frac{1}{i} \norm{x_i - x\opt}^2
    = \lim_{k \to \infty} \sum_{i=1}^k \frac{1}{i} \norm{x_i - x\opt}^2   
    \le M\sum_{i=1}^\infty \frac{1}{i^{3/2}} < \infty.
  \end{equation*}
  This proves Eq.~\eqref{eqn:almost-sure-bound-harmonic-average}. Now, 
  using Assumption~\ref{assumption:growth-and-noise}, 
  Eq.~\eqref{eqn:almost-sure-bound-harmonic-average} immediately implies that
  \begin{equation*}
    \sum_{i=1}^\infty \frac{1}{i}\E\left[\norms{\noisier_i(x_i)}^2
      \mid \mc{F}_{i-1}\right]
    \indic{\norm{x_i - x\opt}\le \eps} \le 
    \sum_{i=1}^\infty \frac{1}{i} \norm{x_i - x\opt}^2 < \infty. 
  \end{equation*}
  Applying Lemma~\ref{lemma:dembo-l2}, gives that
  \begin{equation*}
    \frac{1}{\sqrt{k}}\sum_{i=1}^k \noisier_i(x_i)\indic{\norm{x_i - x\opt}
      \le \eps} \cas 0.
  \end{equation*}
  As $\norm{x_i - x\opt} \le \epsilon$ except for finitely many $i$,
  this gives the lemma.
\end{proof}

With Lemmas~\ref{lemma:kill-error-noises}
and~\ref{lemma:convergence-of-weird-terms} in hand, we return to the
expansion~\eqref{eqn:decompose-weighted-noise-sequence}.  By
Lemma~\ref{lemma:kill-error-noises}, the final sum converges in probability to
zero, while Lemma~\ref{lemma:convergence-of-weird-terms} shows that the second
to last term converges almost surely to zero. The first term on the right side
of expression~\eqref{eqn:decompose-weighted-noise-sequence}, on the other
hand, is asymptotically normal: Assumption~\ref{assumption:growth-and-noise}
guarantees that $k^{-\half} \sum_{i = 1}^k \projmatr \noisezero_i \cd \normal(0,
\projmatr \Sigma \projmatr)$.  Slutsky's theorem thus implies
Lemma~\ref{lemma:asymptotic-normality-of-main-terms}
as desired.

\subsection{Proof of Lemma~\ref{lemma:funny-gamma-integral}}
\label{sec:proof-funny-gamma-integral}

We prove the result via a change of variables. Let
$u = c (t^\kappa - a^\kappa)$, so that
\begin{equation*}
  t = \left(u/c + a^\kappa\right)^{\frac{1}{\kappa}},
  ~~~
  du = \kappa c t^{\kappa - 1} dt
  = \kappa c \left(u/c + a^\kappa\right)^{\frac{\kappa - 1}{\kappa}}
  dt,
  ~~~ \mbox{or} ~~~
  dt = (\kappa c)^{-1}
  \left(u / c + a^\kappa\right)^{\frac{1 - \kappa}{\kappa}}
  du.
\end{equation*}
That is, by our change of variables, we have
\begin{align*}
  \int_a^b (t^\rho - a^\rho)
  \exp\left(-c (t^\kappa - a^\kappa)\right) dt
  & = \frac{1}{\kappa c}\int_0^{c(b^\kappa - a^\kappa)}
  \left(\left(\frac{u}{c} + a^\kappa\right)^\frac{\rho}{\kappa}
  - a^\rho\right)
  \left(\frac{u}{c} + a^\kappa\right)^\frac{1 - \kappa}{\kappa}
  e^{-u}
  du \\
  & \le \frac{1}{\kappa c}
  \int_0^{c(b^\kappa - a^\kappa)}
  \left[
    \left(\frac{u}{c} + a^\kappa\right)^\frac{\rho + 1 - \kappa}{\kappa}
    - a^{\rho + 1 - \kappa}\right] e^{-u} du
\end{align*}
where the last inequality follows because $(u/c + a^\kappa)^\frac{1 -
  \kappa}{\kappa} \ge a^\kappa$. Now, we note that
this final quantity is upper bounded by
\begin{align*}
  O_{\kappa,c,\rho}(1)
  \left[
    \int_0^\infty
    u^\frac{1 + \rho - \kappa}{\kappa} e^{-u} du
    +
    \int_0^\infty
    a^{1 + \rho - \kappa} e^{-u} du\right]
\end{align*}
by convexity of $t \mapsto t^\frac{1 -
  \kappa}{\kappa}$, for $\kappa < \half$ and
the fact that $(t_1 + t_2)^\frac{1 - \kappa}{\kappa}
\le t_1^\frac{1 - \kappa}{\kappa} + t_2^\frac{1 - \kappa}{\kappa}$
for $\kappa \ge \half$ (or $\frac{1 - \kappa}{\kappa} \le 1$).
Noting that $\int_0^\infty u^{\alpha - 1} e^u du = \Gamma(\alpha)$ by
definition,
we obtain our first result.

For the second, we
let $b = \frac{1}{\kappa}$, so that
we consider the integral
\begin{equation*}
  \int_0^\infty \left[(cu + a^{1/b})^b - a \right] e^{-u} du
\end{equation*}
where $\kappa b = 1$.
Dividing the integral by $a \ge 1$, we obtain
\begin{equation*}
  \int_0^\infty \left[a^{-1} (cu + a^{1/b})^b - 1 \right] e^{-u} du
  = \int_0^\infty \left[(a^{-1/b} cu + 1)^b - 1 \right] e^{-u} du.
\end{equation*}
The first term in the integral is dominated by
$(cu + 1)^b e^{-u}$ for all $a \ge 1$, so that taking $a \to \infty$
allows us to apply the dominated convergence theorem as
$(a^{-1/b} cu + 1) \to 1$ as $a \to \infty$.

\end{document}